\documentclass[11pt,reqno,UTF8]{amsart}
\usepackage{amsfonts,verbatim, amscd,mathrsfs, color, array}
\usepackage{amsmath,latexsym,amssymb,amsbsy, amsthm}
\usepackage{graphicx}

\newcommand{\beq}{\begin{equation}}
	\newcommand{\eeq}{\end{equation}}
\newcommand{\ben}{\begin{eqnarray}}
	\newcommand{\een}{\end{eqnarray}}
\newcommand{\beno}{\begin{eqnarray*}}
	\newcommand{\eeno}{\end{eqnarray*}}

\setlength{\parindent}{2em}
\usepackage[hmargin=3.0cm, vmargin=4.0cm]{geometry}
\usepackage{bbm}
\usepackage{pdfsync}
\usepackage{epstopdf}
\usepackage{cite}
\usepackage[english]{babel}
\usepackage{multirow}
\usepackage[font=bf,aboveskip=15pt]{caption}
\usepackage[toc,page]{appendix}
\usepackage[colorlinks,
linkcolor=red,
anchorcolor=blue,
citecolor=blue]{hyperref}
\usepackage{enumerate}
\raggedbottom
\usepackage{bookmark}
\DeclareFontShape{OT1}{cmr}{bx}{sc}{<-> cmbcsc10}{}
\setcounter{tocdepth}{6}
\setcounter{secnumdepth}{6}

\usepackage{float}
\usepackage{ulem}
\usepackage{syntonly}
\usepackage{mathtools}
\usepackage{bm}
\usepackage{bbding}
\usepackage{wasysym}
\usepackage{diagbox}

\usepackage{upgreek}

\newtheorem{lem}{Lemma}[section]
\newtheorem{prop}{Proposition}[section]
\newtheorem{thm}{Theorem}[section]

\newcommand{\bremark}{\begin{remark} \em}
	\newcommand{\eremark}{\end{remark} }

\allowdisplaybreaks[4]

\numberwithin{equation}{section}
\linespread{1.2}


\hypersetup{
	colorlinks=true,
	linkcolor=black, 
	citecolor=black, 
	urlcolor=black 
}
\usepackage{enumitem}
\usepackage{amsthm}

\begin{document}

\title[]{Normalized solutions to subcritical Choquard systems with double couplings}

\author[]{Wenliang Pei and Chonghao Deng}
\address{\noindent
	State Key Laboratory of Mathematical Sciences, Academy of Mathematics and Systems Science, Chinese Academy of Sciences, Beijing 100190, China;
	School of Mathematical Sciences, University of Chinese Academy of Sciences, Beijing 100049, China
}
\email{peiwenliang@amss.ac.cn}

\address{\noindent
	School of Statistics, University of International Business and Economics, Beijing 100029, China
}
\email{dengchonghao@amss.ac.cn}

	\begin{abstract}
	We consider the Choquard system with both linear and nonlinear couplings
\begin{equation*}
	\left\{
	\begin{aligned}
		-\Delta u + \mu_1 u &=\lambda_1 ( I_\alpha * |u|^{r_1} ) |u|^{r_1-2} u + \beta p( I_\alpha * |v|^q)|u|^{p-2} u + \kappa v, \quad \text{in} \,\, \mathbb{R}^N, \\
		-\Delta v + \mu_2 v &=\lambda_2 ( I_\alpha * |v|^{r_2} ) |v|^{r_2-2} v + \beta q( I_\alpha * |u|^p)|v|^{q-2} v + \kappa u , \quad \text{in} \,\, \mathbb{R}^N ,\\
		\int_{\mathbb{R}^N} u^2 = \rho_1^2\, &,  \int_{\mathbb{R}^N} v^2 = \rho_2^2,
	\end{aligned}
	\right.
\end{equation*}
where $N \in \{3,4\}$, $\lambda_1, \lambda_2, \beta, \kappa, \rho_1,\rho_2 > 0$, $2_{\alpha,*} :=\frac{N+\alpha}{N} <p,q , r_1,  r_2 <2_\alpha^*:=\frac{N+\alpha}{N-2}$ and $p+q\leq 2r_1 \leq 2r_2$ .
We investigate a classification result as the parameters $p+q$, $2r_1$ and $2r_2$ vary across the ranges $(\frac{2N+2\alpha}{N},\frac{2N+2\alpha+4}{N})$,
 $\{\frac{2N+2\alpha+4}{N}\}$, and $(\frac{2N+2\alpha+4}{N},\frac{2N+2\alpha}{N-2})$. 
Employing variational methods, we demonstrate the existence of a normalized ground state for the system in the mass subcritical, critical, and supercritical cases.

	\noindent \footnotesize{  \textbf{Keywords: }Normalized solutions;  Choquard systems; linear couplings.} \\
	\footnotesize{ \textbf{AMS Subject Classification (2020): } 35J15; 35J60; 35Q55}
\end{abstract}
\maketitle

	\section{Introduction}

 \par In this paper, we consider the Choquard system with both linear and nonlinear couplings as follows
\begin{equation}
	\left\{
	\begin{aligned}
		-\Delta u + \mu_1 u &=\lambda_1 ( I_\alpha * |u|^{r_1} ) |u|^{r_1-2} u + \beta p( I_\alpha * |v|^q)|u|^{p-2} u + \kappa v, \quad \text{in} \,\, \mathbb{R}^N, \\
		-\Delta v + \mu_2 v &=\lambda_2 ( I_\alpha * |v|^{r_2} ) |v|^{r_2-2} v + \beta q( I_\alpha * |u|^p)|v|^{q-2} v + \kappa u , \quad \text{in} \,\, \mathbb{R}^N ,\\
		\int_{\mathbb{R}^N} u^2 = \rho_1^2\, &,  \int_{\mathbb{R}^N} v^2 = \rho_2^2,
	\end{aligned}
	\right.
	\label{system1.1}
\end{equation}
where $N \in \{3,4\}$, $\lambda_1, \lambda_2, \beta, \kappa, \rho_1,\rho_2 > 0$, $2_{\alpha,*} :=\frac{N+\alpha}{N} <p,q , r_1,  r_2 <2_\alpha^*:=\frac{N+\alpha}{N-2}$ 
and $p+q\leq 2r_1 \leq 2r_2$.
The Riesz potential $I_\alpha: \mathbb{R}^N \to \mathbb{R}$ , with $\alpha \in (0,N)$, takes the form
\begin{equation*}
	I_\alpha(x)=\frac{\Gamma(\frac{N-\alpha}{2})}{2^\alpha \pi^{\frac{N}{2}}\Gamma(\frac{\alpha}{2})}\cdot \frac{1}{|x|^{N-\alpha}},
	 \qquad \forall x \in \mathbb{R}^N\setminus \{0\}.
\end{equation*}

The system \eqref{system1.1} is derived from the subsequent Schr\"{o}dinger system:
\begin{equation}
	\left\{
	\begin{aligned}
		-i\partial_t \Phi_1-\kappa \Phi_2&=\Delta \Phi_1+(H(x)*|\Phi_1|^a)|\Phi_1|^{a-2}\Phi_1+\beta p(H(x)*|\Phi_2|^q)|\Phi_1|^{p-2}\Phi_1,\\
		-i\partial_t \Phi_2-\kappa \Phi_1&=\Delta \Phi_2+(H(x)*|\Phi_2|^b)|\Phi_2|^{b-2}\Phi_2+\beta q(H(x)*|\Phi_1|^p)|\Phi_2|^{q-2}\Phi_2,
	\end{aligned}
	\right.
	\label{system1.2}
\end{equation}
a model stemming from Bose-Einstein condensations \cite{Anderson1995,Deconinck2004}. The Bose–Einstein condensations are of substantial importance in contemporary mathematical physics, leading to a prolific body of research over the last two decades.

Considering $\beta=0$ and $\kappa=0$, the system \eqref{system1.1} simplifies to the subsequent Choquard equation
\begin{equation}
	-\Delta u + \mu_1 u =\lambda_1 ( I_\alpha * |u|^{r_1} ) |u|^{r_1-2} u.
	\label{equation1.3}
\end{equation}
Moroz et al. \cite{MOROZ2013153} carried out important work on this problem and established that equation \eqref{equation1.3} possesses a nontrivial solution 
if and only if $\frac{N+\alpha}{N}<r_1<\frac{N+\alpha}{N-2}$. 

For the situation of $r_1=r_2=\frac{N+\alpha}{N-2}$, Zhang et al. \cite{ZhangandZhong2023} demonstrated the existence of a normalized ground state for the pure
nonlinear couplings case and Pei et al. \cite{Pei2025} demonstrated the existence of a normalized ground state for the double couplings case.
A natural question is whether a normalized ground state exists when $\frac{N+\alpha}{N} <r_1,  r_2 <\frac{N+\alpha}{N-2}$. 

In this paper, we aim to prove the existence of a normalized ground state for the system \eqref{system1.1} in the mass subcritical, critical, and supercritical cases.
Denote $H:=H^1_r(\mathbb{R}^N) \times H^1_r(\mathbb{R}^N)$, where $H^1_r(\mathbb{R}^N)=\{u \in H^1(\mathbb{R}^N):u(x)=u(|x|)\}$. 
 The energy function $\mathcal{J}_\beta : H \to \mathbb{R}$ is given by
\begin{equation*}
	\begin{split}
		\mathcal{J}_\beta(u,v)=&\frac{1}{2}\int_{\mathbb{R}^N}(|\nabla u|^2+|\nabla v|^2)-\frac{\lambda_1}{2r_1}\int_{\mathbb{R}^N}(I_\alpha*|u|^{r_1})|u|^{r_1}-\frac{\lambda_2}{2r_2}\int_{\mathbb{R}^N}(I_\alpha*|v|^{r_2})|v|^{r_2}\\
		&-\beta \int_{\mathbb{R}^N}(I_\alpha*|u|^p)|v|^q-\kappa \int_{\mathbb{R}^N}uv.
	\end{split}
\end{equation*}
Define the $L^2-torus$
\[ \mathcal{S}(\rho_1,\rho_2):= \{(u,v)\in H:\int_{\mathbb{R}^N} u^2 = \rho_1^2 ,~ \int_{\mathbb{R}^N} v^2 = \rho_2^2 \}. \]
It follows that the critical points of $\mathcal{J}_\beta(u,v)$ constrained to  $\mathcal{S}(\rho_1,\rho_2)$ yield the weak solutions of system $\eqref{system1.1} $, 
where the parameters $\mu_1, \mu_2 \in \mathbb{R}$ are the corresponding Lagrangian multipliers. 
Then we state the Pohozaev identity corresponding to system \eqref{system1.1}
\begin{equation}
	\begin{split}
		P_\beta(u,v)=&\int_{\mathbb{R}^N}(|\nabla u|^2+|\nabla v|^2)-\frac{\gamma_{r_1}}{r_1}\lambda_1 \int_{\mathbb{R}^N}(I_\alpha*|u|^{r_1})|u|^{r_1}
		-\frac{\gamma_{r_2}}{r_2}\lambda_2 \int_{\mathbb{R}^N}(I_\alpha*|v|^{r_2})|v|^{r_2}\\
		&-\beta(\gamma_p+\gamma_q)\int_{\mathbb{R}^N}(I_\alpha*|u|^p)|v|^q,
	\end{split}
\end{equation}
where
\begin{equation}
	\gamma_p:=\frac{Np-N-\alpha}{2}.           \label{gamma_p}
\end{equation}
Define $\mathcal{P}_\beta(\rho_1,\rho_2):=\{(u,v)\in  \mathcal{S}(\rho_1,\rho_2): P_\beta(u,v)=0\}$, and we are concerned with the following minimizing problem
\[ m_\beta(\rho_1,\rho_2):= \inf_{ (u,v) \in \mathcal{P}_\beta(\rho_1,\rho_2)}\mathcal{J}_\beta(u,v).
\]
One says that $(u,v)$ is a normalized ground state of system \eqref{system1.1} whenever it realizes $m_\beta(\rho_1,\rho_2)$.

The main results of this paper are presented below. Our first focus is on the $r_1=r_2$ case, for which we have the subsequent theorems.
\begin{thm}
	Assume $N \in \{3,4\}$, $\lambda_1, \lambda_2, \beta, \kappa, \rho_1,\rho_2 > 0$ and $\frac{2N+2\alpha}{\alpha}<p+q\leq 2r_1=2r_2<\frac{2N+2\alpha+4}{N}$, then
	 system $\eqref{system1.1} $ admits a normalized ground state $(u,v)$. Moreover,
	\begin{enumerate}[label=(\roman*)]
		\item $(u,v) \in H_r^1(\mathbb{R}^N) \times H_r^1(\mathbb{R}^N)$ and $u,v> 0$ with the associated Lagrange multiplier $\mu_i$ being positive, $i=1,2$.
		\item  $(u,v)$ is a local minimizer of $\mathcal{J}_\beta$ restricted to $\mathcal{S}(\rho_1,\rho_2)$.
	\end{enumerate}
	\label{Theorem1.1}
\end{thm}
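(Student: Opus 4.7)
Because $p+q$, $2r_1$, $2r_2$ all lie in the $L^2$-subcritical range $(\frac{2N+2\alpha}{N},\frac{2N+2\alpha+4}{N})$, the plan is to minimize $\mathcal{J}_\beta$ directly on the whole constraint $\mathcal{S}(\rho_1,\rho_2)$ and then identify the global infimum with $m_\beta(\rho_1,\rho_2)$. First, combining the Hardy-Littlewood-Sobolev inequality with Gagliardo-Nirenberg interpolation, each nonlocal term is bounded by $C\|\nabla u\|_2^{2\gamma_s}$ times a power of the conserved $L^2$-mass; since $\gamma_{r_1},\gamma_{r_2},\gamma_p+\gamma_q<1$ in this range, summing gives
\[ \mathcal{J}_\beta(u,v)\ge\tfrac{1}{2}\bigl(\|\nabla u\|_2^2+\|\nabla v\|_2^2\bigr)-C(\rho_1,\rho_2)\bigl(\|\nabla u\|_2^2+\|\nabla v\|_2^2\bigr)^{\gamma}-\kappa\rho_1\rho_2 \]
for some $\gamma\in(0,1)$, so $\mathcal{J}_\beta$ is bounded below and coercive on $\mathcal{S}(\rho_1,\rho_2)$. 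Testing with $(t^{N/2}u(t\cdot),t^{N/2}v(t\cdot))$ as $t\to 0^+$ shows $\tilde{m}:=\inf_{\mathcal{S}(\rho_1,\rho_2)}\mathcal{J}_\beta<0$.

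\textbf{Compactness of a minimizing sequence.} Take $(u_n,v_n)\subset\mathcal{S}(\rho_1,\rho_2)$ with $\mathcal{J}_\beta(u_n,v_n)\to\tilde{m}$, which is bounded in $H$ by coercivity. The radial compact embedding $H^1_r(\mathbb{R}^N)\hookrightarrow L^s(\mathbb{R}^N)$ for $s\in(2,2^*)$ yields, along a subsequence, $(u_n,v_n)\rightharpoonup(u_0,v_0)$ in $H$ with strong convergence in $L^s\times L^s$ for every such $s$; combined with HLS, all four nonlocal integrals converge. The delicate point is that $H^1_r\hookrightarrow L^2$ is not compact, so a priori only $\|u_0\|_2\le\rho_1$ and $\|v_0\|_2\le\rho_2$. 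I will close this gap by proving strict monotonicity of $(\sigma_1,\sigma_2)\mapsto\tilde{m}(\sigma_1,\sigma_2)$ in both mass coordinates, equivalent to the strict subadditivity needed to exclude dichotomy in the Lions concentration-compactness framework. The mechanism is a two-parameter rescaling $(u_0,v_0)\mapsto(\theta_1 u_0(\sigma\cdot),\theta_2 v_0(\sigma\cdot))$ with $\theta_i\sigma^{-N/2}$ tuned so that the rescaled pair lies in $\mathcal{S}(\rho_1,\rho_2)$: subcriticality of every nonlinear exponent guarantees, after optimizing in $\sigma$, a competitor whose energy lies strictly below $\tilde{m}$ whenever mass has been lost, a contradiction. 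This forces $\|u_0\|_2=\rho_1$ and $\|v_0\|_2=\rho_2$; Brezis-Lieb then upgrades weak to strong $L^2$ convergence, and the coercivity bound together with weak lower semicontinuity of the Dirichlet part yields strong $H$-convergence and $\mathcal{J}_\beta(u_0,v_0)=\tilde{m}$.

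\textbf{Ground state property and qualitative features.} As a critical point of $\mathcal{J}_\beta|_{\mathcal{S}(\rho_1,\rho_2)}$, the minimizer $(u_0,v_0)$ solves \eqref{system1.1} with some Lagrange multipliers $(\mu_1,\mu_2)$ and satisfies the Pohozaev identity $P_\beta(u_0,v_0)=0$, so $(u_0,v_0)\in\mathcal{P}_\beta(\rho_1,\rho_2)$ and $\tilde{m}\ge m_\beta(\rho_1,\rho_2)$; the reverse inequality is trivial, whence $(u_0,v_0)$ realizes $m_\beta(\rho_1,\rho_2)$, which gives (ii). Replacing $(u_0,v_0)$ by $(|u_0|,|v_0|)$ cannot increase the energy (the nonlocal integrands are invariant and $-\kappa\int uv\ge-\kappa\int|u||v|$ since $\kappa>0$), so up to sign $u_0,v_0\ge 0$; the strong maximum principle applied to the cooperative Euler-Lagrange system then upgrades this to $u_0,v_0>0$. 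Testing each equation against its own unknown, summing, and subtracting the Pohozaev identity yields
\[ \mu_1\rho_1^2+\mu_2\rho_2^2=\sum_{i=1,2}\lambda_i\bigl(1-\tfrac{\gamma_{r_i}}{r_i}\bigr)\!\!\int(I_\alpha*|u_0|^{r_i})|u_0|^{r_i}+\beta(p+q-\gamma_p-\gamma_q)\!\!\int(I_\alpha*|u_0|^p)|v_0|^q+2\kappa\!\!\int u_0 v_0, \]
which is strictly positive in the subcritical regime; individual positivity of each $\mu_i$ follows from the observation that a positive $L^2(\mathbb{R}^N)$ solution of $-\Delta u_0+\mu_1 u_0=(\text{strictly positive RHS})$ forces $\mu_1>0$, since otherwise Fourier analysis obstructs the requisite $L^2$ decay.

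\textbf{Main obstacle.} The technical heart of the proof is the no-mass-loss step in the compactness analysis: despite the failure of $L^2$ compactness in $H^1_r$, the strict $L^2$-subcriticality of every nonlocal term, including the coupling exponent $p+q$, must be leveraged to produce strict monotonicity of $\tilde{m}$ and rule out dichotomy of the minimizing sequence. A secondary subtlety is upgrading the sum estimate $\mu_1\rho_1^2+\mu_2\rho_2^2>0$ to the individual positivity of both Lagrange multipliers, which is handled via the decay-obstruction argument above.
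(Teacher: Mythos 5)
Your overall architecture (global minimization on $\mathcal{S}(\rho_1,\rho_2)$, coercivity from subcriticality, identification of the global infimum with $m_\beta(\rho_1,\rho_2)$ via the Pohozaev manifold) is consistent with the paper, which in this regime also shows that $\Psi^\beta_{(u,v)}$ attains its global minimum at its unique critical point, so that $m_\beta(\rho_1,\rho_2)=\inf_{\mathcal{S}(\rho_1,\rho_2)}\mathcal{J}_\beta$. However, there is a genuine gap at the heart of your compactness step. You claim that strict monotonicity of $(\sigma_1,\sigma_2)\mapsto\tilde m(\sigma_1,\sigma_2)$, obtained by a mass-restoring rescaling, is ``equivalent to the strict subadditivity needed to exclude dichotomy.'' That equivalence fails here because of the linear coupling. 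Writing $(\tilde u_n,\tilde v_n)=(u_n-u_0,v_n-v_0)$, every nonlocal term of the remainder vanishes (compact embedding into $L^s$, $s\in(2,2^*)$), but the remainder's energy is only bounded below by $-\kappa\int\tilde u_n\tilde v_n\geq-\kappa\sqrt{(\rho_1^2-\bar\rho_1^2)(\rho_2^2-\bar\rho_2^2)}$, which can be strictly negative: a correlated pair of spreading bumps loses all gradient and nonlocal energy while retaining $\int\tilde u_n\tilde v_n$ bounded away from zero. Consequently one only gets $\mathcal{J}_\beta(u_0,v_0)\leq\tilde m+\kappa\sqrt{(\rho_1^2-\bar\rho_1^2)(\rho_2^2-\bar\rho_2^2)}$, and your competitor must beat $\tilde m$ by \emph{more} than this quantity, not merely strictly. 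You neither state nor prove this quantitative inequality, and it is not clear your two-parameter rescaling yields it when the two mass deficits are not proportional. The paper avoids this entirely: it passes (via Ekeland) to a Palais--Smale sequence, proves $\mu_1,\mu_2>0$ by the Liouville lemma, derives the identity $\mu_1(\rho_1^2-\bar\rho_1^2)+\mu_2(\rho_2^2-\bar\rho_2^2)=2\kappa\lim_n\int(u_n-u)(v_n-v)$, and concludes via Cauchy--Schwarz that mass loss forces $\kappa\geq\sqrt{\mu_1\mu_2}$, which is then excluded by applying the Liouville lemma to $\sqrt{\mu_2}u+\sqrt{\mu_1}v$. Some version of this multiplier information (or a genuinely stronger subadditivity inequality accounting for the $\kappa\sqrt{\Delta_1\Delta_2}$ deficit) is indispensable; your sketch as written does not supply it.

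Two smaller points. First, to run your argument you must also rule out $(u_0,v_0)=(0,0)$ and indeed $u_0\neq0$, $v_0\neq0$ separately; for this you need the sharper bound $\tilde m<-\kappa\rho_1\rho_2$ (available from your $t\to0^+$ test pair with $v=\tfrac{\rho_2}{\rho_1}u$, but you only recorded $\tilde m<0$), together with the observation that $u_0=0$ forces $v_0=0$ through the equation $0=\kappa v_0$. Second, your justification of $\mu_i>0$ by ``Fourier analysis obstructs the requisite $L^2$ decay'' is too vague; the clean route, used in the paper, is that $\mu_1\leq0$ gives $-\Delta u_0\geq0$ with $u_0\geq0$ smooth and integrable, whence $u_0\equiv0$ by the Liouville-type Lemma \ref{Liouville}, contradicting nontriviality.
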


\begin{thm}
	Assume $N \in \{3,4\}$, $\beta, \kappa > 0$, $\frac{2N+2\alpha}{\alpha}<p+q < 2r_1=2r_2=\frac{2N+2\alpha+4}{N}$ and $\rho_1, \rho_2, \lambda_1, \lambda_2>0$ satisfying \eqref{frac{1}{2}-(A1+A2)>0},
	 then system $\eqref{system1.1} $ admits a normalized ground state $(u,v)$. Moreover,
	\begin{enumerate}[label=(\roman*)]
		\item $(u,v) \in H_r^1(\mathbb{R}^N) \times H_r^1(\mathbb{R}^N)$ and $u,v> 0$ with the associated Lagrange multiplier $\mu_i$ being positive, $i=1,2$.
		\item  $(u,v)$ is a local minimizer of $\mathcal{J}_\beta$ restricted to $\mathcal{S}(\rho_1,\rho_2)$.
	\end{enumerate}
	\label{Theorem1.2}
\end{thm}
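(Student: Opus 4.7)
The plan is to cast Theorem~\ref{Theorem1.2} as an unconstrained minimization of $\mathcal{J}_\beta$ over the full $L^2$-sphere $\mathcal{S}(\rho_1,\rho_2)$, verify under the smallness hypothesis \eqref{frac{1}{2}-(A1+A2)>0} that the infimum $\tilde m_\beta:=\inf_{\mathcal{S}(\rho_1,\rho_2)}\mathcal{J}_\beta$ is finite and achieved, and then identify $\tilde m_\beta$ with $m_\beta(\rho_1,\rho_2)$ via the Pohozaev identity. For coercivity I use that $r_1=r_2=\tfrac{N+\alpha+2}{N}$ forces $\gamma_{r_1}=\gamma_{r_2}=1$, so the Hardy--Littlewood--Sobolev and Gagliardo--Nirenberg inequalities together produce
\[\int_{\mathbb{R}^N}(I_\alpha*|u|^{r_i})|u|^{r_i}\leq C_{r_i}\,\|\nabla u\|_2^{2}\,\|u\|_2^{2r_i-2}.\]
Setting $A_i:=\tfrac{\lambda_i C_{r_i}}{2r_i}\rho_i^{2r_i-2}$, the leading kinetic/critical part of $\mathcal{J}_\beta$ is bounded below by $\bigl(\tfrac12-(A_1+A_2)\bigr)(\|\nabla u\|_2^2+\|\nabla v\|_2^2)$, which is positive by hypothesis. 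The cross Choquard term is mass-subcritical, because $p+q<\tfrac{2N+2\alpha+4}{N}$ yields $\gamma_p+\gamma_q<2$, and the linear coupling is bounded by $\kappa\rho_1\rho_2$; both are lower-order perturbations, so $\mathcal{J}_\beta$ is coercive and bounded below on $\mathcal{S}(\rho_1,\rho_2)$. Applying the $L^2$-preserving dilation $(t^{N/2}u(t\,\cdot),t^{N/2}v(t\,\cdot))$ to a positive radial pair and letting $t\to 0^+$, every term scales like $t^{\geq\gamma_p+\gamma_q}$ except the $t$-invariant linear coupling $-\kappa\int uv<0$; hence $\tilde m_\beta<0$.

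I then take a minimizing sequence $(u_n,v_n)\subset\mathcal{S}(\rho_1,\rho_2)$, which is bounded in $H$ by coercivity, so up to a subsequence $(u_n,v_n)\rightharpoonup(u_\ast,v_\ast)$. The Strauss compact embedding $H^1_r(\mathbb{R}^N)\hookrightarrow L^s(\mathbb{R}^N)$ for $s\in(2,2_\alpha^\ast)$, combined with HLS, lets every Choquard term pass to the limit, since $r_1,r_2,p,q\in(2_{\alpha,*},2_\alpha^\ast)$; gradient norms are weakly lower semicontinuous.

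The hard part is passing to the limit in the linear coupling $\kappa\int u_n v_n$, because Strauss compactness provides only weak $L^2$ convergence. I expect to preclude mass leak by a strict subadditivity / mass-restoring rescaling argument: if $\|u_\ast\|_2^2={\rho_1'}^2<\rho_1^2$ (or the analogous strict inequality for $v_\ast$), the competitor $\bigl(\tfrac{\rho_1}{\rho_1'}u_\ast,\tfrac{\rho_2}{\rho_2'}v_\ast\bigr)\in\mathcal{S}(\rho_1,\rho_2)$ is examined; using the mass-critical homogeneity of the dangerous terms together with the strict negativity of $\tilde m_\beta$, its energy turns out strictly below $\mathcal{J}_\beta(u_\ast,v_\ast)\leq\liminf_n\mathcal{J}_\beta(u_n,v_n)=\tilde m_\beta$, contradicting the definition of $\tilde m_\beta$. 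Therefore $\|u_\ast\|_2=\rho_1$ and $\|v_\ast\|_2=\rho_2$; combined with weak $L^2$ convergence this yields strong $L^2$ convergence, $\int u_n v_n\to\int u_\ast v_\ast$, and $(u_\ast,v_\ast)$ attains $\tilde m_\beta$.

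To close the argument, a Lagrange multiplier argument gives $\mu_1,\mu_2\in\mathbb{R}$ with $(u_\ast,v_\ast)$ solving \eqref{system1.1}. Because $\mathcal{J}_\beta(|u_\ast|,|v_\ast|)\leq\mathcal{J}_\beta(u_\ast,v_\ast)$ and the $L^2$-constraint is preserved, one may assume $u_\ast,v_\ast\geq 0$, and the strong maximum principle (applied to each equation whose right-hand side is then nonnegative) promotes this to $u_\ast,v_\ast>0$. The Pohozaev identity $P_\beta(u_\ast,v_\ast)=0$ follows from stationarity of $\mathcal{J}_\beta$ along $L^2$-preserving dilations, so $(u_\ast,v_\ast)\in\mathcal{P}_\beta(\rho_1,\rho_2)$ and $m_\beta(\rho_1,\rho_2)=\tilde m_\beta$ is realized. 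Positivity of the multipliers is a Liouville-type consequence: since every right-hand-side term is nonnegative for $u_\ast,v_\ast>0$, one has $-\Delta u_\ast\geq -\mu_1 u_\ast$, incompatible with a positive $L^2$ function unless $\mu_1>0$, and similarly $\mu_2>0$. Finally, global minimality on $\mathcal{S}(\rho_1,\rho_2)$ delivers the claimed local minimizer property.
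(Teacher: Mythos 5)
Your strategy (global minimization of $\mathcal{J}_\beta$ over all of $\mathcal{S}(\rho_1,\rho_2)$) is genuinely different from the paper's, which projects onto the Pohozaev manifold, minimizes over the gradient-sublevel set $A_{s_1}(\rho_1,\rho_2)$, extracts a Palais--Smale sequence by Ekeland's principle, and proves a dedicated compactness lemma (Lemma \ref{Lemma3.3紧性}). Your coercivity argument from \eqref{frac{1}{2}-(A1+A2)>0} together with $\gamma_p+\gamma_q<2=2\gamma_{r_1}$ is correct, as is the dilation computation giving a negative infimum. The gap is in the compactness step, which is exactly where the difficulty of the linear coupling is concentrated. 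First, the inequality $\mathcal{J}_\beta(u_*,v_*)\le\liminf_n\mathcal{J}_\beta(u_n,v_n)$ that your argument rests on is unjustified: writing $-\kappa\int u_nv_n=-\tfrac{\kappa}{2}|u_n+v_n|_2^2+\tfrac{\kappa}{2}(\rho_1^2+\rho_2^2)$ shows this term is weakly \emph{upper} semicontinuous, so if the escaping parts of $u_n$ and $v_n$ are positively correlated one can have $\liminf_n\bigl(-\kappa\int u_nv_n\bigr)<-\kappa\int u_*v_*$, and the weak limit may then have strictly larger energy than the infimum. Second, the ``mass-restoring rescaling'' is only announced, not carried out: multiplying $u_*$ by $\rho_1/\rho_1'>1$ increases the kinetic term by the factor $(\rho_1/\rho_1')^2$ while amplifying the negative Choquard terms by larger powers, and whether the net change is negative depends on the unknown ratio of $|\nabla u_*|_2^2$ to $\int(I_\alpha*|u_*|^{r_1})|u_*|^{r_1}$; no strict subadditivity inequality is actually proved, and the non-strict monotonicity $m_\beta(\rho_1,\rho_2)\le m_\beta(\rho_1',\rho_2')$ of Lemma \ref{Lemma3.2能量值单调性} is not enough to force a contradiction.

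The paper resolves exactly this point by a different mechanism that your proposal has no substitute for: working with a Palais--Smale sequence makes approximate multipliers $\mu_{1,n},\mu_{2,n}$ available; after showing $\mu_1,\mu_2>0$ via the Liouville Lemma \ref{Liouville}, one derives the identity $\mu_1(\rho_1^2-\overline{\rho}_1^2)+\mu_2(\rho_2^2-\overline{\rho}_2^2)=2\kappa\lim_n\int(u_n-u)(v_n-v)$ and rules out mass loss by Cauchy--Schwarz combined with a Liouville argument for $(\sqrt{\mu_2}u,\sqrt{\mu_1}v)$. Until you supply an argument of comparable strength for the nonlocal-in-$n$ quantity $\lim_n\int(u_n-u)(v_n-v)$, the existence of a minimizer is not established. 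A smaller point: you only prove that the infimum is negative, whereas the sharper bound $m_\beta(\rho_1,\rho_2)<-\kappa\rho_1\rho_2$ (obtained in Lemma \ref{Lemma3.2能量值单调性} by letting $t\to0^+$ along $t\diamond(u,v)$ with $v$ a positive multiple of $u$) is what the paper's compactness lemma uses to exclude vanishing; any repaired argument will likely need it too.
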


\begin{thm}
	Assume $N \in \{3,4\}$, $p+q = 2r_1=2r_2=\frac{2N+2\alpha+4}{N}$ and $\rho_1, \rho_2, \lambda_1, \lambda_2, \beta>0$ satisfying 
	{\small
	\begin{equation}
	1-\frac{2N}{N+\alpha+2} C(N,r_1,r_1)2^{\frac{N+\alpha+2}{N}}(\lambda_1\rho_1^{\frac{2\alpha+4}{N}}
	+\lambda_2\rho_2^{\frac{2\alpha+4}{N}})-2\beta C(N,p,q)(\rho_1^2+\rho_2^2)^{\frac{\alpha+2}{N}}>0,
	\label{assumption in Thm 1.3}
	\end{equation}
     }
	then system $\eqref{system1.1} $ admits no normalized solution. 
	\label{Theorem1.3}
\end{thm}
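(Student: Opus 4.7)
The plan is to argue by contradiction: suppose $(u,v)\in \mathcal{S}(\rho_1,\rho_2)$ is a normalized solution of \eqref{system1.1}. Then $(u,v)$ must lie on $\mathcal{P}_\beta(\rho_1,\rho_2)$, i.e.\ $P_\beta(u,v)=0$. In the mass critical regime $p+q=2r_1=2r_2=\frac{2N+2\alpha+4}{N}$, a direct computation from \eqref{gamma_p} gives $\gamma_{r_1}=\gamma_{r_2}=1$ and $\gamma_p+\gamma_q=2$, so the Pohozaev identity collapses to the clean balance
\[
\|\nabla u\|_2^2+\|\nabla v\|_2^2=\frac{\lambda_1}{r_1}\int(I_\alpha*|u|^{r_1})|u|^{r_1}+\frac{\lambda_2}{r_2}\int(I_\alpha*|v|^{r_2})|v|^{r_2}+2\beta\int(I_\alpha*|u|^p)|v|^q.
\]

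The key step is to bound every term on the right-hand side by $\|\nabla u\|_2^2+\|\nabla v\|_2^2$ times a constant depending only on $\rho_1,\rho_2,\lambda_1,\lambda_2,\beta$, using the Gagliardo--Nirenberg inequality for Choquard-type integrals (of Moroz--Van Schaftingen type). For the pure terms I would introduce the envelope $W:=(u^2+v^2)^{1/2}$: since $|u|^{r_1}\le W^{r_1}$ pointwise and $I_\alpha\ge 0$, we get $\int(I_\alpha*|u|^{r_1})|u|^{r_1}\le \int(I_\alpha*W^{r_1})W^{r_1}$, and applying the mass-critical GN with $\gamma_{r_1}=1$ produces $C(N,r_1,r_1)\|\nabla W\|_2^2\|W\|_2^{2r_1-2}$. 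The extra factor $2^{r_1}=2^{(N+\alpha+2)/N}$ in \eqref{assumption in Thm 1.3} is what this envelope comparison naturally contributes when one separates the kinetic energies of $u$ and $v$. For the cross term the identity $\gamma_p+\gamma_q=2$ is decisive: writing $|u|^p|v|^q\le W^{p+q}$ and applying the same GN gives
\[
\int(I_\alpha*|u|^p)|v|^q \le C(N,p,q)(\rho_1^2+\rho_2^2)^{\frac{\alpha+2}{N}}\bigl(\|\nabla u\|_2^2+\|\nabla v\|_2^2\bigr).
\]

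Substituting these estimates back into the Pohozaev identity yields
\[
\bigl(\|\nabla u\|_2^2+\|\nabla v\|_2^2\bigr)\Bigl[\,1-\tfrac{2N}{N+\alpha+2}C(N,r_1,r_1)2^{\frac{N+\alpha+2}{N}}\bigl(\lambda_1\rho_1^{\frac{2\alpha+4}{N}}+\lambda_2\rho_2^{\frac{2\alpha+4}{N}}\bigr)-2\beta C(N,p,q)(\rho_1^2+\rho_2^2)^{\frac{\alpha+2}{N}}\Bigr]\le 0.
\]
Under hypothesis \eqref{assumption in Thm 1.3} the bracket is strictly positive, forcing $\|\nabla u\|_2^2+\|\nabla v\|_2^2=0$, hence $u\equiv v\equiv 0$, contradicting the mass constraints $\|u\|_2^2=\rho_1^2>0$ and $\|v\|_2^2=\rho_2^2>0$.

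The principal technical obstacle is to identify the Gagliardo--Nirenberg estimates in exactly the form that matches the constants appearing in \eqref{assumption in Thm 1.3} — in particular isolating where the factor $2^{(N+\alpha+2)/N}$ enters. The cleanest route I see is the envelope-function argument above, in which the combinatorics of ``$\gamma_p+\gamma_q=2$'' is used via Young's inequality $a^{\gamma_p}b^{\gamma_q}\le \tfrac{\gamma_p}{2}a^2+\tfrac{\gamma_q}{2}b^2$ to split the gradient powers in the cross term, while the comparison $|u|^{r_1}+|v|^{r_2}\le 2W^{r_1}$ absorbs the separate contributions of $\lambda_1\rho_1^{\frac{2\alpha+4}{N}}$ and $\lambda_2\rho_2^{\frac{2\alpha+4}{N}}$ on a common gradient footprint. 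Once the inequalities are in place, the contradiction is immediate, so the whole proof is an estimation exercise rather than a variational construction.
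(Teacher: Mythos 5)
Your overall strategy is exactly the paper's: assume a solution exists, invoke the Pohozaev identity $P_\beta(u,v)=0$ (which in this fully mass-critical regime has $\gamma_{r_1}=\gamma_{r_2}=1$, $\gamma_p+\gamma_q=2$), bound each nonlocal term by a constant times $|\nabla u|_2^2+|\nabla v|_2^2$ via Gagliardo--Nirenberg, and read off a contradiction with \eqref{assumption in Thm 1.3}. The cross-term estimate you give is precisely the paper's application of \eqref{GN}. However, your treatment of the pure self-interaction terms has a genuine gap. The envelope $W=(u^2+v^2)^{1/2}$ satisfies $\|W\|_2^2=\rho_1^2+\rho_2^2$, so the bound $\int(I_\alpha*|u|^{r_1})|u|^{r_1}\le\int(I_\alpha*W^{r_1})W^{r_1}\le C\,\|\nabla W\|_2^2\,\|W\|_2^{2r_1-2}$ produces the \emph{combined} mass $(\rho_1^2+\rho_2^2)^{\frac{\alpha+2}{N}}$ in front of $\lambda_1$, not the individual mass $\rho_1^{\frac{2\alpha+4}{N}}=(\rho_1^2)^{\frac{\alpha+2}{N}}$ appearing in \eqref{assumption in Thm 1.3}. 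Since $(\rho_1^2+\rho_2^2)^{\frac{\alpha+2}{N}}>\rho_1^{\frac{2\alpha+4}{N}}$, the bracket you would actually obtain is strictly smaller than the quantity in \eqref{assumption in Thm 1.3}, and its positivity is \emph{not} implied by the stated hypothesis (take $\rho_2\gg\rho_1$ with $\lambda_1$ large). So your final displayed inequality does not follow from your own estimates, and the proof as written establishes nonexistence only under a strictly stronger assumption.

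The fix is immediate and is what the paper does: do not introduce $W$ at all. Apply the vector inequality \eqref{GN} to the diagonal pair $(u,u)$ with $p=q=r_1$, which gives
\[
\int_{\mathbb{R}^N}(I_\alpha*|u|^{r_1})|u|^{r_1}\le C(N,r_1,r_1)\,2^{r_1}\,|\nabla u|_2^{2}\,\rho_1^{2(r_1-1)},
\]
keeping $u$'s own gradient and mass; this is exactly where the factor $2^{\frac{N+\alpha+2}{N}}$ in \eqref{assumption in Thm 1.3} originates (it is \emph{not} produced by the envelope comparison, contrary to your heuristic). Then bounding $|\nabla u|_2^2\le|\nabla u|_2^2+|\nabla v|_2^2$ and summing the two pure terms with the cross term reproduces the paper's bracket verbatim, and the contradiction with \eqref{assumption in Thm 1.3} closes as you describe.
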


\begin{thm}
	Assume that $N \in \{3,4\}$, $\rho_1, \rho_2, \lambda_1, \lambda_2 > 0$, $\frac{2N+2\alpha}{N}<p+q <\frac{2N+2\alpha+4}{N}< 2r_1=2r_2<\frac{2N+2\alpha}{N-2}$,  $0<\beta<\beta_0$, $0<\kappa<\kappa_0$, where $\beta_0$ and $\kappa_0$ are defined in \eqref{beta 0 in 3.3.1} and \eqref{kappa 0 in 3.3.1} respectively,
	then system $\eqref{system1.1} $ admits a normalized ground state $(u,v)$. Moreover,
	\begin{enumerate}[label=(\roman*)]
		\item $(u,v) \in H_r^1(\mathbb{R}^N) \times H_r^1(\mathbb{R}^N)$ and $u,v> 0$ with the associated Lagrange multiplier $\mu_i$ being positive, $i=1,2$.
		\item  $(u,v)$ is a local minimizer of $\mathcal{J}_\beta$ restricted to $\mathcal{S}(\rho_1,\rho_2)$.
	\end{enumerate}
	\label{Theorem1.4}
\end{thm}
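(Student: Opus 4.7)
The plan is to adapt the Soave-type fibration strategy for mixed mass-subcritical/supercritical nonlinearities to the Choquard framework. Since $2r_1=2r_2>\frac{2N+2\alpha+4}{N}$ makes $\mathcal{J}_\beta$ unbounded below on $\mathcal{S}(\rho_1,\rho_2)$, I would minimize on the Pohozaev manifold $\mathcal{P}_\beta(\rho_1,\rho_2)$ through the $L^2$-preserving fibration $(u,v)_t(x):=(t^{N/2}u(tx),t^{N/2}v(tx))$. Writing $A=\|\nabla u\|_2^2+\|\nabla v\|_2^2$, $B_1=\int_{\mathbb{R}^N}(I_\alpha*|u|^{r_1})|u|^{r_1}$, $B_2=\int_{\mathbb{R}^N}(I_\alpha*|v|^{r_2})|v|^{r_2}$, $C=\int_{\mathbb{R}^N}(I_\alpha*|u|^p)|v|^q$ and $D=\int_{\mathbb{R}^N}uv$, the fiber map
\begin{equation*}
\psi_{u,v}(t):=\mathcal{J}_\beta\bigl((u,v)_t\bigr)=\tfrac{t^2}{2}A-\tfrac{\lambda_1 t^{2\gamma_{r_1}}}{2r_1}B_1-\tfrac{\lambda_2 t^{2\gamma_{r_2}}}{2r_2}B_2-\beta t^{\gamma_p+\gamma_q}C-\kappa D
\end{equation*}
satisfies $t\psi'_{u,v}(t)=P_\beta((u,v)_t)$, so critical points of $\psi_{u,v}$ correspond exactly to values of $t$ for which $(u,v)_t\in\mathcal{P}_\beta(\rho_1,\rho_2)$.

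The parameter range forces $0<\gamma_p+\gamma_q<2<2\gamma_{r_1}=2\gamma_{r_2}$, and the Hardy--Littlewood--Sobolev together with Gagliardo--Nirenberg inequalities bound $B_1,B_2,C,D$ in terms of powers of $A$ and the prescribed masses. Rewriting $\psi'_{u,v}(t)/t$ as $A$ minus a positive function of $t$ that blows up both at $t\to 0^+$ (driven by the term $\beta(\gamma_p+\gamma_q)C\, t^{\gamma_p+\gamma_q-2}$) and at $t\to\infty$ (driven by the $t^{2\gamma_{r_1}-2}$ terms), I would show that for $0<\beta<\beta_0$ the infimum over $t>0$ of that positive function lies strictly below $A$, so that $\psi'_{u,v}(t)=0$ has exactly two solutions $0<t^+(u,v)<t^-(u,v)$, the first a strict local minimum and the second a strict local maximum of $\psi_{u,v}$. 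This yields a disjoint decomposition $\mathcal{P}_\beta(\rho_1,\rho_2)=\mathcal{P}_\beta^+\cup\mathcal{P}_\beta^-$ with $\mathcal{P}_\beta^0=\emptyset$, so that $\mathcal{P}_\beta^\pm$ are smooth natural constraints of codimension one in $\mathcal{S}(\rho_1,\rho_2)$, together with gradient thresholds $0<R_1<R_2$ such that $\mathcal{P}_\beta^+\subset\{A<R_1\}$ and $\mathcal{P}_\beta^-\subset\{A>R_2\}$; the explicit $\beta_0$ emerges from these estimates.

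Exploiting the thresholds, I would verify that $m_\beta^+:=\inf_{\mathcal{P}_\beta^+}\mathcal{J}_\beta=\inf\{\mathcal{J}_\beta(u,v):(u,v)\in\mathcal{S}(\rho_1,\rho_2),\ A(u,v)<R_2\}$, which exhibits $m_\beta^+$ as a local-minimum value of $\mathcal{J}_\beta|_{\mathcal{S}(\rho_1,\rho_2)}$, and a suitable radial test pair combined with $\kappa<\kappa_0$ yields $m_\beta^+<0$. Ekeland's principle on the open set $\{A<R_2\}\cap\mathcal{S}(\rho_1,\rho_2)$ then produces a Palais--Smale sequence $(u_n,v_n)$ at level $m_\beta^+$ with $P_\beta(u_n,v_n)\to 0$; boundedness in $H$ follows from the coercivity of $\mathcal{J}_\beta-\tfrac{1}{2\gamma_{r_1}}P_\beta$, which uses $\gamma_{r_1}>1$. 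Since $H=H_r^1(\mathbb{R}^N)\times H_r^1(\mathbb{R}^N)$ and every term in the energy is $O(N)$-invariant, the compact embedding $H_r^1(\mathbb{R}^N)\hookrightarrow L^s(\mathbb{R}^N)$ for $s\in(2,2^*)$ passes to the limit in every Choquard and linear-coupling term; the strict negativity $m_\beta^+<0$ rules out vanishing, forcing the weak limit $(u,v)$ to be nontrivial and of full mass, and strong convergence in $H$ then follows. Standard arguments supply Lagrange multipliers $\mu_1,\mu_2\in\mathbb{R}$; testing the two Euler--Lagrange equations against $u$ and $v$, summing and eliminating $A(u,v)$ via $P_\beta(u,v)=0$, and noting that $1-\gamma_{r_i}/r_i>0$ and $p+q-\gamma_p-\gamma_q>0$ in the present range, one obtains $\mu_1\rho_1^2+\mu_2\rho_2^2>0$; the smallness $\kappa<\kappa_0$ is then used to separate the two multipliers and conclude $\mu_i>0$ individually. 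Finally, since $\kappa>0$, replacing $(u,v)$ by $(|u|,|v|)$ decreases the energy and preserves both the $L^2$-constraint and membership in $\mathcal{P}_\beta$, after which the strong maximum principle delivers $u,v>0$.

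The most delicate part will be the quantitative fiber-map analysis: producing HLS and Gagliardo--Nirenberg constants sharp enough to pin down the explicit $\beta_0$ (and, through it, the shape of $\psi_{u,v}$ uniformly over $(u,v)\in\mathcal{S}(\rho_1,\rho_2)$), together with the thresholds $R_1<R_2$ that genuinely separate $\mathcal{P}_\beta^+$ from $\mathcal{P}_\beta^-$. A secondary difficulty is that vanishing of the Palais--Smale sequence is not automatically excluded in the mass-supercritical regime; it is the combination of the local-minimum structure, strict negativity $m_\beta^+<0$ (enforced by $\kappa<\kappa_0$), and radial symmetry that closes the compactness step.
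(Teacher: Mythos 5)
Your overall strategy coincides with the paper's: the same $L^2$-preserving fibration, the same reduction of the fiber map to the comparison function $h$ in \eqref{h(s)}, the conclusion that the fiber map has exactly two critical points with $\mathcal{P}_\beta^0(\rho_1,\rho_2)=\emptyset$ when $\beta<\beta_0$, and the realization of $m_\beta(\rho_1,\rho_2)$ as a local minimum of $\mathcal{J}_\beta$ over a gradient ball in $\mathcal{S}(\rho_1,\rho_2)$, followed by Ekeland's principle and radial compactness. Two steps, however, are genuinely gapped.

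First, the compactness step. You assert that the compact embedding $H_r^1(\mathbb{R}^N)\hookrightarrow L^s(\mathbb{R}^N)$, $s\in(2,2^*)$, passes to the limit in every Choquard and linear-coupling term, and that strict negativity of the level forces the weak limit to be of full mass. The linear coupling $\kappa\int_{\mathbb{R}^N}uv$ is an $L^2$-quantity and is precisely the term that does \emph{not} converge under the radial embedding; $L^2$-mass can still be lost at spatial infinity. Nontriviality of the weak limit (which is what the level bound $m_\beta<-\kappa\rho_1\rho_2$ gives --- note you only establish $m_\beta^+<0$, which is not quite enough for that step) does not imply $|u|_2=\rho_1$ and $|v|_2=\rho_2$. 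The paper recovers full mass by a separate argument: it first proves $\mu_1,\mu_2>0$, then derives $\mu_1(\rho_1^2-\overline{\rho}_1^2)+\mu_2(\rho_2^2-\overline{\rho}_2^2)=2\kappa\lim_{n}\int_{\mathbb{R}^N}(u_n-u)(v_n-v)$, and combines Cauchy--Schwarz with the Liouville-type Lemma \ref{Liouville} applied to $(\sqrt{\mu_2}\,u,\sqrt{\mu_1}\,v)$ to exclude $\overline{\rho}_i<\rho_i$. Without this (or a careful use of the monotonicity $m_\beta(\rho_1,\rho_2)\le m_\beta(\rho_1',\rho_2')$), your strong convergence in $H$ does not close.

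Second, the sign of the multipliers. Testing the equations against $(u,v)$ and eliminating the gradient term via $P_\beta(u,v)=0$ only yields $\mu_1\rho_1^2+\mu_2\rho_2^2>0$, i.e.\ positivity of at least one multiplier; your suggestion that the smallness $\kappa<\kappa_0$ "separates the two multipliers" is not substantiated, since $\kappa_0$ in \eqref{kappa 0 in 3.3.1} is a condition on the geometry of $h$ and has no evident bearing on the individual signs of $\mu_1,\mu_2$. The paper argues instead by contradiction: if $\mu_1\le0$ then $-\Delta u\ge0$ with $u$ smooth, nonnegative and in $L^{N/(N-2)}(\mathbb{R}^N)$, so Lemma \ref{Liouville} forces $u\equiv0$, contradicting nontriviality. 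This is not a cosmetic point: the individual positivity of the multipliers is an essential input to the full-mass recovery described above, so it cannot be deferred to a "standard argument".
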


\begin{thm}
Assume $N \in \{3,4\}$, $\lambda_1, \lambda_2, \kappa > 0$, $p+q =\frac{2N+2\alpha+4}{N}< 2r_1=2r_2<\frac{2N+2\alpha}{N-2}$ 
and $\rho_1, \rho_2, \beta>0$ satisfying \eqref{frac{1}{2}-A3>0},
then system $\eqref{system1.1} $ admits a normalized ground state $(u,v)$. Moreover,
\begin{enumerate}[label=(\roman*)]
	\item $(u,v) \in H_r^1(\mathbb{R}^N) \times H_r^1(\mathbb{R}^N)$ and $u,v> 0$ with the associated Lagrange multiplier $\mu_i$ being positive, $i=1,2$.
	\item  $(u,v)$ constitutes a mountain pass critical point for $\mathcal{J}_\beta$ restricted to $\mathcal{S}(\rho_1,\rho_2)$.
\end{enumerate}
\label{Theorem1.5}
\end{thm}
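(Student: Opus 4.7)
The plan is to exploit the mass-supercritical character of the self-interactions $r_1 = r_2$ together with the mass-critical nature of the cross term $p+q$, and to seek the ground state as a mountain pass critical point on the Pohozaev manifold $\mathcal{P}_\beta(\rho_1,\rho_2)$. Because $2r_i > \frac{2N+2\alpha+4}{N}$, the functional $\mathcal{J}_\beta$ is unbounded from below on $\mathcal{S}(\rho_1,\rho_2)$, so a direct minimization is impossible; however, the scaling $(u,v)_t := (t^{N/2}u(t\,\cdot\,),t^{N/2}v(t\,\cdot\,))$ preserves the $L^2$-torus and produces a fibering map
\begin{equation*}
h(t) := \mathcal{J}_\beta\bigl((u,v)_t\bigr) = t^2\Bigl(\tfrac{1}{2}\|\nabla(u,v)\|_2^2 - \beta \!\int_{\mathbb{R}^N}(I_\alpha*|u|^p)|v|^q\Bigr) - \tfrac{\lambda_1 t^{2\gamma_{r_1}}}{2r_1}\!\int_{\mathbb{R}^N}(I_\alpha*|u|^{r_1})|u|^{r_1} - \tfrac{\lambda_2 t^{2\gamma_{r_2}}}{2r_2}\!\int_{\mathbb{R}^N}(I_\alpha*|v|^{r_2})|v|^{r_2} - \kappa\!\int_{\mathbb{R}^N}uv,
\end{equation*}
where I used $\gamma_p+\gamma_q=2$ (by the mass-critical condition on $p+q$) and $2\gamma_{r_i} > 2$. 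The hypothesis \eqref{frac{1}{2}-A3>0}, via Hardy–Littlewood–Sobolev and Gagliardo–Nirenberg, forces the coefficient of $t^2$ to be uniformly positive on $\mathcal{S}(\rho_1,\rho_2)$, so $h(t)$ has a unique strict maximum $t(u,v) \in (0,\infty)$, and $h'(t(u,v))=0$ precisely says $(u,v)_{t(u,v)}\in \mathcal{P}_\beta(\rho_1,\rho_2)$. The first step is to use this fibering analysis to prove that $\mathcal{P}_\beta$ is a smooth natural constraint, that $m_\beta(\rho_1,\rho_2) > 0$, and that $m_\beta$ coincides with a mountain pass level.

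Second, I would apply Jeanjean's minimax principle on the enlarged manifold $\mathbb{R}\times\mathcal{S}(\rho_1,\rho_2)$ (with the action $(s,u,v)\mapsto(u,v)_{e^s}$) to the auxiliary functional $\widetilde{\mathcal{J}}_\beta(s,u,v):=\mathcal{J}_\beta((u,v)_{e^s})$, which inherits mountain pass geometry from the behaviour of $h$. This yields a Palais–Smale sequence $(u_n,v_n)\subset \mathcal{S}(\rho_1,\rho_2)$ at level $m_\beta$ satisfying the additional Pohozaev condition $P_\beta(u_n,v_n)\to 0$. From this extra condition combined with \eqref{frac{1}{2}-A3>0} and the mass-supercritical exponents $2\gamma_{r_i}>2$, I would extract boundedness of $\|\nabla(u_n,v_n)\|_2$ in a standard way, bounding gradient norms by lower-order Choquard terms controlled by the energy.

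Third, because we work on $H^1_r\times H^1_r$, I would use the Strauss compactness embedding $H^1_r(\mathbb{R}^N) \hookrightarrow L^{q}(\mathbb{R}^N)$ for $2<q<2^*$ to pass to the limit in all Choquard nonlinearities (via the Hardy–Littlewood–Sobolev inequality and a Brezis–Lieb argument for convolution terms) and obtain a weak limit $(u,v)$ which is a critical point of $\mathcal{J}_\beta$ restricted to $\mathcal{S}(\rho_1,\rho_2)$ with associated Lagrange multipliers $\mu_1,\mu_2 \in \mathbb{R}$. The Lagrange multipliers are identified by pairing the weak equation with suitable test functions, and positivity $\mu_i>0$ follows by testing with $(u,0)$ and $(0,v)$ and combining with the Pohozaev identity $P_\beta(u,v)=0$ to eliminate the kinetic term in favour of the subcritical lower-order terms. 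To promote weak to strong convergence, I would use $P_\beta(u_n,v_n)\to 0 = P_\beta(u,v)$ together with the already-established $L^q$ convergence to force $\|\nabla(u_n,v_n)\|_2 \to \|\nabla(u,v)\|_2$, giving strong convergence in $H$. Positivity of $u,v$ is obtained by replacing $(u,v)$ with $(|u|,|v|)$, noting that $\mathcal{J}_\beta$ decreases under this operation (the $\kappa\int uv$ term becomes $\kappa\int |u||v|\geq \kappa\int uv$), and then applying the strong maximum principle to the coupled system.

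The main obstacle I anticipate is the strong convergence step: unlike the subcritical/critical minimization cases (Theorems 1.1–1.2, 1.4) where $(u,v)$ is a local minimizer and one has a Brezis–Lieb splitting of the energy, here we only have a mountain pass saddle, so vanishing/dichotomy of the PS sequence cannot be excluded by purely energetic means. The assumption \eqref{frac{1}{2}-A3>0} together with the Pohozaev condition $P_\beta(u_n,v_n)\to 0$ and the radial setting must jointly be used to rule out splitting and to ensure the limiting gradient norm equals the $\liminf$; establishing this, and in parallel verifying the sign of the Lagrange multipliers in the borderline mass-critical/mass-supercritical regime, will constitute the technical heart of the proof.
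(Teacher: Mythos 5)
Your overall architecture (fibering map on the $L^2$-preserving scaling, $\mathcal{P}_\beta(\rho_1,\rho_2)=\mathcal{P}_\beta^-(\rho_1,\rho_2)$ as a natural constraint, mountain pass characterization of $m_\beta$, radial compactness plus a nonlocal Brezis--Lieb lemma) matches the paper's Section 3.3.2, and your use of Jeanjean's auxiliary functional on $\mathbb{R}\times\mathcal{S}(\rho_1,\rho_2)$ is a legitimate alternative to the paper's route via $\mathcal{I}_\beta(u,v)=\max_{t>0}\mathcal{J}_\beta(t\diamond(u,v))$ and the quantitative Ekeland principle of \cite{Chen2021Normalized}. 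However, the paper's choice is not cosmetic: that Ekeland-type lemma delivers a Palais--Smale sequence with $u_n,v_n\geq 0$ \emph{built in}, which is what ultimately yields positivity of the limit. Your substitute --- replacing the limit $(u,v)$ by $(|u|,|v|)$ because $\mathcal{J}_\beta$ decreases --- does not work at a saddle: $(|u|,|v|)$ need not be a critical point nor lie on $\mathcal{P}_\beta(\rho_1,\rho_2)$ at the same level. The symmetrization must be applied to the minimizing sequence for $\mathcal{I}_\beta$ (where $\mathcal{I}_\beta(|u|,|v|)\leq\mathcal{I}_\beta(u,v)$ is enough), not to the limit. Also, a small but real slip: the correct lower bound is $m_\beta(\rho_1,\rho_2)>-\kappa\rho_1\rho_2$, not $m_\beta>0$; the identity $\mathcal{J}_\beta-\tfrac{1}{\gamma_p+\gamma_q}P_\beta=\sum_i\tfrac{\lambda_i}{2r_i}(\gamma_{r_i}-1)\int(I_\alpha*|u|^{r_i})|u|^{r_i}-\kappa\int uv$ only controls the level from below by $-\kappa\rho_1\rho_2$ because of the linear coupling.

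The two places you explicitly defer are exactly where the proof lives, and the mechanisms you hint at are not the ones that close them. First, nonvanishing of the weak limit: the paper does \emph{not} rule out vanishing by concentration-compactness or by the radial setting alone; it uses the linear coupling structurally. If $u=0$, the first equation degenerates to $\kappa v=0$, hence $v=0$; then all Choquard terms vanish along the sequence and $P_\beta(u_n,v_n)=0$ forces $|\nabla u_n|_2^2+|\nabla v_n|_2^2\to 0$, contradicting the uniform lower bound $\inf_{\mathcal{P}_\beta(\rho_1,\rho_2)}(|\nabla u|_2^2+|\nabla v|_2^2)>0$ (which itself needs \eqref{frac{1}{2}-A3>0}). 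Second, mass loss (dichotomy) is excluded not energetically but by the identity $\mu_1(\rho_1^2-\overline{\rho}_1^2)+\mu_2(\rho_2^2-\overline{\rho}_2^2)=2\kappa\lim_n\int(u_n-u)(v_n-v)$, which via Cauchy--Schwarz forces $\kappa\geq\sqrt{\mu_1\mu_2}$ and then a Liouville-type contradiction for $(\sqrt{\mu_2}\,u,\sqrt{\mu_1}\,v)$. This in turn requires knowing $\mu_1>0$ \emph{and} $\mu_2>0$ individually beforehand, which the paper gets from the Liouville lemma applied to $-\Delta u\geq 0$ when $\mu_1\leq 0$; your proposed route (test with $(u,0)$, $(0,v)$ and combine with $P_\beta(u,v)=0$) only yields positivity of a weighted sum of the multipliers, since the Pohozaev identity mixes the two components and cannot eliminate $|\nabla u|_2^2$ alone. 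Without these three ingredients the compactness step remains open, so the proposal as written has a genuine gap at its acknowledged "technical heart."
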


Our second focus is on the $r_1<r_2$ case, for which we have the subsequent theorems.

\begin{thm}
	Assume $N \in \{3,4\}$, $\lambda_1, \lambda_2, \kappa, \rho_1, \rho_2, \beta>0 > 0$ and $\frac{2N+2\alpha+4}{N}<p+q \leq 2r_1=2r_2<\frac{2N+2\alpha}{N-2}$,
	then system $\eqref{system1.1} $ admits a normalized ground state $(u,v)$. Moreover,
	\begin{enumerate}[label=(\roman*)]
		\item $(u,v) \in H_r^1(\mathbb{R}^N) \times H_r^1(\mathbb{R}^N)$ and $u,v> 0$ with the associated Lagrange multiplier $\mu_i$ being positive, $i=1,2$.
		\item  $(u,v)$ constitutes a mountain pass critical point for $\mathcal{J}_\beta$ restricted to $\mathcal{S}(\rho_1,\rho_2)$.
	\end{enumerate}
	\label{Theorem1.6}
\end{thm}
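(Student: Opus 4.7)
The range $\frac{2N+2\alpha+4}{N}<p+q \leq 2r_1=2r_2<\frac{2N+2\alpha}{N-2}$ places every nonlinearity strictly in the mass supercritical regime, so $\mathcal{J}_\beta$ is unbounded below on $\mathcal{S}(\rho_1,\rho_2)$ and no global minimizer can exist. The plan is a Pohozaev-manifold min-max in the spirit of Jeanjean. For $(u,v)\in \mathcal{S}(\rho_1,\rho_2)$ and $t>0$, set $w_t(x):=t^{N/2}w(tx)$; this rescaling preserves both $L^2$-norms and the bilinear coupling $\int uv$, and a direct computation yields $t\,\partial_t\mathcal{J}_\beta(u_t,v_t)=P_\beta(u_t,v_t)$. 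Because $2r_i\geq p+q>\frac{2N+2\alpha+4}{N}$, every exponent appearing in $P_\beta$ strictly exceeds $2$, so $t\mapsto \mathcal{J}_\beta(u_t,v_t)$ admits a unique critical point $t(u,v)>0$, which is a strict global maximum. The map $(u,v)\mapsto t(u,v)$ is $C^1$, $\mathcal{P}_\beta(\rho_1,\rho_2)$ is a smooth natural constraint, and substituting $P_\beta=0$ into $\mathcal{J}_\beta$ gives, on $\mathcal{P}_\beta(\rho_1,\rho_2)$,
\[\mathcal{J}_\beta(u,v)=\sum_{i=1}^{2}\tfrac{\lambda_i(\gamma_{r_i}-1)}{2r_i}\!\int_{\mathbb{R}^N}\!(I_\alpha*|w_i|^{r_i})|w_i|^{r_i}+\beta\Big(\tfrac{\gamma_p+\gamma_q}{2}-1\Big)\!\int_{\mathbb{R}^N}\!(I_\alpha*|u|^p)|v|^q-\kappa\!\int_{\mathbb{R}^N}\! uv,\]
with $w_1=u$, $w_2=v$; all coefficients except the last are positive, and Cauchy-Schwarz gives $m_\beta(\rho_1,\rho_2)\geq-\kappa\rho_1\rho_2$.

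Next, I would work with the augmented functional $\widetilde{\mathcal{J}}(s,u,v):=\mathcal{J}_\beta(u_{e^s},v_{e^s})$ on $\mathbb{R}\times\mathcal{S}(\rho_1,\rho_2)$. The uniqueness of $t(u,v)$ identifies the Jeanjean min-max level with $m_\beta(\rho_1,\rho_2)$, and a standard deformation argument produces a Palais-Smale sequence $(u_n,v_n)\subset\mathcal{S}(\rho_1,\rho_2)$ at level $m_\beta$ with the extra information $P_\beta(u_n,v_n)\to 0$. Combined with the displayed identity, mass supercriticality then forces $(u_n,v_n)$ to be bounded in $H$: the Choquard integrals are bounded by the energy, and the Pohozaev identity controls the gradient term.

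Working in $H=H_r^1(\mathbb{R}^N)\times H_r^1(\mathbb{R}^N)$, the Strauss compact embedding $H_r^1\hookrightarrow L^q$ for $2<q<2^*$ and the Hardy-Littlewood-Sobolev inequality (the strict bound $2r_i<\frac{2N+2\alpha}{N-2}$ keeping every nonlinearity Sobolev subcritical) allow passage to a weak limit $(u,v)$ in every nonlocal and bilinear term. The limit solves the Euler-Lagrange system with some $\mu_i\in\mathbb{R}$ and satisfies $P_\beta(u,v)=0$. Upgrading to strong convergence is done by applying Brezis-Lieb to each Choquard integral and exploiting $P_\beta(u_n,v_n)\to 0$ together with $P_\beta(u,v)=0$ to show that the residual $(u_n-u,v_n-v)$ carries no gradient energy. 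The substitution $(u,v)\mapsto(|u|,|v|)$ leaves every term of $\mathcal{J}_\beta$ invariant except $-\kappa\int uv$, which can only decrease, so a ground state can be taken nonnegative; the cooperative structure of the coupled system and the strong maximum principle then yield $u,v>0$. Finally, testing the Euler-Lagrange system against $(u,v)$ and using $P_\beta(u,v)=0$ to eliminate the gradient contribution expresses $\mu_i\rho_i^2$ as a strictly positive linear combination of the nonlocal integrals in the displayed formula, so $\mu_1,\mu_2>0$.

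The main obstacle is the compactness step. Since $m_\beta$ is a mountain-pass rather than a global minimum, strong convergence of the Palais-Smale sequence is not automatic even in the radial setting, and the interaction of the three nonlocal Choquard terms with the linear bilinear coupling must be handled delicately in the Brezis-Lieb splitting so that the residual sequence is simultaneously forced to satisfy $P_\beta\to 0$ and to lose its gradient mass. The strict Sobolev-subcriticality $2r_i<\frac{2N+2\alpha}{N-2}$ and the full mass-supercriticality of all exponents are precisely what make this argument go through.
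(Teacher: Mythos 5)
Your overall strategy matches the paper's: in this fully mass-supercritical range the fiber map $t\mapsto\mathcal{J}_\beta(t\diamond(u,v))$ has a unique critical point which is a strict maximum, $\mathcal{P}_\beta(\rho_1,\rho_2)=\mathcal{P}_\beta^-(\rho_1,\rho_2)$ is a natural constraint, $m_\beta$ has a min-max characterization, and one produces a Palais--Smale sequence with $P_\beta\to 0$; your boundedness argument via $\mathcal{J}_\beta-\tfrac12 P_\beta$ is also essentially the paper's. However, the compactness step — which you correctly flag as the main obstacle — is left unresolved at exactly the two places where the paper has to work. First, you never show the weak limit is nontrivial. The paper's argument uses the linear coupling in an essential way: if $u=0$, the limit equation forces $\kappa v=0$, hence $v=0$; then by the compact radial embedding all Choquard terms of $(u_n,v_n)$ vanish, and $P_\beta(u_n,v_n)\to 0$ forces $|\nabla u_n|_2^2+|\nabla v_n|_2^2\to 0$, contradicting $\inf_{\mathcal{P}_\beta(\rho_1,\rho_2)}(|\nabla u|_2^2+|\nabla v|_2^2)>0$ (a bound that itself must be established, as in \eqref{P beta rho1 rho2>0}). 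Second, and more seriously, showing that the residual carries no gradient energy does \emph{not} recover the constraints $|u|_2=\rho_1$, $|v|_2=\rho_2$, because $H^1_r(\mathbb{R}^N)$ does not embed compactly into $L^2(\mathbb{R}^N)$; mass can still be lost. The paper closes this gap with the identity $\mu_1(\rho_1^2-\overline{\rho}_1^2)+\mu_2(\rho_2^2-\overline{\rho}_2^2)=2\kappa\lim_n\int(u_n-u)(v_n-v)$, a Cauchy--Schwarz case analysis yielding $\kappa\geq\sqrt{\mu_1\mu_2}$ in the bad case, and then a Liouville-type argument applied to $(\sqrt{\mu_2}\,u,\sqrt{\mu_1}\,v)$ to reach a contradiction. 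Your proposal contains no substitute for this step.

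A further gap is the positivity of the Lagrange multipliers. Testing the system against $(u,v)$ and substituting $P_\beta(u,v)=0$ only eliminates the combined gradient term $|\nabla u|_2^2+|\nabla v|_2^2$, so it yields positivity of $\mu_1\rho_1^2+\mu_2\rho_2^2$, not of each $\mu_i$ separately; the individual signs of $-|\nabla u|_2^2+\lambda_1\int(I_\alpha*|u|^{r_1})|u|^{r_1}+\cdots$ are not controlled. The paper instead argues that $\mu_1\leq 0$ would give $-\Delta u\geq 0$ with $u$ in an appropriate Lebesgue space, whence $u\equiv 0$ by the Liouville lemma (Lemma \ref{Liouville}), a contradiction. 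This positivity is not cosmetic: it is an input to the mass-recovery argument above.
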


\begin{thm}
	Assume $N \in \{3,4\}$, $\lambda_1, \lambda_2, \beta, \kappa, \rho_1,\rho_2 > 0$ and $\frac{2N+2\alpha}{N}<p+q \leq 2r_1<2r_2<\frac{2N+2\alpha+4}{N}$, then
	system $\eqref{system1.1} $ admits a normalized ground state $(u,v)$. Moreover,
	\begin{enumerate}[label=(\roman*)]
		\item $(u,v) \in H_r^1(\mathbb{R}^N) \times H_r^1(\mathbb{R}^N)$ and $u,v> 0$ with the associated Lagrange multiplier $\mu_i$ being positive, $i=1,2$.
		\item  $(u,v)$ is a local minimizer of $\mathcal{J}_\beta$ restricted to $\mathcal{S}(\rho_1,\rho_2)$.
	\end{enumerate}
	\label{Theorem1.7}
\end{thm}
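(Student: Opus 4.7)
Theorem~\ref{Theorem1.7} lies fully in the mass-subcritical regime, so $2\gamma_{r_1},\,2\gamma_{r_2},\,\gamma_p+\gamma_q$ are all strictly less than $2$; the only difference with Theorem~\ref{Theorem1.1} is that $r_1<r_2$, and the plan is to rerun the same variational scheme while tracking the two self-interaction exponents separately. Combining Hardy--Littlewood--Sobolev with the Gagliardo--Nirenberg interpolation gives, on $\mathcal{S}(\rho_1,\rho_2)$,
\[
\int_{\mathbb{R}^N}(I_\alpha*|u|^{r_i})|u|^{r_i}\,dx\le C\rho_i^{2r_i-2\gamma_{r_i}}\|\nabla u\|_2^{2\gamma_{r_i}},\quad \int_{\mathbb{R}^N}(I_\alpha*|u|^p)|v|^q\,dx\le C\rho_1^{p-\gamma_p}\rho_2^{q-\gamma_q}\|\nabla u\|_2^{\gamma_p}\|\nabla v\|_2^{\gamma_q}.
\]
Since every gradient exponent above is strictly below $2$ and $|\kappa\!\int\!uv|\le\kappa\rho_1\rho_2$, $\mathcal{J}_\beta$ is coercive and bounded below on $\mathcal{S}(\rho_1,\rho_2)$. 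On the dilated fibre $f(s):=\mathcal{J}_\beta(u_s,v_s)$ with $u_s(x)=s^{N/2}u(sx)$ the quotient $f'(s)/s$ is the positive constant $\|\nabla u\|_2^2+\|\nabla v\|_2^2$ plus three strictly increasing summands (each a negative power multiplied by $-1$), hence $f$ attains a unique global minimum at some $s^{*}=s^{*}(u,v)>0$ where $(u_{s^{*}},v_{s^{*}})\in\mathcal{P}_\beta(\rho_1,\rho_2)$; it follows that $m_\beta(\rho_1,\rho_2)=\inf_{\mathcal{S}(\rho_1,\rho_2)}\mathcal{J}_\beta$, and testing with any positive radial pair and sending $s\to 0^{+}$ yields $m_\beta(\rho_1,\rho_2)<0$.

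A minimizing sequence $\{(u_n,v_n)\}\subset\mathcal{S}(\rho_1,\rho_2)\subset H$ is $H$-bounded by coercivity and radial by construction; up to a subsequence $(u_n,v_n)\rightharpoonup(u,v)$ in $H$. Strauss's compact embedding $H_r^1(\mathbb{R}^N)\hookrightarrow L^s(\mathbb{R}^N)$ for $s\in(2,2^{*})$ captures the exponents $\tfrac{2Nr_i}{N+\alpha},\,\tfrac{2Np}{N+\alpha},\,\tfrac{2Nq}{N+\alpha}$, so HLS yields strong convergence of every Choquard integral along the sequence, while weak lower semicontinuity controls the Dirichlet terms. Setting $w_n:=u_n-u,\ z_n:=v_n-v$, one has $w_n,z_n\rightharpoonup 0$ in $H_r^1$ and $\to 0$ strongly in $L^s$ for $s\in(2,2^{*})$; the Brezis--Lieb lemma together with the splitting $\int u_nv_n=\int uv+\int w_nz_n+o(1)$ reduces the energy decomposition to
\[
\mathcal{J}_\beta(u_n,v_n)=\mathcal{J}_\beta(u,v)+\tfrac{1}{2}\bigl(\|\nabla w_n\|_2^2+\|\nabla z_n\|_2^2\bigr)-\kappa\!\int\!w_nz_n+o(1),
\]
as the Choquard contributions of the tail vanish by the compact convergence.

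The principal obstacle is preventing a strict drop of $L^2$-mass, since a priori only $\|u\|_2\le\rho_1$ and $\|v\|_2\le\rho_2$. I would establish a strict subadditivity property for $(\rho_1,\rho_2)\mapsto m_\beta(\rho_1,\rho_2)$ by running a small-$s$ dilation on the putative tail, which drives the gradient cost to zero while preserving the negative sign of the residual $\kappa$-term and so exploits $m_\beta<0$. Inserting this into the Brezis--Lieb decomposition above rules out any mass loss and forces $\|u\|_2=\rho_1,\,\|v\|_2=\rho_2$; combined with weak $L^2$-convergence this upgrades to $u_n\to u,\,v_n\to v$ in $L^2$ and then, together with strong convergence of the nonlinear terms, in $H$, producing the minimizer $(u,v)\in\mathcal{S}(\rho_1,\rho_2)$. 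Items (i)--(ii) then follow exactly as in Theorem~\ref{Theorem1.1}: the pair $(|u|,|v|)$ has no larger energy (note $-\kappa\!\int\!|u||v|\le-\kappa\!\int\!uv$ and that every other nonlinear term is even), so the minimizer may be taken non-negative, and the strong maximum principle upgrades this to $u,v>0$; multiplying the Euler--Lagrange equations by $u$ and $v$ respectively and eliminating gradients via the Pohozaev identity produces $\mu_1,\mu_2>0$; and local minimality on $\mathcal{S}(\rho_1,\rho_2)$ follows from the fibre characterization established in the first paragraph.
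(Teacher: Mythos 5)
The overall architecture you set up (coercivity from the subcritical exponents, the fibre map $\Psi^\beta_{(u,v)}$ having a unique critical point which is its global minimum so that $m_\beta=\inf_{\mathcal{P}_\beta}\mathcal{J}_\beta=\inf_{\mathcal{S}}\mathcal{J}_\beta$, and strong convergence of all Choquard integrals via the radial compact embedding) agrees with the paper. The genuine gap is in how you exclude loss of $L^2$-mass. You propose a strict subadditivity inequality obtained "by running a small-$s$ dilation on the putative tail", exploiting $m_\beta<0$; but $m_\beta<0$ is not the relevant threshold. In your own energy decomposition the escaping tail $(w_n,z_n)$ carries no Choquard energy and possibly vanishing gradient energy, yet contributes $-\kappa\int w_nz_n$, which can be as negative as $-\kappa\sqrt{(\rho_1^2-\bar\rho_1^2)(\rho_2^2-\bar\rho_2^2)}$. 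Already in the total-vanishing case $(u,v)=(0,0)$ the limit energy can equal $-\kappa\rho_1\rho_2<0$, so $m_\beta<0$ rules out nothing; the paper needs and proves the sharper bound $m_\beta<-\kappa\rho_1\rho_2$ (obtained because the fibre minimum is strictly below $\lim_{t\to0^+}\Psi^\beta_{(u,v)}(t)=-\kappa\int uv$). For partial mass loss the inequality you would need is
\[
m_\beta(\rho_1,\rho_2)<m_\beta(\bar\rho_1,\bar\rho_2)-\kappa\sqrt{(\rho_1^2-\bar\rho_1^2)(\rho_2^2-\bar\rho_2^2)},
\]
and the natural test-function construction (a flat tail pushed to infinity) only yields the non-strict version, since every cross Choquard term that could produce strictness tends to zero as the tail spreads; the case where only one mass drops would moreover require strict monotonicity of $m_\beta$ in each mass separately, which is not available (the paper only has non-strict monotonicity, Lemma \ref{Lemma3.2能量值单调性}(ii)). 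The paper circumvents all of this by a different mechanism: it first shows $\mu_1,\mu_2>0$ via Lemma \ref{Liouville}, then derives from the Euler--Lagrange equations the identity $\mu_1(\rho_1^2-\bar\rho_1^2)+\mu_2(\rho_2^2-\bar\rho_2^2)=2\kappa\lim_n\int(u_n-u)(v_n-v)$, and converts a putative mass drop into $\kappa\geq\sqrt{\mu_1\mu_2}$ and thence a Liouville contradiction for the coupled system. Some argument of this kind is required; as written, your compactness step does not close.

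A secondary point: testing the equations with $(u,v)$ and eliminating gradients via the Pohozaev identity only gives $\mu_1\rho_1^2+\mu_2\rho_2^2>0$, i.e., positivity of one combination, not of each multiplier. To obtain $\mu_1>0$ and $\mu_2>0$ separately you need the paper's argument: if $\mu_1\leq0$ then $-\Delta u\geq0$ with $u$ in an admissible $L^r$, so $u\equiv0$ by Lemma \ref{Liouville}, contradicting nontriviality of the limit.
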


\begin{thm}
	Assume $N \in \{3,4\}$, $\lambda_1, \rho_1, \kappa, \beta > 0$, $\frac{2N+2\alpha}{N}<p+q \leq 2r_1<2r_2=\frac{2N+2\alpha+4}{N}$ 
	and $\lambda_2, \rho_2>0$ satisfying \eqref{frac{1}{2}-A2>0},
	then system $\eqref{system1.1} $ admits a normalized ground state $(u,v)$. Moreover,
	\begin{enumerate}[label=(\roman*)]
		\item $(u,v) \in H_r^1(\mathbb{R}^N) \times H_r^1(\mathbb{R}^N)$ and $u,v> 0$ with the associated Lagrange multiplier $\mu_i$ being positive, $i=1,2$.
		\item  $(u,v)$ is a local minimizer of $\mathcal{J}_\beta$ restricted to $\mathcal{S}(\rho_1,\rho_2)$.
	\end{enumerate}
	\label{Theorem1.8}
\end{thm}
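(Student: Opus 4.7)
The plan is to characterize $m_\beta(\rho_1,\rho_2)$ as a local minimum of $\mathcal{J}_\beta$ on $\mathcal{S}(\rho_1,\rho_2)$, in the spirit of the proofs of Theorems \ref{Theorem1.1} and \ref{Theorem1.7}, with the extra borderline contribution coming from the mass-critical exponent $2r_2 = \frac{2N+2\alpha+4}{N}$ (i.e.\ $\gamma_{r_2}=1$) being tamed by the smallness assumption \eqref{frac{1}{2}-A2>0}. First I would analyze the mass-preserving fiber map
\[
g(t) := \mathcal{J}_\beta(u_t,v_t),\qquad (u_t,v_t)(x):=(t^{N/2}u(tx),t^{N/2}v(tx)).
\]
Because $\gamma_{r_2}=1$, the kinetic and $r_2$ pieces both scale as $t^2$, so $g$ takes the form
\[
g(t)=\frac{t^2}{2}\Bigl(\|\nabla u\|_2^2+\|\nabla v\|_2^2-\frac{\lambda_2}{r_2}\int(I_\alpha*|v|^{r_2})|v|^{r_2}\Bigr)-c_1 t^{2\gamma_{r_1}}-c_2 t^{\gamma_p+\gamma_q}-\kappa\int uv,
\]
with $c_1,c_2>0$. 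The Hardy--Littlewood--Sobolev inequality combined with the Gagliardo--Nirenberg inequality and assumption \eqref{frac{1}{2}-A2>0} forces the coefficient of $t^2$ to be bounded below by a strictly positive multiple of $\|\nabla u\|_2^2+\|\nabla v\|_2^2$. Since the two remaining exponents satisfy $2\gamma_{r_1}<2$ and $\gamma_p+\gamma_q\le 2\gamma_{r_1}<2$, the function $g$ has the same local-well profile as in the fully subcritical setting. A direct computation using $P_\beta(u,v)=0$ gives $g''(1)=\frac{2(1-\gamma_{r_1})\gamma_{r_1}}{r_1}\lambda_1\!\!\int(I_\alpha*|u|^{r_1})|u|^{r_1}+\beta(\gamma_p+\gamma_q)(2-\gamma_p-\gamma_q)\!\!\int(I_\alpha*|u|^p)|v|^q>0$, so every point of $\mathcal{P}_\beta(\rho_1,\rho_2)$ is a strict local minimum of its own fiber.

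Next I would show that the reduced energy is bounded below on $\mathcal{S}(\rho_1,\rho_2)$ by the same HLS+GN estimates, so that $m_\beta(\rho_1,\rho_2)$ is finite. Let $\{(u_n,v_n)\}\subset\mathcal{P}_\beta(\rho_1,\rho_2)$ be a minimizing sequence, which I may assume radial from the start since we work in $H=H^1_r(\mathbb{R}^N)\times H^1_r(\mathbb{R}^N)$. Using $P_\beta(u_n,v_n)=0$ and \eqref{frac{1}{2}-A2>0} to rewrite $\mathcal{J}_\beta$ purely in gradient norms plus lower-order terms (proceeding as in Theorem \ref{Theorem1.1}), I obtain boundedness of $(u_n,v_n)$ in $H$. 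Compact embedding of $H_r^1(\mathbb{R}^N)$ into $L^s(\mathbb{R}^N)$ for $s\in(2,2^*)$ allows me to pass to a weak limit $(u,v)$ along a subsequence with strong convergence of every HLS-type term $\int(I_\alpha*|u_n|^a)|v_n|^b$ for the relevant $(a,b)\in\{(r_1,r_1),(r_2,r_2),(p,q)\}$, as well as of $\int u_nv_n$.

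The central difficulty is tightness of the $L^2$-mass, which cannot vanish or split: this is where the mass-critical index $r_2$ interacts nontrivially with the coupling. I would exclude vanishing by using that $\int(I_\alpha*|v_n|^{r_2})|v_n|^{r_2}$ is bounded away from $0$ along $\mathcal{P}_\beta$ (otherwise $g_n(1)=0$ would force $\|\nabla v_n\|_2\to 0$, contradicting the Pohozaev identity coupled with the positive $\kappa$-coupling), and exclude dichotomy by establishing strict subadditivity $m_\beta(\rho_1,\rho_2)<m_\beta(\tau_1,\tau_2)+m_\beta^\infty(\rho_1-\tau_1,\rho_2-\tau_2)$ for any nontrivial split, relying on the $L^2$-scaling monotonicity available in the subcritical regime (exactly as Theorem \ref{Theorem1.7}) and on the fact that the $\kappa$-term is linear and thus always benefits from keeping the mass together. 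Once tightness is secured, weak convergence upgrades to strong convergence in $H$ via the equality case in the weak lower semicontinuity of $\mathcal{J}_\beta$, combined with $(u,v)\in\mathcal{P}_\beta(\rho_1,\rho_2)$; a Brezis--Lieb splitting on each Riesz-type integrand closes the argument.

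Finally, the associated Lagrange multipliers $\mu_1,\mu_2$ are extracted in the standard way, and their positivity is shown by testing the two equations against $u$ and $v$ respectively and combining with $P_\beta(u,v)=0$; the $\kappa$-coupling is what rules out $\mu_i\le 0$, as in Theorems \ref{Theorem1.1} and \ref{Theorem1.7}. Positivity of $u,v$ follows by replacing $(u,v)$ with $(|u|,|v|)$ (which keeps both the constraint and the energy, using $\kappa\int|u||v|\ge\kappa\int uv$), then applying the strong maximum principle to the resulting cooperative elliptic system. The single hardest step I anticipate is the strict subadditivity inequality, since it must accommodate the presence of the mass-critical term $r_2$; the role of the smallness assumption \eqref{frac{1}{2}-A2>0} is precisely to keep the minimizing geometry within the local well where this monotonicity survives.
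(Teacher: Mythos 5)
Your geometric setup is right and matches the paper's: under \eqref{frac{1}{2}-A2>0} the mass-critical $r_2$-term is absorbed into the quadratic part, the fiber map has the subcritical local-well profile, and your computation
\[
(\Psi_{(u,v)}^\beta)''(1)=\frac{2\gamma_{r_1}(1-\gamma_{r_1})}{r_1}\lambda_1\int_{\mathbb{R}^N}(I_\alpha*|u|^{r_1})|u|^{r_1}+\beta(\gamma_p+\gamma_q)(2-\gamma_p-\gamma_q)\int_{\mathbb{R}^N}(I_\alpha*|u|^p)|v|^q>0
\]
on $\mathcal{P}_\beta(\rho_1,\rho_2)$ (the $\gamma_{r_2}=1$ term dropping out) is exactly the mechanism the paper uses to get $\mathcal{P}_\beta(\rho_1,\rho_2)=\mathcal{P}_\beta^+(\rho_1,\rho_2)$. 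The genuine gap is in your compactness step. You propose to rule out loss of $L^2$-mass by a strict subadditivity inequality $m_\beta(\rho_1,\rho_2)<m_\beta(\tau_1,\tau_2)+m_\beta^\infty(\rho_1-\tau_1,\rho_2-\tau_2)$, asserting it follows ``exactly as Theorem \ref{Theorem1.7}.'' But the paper never proves strict subadditivity anywhere: Lemma \ref{Lemma3.2能量值单调性}(ii) only gives the non-strict monotonicity $m_\beta(\rho_1,\rho_2)\le m_\beta(\rho_1',\rho_2')$, and establishing the strict version in the presence of the mass-critical $r_2$-term, the nonlocal coupling, and the linear coupling is precisely the kind of delicate step that cannot be waved through; your heuristic that ``the $\kappa$-term is linear and thus always benefits from keeping the mass together'' is not an argument. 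Note also that since you work in $H^1_r\times H^1_r$ the classical vanishing/dichotomy trichotomy is not the relevant obstruction (the embedding into $L^s$, $s\in(2,2^*)$, is already compact); the only real danger is $|u|_2=\overline{\rho}_1<\rho_1$ or $|v|_2=\overline{\rho}_2<\rho_2$, i.e.\ mass escaping to infinity in $L^2$.

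The paper closes this by a different and fully elementary route (Lemma \ref{Lemma3.3紧性}, Step 3), which you should adopt: first upgrade the minimizing sequence to a Palais--Smale sequence via Ekeland's variational principle (this also supplies the Lagrange multipliers $\mu_{i,n}$, which your plan extracts only ``in the standard way'' from a bare minimizing sequence on $\mathcal{P}_\beta$ --- that requires knowing $\mathcal{P}_\beta$ is a natural constraint, i.e.\ the codimension-3 submanifold argument of Lemma \ref{Lemma3.1变分结构}(i), which you omit); then show $\mu_1,\mu_2>0$ by the Liouville lemma (Lemma \ref{Liouville}); then use the Brezis--Lieb splittings to derive the identity
\[
\mu_1(\rho_1^2-\overline{\rho}_1^2)+\mu_2(\rho_2^2-\overline{\rho}_2^2)=2\kappa\lim_{n\to+\infty}\int_{\mathbb{R}^N}(u_n-u)(v_n-v),
\]
and combine Cauchy--Schwarz with a second Liouville-type argument applied to $(\sqrt{\mu_2}\,u,\sqrt{\mu_1}\,v)$ to force $\overline{\rho}_i=\rho_i$. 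This replaces the unproven subadditivity entirely. The rest of your outline (positivity of $u,v$ via $(|u|,|v|)$ and the maximum principle, local-minimizer characterization) is fine.
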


\begin{thm}
	Assume that $N \in \{3,4\}$, $\frac{2N+2\alpha}{N}<p+q<2r_1<\frac{2N+2\alpha+4}{N}<2r_2<\frac{2N+2\alpha}{N-2}$, $\lambda_1, \lambda_2, \rho_1,\rho_2 > 0$ satisfying \eqref{f(s0)>0 assumption4.2}, $0<\beta<\beta_0$, $0<\kappa<\kappa_0$, where $\beta_0$ and $\kappa_0$ are defined in \eqref{beta 0 in 4.3.1} and \eqref{kappa 0 in 4.3.1} respectively, 
	then system $\eqref{system1.1} $ admits a normalized ground state $(u,v)$. Moreover,
	\begin{enumerate}[label=(\roman*)]
		\item $(u,v) \in H_r^1(\mathbb{R}^N) \times H_r^1(\mathbb{R}^N)$ and $u,v> 0$ with the associated Lagrange multiplier $\mu_i$ being positive, $i=1,2$.
		\item  $(u,v)$ is a local minimizer of $\mathcal{J}_\beta$ restricted to $\mathcal{S}(\rho_1,\rho_2)$.
	\end{enumerate}
	\label{Theorem1.9}
\end{thm}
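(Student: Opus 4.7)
My plan is to follow the template used for Theorems 1.4 and 1.7, where a local--minimizer geometry on $\mathcal{S}(\rho_1,\rho_2)$ survives in the presence of a mass--supercritical term provided the couplings are sufficiently small. For $(u,v) \in \mathcal{S}(\rho_1,\rho_2)$ introduce the $L^2$--preserving dilation $(u_t, v_t) := \bigl(t^{N/2}u(tx), t^{N/2}v(tx)\bigr)$ and study the fiber map $g_{(u,v)}(t) := \mathcal{J}_\beta(u_t, v_t)$. Since
\[
\gamma_p+\gamma_q < \gamma_{r_1} < 2 < \gamma_{r_2}
\]
under the present range of $p,q,r_1,r_2$, the derivative $g'_{(u,v)}(t)$ has, after using Hardy--Littlewood--Sobolev and Gagliardo--Nirenberg estimates, a polynomial form whose sign is controlled by a one--variable profile $f(s)$ (a $\mathcal{S}$--uniform majorant). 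Assumption \eqref{f(s0)>0 assumption4.2} is exactly the condition $\max_{s>0} f(s) > 0$, which ensures that for every $(u,v)$ the fiber map admits a \emph{local minimum} $t_1 = t_1(u,v)$ and a strict local maximum $t_2 > t_1$. Decomposing $\mathcal{P}_\beta(\rho_1,\rho_2)$ into $\mathcal{P}_\beta^\pm \cup \mathcal{P}_\beta^0$ via the sign of $g''(1)$, I set
\[
V(\rho_1,\rho_2) := \bigl\{(u,v) \in \mathcal{S}(\rho_1,\rho_2) : \|\nabla u\|_2^2 + \|\nabla v\|_2^2 < R_0^2 \bigr\},
\]
with $R_0$ the first zero of the profile above, and consider $m_\beta(\rho_1,\rho_2) := \inf_{V(\rho_1,\rho_2)} \mathcal{J}_\beta$. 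The smallness thresholds $\beta_0$ in \eqref{beta 0 in 4.3.1} and $\kappa_0$ in \eqref{kappa 0 in 4.3.1} are calibrated so that $\mathcal{J}_\beta$ is uniformly bounded below on $V(\rho_1,\rho_2)$ and $m_\beta < \inf_{\partial V} \mathcal{J}_\beta$, which traps every minimizing sequence in the interior.

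\textbf{Compactness.} Passing to the radial space $H_r^1 \times H_r^1$ is harmless thanks to symmetric decreasing rearrangement: the Riesz inequality controls the attractive Choquard and cross terms since $\lambda_i, \beta > 0$, while $\int uv$ does not decrease under rearrangement, so the coupling $\kappa > 0$ only helps. Take a minimizing sequence $(u_n, v_n) \subset V(\rho_1,\rho_2) \cap H_r^1 \times H_r^1$; it is bounded and admits a weak limit $(u,v)$. The compact embedding $H_r^1 \hookrightarrow L^s(\mathbb{R}^N)$ for $s \in (2,2^*)$, combined with the Hardy--Littlewood--Sobolev inequality, yields strong convergence of all convolution nonlinearities appearing in $\mathcal{J}_\beta$ and $P_\beta$. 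A Brezis--Lieb decomposition together with the strict subadditivity
\[
m_\beta(\rho_1,\rho_2) < m_\beta(\rho'_1,\rho'_2) + m_\beta(\rho_1 - \rho'_1, \rho_2 - \rho'_2)
\]
for $(\rho'_1,\rho'_2) \neq (0,0)$ (which is inherited from the mass--subcriticality of $p+q$ and $2r_1$, the dominant exponents inside $V$) rules out vanishing and dichotomy and gives $L^2$--strong convergence, hence $(u,v) \in V(\rho_1,\rho_2)$ and $\mathcal{J}_\beta(u,v) = m_\beta(\rho_1,\rho_2)$.

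\textbf{Multipliers, positivity, and the main obstacle.} The minimizer satisfies the Euler--Lagrange system with multipliers $\mu_1, \mu_2$. Testing the system against $(u,0)$ and $(0,v)$, using $P_\beta(u,v) = 0$ to eliminate the gradient terms, and invoking once more the strict dominance of the mass--subcritical terms over the supercritical one inside $V$ (now quantified by $\beta < \beta_0$), one obtains $\mu_1, \mu_2 > 0$. Positivity of $u,v$ is proved by replacing $(u,v)$ with $(|u|,|v|)$: this does not increase $\mathcal{J}_\beta$ (and strictly decreases the $\kappa$--term unless $u,v$ have constant sign), and the strong maximum principle applied to each equation then upgrades non--negativity to strict positivity. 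The decisive obstacle is \emph{compactness inside $V(\rho_1,\rho_2)$}: the mass--supercritical exponent $2r_2 > \frac{2N+2\alpha+4}{N}$ makes $\mathcal{J}_\beta$ unbounded below on the whole torus $\mathcal{S}(\rho_1,\rho_2)$, and the only reason the interior infimum on $V$ is attained is the energy barrier separating the local--minimum valley from the high--gradient descent region; quantifying this barrier uniformly along a minimizing sequence is the delicate point that requires the explicit calibration of $\beta_0$ and $\kappa_0$, together with \eqref{f(s0)>0 assumption4.2}.
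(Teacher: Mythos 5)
Your overall architecture (fiber-map analysis of $\Psi^\beta_{(u,v)}$, a local-minimum valley $V=A_{T_0}(\rho_1,\rho_2)$ separated from the supercritical descent region by an energy barrier calibrated through $\beta_0$, $\kappa_0$ and \eqref{f(s0)>0 assumption4.2}, then minimization inside $V$) is the same as the paper's (Lemma \ref{Lemma4.1变分结构 4.3.1} plus the scheme of Lemmas \ref{Lemma3.1变分结构}--\ref{Lemma3.3紧性}). However, there is a genuine gap in your compactness step. Working in $H^1_r\times H^1_r$ only gives compact embedding into $L^s$ for $s\in(2,2^*)$; it does \emph{not} prevent the minimizing sequence from losing $L^2$ mass at infinity, i.e.\ the weak limit may satisfy $|u|_2=\overline\rho_1<\rho_1$ or $|v|_2=\overline\rho_2<\rho_2$ (and, relatedly, $\int u_nv_n\not\to\int uv$ under mere weak $L^2$ convergence). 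You propose to exclude this via the strict subadditivity $m_\beta(\rho_1,\rho_2)<m_\beta(\rho_1',\rho_2')+m_\beta(\rho_1-\rho_1',\rho_2-\rho_2')$, but you give no proof of it, and the heuristic ``inherited from the mass-subcriticality of the dominant exponents inside $V$'' is not one: $m_\beta$ is here a \emph{local}-minimum level of a functional that is unbounded below on $\mathcal{S}(\rho_1,\rho_2)$, there are two independent constraints (so degenerate splittings such as $\rho_1'=0$, $\rho_2'>0$ must be handled), and the linear coupling $-\kappa\int uv$ does not decompose additively across a dichotomy. The paper avoids subadditivity entirely: it first shows $\mu_1,\mu_2>0$ via the Liouville-type Lemma \ref{Liouville}, then derives the identity $\mu_1(\rho_1^2-\overline\rho_1^2)+\mu_2(\rho_2^2-\overline\rho_2^2)=2\kappa\lim_n\int(u_n-u)(v_n-v)$ and, by Cauchy--Schwarz, forces $\kappa\ge\sqrt{\mu_1\mu_2}$ in the mass-loss scenario, which is then contradicted by applying Lemma \ref{Liouville} to $\sqrt{\mu_2}\,u+\sqrt{\mu_1}\,v$. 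Some such mechanism is indispensable here, and your proposal does not supply it.

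Two smaller points. First, the exponent chain should read $0<\gamma_p+\gamma_q<2\gamma_{r_1}<2<2\gamma_{r_2}$ (your version drops the factors of $2$ and asserts $\gamma_{r_2}>2$, which is false in general since $2r_2<\frac{2N+2\alpha}{N-2}$ only gives $\gamma_{r_2}<\frac{N+\alpha}{N-2}$ and $\gamma_{r_2}>1$). Second, \eqref{f(s0)>0 assumption4.2} is a system of four inequalities, not simply ``$\max_s f(s)>0$'', and its role together with the threshold $\beta_1$ of \eqref{beta_1 4.3.1} is precisely to establish $\mathcal{P}^0_\beta(\rho_1,\rho_2)=\emptyset$ and the exact two-critical-point structure of the fiber map (Lemma \ref{Lemma4.1变分结构 4.3.1}(i)); this verification, which is where the specific form of the hypotheses is actually used, is absent from your argument.
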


\begin{thm}
	Assume that $N \in \{3,4\}$, $\frac{2N+2\alpha}{N}<p+q=2r_1<\frac{2N+2\alpha+4}{N}<2r_2<\frac{2N+2\alpha}{N-2}$, $\lambda_1, \lambda_2, \rho_1,\rho_2 > 0$ satisfying \eqref{g(s0)>0 assumption4.8}, $0<\beta<\beta_0$, $0<\kappa<\kappa_0$, where $\beta_0$ and $\kappa_0$ are defined in \eqref{beta 0 in 4.3.2} and \eqref{kappa 0 in 4.3.2} respectively, 
	then system $\eqref{system1.1} $ admits a normalized ground state $(u,v)$. Moreover,
	\begin{enumerate}[label=(\roman*)]
		\item $(u,v) \in H_r^1(\mathbb{R}^N) \times H_r^1(\mathbb{R}^N)$ and $u,v> 0$ with the associated Lagrange multiplier $\mu_i$ being positive, $i=1,2$.
		\item  $(u,v)$ is a local minimizer of $\mathcal{J}_\beta$ restricted to $\mathcal{S}(\rho_1,\rho_2)$.
	\end{enumerate}
	\label{Theorem1.10}
\end{thm}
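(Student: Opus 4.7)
The plan is to prove Theorem \ref{Theorem1.10} by identifying the normalized ground state as a strict local minimizer of $\mathcal{J}_\beta$ on a gradient ball inside $\mathcal{S}(\rho_1,\rho_2)$, adapting the argument of Theorem \ref{Theorem1.9} to the borderline case $p+q=2r_1$. The key algebraic observation is that $p+q=2r_1$ forces $\gamma_p+\gamma_q=2\gamma_{r_1}$, so the Hardy--Littlewood--Sobolev and Gagliardo--Nirenberg type inequalities make both the $\lambda_1$-nonlinearity and the $\beta$-coupling nonlinearity scale as $s^{\gamma_{r_1}}$ in $s:=\|\nabla u\|_2^2+\|\nabla v\|_2^2$ on $\mathcal{S}(\rho_1,\rho_2)$, while the $\lambda_2$-nonlinearity scales as $s^{\gamma_{r_2}}$. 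Since $2r_1<\frac{2N+2\alpha+4}{N}<2r_2$, we have $\gamma_{r_1}<1<\gamma_{r_2}$, and together with $|\kappa\int uv|\leq\kappa\rho_1\rho_2$ this yields a profile lower bound $\mathcal{J}_\beta(u,v)\geq g(s)$, where
\begin{equation*}
g(s):=\tfrac{1}{2}s-(A_1\lambda_1+B\beta)s^{\gamma_{r_1}}-A_2\lambda_2\, s^{\gamma_{r_2}}-\kappa\rho_1\rho_2
\end{equation*}
and the constants $A_1,A_2,B>0$ collect the sharp GNS constants together with suitable powers of $\rho_1,\rho_2$. Condition \eqref{g(s0)>0 assumption4.8}, combined with $\beta<\beta_0$ and $\kappa<\kappa_0$, ensures that $g$ admits a strictly positive local maximum $g(s_0)>0$ preceded by a negative regime.

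First I would fix $R_0$ with $R_0^2$ in the positivity interval of $g$, set
\begin{equation*}
A_{R_0}:=\{(u,v)\in\mathcal{S}(\rho_1,\rho_2):\|\nabla u\|_2^2+\|\nabla v\|_2^2\leq R_0^2\},
\end{equation*}
and check the local-minimum geometry: testing on $(t^{N/2}u_0(t\cdot),t^{N/2}v_0(t\cdot))$ for small $t$ and $(u_0,v_0)\in\mathcal{S}(\rho_1,\rho_2)$ gives $\inf_{A_{R_0}}\mathcal{J}_\beta<0$, while $\mathcal{J}_\beta|_{\partial A_{R_0}}\geq g(R_0^2)>0$. Thus $m_\beta(\rho_1,\rho_2):=\inf_{A_{R_0}}\mathcal{J}_\beta$ is the target critical value and every minimizing sequence is bounded in $H$ and bounded away from $\partial A_{R_0}$. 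Ekeland's variational principle then produces a bounded Palais--Smale sequence $(u_n,v_n)\subset A_{R_0}$ at level $m_\beta$ with Lagrange multipliers $\mu_{1,n},\mu_{2,n}$.

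Next I would extract a weak limit $(u_n,v_n)\rightharpoonup(u,v)$ in $H$. Working in the radial class and invoking the compact embedding $H^1_r(\mathbb{R}^N)\hookrightarrow L^t(\mathbb{R}^N)$ for $2<t<2^*$ together with the Hardy--Littlewood--Sobolev inequality, every Choquard term and the bilinear $\int u_n v_n$ converge strongly, so $\mu_{i,n}\to\mu_i$ and $(u,v)$ solves system \eqref{system1.1}. To upgrade to strong convergence in $H$ and secure $(u,v)\in\mathcal{S}(\rho_1,\rho_2)$, I would combine a Brezis--Lieb decomposition with the strict sub-additivity $m_\beta(\rho_1,\rho_2)<m_\beta(\rho_1',\rho_2')+m_\beta(\rho_1-\rho_1',\rho_2-\rho_2')$ on every nontrivial splitting, derived from scaling monotonicity in the effectively subcritical regime enforced by $A_{R_0}$. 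For (i), replacing $(u,v)$ by $(|u|,|v|)$ does not increase $\mathcal{J}_\beta$ because $-\kappa\int uv\geq-\kappa\int|u||v|$, so one may assume $u,v\geq 0$, and the strong maximum principle applied to the Choquard system upgrades this to $u,v>0$; positivity of each $\mu_i$ follows by combining $\langle\mathcal{J}_\beta'(u,v),(u,0)\rangle=\mu_1\rho_1^2$ (and its counterpart) with the Pohozaev identity $P_\beta(u,v)=0$, using $\gamma_{r_1}<1$ to sign the linear combinations. Property (ii) is built into the construction.

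The hard part will be the coincidence of scalings between the $\lambda_1$-term and the $\beta$-coupling: the threshold \eqref{g(s0)>0 assumption4.8} must be formulated for the joint coefficient $A_1\lambda_1+B\beta$, which is precisely why $\beta_0$ from \eqref{beta 0 in 4.3.2} is a smallness bound calibrated to preserve $g(s_0)>0$. A related obstacle is that the mass-supercritical $\lambda_2$-term threatens the sub-additivity needed to exclude dichotomy; confining the minimizing sequence to $A_{R_0}$ keeps the $s^{\gamma_{r_2}}$ contribution uniformly bounded, and the interplay between this truncation and the concentration-compactness scheme is the most delicate point in the argument.
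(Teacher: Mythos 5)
Your geometric setup is essentially the paper's: you correctly spot that $p+q=2r_1$ forces $\gamma_p+\gamma_q=2\gamma_{r_1}<2<2\gamma_{r_2}$, so the $\lambda_1$-term and the $\beta$-coupling merge into a single coefficient (the paper's $(A_1+A_3)s^{2\gamma_{r_1}}$ in $h$), the resulting profile has a negative well followed by a positive barrier under \eqref{g(s0)>0 assumption4.8}, $\beta<\beta_0$, $\kappa<\kappa_0$, and the ground state is obtained by minimizing on a gradient ball via Ekeland. (The paper additionally routes this through the Pohozaev decomposition $\mathcal{P}_\beta^{\pm}$, showing $\mathcal{P}_\beta^0=\emptyset$ and that each fiber map has exactly two critical points, but your direct ball minimization is an acceptable substitute for the existence of a bounded PS sequence.)

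The genuine gap is in your compactness step. You propose to recover strong convergence and full mass via strict sub-additivity $m_\beta(\rho_1,\rho_2)<m_\beta(\rho_1',\rho_2')+m_\beta(\rho_1-\rho_1',\rho_2-\rho_2')$ plus concentration–compactness, asserting that this follows "from scaling monotonicity in the effectively subcritical regime." This is not a proof: strict sub-additivity for a two-mass-constraint system containing a linear coupling $-\kappa\int uv$ and a mass-supercritical term ($\gamma_{r_2}>1$) is exactly the delicate point, and the paper never establishes it — it only proves the non-strict monotonicity $m_\beta(\rho_1,\rho_2)\le m_\beta(\rho_1',\rho_2')$ (Lemma \ref{Lemma3.2能量值单调性}(ii)). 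The paper's actual mechanism is different and complete: working in $H^1_r\times H^1_r$ so all Choquard terms pass to the limit, it first rules out $u=0$ (and hence $v=0$, since the first equation then reads $0=\kappa v$) using the strict level inequality $m_\beta<-\kappa\rho_1\rho_2$; it then obtains $\mu_1,\mu_2>0$ from the Liouville Lemma \ref{Liouville} (not from the Nehari–Pohozaev sign-chasing you sketch, which is not obviously conclusive here because of the supercritical $\lambda_2$-term); and finally it excludes mass loss through the identity $\mu_1(\rho_1^2-\overline{\rho}_1^2)+\mu_2(\rho_2^2-\overline{\rho}_2^2)=2\kappa\lim_n\int(u_n-u)(v_n-v)$, Cauchy–Schwarz, and a second application of the Liouville lemma to $(\sqrt{\mu_2}\,u,\sqrt{\mu_1}\,v)$, which forces $\kappa<\sqrt{\mu_1\mu_2}$ to fail and hence $\overline{\rho}_i=\rho_i$. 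Without either this argument or an actual proof of strict sub-additivity, your scheme does not close; you should replace the concentration–compactness paragraph with the linear-coupling/Liouville argument (and add the non-vanishing step, which your write-up omits entirely).
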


\begin{thm}
	Assume that $N \in \{3,4\}$, $\frac{2N+2\alpha}{N}<p+q<2r_1=\frac{2N+2\alpha+4}{N}<2r_2<\frac{2N+2\alpha}{N-2}$, $\lambda_1, \lambda_2, \rho_1,\rho_2 > 0$ satisfying \eqref{frac{1}{2}-A_1>0 4.3.3}, $0<\beta<\beta_0$, $0<\kappa<\kappa_0$, where $\beta_0$ and $\kappa_0$ are defined in \eqref{beta 0 in 4.3.3} and \eqref{kappa 0 in 4.3.3} respectively,
	then system $\eqref{system1.1} $ admits a normalized ground state $(u,v)$. Moreover,
	\begin{enumerate}[label=(\roman*)]
		\item $(u,v) \in H_r^1(\mathbb{R}^N) \times H_r^1(\mathbb{R}^N)$ and $u,v> 0$ with the associated Lagrange multiplier $\mu_i$ being positive, $i=1,2$.
		\item  $(u,v)$ is a local minimizer of $\mathcal{J}_\beta$ restricted to $\mathcal{S}(\rho_1,\rho_2)$.
	\end{enumerate}
	\label{Theorem1.11}
\end{thm}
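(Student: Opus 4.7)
The configuration here is genuinely mixed: the cross term with exponent $p+q$ is mass subcritical, $2r_1=\tfrac{2N+2\alpha+4}{N}$ is exactly mass critical, and $2r_2$ is mass supercritical. Accordingly, the plan is to realise $m_\beta(\rho_1,\rho_2)$ as a strict local minimum of $\mathcal{J}_\beta|_{\mathcal{S}(\rho_1,\rho_2)}$, following the same template as Theorems 1.9 and 1.10 but with the twist that the critical term $r_1$ must be absorbed into the quadratic part of the fibred energy via the assumption \eqref{frac{1}{2}-A_1>0 4.3.3}.

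The first step is to analyse the $L^2$-preserving fibre map $t\star(u,v)(x):=t^{N/2}(u(tx),v(tx))$. Since $\gamma_{r_1}=1$, $\gamma_p+\gamma_q<2$ and $\gamma_{r_2}>1$, combining Hardy--Littlewood--Sobolev with Gagliardo--Nirenberg yields
\begin{equation*}
\mathcal{J}_\beta(t\star(u,v))\ \geq\ \bigl(\tfrac{1}{2}-A_1\bigr)\,t^{2}\|\nabla(u,v)\|_{2}^{2}\;-\;C_\beta\,t^{\gamma_p+\gamma_q}\;-\;C_{\lambda_2}\,t^{2\gamma_{r_2}}\;-\;\kappa\rho_1\rho_2,
\end{equation*}
where $A_1$ is precisely the constant from \eqref{frac{1}{2}-A_1>0 4.3.3}, which guarantees the quadratic coefficient is strictly positive. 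With $\gamma_p+\gamma_q<2<2\gamma_{r_2}$, this scalar function of $t$ admits, for $\beta<\beta_0$ and $\kappa<\kappa_0$ small enough, two critical points: a local minimum at some $t_1>0$ and a local maximum at some $t_2>t_1$. Translating this into the $H$-picture, there exists $R_\beta>0$ such that the sublevel set $V:=\{(u,v)\in\mathcal{S}(\rho_1,\rho_2):\|\nabla(u,v)\|_2<R_\beta\}$ satisfies $\inf_{V}\mathcal{J}_\beta<\inf_{\partial V}\mathcal{J}_\beta$, the constants $\beta_0,\kappa_0$ in \eqref{beta 0 in 4.3.3}--\eqref{kappa 0 in 4.3.3} being chosen to make this gap strict.

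Second, I would set $\widetilde{m}_\beta:=\inf_V \mathcal{J}_\beta$ and build a minimising sequence inside $H$ by working in the radial space $H_r^1(\mathbb{R}^N)\times H_r^1(\mathbb{R}^N)$ (one may Schwarz-symmetrise first because all nonlocal terms and the linear coupling increase under symmetric-decreasing rearrangement). Ekeland's principle on the constrained manifold yields a Palais--Smale sequence $(u_n,v_n)\subset V$ with Lagrange parameters $(\mu_1^n,\mu_2^n)$ bounded. The compact embeddings $H_r^1\hookrightarrow L^s$ for $2<s<2^*$, combined with HLS, promote weak convergence to strong convergence in every nonlocal nonlinearity, since $p,q,r_1,r_2$ all lie strictly below $2_\alpha^*$. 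The limit $(u_0,v_0)$ therefore belongs to $V$, realises $\widetilde{m}_\beta$, and solves the Euler--Lagrange system with limit multipliers $\mu_1,\mu_2$; positivity of $u_0,v_0$ follows by replacing $(u_n,v_n)$ by $(|u_n|,|v_n|)$ and then applying the strong maximum principle to the coupled system, using that $\kappa>0$ couples the two components. Positivity of each $\mu_i$ is extracted by testing the limiting equations against $u_0$ and $v_0$, combining with the Pohozaev identity $P_\beta(u_0,v_0)=0$, and using $p+q\le 2r_1<2r_2$ to eliminate the unfavourable sign.

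Finally I would identify $\widetilde{m}_\beta$ with the ground-state level $m_\beta(\rho_1,\rho_2)$: any $(u,v)\in\mathcal{P}_\beta(\rho_1,\rho_2)$ satisfies $P_\beta(u,v)=0$, which by the fibre analysis forces either $\|\nabla(u,v)\|_2\le t_1\rho$-size (i.e.\ $(u,v)\in V$) or $\|\nabla(u,v)\|_2\ge t_2\rho$-size; in the latter case an $L^2$-preserving rescaling produces an element of $V$ with strictly smaller energy, contradicting the ground-state definition. Hence the minimizer of $\mathcal{J}_\beta$ over $V$ is a ground state. \emph{The main obstacle} I anticipate is the criticality of the $r_1$ term: unlike the pure subcritical setting of Theorem 1.9, the HLS/GN estimate used in Step 1 is sharp, so the separation $\inf_{\partial V}\mathcal{J}_\beta-\widetilde{m}_\beta>0$ does not come for free and must be extracted from the strict inequality in \eqref{frac{1}{2}-A_1>0 4.3.3}. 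Concretely, the explicit values of $\beta_0$ and $\kappa_0$ in \eqref{beta 0 in 4.3.3} and \eqref{kappa 0 in 4.3.3} must be calibrated so that the perturbations $-C_\beta t^{\gamma_p+\gamma_q}$ and $-\kappa\rho_1\rho_2$ do not destroy the local well of the unperturbed scalar map, which amounts to a delicate elementary analysis of the function $t\mapsto(\tfrac12-A_1)t^2-C_\beta t^{\gamma_p+\gamma_q}-C_{\lambda_2} t^{2\gamma_{r_2}}$.
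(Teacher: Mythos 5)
Your overall strategy coincides with the paper's: the critical $r_1$-term is absorbed into the quadratic coefficient via \eqref{frac{1}{2}-A_1>0 4.3.3}, the resulting scalar lower bound $h(s)=(\tfrac12-A_1)s^2-A_2s^{2\gamma_{r_2}}-A_3s^{\gamma_p+\gamma_q}-\kappa\rho_1\rho_2$ is shown to have a local well for $\beta<\beta_0$, $\kappa<\kappa_0$, and the ground state is sought as a local minimizer on a gradient ball; your final identification of $\inf_V\mathcal{J}_\beta$ with $m_\beta(\rho_1,\rho_2)$ via the two critical points of the fibre map is exactly the content of the paper's Lemma \ref{Lemma4.2变分结构 4.3.3} (whose key technical point, the proof that $\mathcal{P}^0_\beta(\rho_1,\rho_2)=\emptyset$ by playing two Gagliardo--Nirenberg estimates against each other, you do not supply; this is precisely where the explicit value $\beta_1$ entering \eqref{beta 0 in 4.3.3} is used).

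The genuine gap is in your compactness step. From $(u_n,v_n)\rightharpoonup(u_0,v_0)$ in $H^1_r\times H^1_r$ and the compact embedding into $L^s$, $2<s<2^*$, you conclude that ``the limit $(u_0,v_0)$ belongs to $V$ and realises $\widetilde m_\beta$''. This does not follow: the embedding is not compact at $s=2$, so the mass constraint is not preserved in the limit --- one only gets $|u_0|_2\le\rho_1$, $|v_0|_2\le\rho_2$, and a priori $(u_0,v_0)$ could even vanish. Ruling this out is the heart of the matter and occupies Steps 2--3 of the paper's Lemma \ref{Lemma3.3紧性}: non-vanishing is obtained from the linear coupling (if $u_0=0$ then the first equation forces $\kappa v_0=0$, hence $v_0=0$, and then convergence of the nonlocal terms contradicts $m_\beta(\rho_1,\rho_2)<-\kappa\rho_1\rho_2$); positivity of $\mu_1,\mu_2$ is obtained not by your ``testing plus Pohozaev'' device but by the Liouville Lemma \ref{Liouville} applied to $-\Delta u_0\ge0$; and the possible mass defect is excluded through the identity $\mu_1(\rho_1^2-\overline\rho_1^2)+\mu_2(\rho_2^2-\overline\rho_2^2)=2\kappa\lim_n\int_{\mathbb{R}^N}(u_n-u_0)(v_n-v_0)$, Cauchy--Schwarz, and a second Liouville argument showing that $\kappa\ge\sqrt{\mu_1\mu_2}$ is impossible. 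Without some version of this analysis your minimizing sequence may converge weakly to a function that no longer lies on $\mathcal{S}(\rho_1,\rho_2)$, and the proof is incomplete.
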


\begin{thm}
	Assume $N \in \{3,4\}$, $ \lambda_2, \kappa > 0$, $\frac{2N+2\alpha}{N}<p+q=2r_1=\frac{2N+2\alpha+4}{N}<2r_2<\frac{2N+2\alpha}{N-2}$
	and $\rho_1, \rho_2, \lambda_1, \beta>0$ satisfying \eqref{frac{1}{2}-(A_1+A_3)>0 4.3.4},
	then system $\eqref{system1.1} $ admits a normalized ground state $(u,v)$. Moreover,
	\begin{enumerate}[label=(\roman*)]
		\item $(u,v) \in H_r^1(\mathbb{R}^N) \times H_r^1(\mathbb{R}^N)$ and $u,v> 0$ with the associated Lagrange multiplier $\mu_i$ being positive, $i=1,2$.
		\item  $(u,v)$ constitutes a mountain pass critical point for $\mathcal{J}_\beta$ restricted to $\mathcal{S}(\rho_1,\rho_2)$.
	\end{enumerate}
	\label{Theorem1.12}
\end{thm}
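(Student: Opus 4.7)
The plan is to follow the mountain-pass-on-the-constraint strategy used for Theorem~\ref{Theorem1.5}, modified to accommodate the extra scaling degeneracy produced by the simultaneous mass-critical scaling of the self-interaction $|u|^{r_1}$ and the cross-coupling $|u|^p|v|^q$. First I would introduce the mass-preserving dilation $(u_s, v_s)(x) := (s^{N/2}\,u(sx),\, s^{N/2}\,v(sx))$, which leaves $\mathcal{S}(\rho_1,\rho_2)$ and $\int_{\mathbb{R}^N} uv$ invariant. Since $\gamma_{r_1} = 1$ and $\gamma_p + \gamma_q = 2$ while $2\gamma_{r_2} > 2$, the fiber map factorizes as
\[
\Psi_{(u,v)}(s) := \mathcal{J}_\beta(u_s, v_s) = \frac{s^2}{2}\,B(u,v) - \frac{\lambda_2 s^{2\gamma_{r_2}}}{2r_2}\int_{\mathbb{R}^N}(I_\alpha*|v|^{r_2})|v|^{r_2} - \kappa\int_{\mathbb{R}^N} uv,
\]
where $B(u,v) := \|\nabla u\|_2^2 + \|\nabla v\|_2^2 - \tfrac{\lambda_1}{r_1}\int(I_\alpha*|u|^{r_1})|u|^{r_1} - 2\beta\int(I_\alpha*|u|^p)|v|^q$. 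Applying the Hardy--Littlewood--Sobolev inequality combined with the critical Gagliardo--Nirenberg estimates yields bounds $\tfrac{\lambda_1}{r_1}\int(I_\alpha*|u|^{r_1})|u|^{r_1} \leq A_1\|\nabla u\|_2^2$ and $2\beta\int(I_\alpha*|u|^p)|v|^q \leq A_3(\|\nabla u\|_2^2 + \|\nabla v\|_2^2)$ on $\mathcal{S}(\rho_1,\rho_2)$, with structural constants $A_1 = A_1(\lambda_1, \rho_1)$ and $A_3 = A_3(\beta, \rho_1, \rho_2)$ precisely those entering \eqref{frac{1}{2}-(A_1+A_3)>0 4.3.4}. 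Under that hypothesis one obtains $B(u,v) \geq (1 - 2(A_1 + A_3))(\|\nabla u\|_2^2 + \|\nabla v\|_2^2) > 0$, so $\Psi_{(u,v)}$ has a unique positive critical point $\bar{s}(u,v)$, which is a strict global maximum, and $\Psi_{(u,v)}(s) \to -\infty$ as $s \to \infty$.

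Next, I would verify that $\mathcal{P}_\beta(\rho_1, \rho_2) = \{(u_{\bar{s}(u,v)}, v_{\bar{s}(u,v)}) : (u,v) \in \mathcal{S}(\rho_1, \rho_2)\}$ is a smooth codimension-one submanifold of $\mathcal{S}$ and a natural constraint for $\mathcal{J}_\beta$, the latter following from the strict maximum property via a standard Lagrange multiplier argument. Substituting $P_\beta(u,v) = 0$ into $\mathcal{J}_\beta$ produces
\[
\mathcal{J}_\beta(u,v) = \frac{\gamma_{r_2} - 1}{2\gamma_{r_2}}\,B(u,v) - \kappa\int_{\mathbb{R}^N} uv \qquad \text{on } \mathcal{P}_\beta(\rho_1, \rho_2),
\]
which, combined with $B(u,v) \geq c_0(\|\nabla u\|_2^2 + \|\nabla v\|_2^2)$ and $|\int uv| \leq \rho_1\rho_2$, gives coercivity of $\mathcal{J}_\beta|_{\mathcal{P}_\beta}$ and a finite positive infimum $m_\beta(\rho_1, \rho_2)$, equivalently characterized as the minimax level $\inf_{(u,v)\in\mathcal{S}}\max_{s > 0}\Psi_{(u,v)}(s)$.

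The hardest part will be the compactness of minimizing sequences in the presence of two mass-critical terms. I would apply Ekeland's variational principle on $\mathcal{P}_\beta$ and invoke the natural-constraint property to upgrade the resulting minimizing sequence to a radial Palais--Smale sequence $(u_n, v_n) \subset H$ for $\mathcal{J}_\beta|_\mathcal{S}$ at level $m_\beta$ with $P_\beta(u_n, v_n) \to 0$, bounded in $H$ by coercivity. The compact embedding $H^1_r(\mathbb{R}^N) \hookrightarrow L^t(\mathbb{R}^N)$ for $t \in (2, 2^*)$, together with the Brezis--Lieb decomposition for Choquard nonlinearities in the spirit of Moroz--Van Schaftingen, localizes possible concentration into scale-invariant bubbles; here the \emph{strict} inequality in \eqref{frac{1}{2}-(A_1+A_3)>0 4.3.4} is decisive, ruling out bubble formation at the mass-critical scales of $|u|^{r_1}$ and $|u|^p|v|^q$ and forcing strong convergence to a minimizer $(u,v)$. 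To conclude, I would replace $(u,v)$ by $(|u|,|v|)$, which does not increase $\mathcal{J}_\beta$ (thanks to $\kappa > 0$ together with the rearrangement-invariance of the Choquard terms), and rescale back onto $\mathcal{P}_\beta$ in the direction of decreasing energy (moving from $s = 1$ to $\bar{s} \leq 1$ by monotonicity of the fiber) to obtain a nonnegative ground state; the strong maximum principle upgrades it to $u, v > 0$; testing the Euler--Lagrange equations against $(u,v)$ and combining with $P_\beta(u,v) = 0$ together with $r_i - \gamma_{r_i} > 0$ and $(p+q) - (\gamma_p + \gamma_q) > 0$ forces $\mu_1, \mu_2 > 0$; and the minimax characterization of $m_\beta$ exhibits $(u,v)$ as a mountain pass critical point of $\mathcal{J}_\beta|_{\mathcal{S}(\rho_1,\rho_2)}$.
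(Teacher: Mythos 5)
Your reduction of the fiber map to $\Psi_{(u,v)}(s)=\frac{s^2}{2}B(u,v)-\frac{\lambda_2 s^{2\gamma_{r_2}}}{2r_2}\int_{\mathbb{R}^N}(I_\alpha*|v|^{r_2})|v|^{r_2}-\kappa\int_{\mathbb{R}^N} uv$ with $B(u,v)\ge(1-2(A_1+A_3))(|\nabla u|_2^2+|\nabla v|_2^2)>0$ is exactly the mechanism the paper exploits in this regime (there it appears as $h(s)=(\tfrac12-(A_1+A_3))s^2-A_2s^{2\gamma_{r_2}}-\kappa\rho_1\rho_2$), and the identity $\mathcal{J}_\beta=\frac{\gamma_{r_2}-1}{2\gamma_{r_2}}B-\kappa\int_{\mathbb{R}^N} uv$ on $\mathcal{P}_\beta(\rho_1,\rho_2)$ correctly gives boundedness of minimizing sequences. (Your displayed intermediate bounds should read $\frac{\lambda_1}{r_1}\int(I_\alpha*|u|^{r_1})|u|^{r_1}\le 2A_1|\nabla u|_2^2$ and $2\beta\int(I_\alpha*|u|^p)|v|^q\le 2A_3(|\nabla u|_2^2+|\nabla v|_2^2)$; the conclusion you draw from them is the correct one.) So the variational skeleton agrees with the paper, which runs Theorem \ref{Theorem1.12} through the same machinery as Theorem \ref{Theorem1.5}.

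The gap is in the compactness step, which you treat as a black box. ``Bubble formation at mass-critical scales'' is not the obstruction here: all Choquard exponents are energy-subcritical and $H^1_r\hookrightarrow L^t$ compactly for $t\in(2,2^*)$, so the nonlocal terms pass to the limit along a weakly convergent subsequence. The two real obstructions are (a) the weak limit may vanish, and (b) mass may escape to infinity, i.e. $|u|_2=\overline\rho_1<\rho_1$ or $|v|_2=\overline\rho_2<\rho_2$, which the compact embedding does not rule out since it fails at $t=2$. The paper excludes (a) using the linear coupling: if $u=0$ the limit system forces $\kappa v=0$, hence $v=0$, and then the Pohozaev constraint drives $|\nabla u_n|_2^2+|\nabla v_n|_2^2\to0$, contradicting \eqref{P beta rho1 rho2>0}. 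It excludes (b) by first proving $\mu_1,\mu_2>0$ via the Liouville-type Lemma 2.1 (if $\mu_1\le0$ then $-\Delta u\ge0$ forces $u\equiv0$), then deriving $\mu_1(\rho_1^2-\overline\rho_1^2)+\mu_2(\rho_2^2-\overline\rho_2^2)=2\kappa\lim_n\int_{\mathbb{R}^N}(u_n-u)(v_n-v)$, which leads to $\kappa\ge\sqrt{\mu_1\mu_2}$ and a second Liouville contradiction if any mass is lost. None of these mechanisms appears in your outline, and your proposed substitute for the multiplier signs --- testing the equations against $(u,v)$ and combining with $P_\beta(u,v)=0$ --- only yields $\mu_1\rho_1^2+\mu_2\rho_2^2>0$, not the individual positivity that the mass-recovery argument needs. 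Two smaller points: $m_\beta(\rho_1,\rho_2)$ need not be positive (the paper proves only $m_\beta>-\kappa\rho_1\rho_2$, which is what the mountain-pass geometry requires), and in this geometry the projection of $(|u|,|v|)$ onto $\mathcal{P}_\beta$ is the fiber \emph{maximum}, so the energy comparison must go through $\max_{s>0}\Psi_{(|u|,|v|)}(s)\le\max_{s>0}\Psi_{(u,v)}(s)$ rather than by ``moving in the direction of decreasing energy''; the paper sidesteps this by constructing a nonnegative Palais--Smale sequence from the outset via the Ekeland-type lemma of Chen--Zhong--Zou.
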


\begin{thm}
	Assume that $N \in \{3,4\}$, $\frac{2N+2\alpha}{N}<p+q<\frac{2N+2\alpha+4}{N}<2r_1<2r_2<\frac{2N+2\alpha}{N-2}$, $\lambda_1, \lambda_2, \rho_1,\rho_2 > 0$ satisfying \eqref{g(s_0)>0 4.3.5}, $0<\beta<\beta_0$, $0<\kappa<\kappa_0$, where $\beta_0$ and $\kappa_0$ are defined in \eqref{beta 0 in 4.3.5} and \eqref{kappa 0 in 4.3.5} respectively,
	then system $\eqref{system1.1} $ admits a normalized ground state $(u,v)$. Moreover,
	\begin{enumerate}[label=(\roman*)]
		\item $(u,v) \in H_r^1(\mathbb{R}^N) \times H_r^1(\mathbb{R}^N)$ and $u,v> 0$ with the associated Lagrange multiplier $\mu_i$ being positive, $i=1,2$.
		\item  $(u,v)$ is a local minimizer of $\mathcal{J}_\beta$ restricted to $\mathcal{S}(\rho_1,\rho_2)$.
	\end{enumerate}
	\label{Theorem1.13}
\end{thm}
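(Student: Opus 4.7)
The exponent hypotheses place Theorem \ref{Theorem1.13} in a mixed-scaling regime: under the $L^{2}$-preserving dilation $(u_{t},v_{t})(x)=(t^{N/2}u(tx),t^{N/2}v(tx))$ the two self-interactions scale like $t^{2\gamma_{r_{1}}},t^{2\gamma_{r_{2}}}$ with $2\gamma_{r_{1}},2\gamma_{r_{2}}>2$, the cross term scales like $t^{\gamma_{p}+\gamma_{q}}$ with $\gamma_{p}+\gamma_{q}\in(0,2)$, the gradient scales like $t^{2}$ and the linear coupling is invariant. The plan is to adapt the local-minimisation scheme already used for Theorems \ref{Theorem1.7}--\ref{Theorem1.11}: combine Hardy--Littlewood--Sobolev and Gagliardo--Nirenberg inequalities to bound $\mathcal{J}_{\beta}$ on $\mathcal{S}(\rho_{1},\rho_{2})$ from below by an auxiliary scalar function $g(s)$ of $s=\bigl(\|\nabla u\|_{2}^{2}+\|\nabla v\|_{2}^{2}\bigr)^{1/2}$ alone, whose coefficients are explicit in $\lambda_{i},\rho_{i},\beta,\kappa$. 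The hypothesis \eqref{g(s_0)>0 4.3.5}, made compatible with the smallness thresholds \eqref{beta 0 in 4.3.5} and \eqref{kappa 0 in 4.3.5}, will guarantee that $g$ admits a strict local maximum $g(s_{0})>0$, so that the open set
\[
V:=\{(u,v)\in\mathcal{S}(\rho_{1},\rho_{2}):\|\nabla u\|_{2}^{2}+\|\nabla v\|_{2}^{2}<s_{0}^{2}\}
\]
is a genuine potential well with $\mathcal{J}_{\beta}\ge g(s_{0})>0$ on $\partial V$ and with $m(\rho_{1},\rho_{2}):=\inf_{V}\mathcal{J}_{\beta}<0$; the strict negativity of $m$ follows by testing with a suitably rescaled trial pair at small $t$, where the $\gamma_{p}+\gamma_{q}<2$ term dominates.

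\textbf{Constrained minimisation and qualitative properties.} Any minimising sequence for $m(\rho_{1},\rho_{2})$ is bounded in $H$ and stays uniformly in the interior of $V$. Working inside $H^{1}_{r}(\mathbb{R}^{N})\times H^{1}_{r}(\mathbb{R}^{N})$ I use the compact Strauss embedding $H^{1}_{r}\hookrightarrow L^{\sigma}$ for $\sigma\in(2,2^{\ast})$; since $2r_{1},2r_{2}<\tfrac{2N+2\alpha}{N-2}$ and $p,q\in(\tfrac{N+\alpha}{N},\tfrac{N+\alpha}{N-2})$, Hardy--Littlewood--Sobolev makes all three Choquard functionals continuous under strong $L^{\sigma}$ convergence, while the bilinear $\kappa$-term is continuous on weak $L^{2}$. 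Weak lower semicontinuity of the gradient norm then lets a weak limit $(u,v)$ attain $m(\rho_{1},\rho_{2})$, and the strict inequality $m<\inf_{\partial V}\mathcal{J}_{\beta}$ prevents any $L^{2}$-mass from escaping, so $(u,v)\in V$ retains the correct masses. The Lagrange multiplier rule gives a solution of \eqref{system1.1}; positivity of $\mu_{1},\mu_{2}$ follows from solving the $2\times 2$ linear system obtained by testing the Euler--Lagrange equations against $(u,0),(0,v)$ and using $P_{\beta}(u,v)=0$, noting that every integral entering the right-hand side is positive. Replacing $(u,v)$ by $(|u|,|v|)$ does not raise $\mathcal{J}_{\beta}$ (since $\kappa\int|u||v|\ge\kappa\int uv$), and elliptic regularity plus the strong maximum principle yields $u,v>0$. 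Finally, ground-state minimality $m_{\beta}(\rho_{1},\rho_{2})=\mathcal{J}_{\beta}(u,v)$ is obtained by splitting $\mathcal{P}_{\beta}(\rho_{1},\rho_{2})$ into its intersections with $V$ and with $\{s\ge s_{0}\}$: elements of the first set have energy $\ge m$ by definition, elements of the second satisfy $\mathcal{J}_{\beta}\ge g(s_{0})>m$ since $\mathcal{J}_{\beta}\ge g$ holds on all of $\mathcal{S}(\rho_{1},\rho_{2})$.

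\textbf{Main obstacle.} The crux lies in the first step: the two $L^{2}$-supercritical self-interactions $t^{2\gamma_{r_{1}}}$ and $t^{2\gamma_{r_{2}}}$ must be simultaneously dominated by the gradient term $t^{2}$ across the whole range in which the well opens, which is strictly more delicate than the single supercritical self-term case of Theorem \ref{Theorem1.9}. This forces $\beta_{0},\kappa_{0}$ in \eqref{beta 0 in 4.3.5}--\eqref{kappa 0 in 4.3.5} to involve both pairs of sharp Hardy--Littlewood--Sobolev / Gagliardo--Nirenberg constants in a coupled way, and the delicate algebraic task is to verify that the chosen thresholds produce $g(s_{0})>0$ while keeping the linear system for $(\mu_{1},\mu_{2})$ non-degenerate; once these ingredients are in place the compactness argument of Step 2 runs essentially in parallel with the corresponding arguments in Theorems \ref{Theorem1.9}--\ref{Theorem1.11}.
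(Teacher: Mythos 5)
Your overall framework (lower bound of $\mathcal{J}_\beta$ by a scalar function of $s=(|\nabla u|_2^2+|\nabla v|_2^2)^{1/2}$, a potential well below the barrier $s_0$, local minimization there, and recovery of the ground-state property by comparing with the Pohozaev manifold) matches the paper's scheme for this regime. However, there is a genuine gap at the heart of the compactness step. You assert that ``the strict inequality $m<\inf_{\partial V}\mathcal{J}_\beta$ prevents any $L^2$-mass from escaping.'' That inequality only confines the gradient norm of a minimizing sequence away from $\partial V$; it says nothing about the $L^2$-constraint. The Strauss embedding $H^1_r\hookrightarrow L^\sigma$ is compact only for $\sigma\in(2,2^*)$, not at $\sigma=2$, so the weak limit $(u,v)$ may well satisfy $|u|_2=\overline{\rho}_1<\rho_1$ or $|v|_2=\overline{\rho}_2<\rho_2$, in which case it does not lie on $\mathcal{S}(\rho_1,\rho_2)$ and attains nothing. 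Moreover, the paper's monotonicity $m_\beta(\rho_1,\rho_2)\le m_\beta(\overline{\rho}_1,\overline{\rho}_2)$ goes in the wrong direction to rule this out by energy comparison alone. The paper's actual argument (Lemma \ref{Lemma3.3紧性}, Step 3, invoked here via the ``proof parallels the preceding case'' chain) is substantially harder: one first shows $u\neq 0$, $v\neq 0$ using the linear coupling (if $u=0$ the first equation forces $\kappa v=0$) together with the level being strictly below $-\kappa\rho_1\rho_2$ (note: your claim $m<0$ is not enough for this step); then $\mu_1,\mu_2>0$ via the Liouville Lemma \ref{Liouville}; then the Brezis--Lieb decomposition and testing the equations yield the identity $\mu_1(\rho_1^2-\overline{\rho}_1^2)+\mu_2(\rho_2^2-\overline{\rho}_2^2)=2\kappa\lim_n\int(u_n-u)(v_n-v)$, and a second Liouville-type argument applied to $(\sqrt{\mu_2}\,u,\sqrt{\mu_1}\,v)$ excludes $\kappa\ge\sqrt{\mu_1\mu_2}$, forcing $\overline{\rho}_i=\rho_i$. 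None of this is present in, or replaceable by, your boundary-strictness remark.

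A secondary flaw: in your final step you claim that elements of $\mathcal{P}_\beta(\rho_1,\rho_2)$ outside the well satisfy $\mathcal{J}_\beta\ge g(s_0)$ ``since $\mathcal{J}_\beta\ge g$ holds on all of $\mathcal{S}(\rho_1,\rho_2)$.'' The pointwise lower bound $h$ tends to $-\infty$ as $s\to+\infty$ (the exponents $2\gamma_{r_1},2\gamma_{r_2}>2$ dominate), so it gives no positivity for elements with large gradient norm. The correct argument, as in Lemma \ref{Lemma4.3变分结构 4.3.5}, is that such elements lie in $\mathcal{P}_\beta^-(\rho_1,\rho_2)$, hence realize the maximum of their fiber map $\Psi^\beta_{(u,v)}$, and that fiber necessarily crosses the barrier region $(T_0,T_1)$ where $\Psi^\beta_{(u,v)}>\kappa\rho_1\rho_2>m_\beta(\rho_1,\rho_2)$. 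You would also need the full fiber-map analysis ($\mathcal{P}_\beta^0=\emptyset$, exactly two critical points per fiber), which in this case requires the coupled estimates leading to the thresholds $\beta_3,\beta_4$ in the paper and is only sketched in your ``main obstacle'' paragraph.
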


\begin{thm}
	Assume $N \in \{3,4\}$, $\lambda_1, \lambda_2, \kappa > 0$, $p+q=\frac{2N+2\alpha+4}{N}<2r_1<2r_2<\frac{2N+2\alpha}{N-2}$
	and $\rho_1, \rho_2, \beta>0$ satisfying \eqref{frac{1}{2}-A_3>0 4.3.6},
	then system $\eqref{system1.1} $ admits a normalized ground state $(u,v)$. Moreover,
	\begin{enumerate}[label=(\roman*)]
		\item $(u,v) \in H_r^1(\mathbb{R}^N) \times H_r^1(\mathbb{R}^N)$ and $u,v> 0$ with the associated Lagrange multiplier $\mu_i$ being positive, $i=1,2$.
		\item  $(u,v)$ constitutes a mountain pass critical point for $\mathcal{J}_\beta$ restricted to $\mathcal{S}(\rho_1,\rho_2)$.
	\end{enumerate}
	\label{Theorem1.14}
\end{thm}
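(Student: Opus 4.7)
The plan is to implement a Pohozaev-constrained mountain-pass scheme for the radial space $H=H_r^1(\mathbb{R}^N)\times H_r^1(\mathbb{R}^N)$, in direct analogy with the arguments used for Theorems~\ref{Theorem1.5}, \ref{Theorem1.6} and \ref{Theorem1.12}. The hypotheses force $\gamma_{r_2}\geq\gamma_{r_1}>1$ (both self-interactions mass supercritical) and $\gamma_p+\gamma_q=2$ (coupling mass critical), so for $(u,v)\in\mathcal{S}(\rho_1,\rho_2)$ the $L^2$-preserving dilation $(u_t,v_t):=(t^{N/2}u(t\,\cdot\,),t^{N/2}v(t\,\cdot\,))$ yields
\begin{equation*}
\mathcal{J}_\beta(u_t,v_t)=t^{2}\Bigl[\tfrac12(\|\nabla u\|_2^2+\|\nabla v\|_2^2)-\beta\!\!\int(I_\alpha*|u|^{p})|v|^{q}\Bigr]-\tfrac{\lambda_1 t^{2\gamma_{r_1}}}{2r_1}\!\!\int(I_\alpha*|u|^{r_1})|u|^{r_1}-\tfrac{\lambda_2 t^{2\gamma_{r_2}}}{2r_2}\!\!\int(I_\alpha*|v|^{r_2})|v|^{r_2}-\kappa\!\!\int uv.
\end{equation*}
The assumption \eqref{frac{1}{2}-A_3>0 4.3.6} encodes a Hardy--Littlewood--Sobolev bound ensuring that the bracketed $t^2$-coefficient is uniformly positive on $\mathcal{S}(\rho_1,\rho_2)$; since $\gamma_{r_2}\geq\gamma_{r_1}>1$, the fiber map therefore has the shape $at^2-b_1 t^{2\gamma_{r_1}}-b_2 t^{2\gamma_{r_2}}-d$ with $a,b_1,b_2>0$ and admits a unique positive maximum $t^\star(u,v)$ at which $P_\beta(u_{t^\star},v_{t^\star})=0$. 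Consequently $\mathcal{P}_\beta(\rho_1,\rho_2)$ is a natural $C^1$-constraint and
\begin{equation*}
m_\beta(\rho_1,\rho_2)=\inf_{\mathcal{P}_\beta(\rho_1,\rho_2)}\mathcal{J}_\beta=\inf_{(u,v)\in\mathcal{S}(\rho_1,\rho_2)}\max_{t>0}\mathcal{J}_\beta(u_t,v_t),
\end{equation*}
which is the mountain-pass characterisation giving item~(ii).

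Eliminating the kinetic term via $P_\beta(u,v)=0$ rewrites the energy on $\mathcal{P}_\beta$ as
\begin{equation*}
\mathcal{J}_\beta(u,v)=\tfrac{\lambda_1(\gamma_{r_1}-1)}{2r_1}\!\!\int(I_\alpha*|u|^{r_1})|u|^{r_1}+\tfrac{\lambda_2(\gamma_{r_2}-1)}{2r_2}\!\!\int(I_\alpha*|v|^{r_2})|v|^{r_2}-\kappa\!\!\int uv,
\end{equation*}
with strictly positive prefactors by $\gamma_{r_i}>1$; note that the coupling term cancels cleanly here because $\gamma_p+\gamma_q=2$. Combined with $\lvert\kappa\!\int uv\rvert\leq\kappa\rho_1\rho_2$ and the fact that \eqref{frac{1}{2}-A_3>0 4.3.6} makes the Hartree integrals control $\|\nabla u\|_2^2+\|\nabla v\|_2^2$ on $\mathcal{P}_\beta$, this yields both $m_\beta>0$ and coercivity of $\mathcal{J}_\beta|_{\mathcal{P}_\beta}$ in $H$. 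A radial version of the Jeanjean--Bartsch--Soave construction then produces a bounded Palais--Smale sequence $(u_n,v_n)\subset\mathcal{S}(\rho_1,\rho_2)$ for $\mathcal{J}_\beta$ at level $m_\beta$ with $P_\beta(u_n,v_n)\to 0$; by the compact radial embedding $H_r^1\hookrightarrow L^s$ for $s\in(2,2^*)$ together with Hardy--Littlewood--Sobolev, all three nonlocal integrals converge strongly along a subsequence to their limits at the weak limit $(u,v)$.

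The main obstacle is the $L^2$-based linear coupling $\kappa\int u_nv_n$, which is not controlled by the compact radial embedding and admits mass-preserving spreading sequences. To overcome this I would carry out a concentration analysis: first show that $(u,v)\not\equiv(0,0)$ by comparing $m_\beta$ to the mountain-pass level of the decoupled system ($\beta=0$, $\kappa=0$) built on the Moroz--Van Schaftingen solutions of the scalar Choquard equations, together with the strict inequality $m_\beta>0$; then recover $\|u\|_2=\rho_1$, $\|v\|_2=\rho_2$ via a strict monotonicity $\rho\mapsto m_\beta(\rho)$ (obtained from a rescaling argument exploiting $\gamma_{r_i}-1>0$) combined with the fibering applied to $(u_{t^\star(u,v)},v_{t^\star(u,v)})\in\mathcal{P}_\beta(\|u\|_2,\|v\|_2)$. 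Once $(u,v)\in\mathcal{P}_\beta(\rho_1,\rho_2)$ achieves $m_\beta$ and $L^2$-convergence is secured, weak-lower semicontinuity of the gradient together with the convergence of every other term forces strong $H$-convergence.

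Finally, item~(i) is obtained by standard arguments: the replacement $(u,v)\mapsto(|u|,|v|)$ does not increase $\mathcal{J}_\beta$ (by $\lambda_i,\beta,\kappa>0$ and positivity of the Riesz kernel $I_\alpha$), so the minimizer may be taken nonnegative, after which the strong maximum principle applied componentwise to the cooperative elliptic system yields $u,v>0$ on $\mathbb{R}^N$. Positivity of the Lagrange multipliers $\mu_1,\mu_2$ then follows from the weighted combination of $\mathcal{J}_\beta'(u,v)\cdot(u,v)=0$ and $P_\beta(u,v)=0$ that cancels the Hartree contributions, using $\kappa\int uv>0$, in direct analogy with the corresponding step in the proof of Theorem~\ref{Theorem1.12}.
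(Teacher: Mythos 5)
Your analysis of the variational structure is correct and coincides with the paper's route (the analogues of Lemmas \ref{Lemma3.5变分结构 3.3.2} and \ref{Lemma3.6能量值 3.3.2}): under \eqref{frac{1}{2}-A_3>0 4.3.6} the coefficient of $t^2$ in the fiber map is positive because $\gamma_p+\gamma_q=2$, the map $\Psi^\beta_{(u,v)}$ has a unique critical point which is a global maximum, $\mathcal{P}_\beta(\rho_1,\rho_2)=\mathcal{P}^-_\beta(\rho_1,\rho_2)$ is a natural constraint with the mountain-pass characterisation of $m_\beta$, and $\mathcal{J}_\beta-\frac12 P_\beta$ gives coercivity on $\mathcal{P}_\beta(\rho_1,\rho_2)$. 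The difficulties are concentrated in the compactness step, exactly where you locate ``the main obstacle,'' and there your proposal has genuine gaps.

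First, the claim $m_\beta(\rho_1,\rho_2)>0$ is unjustified and in general false: on $\mathcal{P}_\beta(\rho_1,\rho_2)$ one only obtains
\begin{equation*}
\mathcal{J}_\beta(u,v)=\frac{\lambda_1(\gamma_{r_1}-1)}{2r_1}\int_{\mathbb{R}^N}(I_\alpha*|u|^{r_1})|u|^{r_1}+\frac{\lambda_2(\gamma_{r_2}-1)}{2r_2}\int_{\mathbb{R}^N}(I_\alpha*|v|^{r_2})|v|^{r_2}-\kappa\int_{\mathbb{R}^N}uv>-\kappa\rho_1\rho_2,
\end{equation*}
and since Theorem \ref{Theorem1.14} imposes no upper bound on $\kappa$, the level can be negative; so a nonvanishing argument resting on $m_\beta>0$ and an energy comparison with the decoupled system does not close. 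The paper's argument needs no sign of $m_\beta$: if the weak limit had $u=0$, the limit system forces $\kappa v=0$, hence $v=0$; the compact radial embedding then sends all three Hartree integrals of $(u_n,v_n)$ to zero, and $P_\beta(u_n,v_n)=0$ forces $|\nabla u_n|_2^2+|\nabla v_n|_2^2\to0$, contradicting the uniform lower bound \eqref{P beta rho1 rho2>0} on the Pohozaev set. Second, your recovery of the full masses via \emph{strict} monotonicity of $\rho\mapsto m_\beta(\rho)$ is not established (the paper only proves the non-strict inequality, and with the term $-\kappa\int uv$ a rescaling does not obviously yield strict decrease); moreover, even granted, strict monotonicity alone does not exclude loss of $L^2$-mass through the non-compact term $\int u_nv_n$ without an additional splitting/subadditivity argument. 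The paper instead tests the approximate and limit equations to obtain $\mu_1(\rho_1^2-\overline\rho_1^{\,2})+\mu_2(\rho_2^2-\overline\rho_2^{\,2})=2\kappa\lim_n\int(u_n-u)(v_n-v)$, applies Cauchy--Schwarz, and eliminates the residual case $\kappa\ge\sqrt{\mu_1\mu_2}$ by applying Lemma \ref{Liouville} to $\sqrt{\mu_2}\,u+\sqrt{\mu_1}\,v$; some such argument is indispensable here. Finally, your derivation of $\mu_i>0$ from a single weighted combination of the Nehari and Pohozaev identities ``cancelling the Hartree contributions'' cannot work as stated: the three nonlocal terms have distinct homogeneities $2r_1$, $2r_2$ and $p+q$, so no linear combination of two identities cancels all of them. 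The paper obtains $\mu_i>0$ from Lemma \ref{Liouville}: if $\mu_1\le0$ then $-\Delta u\ge0$ with $u\ge0$ smooth and $u\in L^2(\mathbb{R}^N)$, forcing $u\equiv0$, a contradiction.
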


\begin{thm}
	Assume $N \in \{3,4\}$, $\lambda_1, \lambda_2, \kappa, \rho_1, \rho_2, \beta>0 > 0$ and $\frac{2N+2\alpha+4}{N}<p+q \leq 2r_1<2r_2<\frac{2N+2\alpha}{N-2}$,
	then system $\eqref{system1.1} $ admits a normalized ground state $(u,v)$. Moreover,
	\begin{enumerate}[label=(\roman*)]
		\item $(u,v) \in H_r^1(\mathbb{R}^N) \times H_r^1(\mathbb{R}^N)$ and $u,v> 0$ with the associated Lagrange multiplier $\mu_i$ being positive, $i=1,2$.
		\item  $(u,v)$ constitutes a mountain pass critical point for $\mathcal{J}_\beta$ restricted to $\mathcal{S}(\rho_1,\rho_2)$.
	\end{enumerate}
	\label{Theorem1.15}
\end{thm}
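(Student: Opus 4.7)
The plan is to realize $m_\beta(\rho_1,\rho_2)$ as a mountain pass critical value of $\mathcal{J}_\beta$ on $\mathcal{S}(\rho_1,\rho_2)$, using $\mathcal{P}_\beta(\rho_1,\rho_2)$ as a natural constraint and following the framework already developed for Theorem \ref{Theorem1.6}. Writing $(s\star w)(x):=e^{Ns/2}w(e^sx)$ for the $L^2$-preserving dilation, one checks that in the fiber map $\varphi_{u,v}(s):=\mathcal{J}_\beta(s\star(u,v))$ the kinetic part scales as $e^{2s}$, the Choquard self- and cross-interactions scale as $e^{2s\gamma_{r_i}}$ and $e^{s(\gamma_p+\gamma_q)}$, and the linear coupling $-\kappa\int uv$ is invariant. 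Since the mass-supercritical hypothesis $p+q,\,2r_1,\,2r_2>\frac{2N+2\alpha+4}{N}$ makes all of $\gamma_p+\gamma_q,\,2\gamma_{r_1},\,2\gamma_{r_2}$ strictly greater than $2$, a direct sign analysis of $\varphi'_{u,v}$ produces a unique strict maximizer $s^{*}(u,v)$ with $s^{*}(u,v)\star(u,v)\in\mathcal{P}_\beta(\rho_1,\rho_2)$. A Hardy--Littlewood--Sobolev together with Gagliardo--Nirenberg estimate then forces $\varphi_{u,v}(s^{*})$ to stay bounded below by a positive constant independent of $(u,v)$, yielding the minimax characterization
\[
m_\beta(\rho_1,\rho_2)=\inf_{(u,v)\in\mathcal{S}(\rho_1,\rho_2)}\max_{s\in\mathbb{R}}\mathcal{J}_\beta(s\star(u,v))>0.
\]

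Next I would invoke Jeanjean's trick on the augmented functional $\widetilde{\mathcal{J}}_\beta(u,v,s):=\mathcal{J}_\beta(s\star(u,v))$ on $\mathcal{S}(\rho_1,\rho_2)\times\mathbb{R}$ to extract a Palais--Smale sequence $(u_n,v_n)\subset\mathcal{S}(\rho_1,\rho_2)$ satisfying $\mathcal{J}_\beta(u_n,v_n)\to m_\beta(\rho_1,\rho_2)$, $\|\mathcal{J}_\beta|_{\mathcal{S}}'(u_n,v_n)\|\to0$, and (from the $s$-derivative of $\widetilde{\mathcal{J}}_\beta$) $P_\beta(u_n,v_n)\to0$. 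Forming a suitable linear combination $\mathcal{J}_\beta(u_n,v_n)-c\,P_\beta(u_n,v_n)$ cancels the mass-supercritical nonlinear terms and yields an a priori bound on $\|\nabla u_n\|_2^2+\|\nabla v_n\|_2^2$. Along a subsequence, $(u_n,v_n)\rightharpoonup(u,v)$ in $H=H_r^1(\mathbb{R}^N)\times H_r^1(\mathbb{R}^N)$, and the compact embedding $H_r^1(\mathbb{R}^N)\hookrightarrow L^t(\mathbb{R}^N)$ for $t\in(2,2^{*})$ combined with HLS makes each Choquard interaction and $\int u_nv_n$ converge strongly. Testing the $n$-th equation against $(u_n,0)$ and $(0,v_n)$ yields bounded Lagrange multipliers $\mu_1^n,\mu_2^n$, convergent along a further subsequence, so that $(u,v)$ solves \eqref{system1.1}; the bound $m_\beta>0$ excludes the trivial limit $(u,v)\equiv(0,0)$.

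The most delicate step is preservation of the mass constraint, i.e., strong $L^2$-convergence of $(u_n,v_n)$ to $(u,v)$. I would argue by contradiction through a Brezis--Lieb splitting with $(w_n,z_n):=(u_n-u,v_n-v)\rightharpoonup0$: if $\|u\|_2<\rho_1$ or $\|v\|_2<\rho_2$, the decompositions $\mathcal{J}_\beta(u_n,v_n)=\mathcal{J}_\beta(u,v)+\mathcal{J}_\beta(w_n,z_n)+o(1)$ and $P_\beta(u_n,v_n)=P_\beta(u,v)+P_\beta(w_n,z_n)+o(1)$ (with cross terms vanishing by the radial compactness and HLS) combined with the strict monotonicity of $(\rho_1',\rho_2')\mapsto m_\beta(\rho_1',\rho_2')$ in the mass-supercritical regime (a consequence of the scaling argument already used to prove $m_\beta>0$) force $\mathcal{J}_\beta(u,v)\geq m_\beta(\|u\|_2,\|v\|_2)>m_\beta(\rho_1,\rho_2)$, contradicting weak lower semicontinuity of the energy along the sequence. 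The asymmetry $r_1<r_2$ relative to Theorem \ref{Theorem1.6} only requires separate Gagliardo--Nirenberg bookkeeping for the two self-interaction integrals and leaves the structural argument intact. Once strong convergence is secured, positivity of $u,v$ follows by replacing $(u,v)$ with $(|u|,|v|)$, which leaves $\mathcal{S}(\rho_1,\rho_2)$ and $\mathcal{P}_\beta(\rho_1,\rho_2)$ invariant and does not raise $\mathcal{J}_\beta$ (since $\kappa>0$ and $uv\leq|u||v|$), and then applying the strong maximum principle to the cooperative elliptic system; positivity of $\mu_1,\mu_2$ is deduced by testing the equations with $u,v$ and combining with $P_\beta(u,v)=0$, exactly as in the proof of Theorem \ref{Theorem1.6}.
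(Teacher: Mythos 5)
Your overall skeleton (fiber-map analysis on the Pohozaev manifold, mountain-pass minimax characterization, Palais--Smale sequence with $P_\beta\to 0$, radial compactness) matches the paper's strategy for this fully mass-supercritical case, but two steps as written do not go through, and both are precisely where the linear coupling $\kappa\int_{\mathbb{R}^N}uv$ matters.

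First, the claim $m_\beta(\rho_1,\rho_2)=\inf_{\mathcal{S}}\max_{s}\mathcal{J}_\beta(s\star(u,v))>0$ is unjustified. The term $-\kappa\int_{\mathbb{R}^N}uv$ is invariant under the $L^2$-preserving dilation, so the fiber map tends to $-\kappa\int_{\mathbb{R}^N}uv\geq-\kappa\rho_1\rho_2$ at one end, and the Hardy--Littlewood--Sobolev/Gagliardo--Nirenberg lower bound only yields $\max_{s}\mathcal{J}_\beta(s\star(u,v))\geq c_0-\kappa\rho_1\rho_2$ with $c_0$ independent of $\kappa$; since the theorem allows arbitrary $\kappa>0$, this level can be negative. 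The paper accordingly proves only $m_\beta>-\kappa\rho_1\rho_2$, and it excludes the semitrivial weak limit not via positivity of the level but via the structure of the limit system (if $u=0$ the first equation reduces to $\kappa v=0$, hence $v=0$) combined with the uniform lower bound of the type \eqref{P beta rho1 rho2>0} on $\mathcal{P}_\beta(\rho_1,\rho_2)$, which contradicts $P_\beta(u_n,v_n)\to 0$ once the nonlocal terms vanish by radial compactness. Your exclusion step must be replaced by an argument of this kind.

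Second, and more seriously, your recovery of the mass constraint hinges on \emph{strict} monotonicity of $(\rho_1',\rho_2')\mapsto m_\beta(\rho_1',\rho_2')$, which you assert as a consequence of the scaling argument but which is neither proved in the paper nor standard in the presence of the linear coupling. The paper establishes only the non-strict inequality $m_\beta(\rho_1,\rho_2)\leq m_\beta(\rho_1',\rho_2')$ for $0<\rho_i'\leq\rho_i$, and with only that your chain $\mathcal{J}_\beta(u,v)\geq m_\beta(\|u\|_2,\|v\|_2)>m_\beta(\rho_1,\rho_2)$ degenerates into an equality and yields no contradiction. The paper's actual mechanism is different and is the essential point of the double-coupling setting: after showing $\mu_1,\mu_2>0$ via the Liouville lemma, one tests the approximate and limit equations to obtain $\mu_1(\rho_1^2-\overline{\rho}_1^2)+\mu_2(\rho_2^2-\overline{\rho}_2^2)=2\kappa\lim_{n}\int_{\mathbb{R}^N}(u_n-u)(v_n-v)$, applies Cauchy--Schwarz to deduce $\kappa\geq\sqrt{\mu_1\mu_2}$ whenever mass is lost, and then rules this out by a further Liouville argument applied to $(\sqrt{\mu_2}\,u,\sqrt{\mu_1}\,v)$. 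Without this step (or a genuine proof of strict monotonicity), your compactness argument is incomplete.
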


The remainder of this article is organized as follows.
Section 2 introduces the essential technical preliminaries.
In section 3, we study the $r_1=r_2$ case containing $\frac{N+\alpha}{N}<r_1=r_2<\frac{N+\alpha+2}{N}$, 
$r_1=r_2=\frac{N+\alpha+2}{N}$ and $\frac{N+\alpha+2}{N}<r_1=r_2<\frac{N+\alpha}{N-2}$.
In section 4, we study the $r_1<r_2$ case containing $\frac{N+\alpha}{N}<r_1<r_2<\frac{N+\alpha+2}{N}$,
 $\frac{N+\alpha}{N}<r_1<r_2=\frac{N+\alpha+2}{N}$ and $\frac{N+\alpha+2}{N}<r_2<\frac{N+\alpha}{N-2}$.

	\section{PRELIMINARIES}

To complete the presentation, we provide the classical Hardy–Littlewood–Sobolev inequality in the following proposition.
\begin{prop}\cite[theorem 4.3]{Lieb2001}
	Suppose $\alpha \in (0,N), p \in (1, \frac{N}{\alpha})$ and $u \in L^p(\mathbb{R}^N)$. Then $I_\alpha * u \in L^{\frac{Np}{N-\alpha p}}(\mathbb{R}^N)$ and
	\begin{equation}
		\int_{\mathbb{R}^N} |I_\alpha*u|^{\frac{Np}{N-\alpha p}} \leq C \left(\int_{\mathbb{R}^N} |u|^p\right)^{\frac{N}{N-\alpha p}},
		\label{HLS}
	\end{equation}
	where $C$ depends only on $N$, $\alpha$, and $p$.
\end{prop}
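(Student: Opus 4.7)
The plan is to follow the classical route: reduce the strong-type $(p,q)$ bound for the convolution operator $T_\alpha u := I_\alpha * u$, with $q = \tfrac{Np}{N-\alpha p}$, to a weak-type estimate and then invoke the Marcinkiewicz real interpolation theorem. The whole argument rests on one scaling observation and a single truncation trick. First I would record the crucial fact that the Riesz kernel $I_\alpha(x) = c_{N,\alpha}|x|^{-(N-\alpha)}$ belongs to the weak Lebesgue space $L^{N/(N-\alpha),\infty}(\mathbb{R}^N)$, since its distribution function is exactly $|\{x : |x|^{-(N-\alpha)} > t\}| = \omega_N\, t^{-N/(N-\alpha)}$, where $\omega_N$ denotes the volume of the unit ball in $\mathbb{R}^N$. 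This single identity encodes the scaling balance $\alpha/N$ which dictates the target exponent.

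Next, fixing $u \in L^p(\mathbb{R}^N)$ normalized by $\|u\|_p = 1$ and a level $\lambda > 0$, I would split the kernel as $I_\alpha = K_R + K^R$ with $K_R = I_\alpha \chi_{\{|x|<R\}}$ and $K^R = I_\alpha - K_R$, where $R = R(\lambda)$ is to be chosen. For the far tail, H\"older's inequality together with $K^R \in L^{p'}(\mathbb{R}^N)$ (whose integrability at infinity precisely uses $p < N/\alpha$) gives a pointwise bound $\|K^R * u\|_\infty \lesssim R^{\alpha - N/p}$; picking $R$ so that this is at most $\lambda/2$ reduces the problem to controlling $|\{K_R * |u| > \lambda/2\}|$. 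For the near part, $K_R \in L^1(\mathbb{R}^N)$ and Young's inequality yield $\|K_R * u\|_p \lesssim R^\alpha$, after which Chebyshev's inequality and a careful tracking of exponents produce the weak-type bound $|\{|T_\alpha u| > \lambda\}| \lesssim \lambda^{-q}$ with precisely $q = Np/(N-\alpha p)$.

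With weak-type $(p_0, q_0)$ and $(p_1, q_1)$ estimates in hand for two nearby exponents $p_0, p_1$ sandwiching the desired $p$ inside $(1, N/\alpha)$, the Marcinkiewicz interpolation theorem delivers the strong-type inequality \eqref{HLS}. The principal obstacle is the endpoint behavior: at $p = 1$ the operator is only of weak type, and at $p = N/\alpha$ the target exponent $q$ blows up, so the open-interval hypothesis on $p$ is essential and the interpolation step is genuinely necessary rather than cosmetic. Dependence of the constant only on $N$, $\alpha$, $p$ is automatic, since each ingredient — the volume $\omega_N$, the kernel constant $c_{N,\alpha}$, the threshold $R$, and the Marcinkiewicz constants — depends solely on those parameters.
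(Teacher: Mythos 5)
Your outline is correct, but note that the paper offers no proof of this proposition at all: it is quoted verbatim from Lieb--Loss as a classical black box, so the only meaningful comparison is with the proof in the cited source. There the inequality is established in its bilinear form $\bigl|\int\int f(x)|x-y|^{-(N-\alpha)}g(y)\,dx\,dy\bigr|\leq C\|f\|_p\|g\|_{q'}$ by a direct layer-cake decomposition of all the functions involved, with no appeal to interpolation theory; your route is instead the Stein--Hedberg one, splitting the kernel at a level-dependent radius $R(\lambda)$, deriving the weak-type $(p,\frac{Np}{N-\alpha p})$ bound from H\"older on the tail and Young plus Chebyshev on the core, and then upgrading to the strong bound by off-diagonal Marcinkiewicz interpolation. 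Both are standard and both yield exactly the statement as used here (only the existence of some constant $C=C(N,\alpha,p)$ matters for this paper, not its value); the Lieb--Loss argument is more self-contained and tracks an explicit constant, while yours is shorter once the Marcinkiewicz theorem is taken as known and makes the scaling dictating $q=\frac{Np}{N-\alpha p}$ transparent. Your exponent bookkeeping checks out: $\|K^R*u\|_\infty\lesssim R^{\alpha-N/p}$ forces $R\sim\lambda^{-p/(N-\alpha p)}$, and then $\lambda^{-p}R^{\alpha p}=\lambda^{-Np/(N-\alpha p)}$ as required; the interpolation step is legitimate because $1/q=1/p-\alpha/N$ is affine in $1/p$, so the interpolated pairs stay on the correct line, and $p\leq q$ holds throughout. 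The one point worth making explicit if you were to write this out in full is the passage from the pointwise splitting $|T_\alpha u|\leq |K_R*u|+\|K^R*u\|_\infty$ to the distributional inequality, but that is routine.
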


We are now able to give the following estimate for $2_{\alpha,*}  <p<2_\alpha^*$:
\[ \int_{\mathbb{R}^N}(I_\alpha *|u|^p)|u|^p \leq C \left(\int_{\mathbb{R}^N} |u|^{\frac{2Np}{N+\alpha}}\right)^{\frac{N+\alpha}{N}} \leq C' |\nabla u|_2^{2 \gamma_p}|u|_2^{2p-2\gamma_p}, \]
where the above inequality is provided by Hölder’s and Gagliardo-Nirenberg inequalities.\\
Next, we make use of the semigroup property of the Riesz potential, namely $ I_\alpha=I_{\frac{\alpha}{2}}*I_{\frac{\alpha}{2}}$ \cite[theorem 5.9]{Lieb2001}
and obtain
\begin{align*}
	\int_{\mathbb{R}^N}(I_\alpha *|u|^p)|v|^q & \leq  \left(\int_{\mathbb{R}^N}(I_\alpha *|u|^p)|u|^p\right)^{\frac{1}{2}} \left(\int_{\mathbb{R}^N}(I_\alpha *|v|^q)|v|^q \right)^{\frac{1}{2}} \\
	& \leq C |\nabla u|_2^{\gamma_p} |u|_2^{p-\gamma_p} |\nabla v|_2^{\gamma_q}  |v|_2^{q-\gamma_q} \\
	& \leq C' \left(|\nabla u|_2^2+|\nabla v|_2^2\right)^{\frac{\gamma_p+\gamma_q}{2}} \left(|u|_2^2+|v|_2^2\right)^{\frac{p+q-(\gamma_p+\gamma_q)}{2}},
\end{align*}
where the last inequality follows from Young’s inequality.\\
Then, defining
\[C(N,p,q)^{-1} := \inf_{(u,v)\in H^1(\mathbb{R}^N)\setminus \{0\}} \frac{\left(|\nabla u|_2^2+|\nabla v|_2^2\right)^{\frac{\gamma_p+\gamma_q}{2}} \left(|u|_2^2+|v|_2^2\right)^{\frac{p+q-\gamma_p-\gamma_q}{2}}}{\int_{\mathbb{R}^N}(I_\alpha *|u|^p)|v|^q},\]
we have the following vector-valued Gagliardo-Nirenberg-type inequality
\begin{equation}
	\int_{\mathbb{R}^N}(I_\alpha *|u|^p)|v|^q \leq C(N,p,q) \left(|\nabla u|_2^2+|\nabla v|_2^2\right)^{\frac{\gamma_p+\gamma_q}{2}} \left(|u|_2^2+|v|_2^2\right)^{\frac{p+q-\gamma_p-\gamma_q}{2}}.
	\label{GN}
\end{equation}

It follows from \eqref{GN} that
\begin{equation*}
	\begin{split}
		\mathcal{J}_\beta(u,v)&=\frac{1}{2}(|\nabla u|_2^2+|\nabla v|_2^2)-\frac{\lambda_1}{2r_1}\int_{\mathbb{R}^N}(I_\alpha*|u|^{r_1})|u|^{r_1}-\frac{\lambda_2}{2r_2}\int_{\mathbb{R}^N}(I_\alpha*|v|^{r_2})|v|^{r_2}\\
		&\quad -\beta \int_{\mathbb{R}^N}(I_\alpha*|u|^p)|v|^q-\kappa \int_{\mathbb{R}^N}uv\\
		&\geq \frac{1}{2}(|\nabla u|_2^2+|\nabla v|_2^2)-\beta C(N,p,q)(\rho_1^2+\rho_2^2)^{\frac{p+q-\gamma_p-\gamma_q}{2}}(|\nabla u|_2^2+|\nabla v|_2^2)^{\frac{\gamma_p+\gamma_q}{2}}-\kappa \rho_1\rho_2\\
		&\quad-\frac{\lambda_1}{2r_1}C(N,r_1,r_1)2^{r_1}\rho_1^{2(r_1-\gamma_{r_1})}|\nabla u|_2^{2\gamma_{r_1}}-\frac{\lambda_2}{2r_2}C(N,r_2,r_2)2^{r_2}\rho_2^{2(r_2-\gamma_{r_2})}|\nabla v|_2^{2\gamma_{r_2}}\\
		&> \frac{1}{2}(|\nabla u|_2^2+|\nabla v|_2^2)-\beta C(N,p,q)(\rho_1^2+\rho_2^2)^{\frac{p+q-\gamma_p-\gamma_q}{2}}(|\nabla u|_2^2+|\nabla v|_2^2)^{\frac{\gamma_p+\gamma_q}{2}}-\kappa \rho_1\rho_2\\
		&\quad -\frac{\lambda_1}{2r_1}C(N,r_1,r_1)2^{r_1}\rho_1^{2(r_1-\gamma_{r_1})}(|\nabla u|_2^2+|\nabla v|_2^2)^{\gamma_{r_1}}\\
		&\quad -\frac{\lambda_2}{2r_2}C(N,r_2,r_2)2^{r_2}\rho_2^{2(r_2-\gamma_{r_2})}(|\nabla u|_2^2+|\nabla v|_2^2)^{\gamma_{r_2}}.
	\end{split}
\end{equation*}
We define
\begin{equation}
	h(s):=\frac{1}{2}s^2-A_1s^{2\gamma_{r_1}}-A_2 s^{2\gamma_{r_2}}-A_3 s^{\gamma_p+\gamma_q}-\kappa \rho_1 \rho_2,
	\label{h(s)}
\end{equation}
where 
\begin{equation*}
	\begin{split}
		A_1:=&\frac{\lambda_1}{2r_1}C(N,r_1,r_1)2^{r_1}\rho_1^{2(r_1-\gamma_{r_1})},\\
		A_2:=&\frac{\lambda_2}{2r_2}C(N,r_2,r_2)2^{r_2}\rho_2^{2(r_2-\gamma_{r_2})},\\
		A_3:=&\beta C(N,p,q)(\rho_1^2+\rho_2^2)^{\frac{p+q-\gamma_p-\gamma_q}{2}}.
	\end{split}
\end{equation*}

At this point, we recall the $L^2$-invariant scaling. For $t>0$, define $t\diamond u(x):=t^{\frac{N}{2}}u(tx)$.
Now for $(u,v) \in \mathcal{S}(\rho_1,\rho_2)$ , we introduce the map $\Psi_{(u,v)}^\beta : (0,+\infty) \to \mathbb{R}$,
\begin{equation*}
	\begin{split}
		\Psi_{(u,v)}^\beta(t):=&\mathcal{J}_\beta(t\diamond(u,v))=\frac{t^2}{2}(|\nabla u|_2^2+|\nabla v|_2^2)-\frac{t^{2\gamma_{r_1}}}{2r_1}\lambda_1\int_{\mathbb{R}^N}(I_\alpha*|u|^{r_1})|u|^{r_1}\\
		&-\frac{t^{2\gamma_{r_2}}}{2r_2}\lambda_2\int_{\mathbb{R}^N}(I_\alpha*|v|^{r_2})|v|^{r_2}-\beta t^{\gamma_p+\gamma_q}\int_{\mathbb{R}^N}(I_\alpha*|u|^p)|v|^q-\kappa \int_{\mathbb{R}^N}uv,
 	\end{split}
\end{equation*}
where $t\diamond(u,v):=(t \diamond u, t \diamond v)$.
Noticing that $(\Psi_{(u,v)}^\beta)'(t)=\frac{1}{t}P_\beta(t\diamond(u,v))$, we have 
\[\mathcal{P}_\beta(\rho_1,\rho_2)=\{(u,v) \in \mathcal{S}(\rho_1,\rho_2) :(\Psi_{(u,v)}^\beta)'(1)=0\}.\]
Then we introduce the decomposition of $\mathcal{P}_\beta(\rho_1,\rho_2)$ as follows:
\begin{gather*}
	\mathcal{P}_\beta^+(\rho_1,\rho_2)=\{(u,v) \in \mathcal{S}(\rho_1,\rho_2):(\Psi_{(u,v)}^\beta)''(1)>0\},\\
	\mathcal{P}_\beta^0(\rho_1,\rho_2)=\{(u,v) \in \mathcal{S}(\rho_1,\rho_2):(\Psi_{(u,v)}^\beta)''(1)=0\},\\
	\mathcal{P}_\beta^-(\rho_1,\rho_2)=\{(u,v) \in \mathcal{S}(\rho_1,\rho_2):(\Psi_{(u,v)}^\beta)''(1)<0\}.
\end{gather*}

We now recall the Liouville type lemma as follows:
\begin{lem}
	\cite[Lemma A.2]{Ikoma2014} Let $r >0$ for $N=1,2$, while $0<r \leq \frac{N}{N-2}$ for $N \geq 3$. Assume $w\in L^r(\mathbb{R}^N)$ is a smooth, nonnegative function and satisfies $- \Delta w \geq 0$ in $\mathbb{R}^N$. Then $w \equiv 0$.
	\label{Liouville}
\end{lem}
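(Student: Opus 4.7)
The plan is to argue by contradiction: suppose $w\not\equiv 0$, and derive a conflict with $w\in L^r(\mathbb{R}^N)$. Since $w$ is smooth, nonnegative, and $-\Delta w\geq 0$ on the connected domain $\mathbb{R}^N$, the strong minimum principle forces $w>0$ everywhere. This strict positivity lets me first reduce the subunit range $0<r<1$ to the case $r=1$: setting $v:=w^r$, the chain rule gives
\[
-\Delta v \;=\; r(1-r)\,w^{r-2}|\nabla w|^2 \;+\; r\,w^{r-1}(-\Delta w)\;\geq\;0,
\]
so $v$ is smooth, nonnegative, superharmonic, and $\|v\|_{L^1} = \|w\|_{L^r}^r<\infty$. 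Since $1\in(0,N/(N-2)]$ for $N\geq 3$ (and no constraint is placed on $r$ for $N\in\{1,2\}$), proving the lemma for $r=1$ already forces $v\equiv 0$, hence $w\equiv 0$. From here on I may therefore assume $r\geq 1$.

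For $r\in[1,N/(N-2)]$ (with the convention $N/(N-2)=+\infty$ when $N\leq 2$), I would use a standard cutoff. Pick $\varphi_R\in C_c^\infty(\mathbb{R}^N)$ with $\varphi_R\equiv 1$ on $B_R$, $\supp\varphi_R\subset B_{2R}$, $0\leq\varphi_R\leq 1$, and $|\Delta\varphi_R|\leq C R^{-2}$. Integrating by parts (no boundary terms, since $\varphi_R$ is compactly supported and $w$ is smooth) and using $-\Delta w\geq 0$ with $\varphi_R\leq 1$,
\[
\int_{B_R}(-\Delta w)\,dx \;\leq\; \int_{\mathbb{R}^N}\varphi_R(-\Delta w)\,dx \;=\; \int_{B_{2R}\setminus B_R}(-\Delta\varphi_R)\,w\,dx.
\]
H\"{o}lder's inequality bounds the right-hand side by $\|\Delta\varphi_R\|_{L^{r'}(B_{2R}\setminus B_R)}\|w\|_{L^r(B_{2R}\setminus B_R)}\leq C\,R^{N/r'-2}\,\|w\|_{L^r(B_{2R}\setminus B_R)}$. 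The condition $r\leq N/(N-2)$ rephrases as $r'\geq N/2$, i.e.\ $N/r'-2\leq 0$, so the $R$-prefactor stays bounded as $R\to\infty$; meanwhile $\|w\|_{L^r(B_{2R}\setminus B_R)}\to 0$ by absolute continuity of the $L^r$ integral. Passing to the limit yields $\int_{\mathbb{R}^N}(-\Delta w)\,dx\leq 0$, and combined with $-\Delta w\geq 0$ this forces $w$ to be harmonic on $\mathbb{R}^N$. A nonnegative harmonic function on $\mathbb{R}^N$ is constant (Harnack plus the classical Liouville theorem), and the only constant in $L^r(\mathbb{R}^N)$ is $0$, contradicting $w\not\equiv 0$.

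The main technical obstacle is the endpoint $r=N/(N-2)$ for $N\geq 3$, where $R^{N/r'-2}=1$ contributes no decay on its own, so the passage to zero must rely entirely on the absolute continuity $\|w\|_{L^r(B_{2R}\setminus B_R)}\to 0$. A secondary subtlety is that the reduction $v=w^r$ in the subunit range is smooth only because $w>0$ everywhere, which is secured by the strong minimum principle in the very first step; any failure of positivity would obstruct the chain-rule identity for $-\Delta w^r$.
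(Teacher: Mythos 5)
Your proof is correct. Note, however, that the paper does not prove this lemma at all: it is quoted verbatim from Ikoma \cite[Lemma A.2]{Ikoma2014} with only the citation, so any complete argument is necessarily a different "route" from what the paper supplies. Your argument (cutoff $\varphi_R$, two integrations by parts, H\"older with the observation that $r\le \frac{N}{N-2}$ is equivalent to $r'\ge \frac{N}{2}$ so that $R^{N/r'-2}$ stays bounded, conclude $\Delta w\equiv 0$, then Liouville for nonnegative harmonic functions) is a clean, self-contained and rather elementary alternative to the standard potential-theoretic proof behind the citation, which instead establishes that a nontrivial nonnegative superharmonic function on $\mathbb{R}^N$, $N\ge 3$, obeys $w(x)\ge c|x|^{2-N}$ for large $|x|$ (so $w^r$ fails to be integrable exactly when $r\le \frac{N}{N-2}$) and that for $N\le 2$ every nonnegative superharmonic function on $\mathbb{R}^N$ is constant. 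What your version buys is the avoidance of those potential-theoretic facts in favour of integration by parts plus the classical Liouville theorem; what it requires in exchange is the preliminary reduction of $0<r<1$ to $r=1$ via $v=w^r$, which you justify correctly: the strong minimum principle gives $w>0$ everywhere (otherwise $w$ attains its minimum $0$ and is identically zero), so $w^r$ is smooth and the identity $-\Delta (w^r)=r(1-r)w^{r-2}|\nabla w|^2+rw^{r-1}(-\Delta w)\ge 0$ is legitimate, with $\|w^r\|_{L^1}=\|w\|_{L^r}^r<\infty$. The limiting steps, including the endpoint $r=\frac{N}{N-2}$ where all the decay comes from $\|w\|_{L^r(B_{2R}\setminus B_R)}\to 0$, and the monotonicity argument giving $\int_{B_{R_0}}(-\Delta w)\le 0$ for every fixed $R_0$, all check out.
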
 

To ensure the regularity properties, we state the following lemma.
	\begin{lem}
	Assume $(u,v) \in H$ solves the Choquard system as follows
\begin{equation*}
		\left\{
	\begin{aligned}
		-\Delta u + \mu_1 u &=\lambda_1 ( I_\alpha * |u|^{r_1} ) |u|^{r_1-2} u + \beta p( I_\alpha * |v|^q)|u|^{p-2} u + \kappa v, \quad \text{in} \,\, \mathbb{R}^N, \\
		-\Delta v + \mu_2 v &=\lambda_2 ( I_\alpha * |v|^{r_2} ) |v|^{r_2-2} v + \beta q( I_\alpha * |u|^p)|v|^{q-2} v + \kappa u , \quad \text{in} \,\, \mathbb{R}^N ,
	\end{aligned}
	\right.
\end{equation*}
	then $(u,v)$ is smooth.
	\label{smooth}
\end{lem}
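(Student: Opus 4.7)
The plan is to combine the Hardy--Littlewood--Sobolev inequality with a standard elliptic bootstrap, adapting the argument of Moroz and Van Schaftingen \cite{MOROZ2013153} for the scalar Choquard equation to the present coupled system. The linear couplings $\kappa u$, $\kappa v$ are harmless: since they remain in the same Sobolev space as the unknowns themselves, they can be carried along through every step of the iteration without affecting exponent counting.

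\textbf{Step 1: Initial integrability and bootstrap to $L^\infty_{loc}$.} By Sobolev embedding $H^1(\mathbb{R}^N)\hookrightarrow L^t(\mathbb{R}^N)$ for $t\in[2,2^*]$, each of $|u|^{r_1}$, $|v|^{r_2}$, $|u|^p$, $|v|^q$ lies in an appropriate $L^s(\mathbb{R}^N)$, and \eqref{HLS} then places the convolutions $I_\alpha*|u|^{r_1}$, $I_\alpha*|v|^{r_2}$, $I_\alpha*|u|^p$, $I_\alpha*|v|^q$ in a suitable $L^{s'}_{loc}$. Multiplying by the corresponding power of $u$ or $v$ and using Hölder, the right-hand sides $f_1,f_2$ of the two equations satisfy $f_1,f_2\in L^{s_0}_{loc}$ for some $s_0>1$. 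Treating the system as $-\Delta u+\mu_1 u=f_1$, $-\Delta v+\mu_2 v=f_2$, the interior $W^{2,s_0}$ estimate for $-\Delta+\mu_i$ yields $u,v\in W^{2,s_0}_{loc}$, and Sobolev embedding upgrades the local integrability of $u$ and $v$. Re-inserting this gain into the nonlocal nonlinearities and iterating produces a strictly increasing sequence of exponents; since all exponents appearing are strictly below $2_\alpha^*$, the iteration remains productive and, after finitely many rounds, yields $u,v\in L^\infty_{loc}(\mathbb{R}^N)$.

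\textbf{Step 2: From $L^\infty_{loc}$ to $C^\infty$.} Once $u,v$ are locally bounded, a standard Riesz-potential estimate shows that each of $I_\alpha*|u|^{r_1}$, $I_\alpha*|v|^{r_2}$, $I_\alpha*|u|^p$, $I_\alpha*|v|^q$ is locally Hölder continuous, so the right-hand sides lie in $C^{0,\gamma}_{loc}$ for some $\gamma\in(0,1)$. The interior Schauder estimate then gives $u,v\in C^{2,\gamma}_{loc}$. Differentiating the system and exploiting the smoothness of the Riesz kernel $I_\alpha$ away from the origin, one iterates Schauder theory to obtain $u,v\in C^{k,\gamma}_{loc}$ for every $k\in\mathbb{N}$, and hence $u,v\in C^\infty(\mathbb{R}^N)$.

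\textbf{Main obstacle.} The delicate point is the first bootstrap: one must carefully track how the HLS and Sobolev exponents combine at each iteration in order to guarantee a strict integrability gain and reach $L^\infty_{loc}$ in finitely many steps. This is precisely where the strict subcriticality $2_{\alpha,*}<p,q,r_1,r_2<2_\alpha^*$ is essential, since at either endpoint the bootstrap would stall. The nonlinear cross-couplings $(I_\alpha*|v|^q)|u|^{p-2}u$ and $(I_\alpha*|u|^p)|v|^{q-2}v$ force the iteration to be run for $u$ and $v$ simultaneously, but this does not introduce any new analytic difficulty beyond careful bookkeeping.
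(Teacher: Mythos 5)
The paper itself offers no argument here: its ``proof'' consists of the single sentence deferring to \cite[Lemma 3.3]{Pei2025}, and the argument intended there is precisely the Hardy--Littlewood--Sobolev plus elliptic bootstrap scheme you describe, so your route coincides with the one the authors have in mind. Your Step 1 is sound, and you correctly identify where the work lies (verifying that each round of \eqref{HLS} followed by the $W^{2,s}_{loc}$ gain strictly improves the integrability exponent, which is what the strict inequalities $2_{\alpha,*}<p,q,r_1,r_2<2_\alpha^*$ guarantee). The one place you overclaim is the end of Step 2: iterating Schauder estimates beyond $C^{2,\gamma}_{loc}$ requires differentiating the nonlinearities, and $s\mapsto|s|^{p-2}s$ (likewise $|u|^{r_1-2}u$, $|v|^{q-2}v$) is not a smooth function of its argument at $0$ for non-integer exponents --- it is only $C^1$ when the exponent is at least $2$ and merely H\"older continuous otherwise --- so without already knowing $u,v>0$ (which the paper only establishes afterwards via the maximum principle) the bootstrap stalls at $C^{2,\gamma}_{loc}$. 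This does not damage anything downstream, since the only use of the lemma is to feed a classical nonnegative supersolution into Lemma \ref{Liouville} and the strong maximum principle, for which $C^2_{loc}$ suffices; but you should either state the conclusion as ``$(u,v)$ is a classical solution'' or add the positivity hypothesis before claiming $C^\infty$.
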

\begin{proof}
	The argument follows closely that of \cite[Lemma 3.3]{Pei2025}; hence, we omit it here.
\end{proof}

\section{The $r_1=r_2$ case}

\subsection{$\frac{N+\alpha}{N}<r_1=r_2<\frac{N+\alpha+2}{N}$}

\subsubsection{$\frac{2N+2\alpha}{N}<p+q<2r_1=2r_2<\frac{2N+2\alpha+4}{N}$}\leavevmode\par
Under the current assumption, we have $0<\gamma_p+\gamma_q<2\gamma_{r_1}=2\gamma_{r_2}<2$ and
\begin{equation*}
	h(s)=\frac{1}{2}s^2-(A_1+A_2)s^{2\gamma_{r_1}}-A_3 s^{\gamma_p+\gamma_q}-\kappa \rho_1 \rho_2,
\end{equation*}
where $h(s)$ is given by \eqref{h(s)}.
A direct calculation yields
\begin{equation*}
	h'(s)=s-2\gamma_{r_1}(A_1+A_2)s^{2\gamma_{r_1}-1}-(\gamma_p+\gamma_q)A_3s^{\gamma_p+\gamma_q-1}.
\end{equation*}
Set
\begin{equation*}
	g(s):=2\gamma_{r_1}(A_1+A_2)s^{2\gamma_{r_1}-2}+(\gamma_p+\gamma_q)A_3s^{\gamma_p+\gamma_q-2}.
\end{equation*}
Direct computation shows that $\lim\limits_{s\to 0^+}g(s)=+\infty$, $\lim\limits_{s\to +\infty}g(s)=0$ and $g(s)$ is strictly decreasing on $(0,+\infty)$.
Hence, we can find a unique $s_0>0$ satisfying $g(s_0)=1$.
Consequently, $h(s)$ strictly decreases on $(0,s_0)$ while increases on $(s_0,+\infty)$. Noting that 
$\lim\limits_{s\to 0^+}h(s)=-\kappa \rho_1 \rho_2$ and $\lim\limits_{s\to +\infty}h(s)=+\infty$, there exists 
a unique $s_1>0$ such that $h(s_1)=\kappa\rho_1\rho_2$.

\begin{lem}
	Given $(u,v)\in \mathcal{S}(\rho_1,\rho_2)$, the function $\Psi_{(u,v)}^\beta(t)$ admits exactly one critical point $t_\beta(u,v)>0$. Furthermore,
	\begin{enumerate}[label=(\roman*)]
		\item $\mathcal{P}_\beta(\rho_1,\rho_2)=\mathcal{P}_\beta^+(\rho_1,\rho_2)$ and $\mathcal{P}_\beta(\rho_1,\rho_2)$ forms a submanifold of $H$ with codimension 3.
		\item $t\diamond(u,v) \in \mathcal{P}_\beta(\rho_1,\rho_2)$ exactly when $t=t_\beta(u,v)$.
		\item $\Psi_{(u,v)}^\beta(t)$ is increasing on $(t_\beta(u,v),+\infty)$ and 
		\[\Psi_{(u,v)}^\beta(t_\beta(u,v))=\min\limits_{t>0}\Psi_{(u,v)}^\beta(t)=
		\min\{\Psi_{(u,v)}^\beta(t):t(|\nabla u|_2^2+|\nabla v|_2^2)^{\frac{1}{2}}\leq s_1\}.\]
		\item The map $(u,v)\mapsto t_\beta(u,v)$ is of class $C^1$.
	\end{enumerate}
	\label{Lemma3.1变分结构}
\end{lem}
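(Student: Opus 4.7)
The plan is to derive every assertion from a one-variable analysis of the smooth fiber map $\Psi:=\Psi_{(u,v)}^\beta$, exploiting the chain $0<\gamma_p+\gamma_q<2\gamma_{r_1}=2\gamma_{r_2}<2$ built into the mass-subcritical regime. Dividing $\Psi'(t)$ by $t$ gives $(|\nabla u|_2^2+|\nabla v|_2^2)$ minus three terms carrying the strictly negative powers $t^{2\gamma_{r_1}-2}$, $t^{2\gamma_{r_2}-2}$ and $t^{\gamma_p+\gamma_q-2}$, so $\Psi'(t)/t$ is strictly increasing from $-\infty$ at $0^+$ to $|\nabla u|_2^2+|\nabla v|_2^2>0$ at $+\infty$. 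Hence $\Psi$ has exactly one critical point $t_\beta(u,v)>0$, is strictly decreasing on $(0,t_\beta)$, and strictly increasing on $(t_\beta,+\infty)$. Combined with $P_\beta(t\diamond(u,v))=t\Psi'(t)$, this already yields (ii) together with the monotonicity half of (iii).

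Differentiating $\Psi'$ once more and eliminating $|\nabla u|_2^2+|\nabla v|_2^2$ via $\Psi'(t_\beta)=0$, the coefficients combine to give
\begin{equation*}
\begin{split}
\Psi''(t_\beta)=&\frac{\gamma_{r_1}(2-2\gamma_{r_1})\lambda_1}{r_1}t_\beta^{2\gamma_{r_1}-2}\int(I_\alpha*|u|^{r_1})|u|^{r_1}+\frac{\gamma_{r_2}(2-2\gamma_{r_2})\lambda_2}{r_2}t_\beta^{2\gamma_{r_2}-2}\int(I_\alpha*|v|^{r_2})|v|^{r_2}\\
&+\beta(\gamma_p+\gamma_q)(2-\gamma_p-\gamma_q)t_\beta^{\gamma_p+\gamma_q-2}\int(I_\alpha*|u|^p)|v|^q,
\end{split}
\end{equation*}
strictly positive because each factor $2-2\gamma_{r_i}$ and $2-\gamma_p-\gamma_q$ is positive. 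Therefore $\mathcal{P}_\beta^0=\mathcal{P}_\beta^-=\emptyset$ and $\mathcal{P}_\beta=\mathcal{P}_\beta^+$. The codimension-3 submanifold assertion follows from the implicit function theorem applied to $F(u,v):=(|u|_2^2-\rho_1^2,|v|_2^2-\rho_2^2,P_\beta(u,v))$: evaluating $dF$ on the three tangent directions $(u,0)$, $(0,v)$ and the scaling vector $\frac{d}{dt}\big|_{t=1}(t\diamond(u,v))$ gives an upper-triangular matrix with diagonal $(2\rho_1^2,2\rho_2^2,\Psi''(1))$, and $\Psi''(1)=\Psi''(t_\beta)>0$ because $t_\beta(u,v)=1$ on $\mathcal{P}_\beta$.

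For the remaining identity in (iii), set $s:=(|\nabla u|_2^2+|\nabla v|_2^2)^{1/2}$; only the bound $t_\beta(u,v)\,s\le s_1$ is left, and I would actually establish the stronger $t_\beta\,s\le s_0$. Applying \eqref{GN} termwise together with $|\nabla u|_2^{2\gamma_{r_i}}\le s^{2\gamma_{r_i}}$, and exploiting $\gamma_{r_1}=\gamma_{r_2}$ in the present subsection, one obtains for every $\tau>0$
\begin{equation*}
P_\beta(\tau\diamond(u,v))\ge(\tau s)^2-2\gamma_{r_1}(A_1+A_2)(\tau s)^{2\gamma_{r_1}}-(\gamma_p+\gamma_q)A_3(\tau s)^{\gamma_p+\gamma_q}=(\tau s)\,h'(\tau s).
\end{equation*}
Taking $\tau=t_\beta$ and using $P_\beta(t_\beta\diamond(u,v))=0$ forces $h'(t_\beta s)\le 0$; since $h'(s)=s(1-g(s))$ with $g$ strictly decreasing from $+\infty$ to $0$ and $g(s_0)=1$, this gives $t_\beta s\le s_0<s_1$. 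Finally, (iv) follows from the implicit function theorem applied to the $C^1$ equation $\partial_t\Psi_{(u,v)}^\beta(t)=0$ at $(t_\beta(u,v),u,v)$, whose partial derivative in $t$ equals $\Psi''(t_\beta)>0$.

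The main obstacle is proving $t_\beta\,s\le s_1$ in (iii): one has to recognise that after termwise Gagliardo–Nirenberg the lower bound for $P_\beta(\tau\diamond(u,v))$ aligns exactly with $(\tau s)\,h'(\tau s)$, a collapse that uses crucially $r_1=r_2$. All remaining statements reduce to elementary one-variable calculus once the sign of $\Psi''(t_\beta)$ has been pinned down.
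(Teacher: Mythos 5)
Most of your argument is sound and in places cleaner than the paper's. Your treatment of the unique critical point via the strict monotonicity of $\Psi'(t)/t$ is exactly the paper's argument (their function $f$); your explicit formula for $\Psi''(t_\beta)$ obtained by eliminating the gradient term is a more direct route to $\mathcal{P}_\beta(\rho_1,\rho_2)=\mathcal{P}_\beta^+(\rho_1,\rho_2)$ than the paper's contradiction argument, and it is correct; and your proof of the localization in (iii) via $P_\beta(\tau\diamond(u,v))\ge(\tau s)\,h'(\tau s)$, which gives the sharper bound $t_\beta s\le s_0<s_1$, is a valid and more quantitative alternative to the paper's argument (they instead use $\Psi_{(u,v)}^\beta(t)>\kappa\rho_1\rho_2$ for $ts>s_1$ together with $\lim_{t\to0^+}\Psi_{(u,v)}^\beta(t)\le\kappa\rho_1\rho_2$). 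Part (iv) coincides with the paper.

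The genuine gap is in your proof that $\mathcal{P}_\beta(\rho_1,\rho_2)$ is a codimension-3 submanifold. You test $dF$ against the ``scaling vector'' $\frac{d}{dt}\big|_{t=1}(t\diamond(u,v))=\left(\tfrac{N}{2}u+x\cdot\nabla u,\ \tfrac{N}{2}v+x\cdot\nabla v\right)$, but for a generic $(u,v)\in H$ this is \emph{not} an element of $H$: $x\cdot\nabla u$ need not lie in $L^2(\mathbb{R}^N)$, and the path $t\mapsto t\diamond(u,v)$ is continuous but not differentiable in $H$. Consequently the chain-rule identity $\frac{d}{dt}P_\beta(t\diamond(u,v))\big|_{t=1}=dP_\beta(u,v)\left[\partial_t(t\diamond(u,v))\big|_{t=1}\right]=\Psi''(1)$ is not justified, and your upper-triangular matrix is built on a direction that may not exist in the tangent space. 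This is precisely why the paper (following Soave) argues differently: if $dP_\beta(u,v)$ were a linear combination of $dL_1(u)$ and $dL_2(v)$, then $(u,v)$ would weakly solve an autonomous elliptic system with Lagrange multipliers, would by elliptic regularity satisfy the Pohozaev identity of that system, and combining that identity with $P_\beta(u,v)=0$ yields $(\Psi_{(u,v)}^\beta)''(1)=0$, contradicting $\mathcal{P}_\beta^0(\rho_1,\rho_2)=\emptyset$. Replacing your surjectivity computation by this contradiction argument closes the gap; everything else in your proposal stands.
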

\begin{proof}
$(\romannumeral1)$ Assume, for the sake of contradiction, that  $(u,v)\in \mathcal{P}_\beta(\rho_1,\rho_2)\setminus \mathcal{P}_\beta^+(\rho_1,\rho_2)$. It follows that
\begin{align*}
	(\Psi_{(u,v)}^\beta)'(1)=&|\nabla u|_2^2+|\nabla v|_2^2-\frac{\gamma_{r_1}}{r_1}\lambda_1 \int_{\mathbb{R}^N}(I_\alpha*|u|^{r_1})|u|^{r_1}
	-\frac{\gamma_{r_2}}{r_2}\lambda_2 \int_{\mathbb{R}^N}(I_\alpha*|v|^{r_2})|v|^{r_2}\\
	&-\beta(\gamma_p+\gamma_q)\int_{\mathbb{R}^N}(I_\alpha*|u|^p)|v|^q=0,
\end{align*}
\begin{align*}
	(&\Psi_{(u,v)}^\beta)''(1)=|\nabla u|_2^2+|\nabla v|_2^2
	-\beta(\gamma_p+\gamma_q)(\gamma_p+\gamma_q-1)\int_{\mathbb{R}^N}(I_\alpha*|u|^p)|v|^q\\
	&-\frac{\gamma_{r_1}}{r_1}\lambda_1 (2\gamma_{r_1}-1)\int_{\mathbb{R}^N}(I_\alpha*|u|^{r_1})|u|^{r_1}
	-\frac{\gamma_{r_2}}{r_2}\lambda_2(2\gamma_{r_2}-1)
	\int_{\mathbb{R}^N}(I_\alpha*|v|^{r_2})|v|^{r_2}\leq0.
\end{align*}
By combining the preceding two equations, we derive
\begin{align*}
	\frac{\gamma_{r_1}}{r_1}\lambda_1 (2-2\gamma_{r_1})&\int_{\mathbb{R}^N}(I_\alpha*|u|^{r_1})|u|^{r_1}
	+\frac{\gamma_{r_2}}{r_2}\lambda_2(2-2\gamma_{r_2})\int_{\mathbb{R}^N}(I_\alpha*|v|^{r_2})|v|^{r_2}\\
	&+\beta(\gamma_p+\gamma_q)(2-\gamma_p-\gamma_q)\int_{\mathbb{R}^N}(I_\alpha*|u|^p)|v|^q\leq 0.
\end{align*}
Since $0<\gamma_p+\gamma_q<2\gamma_{r_1}=2\gamma_{r_2}<2$, we obtain $u,v\equiv 0$, which contradicts $(u,v)\in \mathcal{S}(\rho_1,\rho_2)$.

We next show that $\mathcal{P}_\beta(\rho_1,\rho_2)$ forms a submanifold of $H$ with codimension 3. The proof is analogous to \cite[Lemma 5.3]{Soave2020}. For the reader's convenience, we provide the details. Notice that
\begin{equation*}
	\mathcal{P}_\beta(\rho_1,\rho_2)=\{(u,v)\in H: P_\beta(u,v)=0,\, L_1(u)=0,\, L_2(v)=0\},
\end{equation*}
where $L_1(u):=\rho_1^2-|u|_2^2$, $L_2(v):=\rho_2^2-|v|_2^2$. Thus, it suffices to show that $d(P_\beta,L_1,L_2): H \to \mathbb{R}^3$ is surjective, 
for any $(u,v) \in \mathcal{P}_\beta(\rho_1,\rho_2)$. Since $dL_1(u)$ and $dL_2(v)$ are independent, for the sake of contradiction,
 assume that $dP_\beta(u,v)$ can be expressed as a linear combination of $dL_1(u)$, $dL_2(v)$. Hence, there exists $\mu_1, \mu_2 \in \mathbb{R}$ such that $(u,v)$ solves the following system
\begin{equation*}
 	\left\{
 	\begin{aligned}
 		-\Delta u + \mu_1 u &=\lambda_1 \gamma_{r_1}( I_\alpha * |u|^{r_1} ) |u|^{r_1-2} u + \beta p \frac{\gamma_p+\gamma_q}{2}( I_\alpha * |v|^q)|u|^{p-2} u , \quad \text{in} \,\, \mathbb{R}^N, \\
 		-\Delta v + \mu_2 v &=\lambda_2 \gamma_{r_2}( I_\alpha * |v|^{r_2} ) |v|^{r_2-2} v + \beta q\frac{\gamma_p+\gamma_q}{2           }( I_\alpha * |u|^p)|v|^{q-2} v  , \quad \text{in} \,\, \mathbb{R}^N ,\\
 		\int_{\mathbb{R}^N} u^2 = \rho_1^2\, &,  \int_{\mathbb{R}^N} v^2 = \rho_2^2,
 	\end{aligned}
 	\right.
\end{equation*}
By the Pohozaev identity, we obtain
\begin{align*}
|\nabla u|_2^2+|\nabla v|_2^2&-\frac{\gamma_{r_1}^2}{r_1}\lambda_1 \int_{\mathbb{R}^N}(I_\alpha*|u|^{r_1})|u|^{r_1}
	-\frac{\gamma_{r_2}^2}{r_2}\lambda_2 \int_{\mathbb{R}^N}(I_\alpha*|v|^{r_2})|v|^{r_2}\\
	&-\beta\frac{(\gamma_p+\gamma_q)^2}{2}\int_{\mathbb{R}^N}(I_\alpha*|u|^p)|v|^q=0.
\end{align*}
Noting that $P_\beta(u,v)=0$, we derive
\begin{align*}
&|\nabla u|_2^2+|\nabla v|_2^2
	-\beta(\gamma_p+\gamma_q)(\gamma_p+\gamma_q-1)\int_{\mathbb{R}^N}(I_\alpha*|u|^p)|v|^q\\
	&-\frac{\gamma_{r_1}}{r_1}\lambda_1 (2\gamma_{r_1}-1)\int_{\mathbb{R}^N}(I_\alpha*|u|^{r_1})|u|^{r_1}
	-\frac{\gamma_{r_2}}{r_2}\lambda_2(2\gamma_{r_2}-1)
	\int_{\mathbb{R}^N}(I_\alpha*|v|^{r_2})|v|^{r_2}=0,
\end{align*}
which contradicts $\mathcal{P}_\beta^0(\rho_1,\rho_2)=\emptyset$.\\
$(\romannumeral2)$ Recall that
\begin{align*}
P_\beta(t\diamond(u,v))&=t(\Psi_{(u,v)}^\beta)'(t)=t^2(|\nabla u|_2^2+|\nabla v|_2^2)-\frac{t^{2\gamma_{r_1}}\gamma_{r_1}}{r_1}\lambda_1\int_{\mathbb{R}^N}(I_\alpha*|u|^{r_1})|u|^{r_1}\\
&-\frac{t^{2\gamma_{r_2}}\gamma_{r_2}}{r_2}\lambda_2\int_{\mathbb{R}^N}(I_\alpha*|v|^{r_2})|v|^{r_2}-\beta t^{\gamma_p+\gamma_q}(\gamma_p+\gamma_q)\int_{\mathbb{R}^N}(I_\alpha*|u|^p)|v|^q.
\end{align*}
We consider
\begin{align*}
	f(t):=&\frac{t^{2\gamma_{r_1}-2}\gamma_{r_1}}{r_1}\lambda_1\int_{\mathbb{R}^N}(I_\alpha*|u|^{r_1})|u|^{r_1}+
	\frac{t^{2\gamma_{r_2}-2}\gamma_{r_2}}{r_2}\lambda_2\int_{\mathbb{R}^N}(I_\alpha*|v|^{r_2})|v|^{r_2}\\
	&+\beta t^{\gamma_p+\gamma_q-2}(\gamma_p+\gamma_q)\int_{\mathbb{R}^N}(I_\alpha*|u|^p)|v|^q.
\end{align*}
Direct computation shows that $\lim\limits_{s\to 0^+}f(s)=+\infty$, $\lim\limits_{s\to +\infty}f(s)=0$ and $f(s)$ is strictly decreasing on $(0,+\infty)$.
Then $P_\beta(t\diamond(u,v))$ possesses a unique zero $t_\beta(u,v)>0$, from which the remaining results follow.\\
$(\romannumeral3)$ Note that
\[\Psi_{(u,v)}^\beta(t)=\mathcal{J}_\beta(t\diamond(u,v)) \geq h(t(|\nabla u|_2^2+|\nabla v|_2^2)^{\frac{1}{2}}),\]
and
\[\Psi_{(u,v)}^\beta(t)>\kappa\rho_1\rho_2, \quad \forall t\in (\frac{s_1}{(|\nabla u|_2^2+|\nabla v|_2^2)^{\frac{1}{2}}},+\infty).\]
We also have $\lim\limits_{t\to 0^+}\Psi_{(u,v)}^\beta(t)=-\kappa\int_{\mathbb{R}^N}uv \leq \kappa\rho_1\rho_2$, $\lim\limits_{t\to +\infty}\Psi_{(u,v)}^\beta(t)=+\infty$ and $\Psi_{(u,v)}^\beta(t)$ is strictly decreasing for $0<t\ll 1$. Thus, we conclude that
\[\Psi_{(u,v)}^\beta(t_\beta(u,v))=\min\limits_{t>0}\Psi_{(u,v)}^\beta(t)=
\min\{\Psi_{(u,v)}^\beta(t):t(|\nabla u|_2^2+|\nabla v|_2^2)^{\frac{1}{2}}\leq s_1\}.\]
$(\romannumeral4)$ Define $\eta(t,(u,v))=(\Psi_{(u,v)}^\beta)'(t)$. It follows that $\eta(t_\beta(u,v),(u,v))=0$ and
\[\partial_t \eta(t,(u,v))|_{t=t_\beta(u,v)}=(\Psi_{(u,v)}^\beta)''(t_\beta(u,v))>0.\]
Hence, the map $(u,v)\mapsto t_\beta(u,v)$ is of class $C^1$ by the implicit function theorem.
\end{proof}
Let $s>0$, and define
\begin{equation}
	A_s(\rho_1,\rho_2):=\{(u,v) \in \mathcal{S}(\rho_1,\rho_2):|\nabla u|_2^2+|\nabla v|_2^2<s^2\}.
	\label{As rho1 rho2}
\end{equation}
The subsequent lemma provides the properties of $m_\beta(\rho_1,\rho_2)$ and demonstrates its monotonicity.

\begin{lem}
		Given $(u,v)\in \mathcal{S}(\rho_1,\rho_2)$, $\lambda_1, \lambda_2, \beta, \kappa, \rho_1,\rho_2 > 0$, then
	\begin{enumerate}[label=(\roman*)]
		\item $m_\beta(\rho_1,\rho_2)=\inf\limits_{A_{s_1}(\rho_1,\rho_2)}\mathcal{J}_\beta(u,v)=\inf\limits_{\mathcal{P}_\beta^+(\rho_1,\rho_2)}\mathcal{J}_\beta(u,v)<-\kappa\rho_1\rho_2$. Furthermore, we can find $\delta>0$ satisfying
		\[m_\beta(\rho_1,\rho_2)<\inf\limits_{A_{s_1}(\rho_1,\rho_2) \setminus A_{s_1-\delta}(\rho_1,\rho_2)}\mathcal{J}_\beta.\]
		\item $m_\beta(\rho_1,\rho_2) \leq m_\beta(\rho_1',\rho_2')$, for any $0<\rho_1'\leq \rho_1$, $0<\rho_2' \leq \rho_2$.
	\end{enumerate}
	\label{Lemma3.2能量值单调性}
\end{lem}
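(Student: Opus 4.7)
I address parts (i) and (ii) separately.

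For (i), the identity $m_\beta(\rho_1,\rho_2)=\inf_{\mathcal{P}_\beta^+(\rho_1,\rho_2)}\mathcal{J}_\beta$ is immediate from Lemma \ref{Lemma3.1变分结构}(i). To connect these with $\inf_{A_{s_1}(\rho_1,\rho_2)}\mathcal{J}_\beta$, I push $(u,v)\in A_{s_1}(\rho_1,\rho_2)$ to $t_\beta(u,v)\diamond(u,v)\in\mathcal{P}_\beta^+$ via the fiber map and invoke Lemma \ref{Lemma3.1变分结构}(iii): since $t=1$ lies in the admissible set $\{t(|\nabla u|_2^2+|\nabla v|_2^2)^{1/2}\leq s_1\}$, we get $\mathcal{J}_\beta(t_\beta(u,v)\diamond(u,v))\leq\mathcal{J}_\beta(u,v)$, hence $\inf_{A_{s_1}}\mathcal{J}_\beta\geq m_\beta$. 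The reverse direction uses the pointwise energy bound $\mathcal{J}_\beta(u,v)\geq h((|\nabla u|_2^2+|\nabla v|_2^2)^{1/2})$: since $h\geq h(s_1)=\kappa\rho_1\rho_2$ on $[s_1,+\infty)$ by the monotonicity of $h$ past $s_0$, any $(u,v)\in\mathcal{P}_\beta^+$ with $\mathcal{J}_\beta(u,v)<\kappa\rho_1\rho_2$ must belong to $A_{s_1}$. To justify this, I prove the strict bound $m_\beta<-\kappa\rho_1\rho_2$ by testing with $(u,v)=(\rho_1\phi,\rho_2\phi)$ for a positive radial $\phi\in H^1_r$ with $|\phi|_2=1$, so that $\int uv=\rho_1\rho_2$; since $\Psi_{(u,v)}^\beta$ is strictly decreasing on $(0,t_\beta(u,v))$ with $\lim_{t\to 0^+}\Psi_{(u,v)}^\beta(t)=-\kappa\rho_1\rho_2$, this yields $m_\beta\leq\Psi_{(u,v)}^\beta(t_\beta(u,v))<-\kappa\rho_1\rho_2$. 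The $\delta$-assertion follows from the continuity of $h$ at $s_1$: since $h(s_1)>m_\beta$ and $h$ is increasing on $[s_0,+\infty)$, a small $\delta>0$ gives $h(s_1-\delta)>m_\beta$, and then $\mathcal{J}_\beta\geq h\geq h(s_1-\delta)>m_\beta$ on $A_{s_1}\setminus A_{s_1-\delta}$.

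For (ii), I use mass augmentation by distant radial bumps. Fix a radial profile $\phi_0\in C_c^\infty(\mathbb{R}^N)$ supported in $\{1\leq|x|\leq 2\}$ with $|\phi_0|_2^2=\rho_1^2-(\rho_1')^2$, and set $\phi_R(x)=R^{-N/2}\phi_0(x/R)$, radial and supported in $\{R\leq|x|\leq 2R\}$, with $|\phi_R|_2=|\phi_0|_2$ and $|\nabla\phi_R|_2=R^{-1}|\nabla\phi_0|_2\to 0$; define $\psi_R$ analogously with $L^2$-mass $\sqrt{\rho_2^2-(\rho_2')^2}$. Given $\varepsilon>0$, take $(u',v')\in\mathcal{P}_\beta^+(\rho_1',\rho_2')$ with $\mathcal{J}_\beta(u',v')\leq m_\beta(\rho_1',\rho_2')+\varepsilon$, and renormalize $(u'+\phi_R,v'+\psi_R)$ to exact masses $(\rho_1,\rho_2)$, obtaining $(\tilde u_R,\tilde v_R)\in\mathcal{S}(\rho_1,\rho_2)$. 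Using the pointwise radial decay of $u',v'$ (Strauss) together with the decay of the Riesz kernel $I_\alpha(x)\sim|x|^{\alpha-N}$, every cross interaction vanishes as $R\to\infty$: linear pairings like $\int u'\phi_R$; mixed Choquard integrals such as $\int(I_\alpha*|u'|^{r_1})|\phi_R|^{r_1}$ and $\int(I_\alpha*|u'|^p)|\psi_R|^q$; and the pure self-terms of $\phi_R,\psi_R$, which vanish by Gagliardo--Nirenberg since $|\nabla\phi_R|_2\to 0$ while $|\phi_R|_2$ stays bounded. The renormalization factors tend to $1$, so $\mathcal{J}_\beta(\tilde u_R,\tilde v_R)\to\mathcal{J}_\beta(u',v')$, whence $m_\beta(\rho_1,\rho_2)\leq m_\beta(\rho_1',\rho_2')+2\varepsilon$ for $R$ large, and $\varepsilon\to 0^+$ concludes.

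The main obstacle is the quantitative vanishing of the mixed Choquard integrals in part (ii): one must split each double integral by the $\sim R$-separation between $\supp\phi_R$ (or $\supp\psi_R$) and the effective support of $u',v'$, apply Hardy--Littlewood--Sobolev on the "separated" piece using the explicit decay $I_\alpha(x)\lesssim|x|^{\alpha-N}$, and control the "close" piece via the pointwise Strauss decay of radial $H^1$ functions.
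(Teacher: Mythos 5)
Your part (i) tracks the paper's own proof almost step for step: the identification $m_\beta(\rho_1,\rho_2)=\inf_{\mathcal{P}_\beta^+(\rho_1,\rho_2)}\mathcal{J}_\beta$ from Lemma \ref{Lemma3.1变分结构}, the two-sided comparison with $\inf_{A_{s_1}(\rho_1,\rho_2)}\mathcal{J}_\beta$ (fiber map in one direction, the bound $\mathcal{J}_\beta\geq h$ in the other), the test pair $(\rho_1\phi,\rho_2\phi)$ for the strict inequality $m_\beta<-\kappa\rho_1\rho_2$ (the paper's choice $v=\tfrac{\rho_2}{\rho_1}u$ is the same), and continuity of $h$ near $s_1$ for the $\delta$-statement; all of this is correct. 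The genuine divergence is in part (ii): the paper omits this proof, deferring to \cite{Pei2025}, whereas you supply the standard mass-augmentation argument with distant annular bumps of vanishing gradient, and your argument is essentially sound. Three remarks. First, you should say why $m_\beta(\rho_1,\rho_2)\leq\mathcal{J}_\beta(\tilde u_R,\tilde v_R)$ for a test pair that lies in $\mathcal{S}(\rho_1,\rho_2)$ but not in $\mathcal{P}_\beta(\rho_1,\rho_2)$; this follows from Lemma \ref{Lemma3.1变分结构}(iii), since $\mathcal{J}_\beta(t_\beta(w)\diamond w)=\min_{t>0}\Psi^\beta_w(t)\leq\Psi^\beta_w(1)=\mathcal{J}_\beta(w)$ for every $w\in\mathcal{S}(\rho_1,\rho_2)$, but it is not automatic from the definition of $m_\beta$. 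Second, with $\phi_R$ and $\psi_R$ supported in the same annulus, the linear cross term $\int_{\mathbb{R}^N}\phi_R\psi_R$ does \emph{not} vanish as $R\to\infty$; either separate the two supports (e.g.\ put $\psi_R$ on $\{3R\leq|x|\leq 4R\}$) or take both bumps nonnegative and note that this term enters $\mathcal{J}_\beta$ with the sign $-\kappa$, so it can only lower the energy and the inequality still goes the right way. Third, the Strauss-decay/kernel-separation machinery you identify as the main obstacle is unnecessary: since $\phi_R\rightharpoonup 0$ in $H^1$ and $|\nabla\phi_R|_2\to 0$, the Brezis--Lieb splittings \eqref{Brezis-Lieb of Riesz potential} and \eqref{2.12} together with \eqref{GN} (using $\gamma_{r_i},\gamma_p,\gamma_q>0$) already kill every Choquard contribution involving the bumps, which is presumably how the outsourced proof proceeds.
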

\begin{proof}
$(\romannumeral1)$	It follows from Lemma \ref{Lemma3.1变分结构} that 
	$ m_\beta(\rho_1,\rho_2)= \inf\limits_{ \mathcal{P}_\beta^+(\rho_1,\rho_2)}\mathcal{J}_\beta(u,v).$
On the one hand, $\mathcal{P}_\beta^+(\rho_1,\rho_2) \subset A_{s_1}(\rho_1,\rho_2)$, then
\[m_\beta(\rho_1,\rho_2) \geq \inf\limits_{A_{s_1}(\rho_1,\rho_2)}\mathcal{J}_\beta(u,v).\]
On the other hand, given $(u,v) \in A_{s_1}(\rho_1,\rho_2)$, since $m_\beta(\rho_1,\rho_2)\leq \mathcal{J}_\beta(t_\beta(u,v)\diamond(u,v)) \leq \mathcal{J}_\beta(u,v)$, we obtain
\[m_\beta(\rho_1,\rho_2) \leq \inf\limits_{A_{s_1}(\rho_1,\rho_2)}\mathcal{J}_\beta(u,v).\]
Let $|u|_2=\rho_1$ and $u>0$, $v:=\frac{\rho_2}{|u|_2}u$, there exists $0<t\ll 1$ such that $t\diamond(u,v) \in A_{s_1}(\rho_1,\rho_2)$. Hence, 
\begin{align*}
	m_\beta&(\rho_1,\rho_2) \leq \mathcal{J}_\beta(t\diamond(u,v))=\frac{t^2}{2}(|\nabla u|^2_2+\frac{\rho_2^2}{\rho_1^2}|\nabla u|^2_2)
	-\frac{\lambda_1 t^{2\gamma_{r_1}}}{2r_1}\int_{\mathbb{R}^N}(I_\alpha*|u|^{r_1})|u|^{r_1}\\
	&-\frac{\lambda_2 t^{2\gamma_{r_2}}}{2r_2}\frac{\rho_2^{2r_2}}{\rho_1^{2r_2}}\int_{\mathbb{R}^N}(I_\alpha*|u|^{r_2})|u|^{r_2}
	-\beta t^{\gamma_p+\gamma_q}(\frac{\rho_2}{\rho_1})^q\int_{\mathbb{R}^N}(I_\alpha*|u|^p)|u|^q-\kappa\rho_1\rho_2\\
	&<-\kappa\rho_1\rho_2, \qquad \text{for } t>0 \text{ small enough}.
\end{align*}
Lastly, we observe that $h(s_1)=\kappa\rho_1\rho_2>0$. There exists $\delta>0$ such that $h(s)\geq \frac{1}{2}m_\beta(\rho_1,\rho_2)$, 
for each $s\in [s_1-\delta,s_1]$. Thus, for $(u,v) \in A_{s_1}(\rho_1,\rho_2) \setminus A_{s_1-\delta}(\rho_1,\rho_2)$, 
we obtain \[\mathcal{J}_\beta(u,v)\geq h\left( (|\nabla u|_2^2+|\nabla v|_2^2)^{\frac{1}{2}} \right)\geq \frac{1}{2}m_\beta(\rho_1,\rho_2)>m_\beta(\rho_1,\rho_2).\]
$(\romannumeral2)$ The argument follows closely that of \cite[Lemma 3.7]{Pei2025}; hence, we omit it here.
\end{proof}

We now proceed to establish the compactness result.
\begin{lem}
	Assume that $N\in \{3,4\}$, $\lambda_1, \lambda_2, \beta, \kappa, \rho_1,\rho_2 > 0$ and  $\{(u_n,v_n)\}\subset \mathcal{S}(\rho_1,\rho_2)$ satisfies that as $n\to \infty$
	\begin{align}
		\mathcal{J}_\beta'(u_n,v_n)+\mu_{1,n}&u_n+\mu_{2,n}v_n\to 0,  \qquad \text{for some}\; \mu_{1,n},\mu_{2,n}\in \mathbb{R}, \label{3.6}\\
		\mathcal{J}_\beta(u_n,v_n)&\to m_\beta(\rho_1,\rho_2), \qquad 	\mathcal{P}_\beta(u_n,v_n)\to 0, \label{3.7}\\
		&u_n^-, v_n^- \to 0 ~~ \text{a.e in}\; \mathbb{R}^N, \label{3.8}
	\end{align}
	with
	\begin{equation}
		c:=m_\beta(\rho_1,\rho_2)<-\kappa\rho_1\rho_2. \label{3.9}
	\end{equation}
	Then there exists $(u,v)\in H$, $u,v>0$ and $\mu_1,\mu_2>0$ satisfying that, for a subsequence,
	$(u_n,v_n)\to (u,v)\; \text{in}\; H$ and $(\mu_{1,n},\mu_{2,n}) \to (\mu_1,\mu_2)\; \text{in} \; \mathbb{R}^2$.
	\label{Lemma3.3紧性}
\end{lem}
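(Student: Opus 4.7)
The strategy is to carry out a three-step program.

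First, to establish boundedness of $\{(u_n,v_n)\}$ in $H$, I would use the pointwise lower bound $\mathcal{J}_\beta(u_n,v_n)\geq h\!\bigl((|\nabla u_n|_2^2+|\nabla v_n|_2^2)^{1/2}\bigr)$ obtained earlier via Hardy--Littlewood--Sobolev, Cauchy--Schwarz on $\kappa\int u_nv_n$, and the vector-valued Gagliardo--Nirenberg inequality \eqref{GN}. In the current subcritical regime one has $0<\gamma_p+\gamma_q<2\gamma_{r_1}=2\gamma_{r_2}<2$, so $h(s)\to+\infty$ as $s\to+\infty$, and the convergence $\mathcal{J}_\beta(u_n,v_n)\to m_\beta(\rho_1,\rho_2)$ forces $|\nabla u_n|_2^2+|\nabla v_n|_2^2$ to stay bounded; combined with the mass constraints this yields $H$-boundedness. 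Passing to a subsequence, $(u_n,v_n)\rightharpoonup(u,v)$ weakly in $H$ and pointwise a.e.; since $H_r^1(\mathbb{R}^N)$ embeds compactly into $L^s(\mathbb{R}^N)$ for every $2<s<2^*$, the Hardy--Littlewood--Sobolev inequality guarantees convergence of every Choquard-type nonlocal integral to its natural limit.

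Second, I would identify the limit equation, extract the Lagrange multipliers, and prove nontriviality and positivity. Testing \eqref{3.6} separately against $(u_n,0)$ and $(0,v_n)$ shows that $\{\mu_{1,n}\}$ and $\{\mu_{2,n}\}$ are bounded, so along a subsequence $\mu_{i,n}\to\mu_i$, and passing to the limit in \eqref{3.6} gives that $(u,v)$ is a weak solution of the coupled system with multipliers $(\mu_1,\mu_2)$. To exclude $(u,v)\equiv(0,0)$, note that in that case all nonlocal terms vanish and $P_\beta(u_n,v_n)\to 0$ forces $|\nabla u_n|_2^2+|\nabla v_n|_2^2\to 0$; then $\mathcal{J}_\beta(u_n,v_n)\to -\kappa\lim_n\int_{\mathbb{R}^N}u_nv_n\geq -\kappa\rho_1\rho_2$ by Cauchy--Schwarz, contradicting \eqref{3.9}. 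Hypothesis \eqref{3.8} yields $u,v\geq 0$; if, say, $v\equiv 0$, the second limit equation reduces to $\kappa u\equiv 0$, forcing $u\equiv 0$ and contradicting nontriviality, and the strong maximum principle applied to the smooth solution provided by Lemma \ref{smooth} then upgrades this to $u,v>0$. For positivity of $\mu_i$, if $\mu_1\leq 0$ then every term on the right of the first equation is nonnegative, whence $-\Delta u\geq 0$ on $\mathbb{R}^N$; since $u\in L^2\subset L^r$ with $r=2\leq N/(N-2)$ for $N\in\{3,4\}$, Lemma \ref{Liouville} forces $u\equiv 0$, a contradiction; symmetrically $\mu_2>0$.

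The third and most delicate step is strong convergence. Set $\rho_1':=|u|_2$ and $\rho_2':=|v|_2$, so $\rho_i'\leq\rho_i$ by weak lower semicontinuity. Since $(u,v)\in\mathcal{P}_\beta(\rho_1',\rho_2')$, Lemma \ref{Lemma3.2能量值单调性}(ii) yields $\mathcal{J}_\beta(u,v)\geq m_\beta(\rho_1',\rho_2')\geq m_\beta(\rho_1,\rho_2)$. Testing \eqref{3.6} against $(u_n-u,v_n-v)$ and applying Hilbert-space orthogonality (Brezis--Lieb) for the quadratic terms together with strong $L^s$-convergence of the Choquard nonlinearities gives
\begin{equation*}
|\nabla(u_n-u)|_2^2+|\nabla(v_n-v)|_2^2+\mu_1|u_n-u|_2^2+\mu_2|v_n-v|_2^2=2\kappa\int_{\mathbb{R}^N}(u_n-u)(v_n-v)+o(1).
\end{equation*}
Combining this with the Brezis--Lieb decomposition of $\mathcal{J}_\beta(u_n,v_n)$ and the lower bound $\mathcal{J}_\beta(u,v)\geq m_\beta(\rho_1,\rho_2)$ determines both sides and forces $\rho_1'=\rho_1$, $\rho_2'=\rho_2$, together with $|\nabla(u_n-u)|_2,|\nabla(v_n-v)|_2\to 0$; the mass equalities then upgrade weak convergence to strong convergence in $H$. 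The principal obstacle throughout is precisely the non-compactness of the linear coupling $\kappa\int u_nv_n$, since $H_r^1$ fails to embed compactly into $L^2(\mathbb{R}^N)$; it is circumvented by exploiting the positivity $\mu_i>0$ of the multipliers in tandem with the mass-monotonicity of $m_\beta$ furnished by Lemma \ref{Lemma3.2能量值单调性}.
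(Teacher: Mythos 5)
Your Steps 1 and 2 coincide with the paper's argument: coercivity of $h$ in the mass-subcritical regime gives boundedness, the compact embedding of $H^1_r(\mathbb{R}^N)$ into $L^s(\mathbb{R}^N)$ handles all the Choquard terms, nontriviality follows from the level $c<-\kappa\rho_1\rho_2$ together with the observation that one vanishing component forces the other to vanish through the linear coupling, and positivity of the multipliers follows from Lemma \ref{Liouville}. The gap is in Step 3. Write $a:=\rho_1^2-|u|_2^2\ge 0$, $b:=\rho_2^2-|v|_2^2\ge 0$, $G:=\lim_n\bigl(|\nabla(u_n-u)|_2^2+|\nabla(v_n-v)|_2^2\bigr)$ and $I:=\lim_n\int_{\mathbb{R}^N}(u_n-u)(v_n-v)$. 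Your tested identity reads $G+\mu_1 a+\mu_2 b=2\kappa I$, the comparison $\mathcal{J}_\beta(u,v)\ge m_\beta(\rho_1,\rho_2)$ combined with the Brezis--Lieb decomposition of the energy gives $\tfrac12 G-\kappa I\le 0$, and Cauchy--Schwarz gives $I\le\sqrt{ab}$. These three facts do dispose of the cases $a=0<b$ and $b=0<a$ (one gets $\mu_2 b\le 2\kappa\sqrt{ab}=0$, a contradiction) and give $G=0$ when $a=b=0$; but when \emph{both} $a>0$ and $b>0$ they only yield $\mu_1 a+\mu_2 b\le 2\kappa\sqrt{ab}$, hence by AM--GM merely $\kappa\ge\sqrt{\mu_1\mu_2}$ --- which is not a contradiction, since Theorem 1.1 imposes no upper bound on $\kappa$. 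So the claim that the combination ``determines both sides and forces $\rho_1'=\rho_1$, $\rho_2'=\rho_2$'' does not follow from what you have written; positivity of the multipliers and mass-monotonicity of $m_\beta$ alone are not enough.

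The paper closes exactly this remaining case with an additional Liouville-type argument: if $\kappa\ge\sqrt{\mu_1\mu_2}$, then since $u,v\ge 0$ make all Choquard terms nonnegative, the limit system gives $-\Delta u+\mu_1 u\ge\kappa v\ge\sqrt{\mu_1\mu_2}\,v$ and $-\Delta v+\mu_2 v\ge\sqrt{\mu_1\mu_2}\,u$; setting $(\overline u,\overline v):=(\sqrt{\mu_2}\,u,\sqrt{\mu_1}\,v)$ one finds $-\Delta\overline u+\mu_1\overline u\ge\mu_2\overline v$ and $-\Delta\overline v+\mu_2\overline v\ge\mu_1\overline u$, whence $-\Delta(\overline u+\overline v)\ge 0$ with $\overline u+\overline v\ge 0$ smooth (Lemma \ref{smooth}) and in $L^2(\mathbb{R}^N)$, so Lemma \ref{Liouville} forces $u\equiv v\equiv 0$, contradicting Step 2. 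You need to insert this (or an equivalent) argument to rule out simultaneous strict mass loss in both components; with it, your Step 3 becomes a correct reorganization of the paper's proof.
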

\begin{proof}
	We structure the argument in three distinct steps.
	
\text{Step 1:}Recall that
\[\mathcal{J}_\beta(u,v) \geq h((|\nabla u|_2^2+|\nabla v|_2^2)^{\frac{1}{2}}),\]
where $h(s)$ is defined in \eqref{h(s)}. Since $0<\gamma_p+\gamma_q<2\gamma_{r_1}=2\gamma_{r_2}<2$, 
it follows from \eqref{3.7} that $\{(u_n,v_n)\}$ is bounded in $H$.
Drawing from \eqref{3.6}, one can infer that
\[ \mu_{1,n}=-\frac{1}{\rho_1^2}\mathcal{J}_\beta'(u_n,v_n)[(u_n,0)]+o_n(1), \qquad \mu_{2,n}=-\frac{1}{\rho_2^2}\mathcal{J}_\beta'(u_n,v_n)[(0,v_n)]+o_n(1).\]
Therefore, $(\mu_{1,n},\mu_{2,n})$ is bounded in $\mathbb{R}^2$.
Then we can find $(u,v) \in H$, $(\mu_1,\mu_2) \in \mathbb{R}^2$, up to a subsequence, satisfying
\begin{align*}
	&(u_n,v_n)\rightharpoonup (u,v)\quad in  \; H,\\
	&(u_n,v_n)\to (u,v) \quad in  \; L^s(\mathbb{R}^N)\times L^s(\mathbb{R}^N),~s\in (2,2^*),\\
	&(u_n,v_n)\to (u,v) \quad a.e \; in  ~ \mathbb{R}^N,\\
	&(\mu_{1,n},\mu_{2,n}) \to (\mu_1,\mu_2) \quad in  \; \mathbb{R}^2.
\end{align*}
Furthermore, combining \eqref{3.6} and \eqref{3.8} yields that
\begin{align}
	\mathcal{J}_\beta'(u,v)+\mu_1 u+\mu_2 v&=0, \qquad u\geq 0,\; v\geq 0. \label{3.10}\\
	P_\beta(u,v)&=0 .   
	\label{3.11}
\end{align}

\text{Step 2:} Our goal is to demonstrate $u \neq 0$, $v \neq 0$. The maximum principle then ensures $u, v > 0$. 
Assume, for the sake of contradiction, that $u=0$. 
Based on \eqref{3.10}, we obtain
\begin{equation*}
	\left\{
	\begin{aligned}
		0&= \kappa v,\\
		-\Delta v + \mu_2 v &=\lambda_2 ( I_\alpha * |v|^{r_2} ) |v|^{r_2-2} v ,
	\end{aligned}
	\right.
\end{equation*}
Hence, one has $v=0$. Assuming without loss of generality that 
$|\nabla u_n|_2^2 \to a_1 \geq 0$, $|\nabla v_n|_2^2 \to a_2 \geq 0$.
Based on the weak convergence $(u_n,v_n)\rightharpoonup (u,v)$ in $H$, it follows from  \cite[Lemma 2.5]{ZhangandZhong2023} that
\begin{equation}
	\int_{\mathbb{R}^N}(I_\alpha*|u_n|^p)|v_n|^q=\int_{\mathbb{R}^N}(I_\alpha*|u|^p)|v|^q+o_n(1).
	\label{2.12}
\end{equation}
As a result of \eqref{2.12}, one has $\int_{\mathbb{R}^N}(I_\alpha*|u_n|^p)|v_n|^q$, $\int_{\mathbb{R}^N}(I_\alpha*|u_n|^{r_1})|u_n|^{r_1}$ and $\int_{\mathbb{R}^N}(I_\alpha*|v_n|^{r_2})|v_n|^{r_2} \to 0$.
It follows from \eqref{GN} that
\begin{equation*}
\begin{split}
	c+o_n(1)&=\mathcal{J}_\beta(u_n,v_n)=\frac{1}{2}\int_{\mathbb{R}^N}(|\nabla u_n|^2+|\nabla v_n|^2)-\frac{\lambda_1}{2r_1}\int_{\mathbb{R}^N}(I_\alpha*|u_n|^{r_1})|u_n|^{r_1}\\
	&-\frac{\lambda_2}{2r_2}\int_{\mathbb{R}^N}(I_\alpha*|v_n|^{r_2})|v_n|^{r_2}-\beta \int_{\mathbb{R}^N}(I_\alpha*|u_n|^p)|v_n|^q-\kappa \int_{\mathbb{R}^N}u_nv_n\\
	&\geq \frac{1}{2}(a_1+a_2) -\kappa\rho_1\rho_2+o_n(1),
\end{split}
\end{equation*}
which is in contradiction with $c<-\kappa\rho_1\rho_2 $. 

\text{Step 3:} The remainder of this section is devoted to proving that $(u_n,v_n)\to (u,v)$ in $H$.
Assuming, for the sake of contradiction, that $\mu_1 \leq 0$, we derive
\[-\Delta u =- \mu_1 u +\lambda_1 ( I_\alpha * |u|^{r_1} ) |u|^{r_1-2} u + \beta p( I_\alpha * |v|^q)|u|^{p-2} u + \kappa v \geq 0.\]
It follows from Lemma \ref{Liouville} and \ref{smooth} that $u \equiv 0$ in $\mathbb{R}^N$, which contradicts the preceding step.
Hence, $\mu_1 > 0$. By the same argument, $\mu_2 >0$. We define $(\tilde{u}_n,\tilde{v}_n):=(u_n-u,v_n-v)$.
Based on the Brezis-Lieb type lemma \cite[Lemma 3.2]{Yang2013} and \cite[Lemma 2.2]{Chen2021}, the subsequent equalities hold:
\begin{align}
	\int_{\mathbb{R}^N}(I_\alpha*|u_n|^{r_1})|v_n|^{r_1}&=\int_{\mathbb{R}^N}(I_\alpha*|u|^{r_1})|u|^{r_1}+
	\int_{\mathbb{R}^N}(I_\alpha*|\tilde{u}_n|^{r_1})|\tilde{u}_n|^{r_1}+o_n(1)
	\label{Brezis-Lieb of Riesz potential}, \\
	\int_{\mathbb{R}^N}(I_\alpha*|v_n|^{r_2})|v_n|^{r_2}&=\int_{\mathbb{R}^N}(I_\alpha*|v|^{r_2})|v|^{r_2}
	+\int_{\mathbb{R}^N}(I_\alpha*|\tilde{v}_n|^{r_2})|\tilde{v}_n|^{r_2}+o_n(1),\\
	\int_{\mathbb{R}^N}u_nv_n&=\int_{\mathbb{R}^N}uv
	+\int_{\mathbb{R}^N}\tilde{u}_n\tilde{v}_n+o_n(1). \label{Brezis-Lieb of linear couplings}
\end{align}
It follows from \eqref{3.7} and \eqref{3.11} that $P_\beta(\tilde{u}_n,\tilde{v}_n) \to 0$. Since $\int_{\mathbb{R}^N}(I_\alpha*|\tilde{u}_n|^p)|\tilde{v}_n|^q$,
 $\int_{\mathbb{R}^N}(I_\alpha*|\tilde{u}_n|^{r_1})|\tilde{u}_n|^{r_1}$ and $\int_{\mathbb{R}^N}(I_\alpha*|\tilde{v}_n|^{r_2})|\tilde{v}_n|^{r_2} \to 0$, 
 we obtain $|\nabla \tilde{u}_n|^2_2+|\nabla \tilde{v}_n|^2_2 \to 0$.
 
 Lastly, we assume that $|u|_2=\overline{\rho}_1 \in (0,\rho_1]$, $|v|_2=\overline{\rho}_2 \in (0,\rho_2]$.
 Testing \eqref{3.6} and \eqref{3.10} with $(u_n,v_n)$ and $(u,v)$ respectively,
  it follows from \eqref{2.12} and \eqref{Brezis-Lieb of Riesz potential} that
 \[\mu_1(\rho_1^2-\overline{\rho}_1^2)+\mu_2(\rho_2^2-\overline{\rho}_2^2)=
 2\kappa \lim\limits_{n \to +\infty} \int_{\mathbb{R}^N}(u_n-u)(v_n-v).\]
 It is hypothesized that $\overline{\rho}_1=\rho_1$, $\overline{\rho}_2=\rho_2$. Assume, for the sake of contradiction, that $0<\overline{\rho}_1<\rho_1$.
 If $\overline{\rho}_2=\rho_2$, we derive
 \[\mu_1(\rho_1^2-\overline{\rho}_1^2)=2\kappa \lim\limits_{n \to +\infty} \int_{\mathbb{R}^N}(u_n-u)(v_n-v) \leq 2\kappa \sqrt{(\rho_1^2-\overline{\rho}_1^2)(\rho_2^2-\overline{\rho}_2^2)}=0,
 \]
 which is in contradiction with $\overline{\rho}_1<\rho_1$.\\
 If $0<\overline{\rho}_2<\rho_2$, we have
 \begin{align*}
 	2\sqrt{\mu_1\mu_2}\sqrt{(\rho_1^2-\overline{\rho}_1^2)(\rho_2^2-\overline{\rho}_2^2)}&\leq \mu_1(\rho_1^2-\overline{\rho}_1^2)+\mu_2(\rho_2^2-\overline{\rho}_2^2)\\
 	&=2\kappa \lim\limits_{n \to +\infty} \int_{\mathbb{R}^N}(u_n-u)(v_n-v)\\
 	&\leq 2\kappa \sqrt{(\rho_1^2-\overline{\rho}_1^2)(\rho_2^2-\overline{\rho}_2^2)}.
 \end{align*}
 Hence, $\kappa\geq \sqrt{\mu_1\mu_2}$. Assuming that $(\overline{u},\overline{v}):=(\sqrt{\mu_2}u,\sqrt{\mu_1}v)$, we obtain
 \begin{equation*}
 	\left\{
 	\begin{aligned}
 		-\Delta \overline{u}+\mu_1 \overline{u} \geq \mu_2 \overline{v},\\
 		-\Delta \overline{v}+\mu_2 \overline{v} \geq \mu_1 \overline{u}.
 	\end{aligned}
 	\right.
 \end{equation*}
 Thus, $-\Delta(\overline{u}+\overline{v}) \geq 0$. It follows from Lemma \ref{Liouville} that $\overline{u} \equiv 0$, $\overline{v} \equiv 0 $, which contradicts the preceding step.
 Therefore, $(u_n,v_n)\to (u,v)$ in $H$.
\end{proof}
\begin{proof}[\normalfont \textbf{Proof of Theorem 1.1}]\leavevmode\\
We choose $\{(\tilde{u}_n,\tilde{v}_n)\} \subset A_{s_1}(\rho_1,\rho_2)$ as a minimizing sequence associated with $m_\beta(\rho_1,\rho_2)$. 
Note that \[\mathcal{J}_\beta (|\tilde{u}_n|,|\tilde{v}_n|) \leq  \mathcal{J}_\beta(\tilde{u}_n,\tilde{v}_n).\]
Assume without loss of generality that $\tilde{u}_n,\tilde{v}_n \geq 0$. 
Defining $(\overline{u}_n,\overline{v}_n):=t_\beta(\tilde{u}_n,\tilde{v}_n) \diamond (\tilde{u}_n,\tilde{v}_n)$, 
it follows from Lemma \ref{Lemma3.1变分结构} that $(\overline{u}_n,\overline{v}_n) \in \mathcal{P}_\beta(\rho_1,\rho_2)$
and $\mathcal{J}_\beta (\overline{u}_n,\overline{v}_n) \leq \mathcal{J}_\beta (\tilde{u}_n,\tilde{v}_n)$.
Based on Ekeland's variational principle, we can find the Palais-Smale sequence $(u_n,v_n)$ for $\mathcal{J}_\beta|_{\mathcal{S}(\rho_1,\rho_2)}$ 
satisfying
\[||(u_n,v_n)-(\overline{u}_n,\overline{v}_n)||_H \to 0, \qquad \; n \to +\infty. \]
Hence, $P_\beta (u_n,v_n)=P_\beta(\overline{u}_n,\overline{v}_n) + o_n(1)=o_n(1)$ and $u_n^-,v_n^- \to 0$ a.e in $\mathbb{R}^N$.
In the light of Lemma \ref{Lemma3.3紧性}, there exists $(u,v)\in H$, $u,v>0$ and $\mu_1,\mu_2>0$ satisfying that, for a subsequence,
$(u_n,v_n)\to (u,v)\; \text{in}\; H$ and $(\mu_{1,n},\mu_{2,n}) \to (\mu_1,\mu_2)\; \text{in} \; \mathbb{R}^2$. 
Thus, we derive a normalized ground state $(u,v)$ for system \eqref{system1.1}.
\end{proof}

\subsubsection{$\frac{2N+2\alpha}{N}<p+q=2r_1=2r_2<\frac{2N+2\alpha+4}{N}$}\leavevmode\par
Under the current assumption, we have $0<\gamma_p+\gamma_q=2\gamma_{r_1}=2\gamma_{r_2}<2$ and
\begin{equation*}
	h(s)=\frac{1}{2}s^2-(A_1+A_2+A_3)s^{2\gamma_{r_1}}-\kappa \rho_1 \rho_2,
\end{equation*}
where $h(s)$ is given by \eqref{h(s)}. A direct calculation yields
\begin{equation*}
	h'(s)=s-2\gamma_{r_1}(A_1+A_2+A_3)s^{2\gamma_{r_1}-1}.
\end{equation*}
Defining $s_0:=\left(2\gamma_{r_1}(A_1+A_2+A_3)\right)^{\frac{1}{2-2\gamma_{r_1}}}$, we derive
$h(s)$ strictly decreases on $(0,s_0)$ while increases on $(s_0,+\infty)$.
As the proof parallels the preceding case, we conclude the same existence result holds (Theorem \ref{Theorem1.1} when $\frac{2N+2\alpha}{N}<p+q=2r_1=2r_2<\frac{2N+2\alpha+4}{N}$).

\subsection{$r_1=r_2=\frac{N+\alpha+2}{N}$}

\subsubsection{$\frac{2N+2\alpha}{N}<p+q < 2r_1=2r_2=\frac{2N+2\alpha+4}{N}$}\leavevmode\par
Under the current assumption, we have $0<\gamma_p+\gamma_q<2\gamma_{r_1}=2\gamma_{r_2}=2$ and
\begin{align*}
	h(s)&=\frac{1}{2}s^2-(A_1+A_2)s^{2\gamma_{r_1}}-A_3 s^{\gamma_p+\gamma_q}-\kappa \rho_1 \rho_2\\
	&=(\frac{1}{2}-(A_1+A_2))s^2-A_3 s^{\gamma_p+\gamma_q}-\kappa \rho_1 \rho_2,
\end{align*}
where $h(s)$ is given by \eqref{h(s)}. Assume that $\rho_1, \rho_2, \lambda_1, \lambda_2>0$ satisfying 
\begin{equation}
	\frac{1}{2}-(A_1+A_2)=\frac{1}{2}-\frac{\lambda_1}{2r_1}C(N,r_1,r_1)2^{r_1}\rho_1^{2(r_1-\gamma_{r_1})}
	-\frac{\lambda_2}{2r_2}C(N,r_2,r_2)2^{r_2}\rho_2^{2(r_2-\gamma_{r_2})}>0.
	\label{frac{1}{2}-(A1+A2)>0}
\end{equation}
A direct calculation yields
\[    h'(s)=(1-2(A_1+A_2))s-(\gamma_p+\gamma_q)A_3s^{\gamma_p+\gamma_q-1}    .\]
Defining \[s_0=(\frac{A_3(\gamma_p+\gamma_q)}{1-2(A_1+A_2)})^{\frac{1}{2-\gamma_p-\gamma_q}},\]
we derive
$h(s)$ strictly decreases on $(0,s_0)$ while increases on $(s_0,+\infty)$.
As the proof parallels the preceding case, we conclude the same existence result holds (Theorem \ref{Theorem1.2}).

\subsubsection{$p+q = 2r_1=2r_2=\frac{2N+2\alpha+4}{N}$}\leavevmode\par
Under the current assumption, we have $\gamma_p+\gamma_q=2\gamma_{r_1}=2\gamma_{r_2}=2$ and
\begin{align*}
	h(s)&=\frac{1}{2}s^2-(A_1+A_2)s^{2\gamma_{r_1}}-A_3 s^{\gamma_p+\gamma_q}-\kappa \rho_1 \rho_2\\
	&=(\frac{1}{2}-(A_1+A_2+A_3))s^2-\kappa \rho_1 \rho_2.
\end{align*}
The focus now shifts to demonstrating a non-existence result.
\begin{proof}[\normalfont \textbf{Proof of Theorem 1.3}]\leavevmode\\
	Assuming, for the sake of contradiction, that $(u,v)$ solves system \eqref{system1.1}. Then
\begin{equation*}
		\begin{split}
			P_\beta(u,v)=&\int_{\mathbb{R}^N}(|\nabla u|^2+|\nabla v|^2)-\frac{\gamma_{r_1}}{r_1}\lambda_1 \int_{\mathbb{R}^N}(I_\alpha*|u|^{r_1})|u|^{r_1}
			-\frac{\gamma_{r_2}}{r_2}\lambda_2 \int_{\mathbb{R}^N}(I_\alpha*|v|^{r_2})|v|^{r_2}\\
			&-\beta(\gamma_p+\gamma_q)\int_{\mathbb{R}^N}(I_\alpha*|u|^p)|v|^q=0.
		\end{split}
\end{equation*}
It follows from \eqref{GN} that
\begin{align*}
	\int_{\mathbb{R}^N}&(|\nabla u|^2+|\nabla v|^2)=\frac{2}{\frac{N+\alpha+2}{N}}\lambda_1 \int_{\mathbb{R}^N}(I_\alpha*|u|^{r_1})|u|^{r_1}
	+\frac{2}{\frac{N+\alpha+2}{N}}\lambda_2 \int_{\mathbb{R}^N}(I_\alpha*|v|^{r_2})|v|^{r_2}\\
	&\qquad \qquad \qquad \qquad +2\beta\int_{\mathbb{R}^N}(I_\alpha*|u|^p)|v|^q\\
	&\leq \frac{2}{\frac{N+\alpha+2}{N}}\lambda_1 C(N,r_1,r_1)2^{\frac{N+\alpha+2}{N}}\rho_1^{\frac{2\alpha+4}{N}}|\nabla u|_2^2
	+\frac{2}{\frac{N+\alpha+2}{N}}\lambda_2 C(N,r_1,r_1)2^{\frac{N+\alpha+2}{N}}\rho_2^{\frac{2\alpha+4}{N}}|\nabla v|_2^2\\
	&\qquad+2\beta C(N,p,q)(\rho_1^2+\rho_2^2)^{\frac{\alpha+2}{N}}(|\nabla u|_2^2+|\nabla v|_2^2)\\
	&\leq \frac{2N}{N+\alpha+2} C(N,r_1,r_1)2^{\frac{N+\alpha+2}{N}}(\lambda_1\rho_1^{\frac{2\alpha+4}{N}}
	+\lambda_2\rho_2^{\frac{2\alpha+4}{N}})(|\nabla u|_2^2+|\nabla v|_2^2)\\
	&\qquad +2\beta C(N,p,q)(\rho_1^2+\rho_2^2)^{\frac{\alpha+2}{N}}(|\nabla u|_2^2+|\nabla v|_2^2).
\end{align*}
Hence, we have
\begin{equation*}
		1-\frac{2N}{N+\alpha+2} C(N,r_1,r_1)2^{\frac{N+\alpha+2}{N}}(\lambda_1\rho_1^{\frac{2\alpha+4}{N}}
	+\lambda_2\rho_2^{\frac{2\alpha+4}{N}})-2\beta C(N,p,q)(\rho_1^2+\rho_2^2)^{\frac{\alpha+2}{N}} \leq 0,
\end{equation*}
which contradicts \eqref{assumption in Thm 1.3}.
\end{proof}

\subsection{$\frac{N+\alpha+2}{N}<r_1=r_2<\frac{N+\alpha}{N-2}$}
\subsubsection{$\frac{2N+2\alpha}{N}<p+q <\frac{2N+2\alpha+4}{N}< 2r_1=2r_2<\frac{2N+2\alpha}{N-2}$}\leavevmode\par
Under the current assumption, we have $0<\gamma_p+\gamma_q<2<2\gamma_{r_1}=2\gamma_{r_2}<\frac{2N+2\alpha}{N-2}$ and
\begin{equation*}
	h(s)=\frac{1}{2}s^2-(A_1+A_2)s^{2\gamma_{r_1}}-A_3 s^{\gamma_p+\gamma_q}-\kappa \rho_1 \rho_2,
\end{equation*}
where $h(s)$ is given by \eqref{h(s)}. A direct calculation yields
\begin{align*}
	h'(s)&=s-2\gamma_{r_1}(A_1+A_2)s^{2\gamma_{r_1}-1}-(\gamma_p+\gamma_q)A_3s^{\gamma_p+\gamma_q-1}\\
	&=s^{\gamma_p+\gamma_q-1}(s^{2-\gamma_p-\gamma_q}-2\gamma_{r_1}(A_1+A_2)s^{2\gamma_{r_1}-\gamma_p-\gamma_q}-(\gamma_p+\gamma_q)A_3).
\end{align*}
Set
\begin{equation*}
	g(s):=s^{2-\gamma_p-\gamma_q}-2\gamma_{r_1}(A_1+A_2)s^{2\gamma_{r_1}-\gamma_p-\gamma_q}
\end{equation*}
and
\begin{equation}
	s_0=\left(\frac{2-\gamma_p-\gamma_q}{2\gamma_{r_1}(2\gamma_{r_1}-\gamma_p-\gamma_q)(A_1+A_2)}\right)^{\frac{1}{2\gamma_{r_1}-2}}.
	\label{s0 in 3.3.1}
\end{equation}
Direct computation shows that $g(s)$ strictly increases on $(0,s_0)$ while decreases on $(s_0,+\infty)$.
We proceed by defining
\begin{align*}
	\beta_1:=&\frac{s_0^{2-\gamma_p-\gamma_q}-2\gamma_{r_1}(A_1+A_2)s_0^{2\gamma_{r_1}-\gamma_p-\gamma_q}}
	{C(N,p,q)(\gamma_p+\gamma_q)(\rho_1^2+\rho_2^2)^{\frac{p+q-\gamma_p-\gamma_q}{2}}},\\
	\beta_2:=&\frac{\frac{1}{2}s_0^{2-\gamma_p-\gamma_q}-(A_1+A_2)s_0^{2\gamma_{r_1}-\gamma_p-\gamma_q}}
	{C(N,p,q)(\rho_1^2+\rho_2^2)^{\frac{p+q-\gamma_p-\gamma_q}{2}}},
\end{align*}
and
\begin{equation}
	\beta_0:=\min\{\beta_1,\beta_2\}.
	\label{beta 0 in 3.3.1}
\end{equation}
We also denote
\begin{equation}
	\kappa_0:=\frac{\frac{1}{2}s_0^2-(A_1+A_2)s_0^{2\gamma_{r_1}}-\overline{A}_3 s^{\gamma_p+\gamma_q}}{2\rho_1\rho_2},
	\label{kappa 0 in 3.3.1}
\end{equation}
where $\overline{A}_3:=\beta_0 C(N,p,q)(\rho_1^2+\rho_2^2)^{\frac{p+q-\gamma_p-\gamma_q}{2}}.$

Assuming $0<\beta<\beta_0$, $0<\kappa<\kappa_0$, then direct computation shows that $g(s_0)>(\gamma_p+\gamma_q)A_3$,
$h(s_0)>\kappa \rho_1\rho_2>0$. Hence, $h(s)$ possesses precisely two critical points $0<s_1<s_0<s_2$ satisfying
\[h(s_1)=\min\limits_{0<s<s_0} h(s)<0,\quad h(s_2)=\max_{s>0}h(s)>\kappa\rho_1\rho_2.
\]
In addition, we can find $T_1>T_0>0$ such that $h(T_0)=h(T_1)=\kappa \rho_1 \rho_2$ 
and $h(s)>\kappa \rho_1 \rho_2$ exactly when $s \in (T_0,T_1)$.

\begin{lem}
	Assume $0<\beta<\beta_0$, $0<\kappa<\kappa_0$ and $(u,v) \in \mathcal{S}(\rho_1,\rho_2)$, then $\Psi_{(u,v)}^\beta(t)$ has exactly two critical points $s_\beta(u,v)<t_\beta(u,v)$. Furthermore,
	\begin{enumerate}[label=(\roman*)]
		\item $\mathcal{P}_\beta^0(\rho_1,\rho_2)=\emptyset$ and $\mathcal{P}_\beta(\rho_1,\rho_2)$ 
		forms a submanifold of $H$ with codimension 3.
		\item $s_\beta(u,v)\diamond (u,v) \in \mathcal{P}_\beta^+(\rho_1,\rho_2)$, $t_\beta(u,v)\diamond (u,v) \in \mathcal{P}_\beta^-(\rho_1,\rho_2)$ and $t \diamond (u,v) \in \mathcal{P}_\beta(\rho_1,\rho_2)$ 
		if and only if $t=s_\beta(u,v) $ or $ t_\beta(u,v)$.
		\item $\Psi_{(u,v)}^\beta(s_\beta(u,v))=\min\{\Psi_{(u,v)}^\beta(t):t(|\nabla u|_2^2+|\nabla v|_2^2)^{\frac{1}{2}}\leq T_0\}$.
		\item $\Psi_{(u,v)}^\beta(t)$ decreases strictly on $(t_\beta(u,v),+\infty)$ and  \[\Psi_{(u,v)}^\beta(t_\beta(u,v))=\max\limits_{t>0}\Psi_{(u,v)}^\beta(t).\]
		\item The maps $(u,v) \mapsto s_\beta(u,v)$ and $(u,v) \mapsto t_\beta(u,v)$ are of class $C^1$.
	\end{enumerate}
	\label{Lemma3.4变分结构 3.3.1}
\end{lem}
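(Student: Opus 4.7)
The proof plan is to carry out essentially the same fiber–map analysis as in Lemma~\ref{Lemma3.1变分结构}, but adapted to the fact that the dominant power $2\gamma_{r_1}$ is now strictly larger than $2$, which forces $\Psi_{(u,v)}^\beta$ to admit two critical points of opposite nature. Concretely, I set
\begin{equation*}
\psi_{(u,v)}(t):=\frac{P_\beta(t\diamond(u,v))}{t^{\gamma_p+\gamma_q}}=\varphi_{(u,v)}(t)-\beta(\gamma_p+\gamma_q)\!\int_{\mathbb{R}^N}(I_\alpha*|u|^p)|v|^q,
\end{equation*}
where
\begin{equation*}
\varphi_{(u,v)}(t):=t^{2-\gamma_p-\gamma_q}(|\nabla u|_2^2+|\nabla v|_2^2)-t^{2\gamma_{r_1}-\gamma_p-\gamma_q}\Big[\tfrac{\gamma_{r_1}}{r_1}\lambda_1\!\!\int\!(I_\alpha*|u|^{r_1})|u|^{r_1}+\tfrac{\gamma_{r_2}}{r_2}\lambda_2\!\!\int\!(I_\alpha*|v|^{r_2})|v|^{r_2}\Big].
\end{equation*}
Since $\gamma_p+\gamma_q<2<2\gamma_{r_1}=2\gamma_{r_2}$, the function $\varphi_{(u,v)}$ vanishes at $0^+$, tends to $-\infty$ at $+\infty$, and is strictly increasing then strictly decreasing, with a unique interior maximum. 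Applying \eqref{GN} to bound the integrals by powers of $|\nabla u|_2^2+|\nabla v|_2^2$ on the torus, and comparing with the scalar profile $g(s)$ at the critical point $s_0$ given by \eqref{s0 in 3.3.1}, the threshold $\beta<\beta_0=\min\{\beta_1,\beta_2\}$ in \eqref{beta 0 in 3.3.1} forces
\begin{equation*}
\max_{t>0}\varphi_{(u,v)}(t)>\beta(\gamma_p+\gamma_q)\!\int_{\mathbb{R}^N}(I_\alpha*|u|^p)|v|^q,
\end{equation*}
uniformly in $(u,v)\in\mathcal{S}(\rho_1,\rho_2)$. Consequently $\psi_{(u,v)}$ has exactly two simple zeros $s_\beta(u,v)<t_\beta(u,v)$, changing sign from $-$ to $+$ at $s_\beta$ and from $+$ to $-$ at $t_\beta$. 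This immediately gives $s_\beta(u,v)\diamond(u,v)\in\mathcal{P}_\beta^+$, $t_\beta(u,v)\diamond(u,v)\in\mathcal{P}_\beta^-$, and assertion (ii).

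For (i), every element of $\mathcal{P}_\beta(\rho_1,\rho_2)$ corresponds to $t=1$ being one of the two simple zeros of $\psi_{(u,v)}$, whence $(\Psi_{(u,v)}^\beta)''(1)\neq 0$, so $\mathcal{P}_\beta^0(\rho_1,\rho_2)=\emptyset$. The submanifold claim then follows verbatim from the Lagrange-multiplier argument of Lemma~\ref{Lemma3.1变分结构}(i): if $dP_\beta(u,v)$ were a linear combination of $dL_1(u)$ and $dL_2(v)$, the associated Pohozaev identity, combined with $P_\beta(u,v)=0$, would yield $(\Psi_{(u,v)}^\beta)''(1)=0$, contradicting $\mathcal{P}_\beta^0=\emptyset$. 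For (iii)–(iv) I would use the pointwise lower bound $\Psi_{(u,v)}^\beta(t)\geq h\bigl(t(|\nabla u|_2^2+|\nabla v|_2^2)^{1/2}\bigr)$ together with the shape of $h$ already established: by the choice $\kappa<\kappa_0$ one has $h(s_0)>\kappa\rho_1\rho_2$, so $h$ exceeds $\kappa\rho_1\rho_2$ precisely on $(T_0,T_1)$. The local minimum $s_\beta(u,v)$ of $\Psi_{(u,v)}^\beta$ must lie in the region where $t(|\nabla u|_2^2+|\nabla v|_2^2)^{1/2}\leq T_0$, giving (iii); while $t_\beta(u,v)$ lies beyond the barrier, after which $\Psi_{(u,v)}^\beta$ is strictly decreasing because it has no further critical point and tends to $-\infty$, giving (iv). Assertion (v) is an immediate application of the implicit function theorem to $\eta(t,(u,v)):=(\Psi_{(u,v)}^\beta)'(t)$ at each of the two non-degenerate zeros.

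The main obstacle is the uniform quantitative comparison $\max_{t>0}\varphi_{(u,v)}(t)>\beta(\gamma_p+\gamma_q)\!\int(I_\alpha*|u|^p)|v|^q$ across the whole torus $\mathcal{S}(\rho_1,\rho_2)$. This is precisely what the threshold $\beta_0$ in \eqref{beta 0 in 3.3.1} is engineered to deliver: one must insert the vector-valued Gagliardo–Nirenberg inequality \eqref{GN} with sharp constants $C(N,r_1,r_1)$, $C(N,r_2,r_2)$, $C(N,p,q)$ into the exact expression of $\varphi_{(u,v)}$ at its maximum point, and reduce the inequality to the algebraic comparison $g(s_0)>(\gamma_p+\gamma_q)A_3$ derived just above the statement. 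Once this quantitative non-degeneracy is secured, every remaining item of the lemma follows from standard bookkeeping of the two-critical-point fiber-map structure.
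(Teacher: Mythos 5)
Your proposal is correct and follows essentially the same fiber-map analysis as the paper: the unimodal profile $\varphi_{(u,v)}(t)$ (which is exactly the left-hand side the paper isolates when counting solutions of $(\Psi_{(u,v)}^\beta)'(t)=0$), the reduction of $\mathcal{P}_\beta^0(\rho_1,\rho_2)=\emptyset$ to the algebraic comparison $g(s_0)>(\gamma_p+\gamma_q)A_3$ enforced by $\beta<\beta_1$, the Lagrange-multiplier/Pohozaev argument for the codimension-3 claim, the $h$-barrier with $T_0<T_1$ for (iii)--(iv), and the implicit function theorem for (v) all match the paper's argument. The only (harmless) variation is that you obtain the existence of the two critical points directly from the two transversal crossings of $\varphi_{(u,v)}$ with the positive level $\beta(\gamma_p+\gamma_q)\int_{\mathbb{R}^N}(I_\alpha*|u|^p)|v|^q$ (so that $\kappa<\kappa_0$ is only needed for the localization in (iii)), whereas the paper pairs the ``at most two'' count with the barrier $h(s_0)>\kappa\rho_1\rho_2$ and the limits of $\Psi_{(u,v)}^\beta$ at $0^+$ and $+\infty$ to get ``at least two''.
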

\begin{proof}
$(\romannumeral1)$ Assume, for the sake of contradiction, that  $(u,v)\in \mathcal{P}_\beta^0(\rho_1,\rho_2)$. It follows that
	\begin{align*}
		(\Psi_{(u,v)}^\beta)'(1)=&|\nabla u|_2^2+|\nabla v|_2^2-\frac{\gamma_{r_1}}{r_1}\lambda_1 \int_{\mathbb{R}^N}(I_\alpha*|u|^{r_1})|u|^{r_1}
		-\frac{\gamma_{r_2}}{r_2}\lambda_2 \int_{\mathbb{R}^N}(I_\alpha*|v|^{r_2})|v|^{r_2}\\
		&-\beta(\gamma_p+\gamma_q)\int_{\mathbb{R}^N}(I_\alpha*|u|^p)|v|^q=0,
	\end{align*}
	\begin{align*}
		(&\Psi_{(u,v)}^\beta)''(1)=|\nabla u|_2^2+|\nabla v|_2^2
		-\beta(\gamma_p+\gamma_q)(\gamma_p+\gamma_q-1)\int_{\mathbb{R}^N}(I_\alpha*|u|^p)|v|^q\\
		&-\frac{\gamma_{r_1}}{r_1}\lambda_1 (2\gamma_{r_1}-1)\int_{\mathbb{R}^N}(I_\alpha*|u|^{r_1})|u|^{r_1}
		-\frac{\gamma_{r_2}}{r_2}\lambda_2(2\gamma_{r_2}-1)
		\int_{\mathbb{R}^N}(I_\alpha*|v|^{r_2})|v|^{r_2}=0.
	\end{align*}
	By combining the preceding two equations, we derive
	\begin{align}
		\frac{\gamma_{r_1}}{r_1}\lambda_1 (2-2\gamma_{r_1})&\int_{\mathbb{R}^N}(I_\alpha*|u|^{r_1})|u|^{r_1}
		+\frac{\gamma_{r_2}}{r_2}\lambda_2(2-2\gamma_{r_2})\int_{\mathbb{R}^N}(I_\alpha*|v|^{r_2})|v|^{r_2}\notag\\
		&+\beta(\gamma_p+\gamma_q)(2-\gamma_p-\gamma_q)\int_{\mathbb{R}^N}(I_\alpha*|u|^p)|v|^q=0.
		\label{3.16 3.3.1}
	\end{align}
On the one hand, by \eqref{GN} and \eqref{3.16 3.3.1}, we have
\begin{align}
	|\nabla u|_2^2+|\nabla v|_2^2&=\frac{\gamma_{r_1}}{r_1}\lambda_1 \int_{\mathbb{R}^N}(I_\alpha*|u|^{r_1})|u|^{r_1}
	+\frac{\gamma_{r_2}}{r_2}\lambda_2 \int_{\mathbb{R}^N}(I_\alpha*|v|^{r_2})|v|^{r_2}\notag\\
	&\quad +\beta(\gamma_p+\gamma_q)\int_{\mathbb{R}^N}(I_\alpha*|u|^p)|v|^q \notag  \\
	&=\frac{2\gamma_{r_1}-\gamma_p-\gamma_q}{2\gamma_{r_1}-2}\beta(\gamma_p+\gamma_q)
	\int_{\mathbb{R}^N}(I_\alpha*|u|^p)|v|^q \label{3.17 3.3.1}  \\
	&\leq \frac{2\gamma_{r_1}-\gamma_p-\gamma_q}{2\gamma_{r_1}-2}\beta(\gamma_p+\gamma_q)
	C(N,p,q)(\rho_1^2+\rho_2^2)^{\frac{p+q-\gamma_p-\gamma_q}{2}}(|\nabla u|_2^2+|\nabla v|_2^2)^{\frac{\gamma_p+\gamma_q}{2}}.\notag
\end{align}
On the other hand, by \eqref{GN} and \eqref{3.16 3.3.1}, we obtain
\begin{align}
	|\nabla u|_2^2+|\nabla v|_2^2&=\frac{2\gamma_{r_1}-\gamma_p-\gamma_q}{2-\gamma_p-\gamma_q}\left(
	\frac{\gamma_{r_1}}{r_1}\lambda_1 \int_{\mathbb{R}^N}(I_\alpha*|u|^{r_1})|u|^{r_1}
	+\frac{\gamma_{r_2}}{r_2}\lambda_2 \int_{\mathbb{R}^N}(I_\alpha*|v|^{r_2})|v|^{r_2}\right)\notag\\
	&\leq \frac{2\gamma_{r_1}-\gamma_p-\gamma_q}{2-\gamma_p-\gamma_q}2\gamma_{r_1}(A_1+A_2)
	(	|\nabla u|_2^2+|\nabla v|_2^2)^{\gamma_{r_1}}.     \label{3.18 3.3.1}
\end{align}
By combining \eqref{3.17 3.3.1} and \eqref{3.18 3.3.1}, we get
\[ \beta \geq \frac{(2\gamma_{r_1}-2)s_0^{2-\gamma_p-\gamma_q}}{(2\gamma_{r_1}-\gamma_p-\gamma_q)
	C(N,p,q)(\gamma_p+\gamma_q)(\rho_1^2+\rho_2^2)^{\frac{p+q-\gamma_p-\gamma_q}{2}}}=\beta_1,    \]
which contradicts $\beta<\beta_0$.

Then by an argument analogous to the proof of Lemma \ref{Lemma3.1变分结构} $(\romannumeral1)$, 
we derive $\mathcal{P}_\beta(\rho_1,\rho_2)$ 
forms a submanifold of $H$ with codimension 3; hence, we omit it here.\\
$(\romannumeral2)$-$(\romannumeral4)$ Given $(u,v) \in \mathcal{S}(\rho_1,\rho_2)$, recall that
\[\Psi_{(u,v)}^\beta(t)=\mathcal{J}_\beta(t\diamond(u,v)) \geq h \left( t(	|\nabla u|_2^2+|\nabla v|_2^2)^{\frac{1}{2}} \right),\]
and
\[\Psi_{(u,v)}^\beta(t)> \kappa\rho_1\rho_2,  \qquad \text{for any} ~~ t \in (\frac{T_0}{ (	|\nabla u|_2^2+|\nabla v|_2^2)^{\frac{1}{2}}}, 
\frac{T_1}{ (	|\nabla u|_2^2+|\nabla v|_2^2)^{\frac{1}{2}}}).   \]
We claim that $(\Psi_{(u,v)}^\beta)'(t)=0$ has at most two solutions. A direct calculation yields
\begin{align*}
	&t^{2-\gamma_p-\gamma_q}(|\nabla u|_2^2+|\nabla v|_2^2)-\frac{\gamma_{r_1}}{r_1}\lambda_1 t^{2\gamma_{r_1}-\gamma_p-\gamma_q}
	\int_{\mathbb{R}^N}(I_\alpha*|u|^{r_1})|u|^{r_1}\\
	&\quad -\frac{\gamma_{r_2}}{r_2}\lambda_2 t^{2\gamma_{r_2}-\gamma_p-\gamma_q}\int_{\mathbb{R}^N}(I_\alpha*|v|^{r_2})|v|^{r_2}=
	\beta (\gamma_p+\gamma_q)\int_{\mathbb{R}^N}(I_\alpha*|u|^p)|v|^q,
\end{align*}
where the left-hand side of the equation admits at most one critical point; hence, the claim holds true.\\
Note that $\lim\limits_{t\to 0^+}\Psi_{(u,v)}^\beta(t)=-\kappa\int_{\mathbb{R}^N}uv \leq \kappa\rho_1\rho_2$,  $\lim\limits_{t\to +\infty}\Psi_{(u,v)}^\beta(t)=-\infty$
and $\Psi_{(u,v)}^\beta(t)$ is strictly decreasing for $0<t\ll 1$. Thus, $\Psi_{(u,v)}^\beta(t)$ has exactly two critical points $s_\beta(u,v)<t_\beta(u,v)$ satisfying
\[\Psi_{(u,v)}^\beta(s_\beta(u,v))=\min\{\Psi_{(u,v)}^\beta(t):t(|\nabla u|_2^2+|\nabla v|_2^2)^{\frac{1}{2}}\leq T_0\} \]
and  $\Psi_{(u,v)}^\beta(t_\beta(u,v))=\max\limits_{t>0}\Psi_{(u,v)}^\beta(t)$.\\
$(\romannumeral5)$ By an argument analogous to the proof of Lemma \ref{Lemma3.1变分结构} $(\romannumeral5)$, 
we derive that the maps $(u,v) \mapsto s_\beta(u,v)$ and $(u,v) \mapsto t_\beta(u,v)$ are of class $C^1$; 
hence, we omit it here.
\end{proof}
As the proof parallels the preceding case, we conclude the same existence result holds (Theorem \ref{Theorem1.4}).

\subsubsection{$p+q =\frac{2N+2\alpha+4}{N}< 2r_1=2r_2<\frac{2N+2\alpha}{N-2}$}\leavevmode\par
Under the current assumption, we have $\gamma_p+\gamma_q=2<2\gamma_{r_1}=2\gamma_{r_2}<\frac{2N+2\alpha}{N-2}$ and
\begin{align*}
	h(s)&=\frac{1}{2}s^2-(A_1+A_2)s^{2\gamma_{r_1}}-A_3 s^{\gamma_p+\gamma_q}-\kappa \rho_1 \rho_2\\
	&=(\frac{1}{2}-A_3)s^2-(A_1+A_2)s^{2\gamma_{r_1}}-\kappa \rho_1 \rho_2,
\end{align*}
where $h(s)$ is given by \eqref{h(s)}. Assume that $\beta, \rho_1, \rho_2>0$ satisfying 
\begin{equation}
	\frac{1}{2}-A_3=\frac{1}{2}-\beta C(N,p,q)(\rho_1^2+\rho_2^2)^{\frac{p+q-\gamma_p-\gamma_q}{2}}>0.
	\label{frac{1}{2}-A3>0}
\end{equation}
A direct calculation yields
\[    h'(s)=(1-2A_3)s-2\gamma_{r_1}(A_1+A_2)s^{2\gamma_{r_1}-1}    .\]
Defining \[s_0=(\frac{1-2A_3}{2\gamma_{r_1}(A_1+A_2)})^{\frac{1}{2\gamma_{r_1}-2}},\]
we derive
$h(s)$ strictly increases on $(0,s_0)$ while decreases on $(s_0,+\infty)$.
\begin{lem}
	Given $(u,v)\in \mathcal{S}(\rho_1,\rho_2)$, the function $\Psi_{(u,v)}^\beta(t)$ admits exactly one critical point $t_\beta(u,v)>0$. Furthermore,
	\begin{enumerate}[label=(\roman*)]
		\item $\mathcal{P}_\beta(\rho_1,\rho_2)=\mathcal{P}_\beta^-(\rho_1,\rho_2)$ and $\mathcal{P}_\beta(\rho_1,\rho_2)$ forms a submanifold of $H$ with codimension 3.
		\item $t\diamond(u,v) \in \mathcal{P}_\beta(\rho_1,\rho_2)$ exactly when $t=t_\beta(u,v)$.
		\item $\Psi_{(u,v)}^\beta(t)$ is decreasing on $(t_\beta(u,v),+\infty)$ and 
		\[\Psi_{(u,v)}^\beta(t_\beta(u,v))=\max\limits_{t>0}\Psi_{(u,v)}^\beta(t)>-\kappa \rho_1 \rho_2.\]
		\item The map $(u,v)\mapsto t_\beta(u,v)$ is of class $C^1$.
	\end{enumerate}
	\label{Lemma3.5变分结构 3.3.2}
\end{lem}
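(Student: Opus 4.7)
The plan is to mirror Lemma \ref{Lemma3.1变分结构}, exploiting the algebraic simplification that occurs when $\gamma_p+\gamma_q=2$. Fix $(u,v)\in\mathcal{S}(\rho_1,\rho_2)$, and write $a:=|\nabla u|_2^2+|\nabla v|_2^2$ and $C:=\beta\int_{\mathbb{R}^N}(I_\alpha*|u|^p)|v|^q$. Because $\gamma_p+\gamma_q=2$, the $t$-dependence of $\Psi^\beta_{(u,v)}$ collapses to a two-term polynomial in powers of $t$ plus a constant, namely
\[
\Psi^\beta_{(u,v)}(t)=\Bigl(\tfrac{a}{2}-C\Bigr)t^{2}-\tfrac{\lambda_1}{2r_1}\!\int(I_\alpha*|u|^{r_1})|u|^{r_1}\,t^{2\gamma_{r_1}}-\tfrac{\lambda_2}{2r_2}\!\int(I_\alpha*|v|^{r_2})|v|^{r_2}\,t^{2\gamma_{r_2}}-\kappa\!\int uv.
\]
By \eqref{GN}, $C\le A_3 a$, so assumption \eqref{frac{1}{2}-A3>0} yields $\tfrac{a}{2}-C\ge (\tfrac12-A_3)a>0$. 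Since $2\gamma_{r_1}=2\gamma_{r_2}>2$, a function of the form $\alpha t^2-\gamma t^{2\gamma_{r_1}}$ with $\alpha,\gamma>0$ has a unique positive critical point which is a strict global maximum; this furnishes $t_\beta(u,v)$, gives (ii), and shows that $\Psi^\beta_{(u,v)}$ strictly decreases on $(t_\beta(u,v),+\infty)$ down to $-\infty$.

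For (i), to verify $\mathcal{P}_\beta^0(\rho_1,\rho_2)=\emptyset$, I would assume $(u,v)\in\mathcal{P}_\beta^0$ and compute $(\Psi^\beta_{(u,v)})''(1)-(\Psi^\beta_{(u,v)})'(1)$. Following the calculation in Lemma \ref{Lemma3.4变分结构 3.3.1}(i), the coefficient of $\int(I_\alpha*|u|^p)|v|^q$ in this difference is $\gamma_p+\gamma_q-2=0$ and drops out entirely, leaving
\[
(2\gamma_{r_1}-2)\tfrac{\gamma_{r_1}}{r_1}\lambda_1\!\int(I_\alpha*|u|^{r_1})|u|^{r_1}+(2\gamma_{r_2}-2)\tfrac{\gamma_{r_2}}{r_2}\lambda_2\!\int(I_\alpha*|v|^{r_2})|v|^{r_2}=0,
\]
which forces $u\equiv v\equiv 0$ since each factor $2\gamma_{r_i}-2>0$, contradicting the mass constraint. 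The codimension-$3$ submanifold property then follows by repeating the Lagrange-multiplier argument from Lemma \ref{Lemma3.1变分结构}(i): surjectivity of $d(P_\beta,L_1,L_2)$ reduces to an identity of exactly the excluded type. Together with $\mathcal{P}_\beta^0=\emptyset$ and the strict-maximum shape of $\Psi^\beta_{(u,v)}$, this gives $\mathcal{P}_\beta(\rho_1,\rho_2)=\mathcal{P}_\beta^-(\rho_1,\rho_2)$.

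For the strict lower bound in (iii), note that $\Psi^\beta_{(u,v)}(t)\to -\kappa\!\int uv$ as $t\to 0^+$, while near $0$ the positive coefficient $\tfrac{a}{2}-C>0$ dominates the higher-order term $-t^{2\gamma_{r_1}}$; hence $\Psi^\beta_{(u,v)}(t)>-\kappa\!\int uv\ge -\kappa\rho_1\rho_2$ for some small $t>0$, the last inequality being Cauchy--Schwarz. Since $t_\beta(u,v)$ is the global maximum, $\Psi^\beta_{(u,v)}(t_\beta(u,v))>-\kappa\rho_1\rho_2$. Finally, (iv) is a direct application of the implicit function theorem to $\eta(t,(u,v)):=(\Psi^\beta_{(u,v)})'(t)$, using that $\partial_t\eta(t_\beta(u,v),(u,v))=(\Psi^\beta_{(u,v)})''(t_\beta(u,v))\neq 0$ from the strict-max property. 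The main obstacle throughout is simply the pointwise coercivity $\tfrac{a}{2}-C>0$; once \eqref{GN} combined with \eqref{frac{1}{2}-A3>0} secures this, every remaining step is a streamlined copy of Lemma \ref{Lemma3.4变分结构 3.3.1}, benefiting from the additional simplification that the $t^2$ and $t^{\gamma_p+\gamma_q}$ scales coincide.
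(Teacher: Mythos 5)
Your proposal is correct and follows exactly the route the paper intends: the paper's own proof of this lemma is omitted with a pointer to Lemma \ref{Lemma3.1变分结构}, and your argument supplies precisely those details, using the collapse of the $t^2$ and $t^{\gamma_p+\gamma_q}$ scales together with \eqref{GN} and \eqref{frac{1}{2}-A3>0} to get the positive leading coefficient, and the vanishing of the coupling term in $(\Psi_{(u,v)}^\beta)'(1)-(\Psi_{(u,v)}^\beta)''(1)$ to rule out $\mathcal{P}_\beta^0(\rho_1,\rho_2)$. The only nitpick is in (iv): a strict maximum alone does not force $(\Psi_{(u,v)}^\beta)''(t_\beta(u,v))\neq 0$, but your explicit two-power form of $\Psi_{(u,v)}^\beta$ gives $(\Psi_{(u,v)}^\beta)''(t_\beta(u,v))<0$ directly, so the application of the implicit function theorem stands.
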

\begin{proof}
	By an argument analogous to the proof of Lemma \ref{Lemma3.1变分结构}                 , 
	we derive that the lemma holds; 
	hence, we omit it here.
\end{proof}\noindent
For $\xi>0$, we define
\begin{equation*}
		\Gamma_\xi :=\{\gamma:[0,1]\to \mathcal{S}(\rho_1,\rho_2) : \gamma~ \text{is continuous}, ~\gamma (0)\in A_\xi(\rho_1,\rho_2),~ \mathcal{J}_\beta (\gamma(1)) \leq -\kappa \rho_1 \rho_2\},
\end{equation*}
where $A_\xi(\rho_1,\rho_2)$ is given by \eqref{As rho1 rho2}.
\begin{lem}
	\begin{enumerate}[label=(\roman*)]
		\item There exists $\xi>0$ such that
		\[m_\beta(\rho_1,\rho_2)=\inf\limits_{\mathcal{S}(\rho_1,\rho_2)}\max_{t>0}
		\mathcal{J}_\beta(t\diamond(u,v))=\inf\limits_{\gamma \in \Gamma_\xi}\max\limits_{t\in [0,1]}\mathcal{J}_\beta(\gamma(t))>-\kappa \rho_1\rho_2.	\] 
		\item $m_\beta(\rho_1,\rho_2) \leq m_\beta(\rho_1',\rho_2')$, for every $0<\rho_1'\leq \rho_1$, $0<\rho_2' \leq \rho_2$.
		\item $m_\beta(\rho_1,\rho_2)$ is decreasing in $\beta \in (0,+\infty)$.
	\end{enumerate}
	\label{Lemma3.6能量值 3.3.2}
\end{lem}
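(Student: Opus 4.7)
The plan is to derive the three assertions of Lemma \ref{Lemma3.6能量值 3.3.2} from the unimodal fiber structure of Lemma \ref{Lemma3.5变分结构 3.3.2} together with the barrier estimate $\mathcal{J}_\beta(u,v)\geq h((|\nabla u|_2^2+|\nabla v|_2^2)^{1/2})$ derived from \eqref{GN}, where $h$ is the function introduced at the beginning of this subsection. For (i), the equality $m_\beta(\rho_1,\rho_2)=\inf_{(u,v)\in\mathcal{S}(\rho_1,\rho_2)}\max_{t>0}\mathcal{J}_\beta(t\diamond(u,v))$ is immediate from Lemma \ref{Lemma3.5变分结构 3.3.2}, since $\mathcal{P}_\beta(\rho_1,\rho_2)$ meets each fiber $\{t\diamond(u,v):t>0\}$ precisely at the unique maximizer $t_\beta(u,v)$ of $\Psi_{(u,v)}^\beta$. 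The strict bound $m_\beta>-\kappa\rho_1\rho_2$ then follows from
\begin{equation*}
\max_{t>0}\mathcal{J}_\beta(t\diamond(u,v))\;\geq\; \max_{t>0}h\!\left(t(|\nabla u|_2^2+|\nabla v|_2^2)^{1/2}\right)\;=\;h(s_0)\;>\;\lim_{s\to 0^+}h(s)\;=\;-\kappa\rho_1\rho_2,
\end{equation*}
together with the strict monotonicity of $h$ on $(0,s_0)$.

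For the mountain-pass representation I will first establish the a priori bound $|\nabla u|_2^2+|\nabla v|_2^2\geq s_0^2$ on $\mathcal{P}_\beta(\rho_1,\rho_2)$; this is obtained by substituting \eqref{GN} (with the criticality $\gamma_p+\gamma_q=2$) into the Pohozaev identity $P_\beta(u,v)=0$ and invoking the smallness hypothesis \eqref{frac{1}{2}-A3>0}. Fix then any $\xi\in(0,s_0)$. On the one hand, for $(u,v)\in A_\xi(\rho_1,\rho_2)$ the relation $t_\beta(u,v)(|\nabla u|_2^2+|\nabla v|_2^2)^{1/2}\geq s_0$ (since $t_\beta(u,v)\diamond(u,v)\in\mathcal{P}_\beta$) forces $t_\beta(u,v)>s_0/\xi>1$, whence $P_\beta(u,v)=(\Psi_{(u,v)}^\beta)'(1)>0$. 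On the other hand, for $(u,v)$ with $\mathcal{J}_\beta(u,v)\leq-\kappa\rho_1\rho_2$, the strict monotonicity of $\Psi_{(u,v)}^\beta$ on $(0,t_\beta(u,v))$ combined with $\Psi_{(u,v)}^\beta(0^+)=-\kappa\int uv\geq-\kappa\rho_1\rho_2\geq\mathcal{J}_\beta(u,v)$ forces $t_\beta(u,v)<1$, whence $P_\beta(u,v)<0$. For any $\gamma\in\Gamma_\xi$ the intermediate value theorem applied to the continuous map $s\mapsto P_\beta(\gamma(s))$ then yields $s_\ast\in[0,1]$ with $\gamma(s_\ast)\in\mathcal{P}_\beta$, giving $\max_s\mathcal{J}_\beta(\gamma(s))\geq m_\beta$. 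Conversely, for each $(u,v)\in\mathcal{S}(\rho_1,\rho_2)$, a reparametrization of the fiber $t\mapsto t\diamond(u,v)$ over an interval $[t_1,t_2]$ with $t_1(|\nabla u|_2^2+|\nabla v|_2^2)^{1/2}<\xi$ and $\mathcal{J}_\beta(t_2\diamond(u,v))\leq-\kappa\rho_1\rho_2$ (both achievable since $\Psi_{(u,v)}^\beta(t)\to-\infty$ as $t\to+\infty$) produces a curve in $\Gamma_\xi$ whose maximum equals $\mathcal{J}_\beta(t_\beta(u,v)\diamond(u,v))$; passing to the infimum over $(u,v)$ then recovers $m_\beta$.

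For (ii) I will follow the scaling and cutoff construction of \cite[Lemma 3.7]{Pei2025}: given a near-minimizer $(u',v')$ for $m_\beta(\rho_1',\rho_2')$, I augment it to a pair in $\mathcal{S}(\rho_1,\rho_2)$ by attaching truncated bumps carrying the remaining masses far away, and show that the energy increment vanishes as the bumps are pushed to infinity, delivering $m_\beta(\rho_1,\rho_2)\leq m_\beta(\rho_1',\rho_2')$. For (iii), fix $0<\beta_1<\beta_2$; for every positive $(u,v)\in\mathcal{S}(\rho_1,\rho_2)$ one has $\int(I_\alpha*|u|^p)|v|^q>0$, hence $\mathcal{J}_{\beta_1}(t\diamond(u,v))>\mathcal{J}_{\beta_2}(t\diamond(u,v))$ for every $t>0$, and the characterization from (i) applied after taking the maximum over $t$ and the infimum over $\mathcal{S}$ yields $m_{\beta_1}\geq m_{\beta_2}$.

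The main obstacle is the sign analysis of $P_\beta$ at the two endpoints of a generic $\gamma\in\Gamma_\xi$, which rests decisively on the a priori bound $|\nabla u|_2^2+|\nabla v|_2^2\geq s_0^2$ on $\mathcal{P}_\beta$; this bound hinges on both the criticality $\gamma_p+\gamma_q=2$ and the smallness \eqref{frac{1}{2}-A3>0}, and without it no single choice of $\xi$ would cleanly separate $A_\xi$ from the sublevel set $\{\mathcal{J}_\beta\leq-\kappa\rho_1\rho_2\}$ via the sign of $P_\beta$.
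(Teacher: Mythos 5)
Your proposal is correct and follows essentially the same route as the paper: the minimax identity is obtained via the sign of $P_\beta$ at the two endpoints of paths in $\Gamma_\xi$ together with the unimodality of $\Psi_{(u,v)}^\beta$ from Lemma \ref{Lemma3.5变分结构 3.3.2}, the a priori lower bound on $|\nabla u|_2^2+|\nabla v|_2^2$ over $\mathcal{P}_\beta(\rho_1,\rho_2)$ comes from \eqref{GN} and \eqref{frac{1}{2}-A3>0}, part (ii) is delegated to \cite[Lemma 3.7]{Pei2025}, and part (iii) is the pointwise comparison of $\mathcal{J}_{\beta_1}$ and $\mathcal{J}_{\beta_2}$. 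The only (harmless) deviation is that you derive the strict bound $m_\beta>-\kappa\rho_1\rho_2$ directly from $m_\beta\geq h(s_0)$, whereas the paper evaluates $\mathcal{J}_\beta-\tfrac{1}{\gamma_p+\gamma_q}P_\beta$ on $\mathcal{P}_\beta(\rho_1,\rho_2)$; both rest on the same ingredients, and your explicit endpoint sign analysis (with the quantitative bound $|\nabla u|_2^2+|\nabla v|_2^2\geq s_0^2$ on $\mathcal{P}_\beta(\rho_1,\rho_2)$) spells out what the paper merely asserts for $0<\xi\ll 1$.
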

\begin{proof}
	$(\romannumeral1)$ It follows from Lemma \ref{Lemma3.5变分结构 3.3.2} that
	\[m_\beta(\rho_1,\rho_2)=\inf\limits_{\mathcal{P}_\beta^-(\rho_1,\rho_2)}
	\mathcal{J}_\beta=\inf\limits_{\mathcal{S}(\rho_1,\rho_2)}\max\limits_{t>0}\mathcal{J}_\beta(t\diamond(u,v)).\]
	We next prove the second equality. On the one hand, assuming $(u,v) \in \mathcal{P}_\beta(\rho_1,\rho_2)$, we define
	$\gamma_M(t):=(tM+\frac{1}{M})\diamond (u,v)$ for $M>0$ large enough. It follows from Lemma \ref{Lemma3.5变分结构 3.3.2} that
	$\gamma_M(t) \in \Gamma_\xi$ and
	\[\mathcal{J}_\beta(u,v)=
	\mathcal{J}_\beta(\gamma_M(\frac{1}{M}-\frac{1}{M^2}))=\max\limits_{t \in [0,1]} \mathcal{J}_\beta(\gamma_M (t))
	 \geq \inf\limits_{\gamma \in \Gamma_\xi}\max_{t\in [0,1]}\mathcal{J}_\beta(\gamma(t)).\] 
	 Hence, $m_\beta(\rho_1,\rho_2) \geq \inf\limits_{\gamma \in \Gamma_\xi}\max\limits_{t\in [0,1]}\mathcal{J}_\beta(\gamma(t)).$\\
	 On the other hand, for $0 < \xi \ll 1$, it follows from Lemma \ref{Lemma3.5变分结构 3.3.2} that
	 $P_\beta(u,v)$ takes positive values on $ A_\xi(\rho_1,\rho_2)$ while $P_\beta(u,v)$ takes negative values on $\mathcal{J}_\beta^{-\kappa\rho_1\rho_2}$,
	 where $\mathcal{J}_\beta^{-\kappa\rho_1\rho_2}:=\{(u,v)\in \mathcal{S}(\rho_1,\rho_2) :\mathcal{J}_\beta(u,v) \leq -\kappa\rho_1\rho_2 \}$. 
	 Thus, for every $\gamma \in \Gamma_\xi$, there exists $t_\gamma \in [0,1]$ such that $P_\beta (\gamma (t_\gamma))=0$. Therefore, we derive
	 \[m_\beta(\rho_1,\rho_2)=\inf\limits_{\mathcal{P}_\beta(\rho_1,\rho_2)}\mathcal{J}_\beta(u,v) \leq \mathcal{J}_\beta(\gamma (t_\gamma)) \leq \max\limits_{t \in [0,1]} \mathcal{J}_\beta(\gamma (t)).\]
	 Hence, $m_\beta(\rho_1,\rho_2) \leq \inf\limits_{\gamma \in \Gamma_\xi}\max\limits_{t\in [0,1]}\mathcal{J}_\beta(\gamma(t)).$
	 
	 We now turn to the proof of the last inequality. Before proceeding, we claim that
	 \begin{equation}
	 	\inf\limits_{\mathcal{P}_\beta(\rho_1,\rho_2)}(|\nabla u|_2^2+|\nabla v|_2^2)>0.
	 	\label{P beta rho1 rho2>0}
	 \end{equation}
	 Indeed, given $(u,v) \in \mathcal{P}_\beta(\rho_1,\rho_2)$, it follows from \eqref{GN} that
\begin{align*}
		|\nabla u|_2^2+|\nabla v|_2^2&=\frac{\gamma_{r_1}}{r_1}\lambda_1 \int_{\mathbb{R}^N}(I_\alpha*|u|^{r_1})|u|^{r_1}
	+\frac{\gamma_{r_2}}{r_2}\lambda_2 \int_{\mathbb{R}^N}(I_\alpha*|v|^{r_2})|v|^{r_2}\\
	&\quad +\beta(\gamma_p+\gamma_q)\int_{\mathbb{R}^N}(I_\alpha*|u|^p)|v|^q   \\
	&\leq 2\gamma_{r_1}A_1(	|\nabla u|_2^2+|\nabla v|_2^2)^{\gamma_{r_1}}+2\gamma_{r_2}A_2(	|\nabla u|_2^2+|\nabla v|_2^2)^{\gamma_{r_2}}\\
	&\quad +(\gamma_p+\gamma_q)A_3(	|\nabla u|_2^2+|\nabla v|_2^2)^{\frac{\gamma_p+\gamma_q}{2}}.
\end{align*}
Since $\gamma_p+\gamma_q=2<2\gamma_{r_1}=2\gamma_{r_2}<\frac{2N+2\alpha}{N-2}$ and $\frac{1}{2}-A_3>0$, the claim holds true.
Then we derive
\begin{align*}
	m_\beta(\rho_1,\rho_2)=&\inf\limits_{(u,v) \in \mathcal{P}_\beta(\rho_1,\rho_2)}\mathcal{J}_\beta(u,v)-\frac{1}{\gamma_p+\gamma_q}P_\beta(u,v)\\
	=&\inf\limits_{(u,v) \in \mathcal{P}_\beta(\rho_1,\rho_2)}\frac{\lambda_1}{2r_1}(\gamma_{r_1}-1)\int_{\mathbb{R}^N}(I_\alpha*|u|^{r_1})|u|^{r_1}\\
	&+\frac{\lambda_2}{2r_2}(\gamma_{r_2}-1)\int_{\mathbb{R}^N}(I_\alpha*|v|^{r_2})|v|^{r_2}-\kappa\int_{\mathbb{R}^N}uv\\
	>&-\kappa\rho_1\rho_2.
\end{align*}
$(\romannumeral2)$ The argument follows closely that of \cite[Lemma 3.7]{Pei2025}; hence, we omit it here.\\
    $(\romannumeral3)$Lastly, by assuming that $0<\beta' \leq \beta$, we obtain
\[m_\beta(\rho_1,\rho_2)=\inf\limits_{\mathcal{S}(\rho_1,\rho_2)}\max_{t>0}
\mathcal{J}_\beta(t\diamond(u,v)) \leq \inf\limits_{\mathcal{S}(\rho_1,\rho_2)}\max_{t>0}
\mathcal{J}_{\beta'}(t\diamond(u,v)) = m_{\beta'}(\rho_1,\rho_2).\]
\end{proof}
	Define $\mathcal{I}_\beta : \mathcal{S}(\rho_1,\rho_2) \to \mathbb{R} \cup \{+\infty\}$ by 
\[\mathcal{I}_\beta (u,v)=\max_{t>0}\mathcal{J}_\beta(t\diamond(u,v))=\max_{t>0} \Psi_{(u,v)}^\beta (t).\]
To establish the existence of a nonnegative Palais-Smale sequence, the subsequent lemmas are essential.
\begin{lem}\cite[Lemma 3.1]{Chen2021Normalized}
	Assume that $\epsilon >0$ and $(u_0,v_0) \in \mathcal{S}(\rho_1,\rho_2)$ satisfies 
	\[\mathcal{I}_\beta(u_0,v_0) \leq \inf\limits_{\mathcal{S}(\rho_1,\rho_2)}\mathcal{I}_\beta(u,v)+\epsilon,\]
	 then given any $\eta >0$,
	there exists $(u_{\eta},v_{\eta}) \in \mathcal{S}(\rho_1,\rho_2) $, $u_{\eta} \geq 0$, $v_{\eta} \geq  0$ satisfying
	\begin{align*}
		&\mathcal{I}_\beta (u_{\eta},v_{\eta}) \leq \mathcal{I}_\beta(u_0,v_0),\\
		&||(u_{\eta},v_{\eta})-(u_0,v_0)||_H \leq \eta,\\
		\mathcal{I}_\beta(u,v) > \mathcal{I}_\beta (u_{\eta},v_{\eta}) - \frac{\epsilon}{\eta}&||(u_{\eta},v_{\eta})-(u,v)||_H , \quad  \forall  (u,v)\in \mathcal{S}(\rho_1,\rho_2) \setminus \{(u_{\eta},v_{\eta})\}.
	\end{align*}
\end{lem}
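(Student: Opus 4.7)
This is a constrained version of Ekeland's variational principle on the $L^2$-torus $\mathcal{S}(\rho_1,\rho_2)$ tailored to produce an approximate critical point of $\mathcal{I}_\beta$ with nonnegative components. The plan is to apply the classical Ekeland principle on the cone of componentwise nonnegative pairs, then extend the resulting local slope estimate to the whole of $\mathcal{S}(\rho_1,\rho_2)$ by symmetrization.

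I would first introduce $\tilde{\mathcal{S}}(\rho_1,\rho_2) := \{(u,v) \in \mathcal{S}(\rho_1,\rho_2) : u \ge 0,\; v \ge 0\}$, which is closed in $H$ and hence a complete metric space under the induced distance. Two preparatory facts about $\mathcal{I}_\beta$ are needed. First, $\mathcal{I}_\beta$ is lower semi-continuous on $\mathcal{S}(\rho_1,\rho_2)$ as a pointwise supremum of the continuous family $t \mapsto \mathcal{J}_\beta(t\diamond(u,v))$, and it is finite-valued thanks to Lemma \ref{Lemma3.5变分结构 3.3.2}. Second, $\mathcal{I}_\beta$ is monotone under symmetrization:
\[
\mathcal{I}_\beta(|u|,|v|) \le \mathcal{I}_\beta(u,v), \qquad (u,v) \in \mathcal{S}(\rho_1,\rho_2),
\]
because the Choquard and coupling integrands depend only on $|u|$ and $|v|$, the linear coupling satisfies $-\kappa \int |u||v| \le -\kappa \int uv$, $L^2$-scaling commutes with the absolute value, and $|\nabla|u||_2 = |\nabla u|_2$. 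In particular $\inf_{\tilde{\mathcal{S}}} \mathcal{I}_\beta = \inf_{\mathcal{S}} \mathcal{I}_\beta$, so $(|u_0|,|v_0|) \in \tilde{\mathcal{S}}$ is still an $\epsilon$-minimizer.

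Applying the classical Ekeland variational principle on the complete metric space $\tilde{\mathcal{S}}(\rho_1,\rho_2)$ to the lower semi-continuous functional $\mathcal{I}_\beta$, with base point $(|u_0|,|v_0|)$ and parameter $\eta$, delivers $(u_\eta,v_\eta) \in \tilde{\mathcal{S}}$ that satisfies the first two conclusions of the lemma along with the slope estimate for every test point in $\tilde{\mathcal{S}}\setminus\{(u_\eta,v_\eta)\}$. To upgrade the slope estimate to an arbitrary $(u,v) \in \mathcal{S}(\rho_1,\rho_2) \setminus \{(u_\eta,v_\eta)\}$, I would insert $(|u|,|v|) \in \tilde{\mathcal{S}}$ and combine $\mathcal{I}_\beta(u,v) \ge \mathcal{I}_\beta(|u|,|v|)$ with the Ekeland inequality applied at $(|u|,|v|)$ and with the pointwise bound $||a|-b| \le |a-b|$ valid for $b \ge 0$, which controls the $L^2$-component of the distance under symmetrization.

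The main technical obstacle is precisely this last comparison: the nonexpansivity of $(u,v)\mapsto(|u|,|v|)$ is immediate at the $L^2$ level, but not obvious for the full $H$-norm appearing on the right-hand side of the slope estimate. The standard remedy, which is the core of \cite[Lemma 3.1]{Chen2021Normalized}, is either to approximate the absolute value by smooth truncations for which $H^1$-nonexpansivity holds up to an arbitrarily small error and to pass to the limit, or to absorb the discrepancy into the flexible Ekeland constant $\epsilon/\eta$. Once the extension is accomplished the three conclusions hold as stated.
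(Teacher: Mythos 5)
First, a point of reference: the paper itself gives no proof of this statement. It is quoted verbatim from \cite[Lemma 3.1]{Chen2021Normalized} and used as a black box in the construction of the nonnegative Palais--Smale sequence (Lemma \ref{Lemma3.8 PS sequence 3.3.2}), so there is no in-paper argument to compare against; I can only assess your reconstruction on its own terms. Your outline is the natural one --- Ekeland's principle on the closed cone $\tilde{\mathcal{S}}:=\{(u,v)\in\mathcal{S}(\rho_1,\rho_2):u\ge 0,\ v\ge 0\}$ combined with the monotonicity $\mathcal{I}_\beta(|u|,|v|)\le\mathcal{I}_\beta(u,v)$ --- and your preparatory facts (lower semicontinuity of the supremum, boundedness below via $m_\beta(\rho_1,\rho_2)>-\kappa\rho_1\rho_2$, monotonicity under $(u,v)\mapsto(|u|,|v|)$ using $|\nabla|u||_2=|\nabla u|_2$ and $-\kappa\int|u||v|\le-\kappa\int uv$) are all correct.

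However, the argument is incomplete at the two places where the statement has real content. (i) Ekeland based at $(|u_0|,|v_0|)$ yields $\|(u_\eta,v_\eta)-(|u_0|,|v_0|)\|_H\le\eta$, not $\|(u_\eta,v_\eta)-(u_0,v_0)\|_H\le\eta$; these differ by $2\|(u_0^-,v_0^-)\|_H$, which the hypothesis $\mathcal{I}_\beta(u_0,v_0)\le\inf\mathcal{I}_\beta+\epsilon$ does not control. You either need $u_0,v_0\ge 0$ (which is how the lemma is actually invoked in this paper, after replacing the minimizing sequence by its absolute values) or an additional argument. (ii) More seriously, extending the slope estimate from $\tilde{\mathcal{S}}$ to all of $\mathcal{S}(\rho_1,\rho_2)$ requires $\|(|u|,|v|)-(u_\eta,v_\eta)\|_H\le\|(u,v)-(u_\eta,v_\eta)\|_H$. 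You rightly note this is clear for the $L^2$ part (since $||a|-b|\le|a-b|$ when $b\ge 0$) but problematic for the gradient part, and in fact it is false in general: on $\{u<0\}$ one computes $|\nabla(|u|-u_\eta)|^2-|\nabla(u-u_\eta)|^2=4\,\nabla u\cdot\nabla u_\eta$, which can be positive on a set of positive measure, so $u\mapsto|u|$ is not $1$-Lipschitz from $H^1$ to $H^1$. Neither of your proposed remedies closes this gap. Smooth truncations $f_\delta$ of the absolute value with $|f_\delta'|\le 1$ give $\|\nabla f_\delta(u)\|_2\le\|\nabla u\|_2$, but the composition operator is still not nonexpansive: $\nabla\bigl(f_\delta(u)-f_\delta(u_\eta)\bigr)=f_\delta'(u)\nabla(u-u_\eta)+\bigl(f_\delta'(u)-f_\delta'(u_\eta)\bigr)\nabla u_\eta$, and the commutator term is of size $\|f_\delta''\|_\infty\,|u-u_\eta|\,|\nabla u_\eta|$, which degenerates as $\delta\to 0$ and is not bounded by a multiple of $\|u-u_\eta\|_H$. ``Absorbing the discrepancy into $\epsilon/\eta$'' is likewise unjustified, since the discrepancy is not a priori a fixed multiple of the distance. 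As written, your argument only delivers the slope inequality for competitors in $\tilde{\mathcal{S}}$ (plus the trivial case $(|u|,|v|)=(u_\eta,v_\eta)$), which is strictly weaker than the stated conclusion; the full statement must be taken from \cite{Chen2021Normalized} or proved by a mechanism that genuinely handles sign-changing competitors.
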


\begin{lem}
	We can find a nonnegative Palais-Smale sequence $\{(u_n,v_n)\} \subset \mathcal{S}(\rho_1,\rho_2)$, $u_n \geq 0$, $v_n \geq 0$ satisfying
	\begin{align*}
		\mathcal{J}_\beta(u_n,v_n) \to m_\beta(\rho_1,\rho_2), \qquad
		\mathcal{J}_\beta '|_{\mathcal{S}(\rho_1,\rho_2)} (u_n,v_n) \to 0.
	\end{align*}
	Furthermore, the condition can be refined to $\{(u_n,v_n)\} \subset \mathcal{P}_\beta(\rho_1,\rho_2)$. 
	\label{Lemma3.8 PS sequence 3.3.2}
\end{lem}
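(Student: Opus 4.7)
The plan is to obtain a nonnegative Palais–Smale sequence for $\mathcal{J}_\beta$ on $\mathcal{S}(\rho_1,\rho_2)$ at the mountain pass level $m_\beta(\rho_1,\rho_2)$ in two steps: first produce a PS sequence for the auxiliary functional $\mathcal{I}_\beta$ on $\mathcal{S}(\rho_1,\rho_2)$ via the refined Ekeland principle of Lemma 3.7, and then transfer it to the Pohozaev manifold $\mathcal{P}_\beta(\rho_1,\rho_2)$ using the $L^2$-preserving projection $(u,v)\mapsto t_\beta(u,v)\diamond(u,v)$ from Lemma \ref{Lemma3.5变分结构 3.3.2}.

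First I would observe that $\mathcal{J}_\beta(|u|,|v|)\le \mathcal{J}_\beta(u,v)$: the map $u\mapsto |u|$ leaves $|\nabla u|_2^2$, $\int(I_\alpha*|u|^{r_i})|u|^{r_i}$ and $\int(I_\alpha*|u|^p)|v|^q$ invariant, while $-\kappa\int uv \ge -\kappa\int |u||v|$. Since this inequality is preserved under the scaling $t\diamond(\cdot,\cdot)$, it follows that $\mathcal{I}_\beta(|u|,|v|)\le \mathcal{I}_\beta(u,v)$. Combined with Lemma \ref{Lemma3.6能量值 3.3.2}$(\romannumeral1)$, one gets a minimizing sequence $(\tilde u_n,\tilde v_n)\in \mathcal{S}(\rho_1,\rho_2)$ with $\tilde u_n,\tilde v_n\ge 0$ and $\mathcal{I}_\beta(\tilde u_n,\tilde v_n)\to m_\beta(\rho_1,\rho_2)$.

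Next, setting $\epsilon_n:=\mathcal{I}_\beta(\tilde u_n,\tilde v_n)-m_\beta(\rho_1,\rho_2)+\frac{1}{n}$ and $\eta_n:=\sqrt{\epsilon_n}$, I would apply Lemma 3.7 to obtain $(u_n',v_n')\in \mathcal{S}(\rho_1,\rho_2)$ with $u_n',v_n'\ge 0$ satisfying $\|(u_n',v_n')-(\tilde u_n,\tilde v_n)\|_H\le \eta_n$, $\mathcal{I}_\beta(u_n',v_n')\to m_\beta(\rho_1,\rho_2)$, and the local estimate
\[
\mathcal{I}_\beta(u,v) > \mathcal{I}_\beta(u_n',v_n')-\tfrac{\epsilon_n}{\eta_n}\|(u_n',v_n')-(u,v)\|_H, \quad (u,v)\in \mathcal{S}(\rho_1,\rho_2)\setminus\{(u_n',v_n')\}.
\]
Then I would set $t_n:=t_\beta(u_n',v_n')$ and $(u_n,v_n):=t_n\diamond(u_n',v_n')$. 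By Lemma \ref{Lemma3.5变分结构 3.3.2}, $(u_n,v_n)\in \mathcal{P}_\beta^-(\rho_1,\rho_2)$ with $u_n,v_n\ge 0$ and $\mathcal{J}_\beta(u_n,v_n)=\mathcal{I}_\beta(u_n',v_n')\to m_\beta(\rho_1,\rho_2)$.

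The main obstacle is deducing that $\mathcal{J}_\beta'|_{\mathcal{S}(\rho_1,\rho_2)}(u_n,v_n)\to 0$ from the Ekeland condition on $\mathcal{I}_\beta$ at $(u_n',v_n')$. The crucial identity is
\[
\mathcal{I}_\beta'(u_n',v_n')[(\phi,\psi)]=\mathcal{J}_\beta'(u_n,v_n)[t_n\diamond(\phi,\psi)],\qquad (\phi,\psi)\in T_{(u_n',v_n')}\mathcal{S}(\rho_1,\rho_2),
\]
which is valid precisely because $(u_n,v_n)\in \mathcal{P}_\beta$ makes the contribution from differentiating $t_\beta(u,v)$ vanish; one also needs that the $L^2$-scaling maps tangent spaces to tangent spaces, which it does since it preserves $L^2$-inner products. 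Transferring the estimate then requires the bounds $0<c_1\le t_n\le c_2<\infty$: the upper bound follows from boundedness of $\{(u_n',v_n')\}$ in $H$ (inherited from $\{(\tilde u_n,\tilde v_n)\}$ via $\eta_n\to 0$) combined with the fact that $\{(u_n,v_n)\}$ lies in the bounded sublevel set of $\mathcal{J}_\beta$ restricted to $\mathcal{P}_\beta$, while the lower bound follows from \eqref{P beta rho1 rho2>0} applied to $(u_n,v_n)\in \mathcal{P}_\beta^-$ together with the scaling identity $|\nabla u_n|_2^2+|\nabla v_n|_2^2=t_n^2(|\nabla u_n'|_2^2+|\nabla v_n'|_2^2)$. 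With these bounds in hand, $(\phi,\psi)\mapsto t_n\diamond(\phi,\psi)$ is a uniform $H$-isomorphism between tangent spaces, so $\|\mathcal{J}_\beta'|_{\mathcal{S}(\rho_1,\rho_2)}(u_n,v_n)\|\to 0$ follows. The resulting sequence lies in $\mathcal{P}_\beta(\rho_1,\rho_2)$, yielding the refined conclusion.
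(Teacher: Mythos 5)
The paper states this lemma with no proof at all (it is evidently deferred to the reference of Chen--Zhong--Zou from which the preceding Ekeland-type lemma is quoted), so there is no in-paper argument to compare against. Your strategy --- symmetrize to get a nonnegative minimizing sequence for $\mathcal{I}_\beta$, perturb it with the quantitative Ekeland lemma, project onto $\mathcal{P}_\beta(\rho_1,\rho_2)$ via the fiber map $t_\beta$, and transfer the derivative estimate through the identity $\mathcal{I}_\beta'(u_n',v_n')[(\phi,\psi)]=\mathcal{J}_\beta'(u_n,v_n)[t_n\diamond(\phi,\psi)]$ --- is the standard route for this statement, and the key identity, the compatibility of $t\diamond$ with tangent spaces, and the role of Lemma \ref{Lemma3.5变分结构 3.3.2}(iv) are all correctly identified.

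There is, however, one genuine gap in the quantitative part. Your control of $t_n=t_\beta(u_n',v_n')$ rests on $\{(u_n',v_n')\}$ being bounded in $H$, which you say is ``inherited from $\{(\tilde u_n,\tilde v_n)\}$ via $\eta_n\to 0$'' --- but you never establish that the minimizing sequence $\{(\tilde u_n,\tilde v_n)\}$ for $\mathcal{I}_\beta$ is bounded, and it need not be: $\mathcal{I}_\beta$ is invariant under the scaling $(u,v)\mapsto s\diamond(u,v)$, so any minimizing sequence can be rescaled into one with $|\nabla\tilde u_n|_2^2+|\nabla\tilde v_n|_2^2\to\infty$ (or $\to 0$). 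Without an upper bound on $|\nabla u_n'|_2^2+|\nabla v_n'|_2^2$, the lower bound on $t_n$ --- the one that is actually needed to pull test functions back, since $\|t_n^{-1}\diamond(\Phi,\Psi)\|_H\le\max\{t_n^{-1},1\}\,\|(\Phi,\Psi)\|_H$ --- fails, because $t_n^2$ is the ratio of $|\nabla u_n|_2^2+|\nabla v_n|_2^2$ (bounded below by \eqref{P beta rho1 rho2>0}) to $|\nabla u_n'|_2^2+|\nabla v_n'|_2^2$. The fix is standard but must be stated: exploiting the scaling invariance of $\mathcal{I}_\beta$ and the fact that $t\diamond$ preserves nonnegativity, first replace $(\tilde u_n,\tilde v_n)$ by $t_\beta(\tilde u_n,\tilde v_n)\diamond(\tilde u_n,\tilde v_n)\in\mathcal{P}_\beta(\rho_1,\rho_2)$ before invoking Ekeland. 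Then Step 1 of Lemma \ref{Lemma3.9紧性 3.3.2} (coercivity of $\mathcal{J}_\beta$ on $\mathcal{P}_\beta$ under \eqref{frac{1}{2}-A3>0}) bounds the gradients of the normalized sequence above, \eqref{P beta rho1 rho2>0} bounds them below, and $\eta_n\to 0$ carries both bounds over to $(u_n',v_n')$; with that, $t_n$ is bounded above and away from zero and your argument closes.
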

With the Palais-Smale sequence at hand, we proceed to establish the compactness result.
\begin{lem}
Assume that $N\in \{3,4\}$, $\lambda_1, \lambda_2, \beta, \kappa, \rho_1,\rho_2 > 0$ and $\{(u_n,v_n)\}\subset \mathcal{S}(\rho_1,\rho_2)$ satisfies that as $n\to \infty$
\begin{align}
	\mathcal{J}_\beta'(u_n,v_n)+\mu_{1,n}&u_n+\mu_{2,n}v_n\to 0,  \qquad \text{for some}\; \mu_{1,n},\mu_{2,n}\in \mathbb{R}, \label{3.20 in Lemma3.9}\\
	\mathcal{J}_\beta(u_n,v_n)&\to m_\beta(\rho_1,\rho_2), \qquad 	\mathcal{P}_\beta(u_n,v_n)= 0, \label{3.21 in Lemma3.9}\\
	&u_n, v_n \geq 0 ~~ \text{ in}~ \mathbb{R}^N. \label{3.22 in Lemma3.9}
\end{align}
Then there exists $(u,v)\in H$, $u,v>0$ and $\mu_1,\mu_2>0$ satisfying that, for a subsequence,
$(u_n,v_n)\to (u,v)\; \text{in}\; H$ and $(\mu_{1,n},\mu_{2,n}) \to (\mu_1,\mu_2)\; \text{in} \; \mathbb{R}^2$.
\label{Lemma3.9紧性 3.3.2}
\end{lem}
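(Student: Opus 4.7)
The plan is to follow the three-step strategy of Lemma \ref{Lemma3.3紧性} while replacing its decisive ingredient — the assumption $c<-\kappa\rho_1\rho_2$ used to exclude vanishing — by a rigidity coming directly from the Pohozaev constraint. Since $\gamma_p+\gamma_q=2$, the combination $\mathcal J_\beta(u_n,v_n)-\tfrac12 P_\beta(u_n,v_n)$ annihilates both the gradient and the $\beta$-term, leaving only the two Riesz integrals with positive coefficients $\tfrac{\lambda_i}{2r_i}(\gamma_{r_i}-1)>0$ together with the term $-\kappa\int u_n v_n\geq-\kappa\rho_1\rho_2$; this bounds $\int(I_\alpha*|u_n|^{r_i})|u_n|^{r_i}$ uniformly. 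Inserting these bounds back into $P_\beta(u_n,v_n)=0$, controlling the coupling term by \eqref{GN} and absorbing it using \eqref{frac{1}{2}-A3>0} (i.e.\ $1-2A_3>0$), bounds $|\nabla u_n|_2^2+|\nabla v_n|_2^2$. Testing \eqref{3.20 in Lemma3.9} against $(u_n,0)$ and $(0,v_n)$ bounds the Lagrange multipliers. Passing to a subsequence gives $(u_n,v_n)\rightharpoonup(u,v)$ in $H$, strongly in $L^s\times L^s$ for $s\in(2,2^*)$ by the compact radial embedding, a.e., and $(\mu_{1,n},\mu_{2,n})\to(\mu_1,\mu_2)$; the limit $(u,v)$ weakly solves the Euler-Lagrange system with $u,v\geq 0$.

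For nontriviality, suppose $(u,v)=(0,0)$. Then $u_n,v_n\to 0$ in $L^s$ for $s\in(2,2^*)$, so HLS forces each of $\int(I_\alpha*|u_n|^{r_1})|u_n|^{r_1}$, $\int(I_\alpha*|v_n|^{r_2})|v_n|^{r_2}$ and $\int(I_\alpha*|u_n|^{p})|v_n|^{q}$ to vanish. Then $P_\beta(u_n,v_n)=0$ forces $|\nabla u_n|_2^2+|\nabla v_n|_2^2\to 0$, contradicting the uniform lower bound $\inf_{(u,v)\in\mathcal P_\beta(\rho_1,\rho_2)}(|\nabla u|_2^2+|\nabla v|_2^2)>0$ from \eqref{P beta rho1 rho2>0}. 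Hence $(u,v)\neq(0,0)$; the linear coupling in the limit system makes $u\equiv 0\Leftrightarrow v\equiv 0$, so in fact $u,v\not\equiv 0$, and the strong maximum principle gives $u,v>0$. Positivity $\mu_i>0$ then follows from Lemma \ref{Liouville}: $\mu_1\leq 0$ would give $-\Delta u\geq 0$, forcing $u\equiv 0$.

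For strong convergence, set $(\tilde u_n,\tilde v_n):=(u_n-u,v_n-v)$. The Brezis-Lieb decompositions \eqref{Brezis-Lieb of Riesz potential}-\eqref{Brezis-Lieb of linear couplings} combined with strong $L^s$ convergence of the limit-component terms yield $P_\beta(\tilde u_n,\tilde v_n)\to 0$ and vanishing of all three Riesz integrals of $(\tilde u_n,\tilde v_n)$, whence $|\nabla\tilde u_n|_2^2+|\nabla\tilde v_n|_2^2\to 0$. To rule out $L^2$-mass loss I mimic Step 3 of Lemma \ref{Lemma3.3紧性}: setting $\bar\rho_1:=|u|_2$, $\bar\rho_2:=|v|_2$ and subtracting the tested identities $\mathcal J_\beta'(u_n,v_n)[(u_n,v_n)]+\mu_{1,n}\rho_1^2+\mu_{2,n}\rho_2^2=o(1)$ and $\mathcal J_\beta'(u,v)[(u,v)]+\mu_1\bar\rho_1^2+\mu_2\bar\rho_2^2=0$ yields
\begin{equation*}
	\mu_1(\rho_1^2-\bar\rho_1^2)+\mu_2(\rho_2^2-\bar\rho_2^2)=2\kappa\lim_{n\to\infty}\!\int(u_n-u)(v_n-v)\leq 2\kappa\sqrt{(\rho_1^2-\bar\rho_1^2)(\rho_2^2-\bar\rho_2^2)}.
\end{equation*}
If both deficits are strict, AM-GM gives $\kappa\geq\sqrt{\mu_1\mu_2}$, and the substitution $(\bar u,\bar v):=(\sqrt{\mu_2}u,\sqrt{\mu_1}v)$ produces a pair with $-\Delta(\bar u+\bar v)\geq 0$; Lemma \ref{Liouville} then forces $u\equiv v\equiv 0$, contradicting nontriviality. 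If only one is strict, the right-hand side vanishes while $\mu_i>0$ keeps the left strictly positive. Hence $\bar\rho_i=\rho_i$, which together with the vanishing of the residual gradient yields $(u_n,v_n)\to(u,v)$ in $H$.

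The main obstacle is the nontriviality step: the subcritical mechanism $c<-\kappa\rho_1\rho_2$ of Lemma \ref{Lemma3.3紧性} is unavailable here, since one has $m_\beta(\rho_1,\rho_2)>-\kappa\rho_1\rho_2$ by Lemma \ref{Lemma3.6能量值 3.3.2}. The replacement must be Pohozaev rigidity, namely the uniform lower bound \eqref{P beta rho1 rho2>0}, which is a direct consequence of the supercritical powers $\gamma_{r_i}>1$ and the standing hypothesis \eqref{frac{1}{2}-A3>0}; once this is in hand the remaining steps parallel Lemma \ref{Lemma3.3紧性} closely.
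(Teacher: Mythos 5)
Your proposal is correct and follows essentially the same route as the paper's proof: boundedness via $\mathcal J_\beta-\tfrac12 P_\beta$ plus the absorption condition $1-2A_3>0$, nontriviality via the Pohozaev rigidity \eqref{P beta rho1 rho2>0} combined with the linear coupling forcing $u\equiv0\Leftrightarrow v\equiv0$, and strong convergence via the Brezis--Lieb decomposition and the mass-deficit/Liouville argument inherited from Step 3 of Lemma \ref{Lemma3.3紧性}. You correctly identified that the only genuinely new ingredient relative to the subcritical case is replacing the condition $c<-\kappa\rho_1\rho_2$ by the uniform gradient lower bound on $\mathcal P_\beta(\rho_1,\rho_2)$, which is exactly what the paper does.
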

\begin{proof}
We structure the argument in three distinct steps.
	
\text{Step 1:} It follows from \eqref{3.21 in Lemma3.9} that
\begin{align*}
	m_\beta(\rho_1,\rho_2)+1 &\geq \mathcal{J}_\beta(u_n,v_n)-\frac{1}{\gamma_p+\gamma_q}P_\beta(u_n,v_n)\\
	&=\frac{\lambda_1}{2r_1}(\gamma_{r_1}-1)\int_{\mathbb{R}^N}(I_\alpha*|u_n|^{r_1})|u_n|^{r_1}
	+\frac{\lambda_2}{2r_2}(\gamma_{r_2}-1)\int_{\mathbb{R}^N}(I_\alpha*|v_n|^{r_2})|v_n|^{r_2}\\
	&\quad -\kappa\int_{\mathbb{R}^N}u_n v_n\\
	&\geq \frac{\lambda_1}{2r_1}(\gamma_{r_1}-1)\int_{\mathbb{R}^N}(I_\alpha*|u_n|^{r_1})|u_n|^{r_1}
	+\frac{\lambda_2}{2r_2}(\gamma_{r_2}-1)\int_{\mathbb{R}^N}(I_\alpha*|v_n|^{r_2})|v_n|^{r_2}\\
	&\quad -\kappa \rho_1 \rho_2.
\end{align*}
Since $\gamma_p+\gamma_q=2<2\gamma_{r_1}=2\gamma_{r_2}<\frac{2N+2\alpha}{N-2}$,
we derive that $\int_{\mathbb{R}^N}(I_\alpha*|u_n|^{r_1})|u_n|^{r_1}$, $\int_{\mathbb{R}^N}(I_\alpha*|v_n|^{r_2})|v_n|^{r_2}$
 are bounded; hence, $	|\nabla u_n|_2^2+|\nabla v_n|_2^2-\beta(\gamma_p+\gamma_q)\int_{\mathbb{R}^N}(I_\alpha*|u_n|^p)|v_n|^q $
 is also bounded. A direct calculation yields
 \[ |\nabla u_n|_2^2+|\nabla v_n|_2^2-\beta(\gamma_p+\gamma_q)\int_{\mathbb{R}^N}(I_\alpha*|u_n|^p)|v_n|^q \geq 
 (1-2A_3)(|\nabla u_n|_2^2+|\nabla v_n|_2^2). \]
 Since $\frac{1}{2}-A_3>0$, we obtain that $|\nabla u_n|_2^2+|\nabla v_n|_2^2$ is bounded.
 Drawing from \eqref{3.20 in Lemma3.9}, one can infer that
 \[ \mu_{1,n}=-\frac{1}{\rho_1^2}\mathcal{J}_\beta'(u_n,v_n)[(u_n,0)]+o_n(1), \qquad \mu_{2,n}=-\frac{1}{\rho_2^2}\mathcal{J}_\beta'(u_n,v_n)[(0,v_n)]+o_n(1).\]
 Therefore, $(\mu_{1,n},\mu_{2,n})$ is bounded in $\mathbb{R}^2$.
 Then we can find $(u,v) \in H$, $(\mu_1,\mu_2) \in \mathbb{R}^2$, up to a subsequence, satisfying
 \begin{align*}
 	&(u_n,v_n)\rightharpoonup (u,v)\quad in  \; H,\\
 	&(u_n,v_n)\to (u,v) \quad in  \; L^s(\mathbb{R}^N)\times L^s(\mathbb{R}^N),~s\in (2,2^*),\\
 	&(u_n,v_n)\to (u,v) \quad a.e \; in  ~ \mathbb{R}^N,\\
 	&(\mu_{1,n},\mu_{2,n}) \to (\mu_1,\mu_2) \quad in  \; \mathbb{R}^2.
 \end{align*}
 Furthermore, combining \eqref{3.20 in Lemma3.9} and \eqref{3.22 in Lemma3.9} yields that
 \begin{align}
 	\mathcal{J}_\beta'(u,v)+\mu_1 u+\mu_2 v&=0, \qquad u\geq 0,\; v\geq 0. \label{3.24 in Lemma3.9}\\
 	P_\beta(u,v)&=0 .   
 	\label{3.25 in Lemma3.9}
 \end{align}
 
 \text{Step 2:} Our goal is to demonstrate $u \neq 0$, $v \neq 0$. The maximum principle then ensures $u, v > 0$. 
 Assume, for the sake of contradiction, that $u=0$. 
 Based on \eqref{3.24 in Lemma3.9}, we obtain
 \begin{equation*}
 	\left\{
 	\begin{aligned}
 		0&= \kappa v,\\
 		-\Delta v + \mu_2 v &=\lambda_2 ( I_\alpha * |v|^{r_2} ) |v|^{r_2-2} v ,
 	\end{aligned}
 	\right.
 \end{equation*}
 Hence, one has $v=0$.
 As a result of \eqref{2.12}, one has $\int_{\mathbb{R}^N}(I_\alpha*|u_n|^p)|v_n|^q$, $\int_{\mathbb{R}^N}(I_\alpha*|u_n|^{r_1})|u_n|^{r_1}$ and $\int_{\mathbb{R}^N}(I_\alpha*|v_n|^{r_2})|v_n|^{r_2} \to 0$.
 It follows from \eqref{3.21 in Lemma3.9} that 
 \begin{align*}
 	|\nabla u_n|_2^2+|\nabla v_n|_2^2&=\frac{\gamma_{r_1}}{r_1}\lambda_1 \int_{\mathbb{R}^N}(I_\alpha*|u_n|^{r_1})|u_n|^{r_1}
 	+\frac{\gamma_{r_2}}{r_2}\lambda_2 \int_{\mathbb{R}^N}(I_\alpha*|v_n|^{r_2})|v_n|^{r_2}\\
 	&\quad +\beta(\gamma_p+\gamma_q)\int_{\mathbb{R}^N}(I_\alpha*|u_n|^p)|v_n|^q  \to 0,
 \end{align*}
 which is in contradiction with \eqref{P beta rho1 rho2>0}. 
 
 \text{Step 3:}  By an argument analogous to the step 3 of Lemma \ref{Lemma3.3紧性}, 
 we derive that $(u_n,v_n)\to (u,v)$ in $H$; 
 hence, we omit it here.
\end{proof}
\begin{proof}[\normalfont \textbf{Proof of Theorem 1.5}]\leavevmode\\
	The result is obtained simply by combining Lemma \ref{Lemma3.5变分结构 3.3.2} -Lemma \ref{Lemma3.9紧性 3.3.2}.
\end{proof}

\subsubsection{$\frac{2N+2\alpha+4}{N}<p+q< 2r_1=2r_2<\frac{2N+2\alpha}{N-2}$}\leavevmode\par
Under the current assumption, we have $2<\gamma_p+\gamma_q<2\gamma_{r_1}=2\gamma_{r_2}<\frac{2N+2\alpha}{N-2}$ and
\begin{equation*}
	h(s)=\frac{1}{2}s^2-(A_1+A_2)s^{2\gamma_{r_1}}-A_3 s^{\gamma_p+\gamma_q}-\kappa \rho_1 \rho_2,
\end{equation*}
where $h(s)$ is given by \eqref{h(s)}.
A direct calculation yields
\begin{equation*}
	h'(s)=s-2\gamma_{r_1}(A_1+A_2)s^{2\gamma_{r_1}-1}-(\gamma_p+\gamma_q)A_3s^{\gamma_p+\gamma_q-1}.
\end{equation*}
Set
\begin{equation*}
	g(s):=2\gamma_{r_1}(A_1+A_2)s^{2\gamma_{r_1}-2}+(\gamma_p+\gamma_q)A_3s^{\gamma_p+\gamma_q-2}.
\end{equation*}
Direct computation shows that $\lim\limits_{s\to 0^+}g(s)=0$, $\lim\limits_{s\to +\infty}g(s)=+\infty$ and $g(s)$ is strictly increasing on $(0,+\infty)$.
Hence, we can find a unique $s_0>0$ satisfying $g(s_0)=1$.
Consequently, $h(s)$ strictly increases on $(0,s_0)$ while decreases on $(s_0,+\infty)$.

As the proof parallels the preceding case, we conclude the same existence result holds (Theorem \ref{Theorem1.6} when $\frac{2N+2\alpha+4}{N}<p+q< 2r_1=2r_2<\frac{2N+2\alpha}{N-2}$).

\subsubsection{$\frac{2N+2\alpha+4}{N}<p+q= 2r_1=2r_2<\frac{2N+2\alpha}{N-2}$}\leavevmode\par
Under the current assumption, we have $2<\gamma_p+\gamma_q=2\gamma_{r_1}=2\gamma_{r_2}<\frac{2N+2\alpha}{N-2}$ and
\begin{align*}
	h(s)=&\frac{1}{2}s^2-(A_1+A_2)s^{2\gamma_{r_1}}-A_3 s^{\gamma_p+\gamma_q}-\kappa \rho_1 \rho_2\\
	=&\frac{1}{2}s^2-(A_1+A_2+A_3)s^{2\gamma_{r_1}}-\kappa \rho_1 \rho_2,
\end{align*}
where $h(s)$ is given by \eqref{h(s)}.
A direct calculation yields
\begin{equation*}
	h'(s)=s-2\gamma_{r_1}(A_1+A_2+A_3)s^{2\gamma_{r_1}-1}.
\end{equation*}
Defining \[s_0=(\frac{1}{2\gamma_{r_1}(A_1+A_2+A_3)})^{\frac{1}{2\gamma_{r_1}-2}},\]
we derive
$h(s)$ strictly increases on $(0,s_0)$ while decreases on $(s_0,+\infty)$.

As the proof parallels the preceding case, we conclude the same existence result holds (Theorem \ref{Theorem1.6} when $\frac{2N+2\alpha+4}{N}<p+q= 2r_1=2r_2<\frac{2N+2\alpha}{N-2}$).

\section{The $r_1<r_2$ case}

\subsection{$\frac{N+\alpha}{N}<r_1<r_2<\frac{N+\alpha+2}{N}$}

\subsubsection{$\frac{2N+2\alpha}{N}<p+q<2r_1<2r_2<\frac{2N+2\alpha+4}{N}$}\leavevmode\par
Under the current assumption, we have $0<\gamma_p+\gamma_q<2\gamma_{r_1}<2\gamma_{r_2}<2$ and
\begin{equation*}
	h(s)=\frac{1}{2}s^2-A_1s^{2\gamma_{r_1}}-A_2 s^{2\gamma_{r_2}}-A_3 s^{\gamma_p+\gamma_q}-\kappa \rho_1 \rho_2,
\end{equation*}
where $h(s)$ is given by \eqref{h(s)}.
A direct calculation yields
\begin{equation*}
	h'(s)=s-2\gamma_{r_1}A_1s^{2\gamma_{r_1}-1}-2\gamma_{r_2}A_2s^{2\gamma_{r_2}-1}-(\gamma_p+\gamma_q)A_3s^{\gamma_p+\gamma_q-1}.
\end{equation*}
Set
\begin{equation*}
	g(s):=2\gamma_{r_1}A_1s^{2\gamma_{r_1}-2}+2\gamma_{r_2}A_2s^{2\gamma_{r_2}-2}+(\gamma_p+\gamma_q)A_3s^{\gamma_p+\gamma_q-2}.
\end{equation*}
Direct computation shows that $\lim\limits_{s\to 0^+}g(s)=+\infty$, $\lim\limits_{s\to +\infty}g(s)=0$ and $g(s)$ is strictly decreasing on $(0,+\infty)$.
Hence, we can find a unique $s_0>0$ satisfying $g(s_0)=1$.
Consequently, $h(s)$ strictly decreases on $(0,s_0)$ while increases on $(s_0,+\infty)$. Noting that 
$\lim\limits_{s\to 0^+}h(s)=-\kappa \rho_1 \rho_2$ and $\lim\limits_{s\to +\infty}h(s)=+\infty$, there exists 
a unique $s_1>0$ such that $h(s_1)=\kappa\rho_1\rho_2$.
As the proof parallels the preceding case, we conclude the same existence result holds (Theorem \ref{Theorem1.7} when $\frac{2N+2\alpha}{N}<p+q<2r_1<2r_2<\frac{2N+2\alpha+4}{N}$).

\subsubsection{$\frac{2N+2\alpha}{N}<p+q=2r_1<2r_2<\frac{2N+2\alpha+4}{N}$}\leavevmode\par
Under the current assumption, we have $0<\gamma_p+\gamma_q=2\gamma_{r_1}<2\gamma_{r_2}<2$ and
\begin{align*}
	h(s)&=\frac{1}{2}s^2-A_1s^{2\gamma_{r_1}}-A_2 s^{2\gamma_{r_2}}-A_3 s^{\gamma_p+\gamma_q}-\kappa \rho_1 \rho_2\\
	&=\frac{1}{2}s^2-(A_1+A_3)s^{2\gamma_{r_1}}-A_2 s^{2\gamma_{r_2}}-\kappa \rho_1 \rho_2,
\end{align*}
where $h(s)$ is given by \eqref{h(s)}. A direct calculation yields
\begin{equation*}
	h'(s)=s-2\gamma_{r_1}(A_1+A_3)s^{2\gamma_{r_1}-1}-2\gamma_{r_2}A_2s^{2\gamma_{r_2}-1}.
\end{equation*}
Set
\begin{equation*}
	g(s):=2\gamma_{r_1}(A_1+A_3)s^{2\gamma_{r_1}-2}+2\gamma_{r_2}A_2s^{2\gamma_{r_2}-2}.
\end{equation*}
Direct computation shows that $\lim\limits_{s\to 0^+}g(s)=+\infty$, $\lim\limits_{s\to +\infty}g(s)=0$ and $g(s)$ is strictly decreasing on $(0,+\infty)$.
Hence, we can find a unique $s_0>0$ satisfying $g(s_0)=1$.
Consequently, $h(s)$ strictly decreases on $(0,s_0)$ while increases on $(s_0,+\infty)$. Noting that 
$\lim\limits_{s\to 0^+}h(s)=-\kappa \rho_1 \rho_2$ and $\lim\limits_{s\to +\infty}h(s)=+\infty$, there exists 
a unique $s_1>0$ such that $h(s_1)=\kappa\rho_1\rho_2$.
As the proof parallels the preceding case, we conclude the same existence result holds (Theorem \ref{Theorem1.7} when $\frac{2N+2\alpha}{N}<p+q=2r_1<2r_2<\frac{2N+2\alpha+4}{N}$).

\subsection{$\frac{N+\alpha}{N}<r_1<r_2=\frac{N+\alpha+2}{N}$}

\subsubsection{$\frac{2N+2\alpha}{N}<p+q<2r_1<2r_2=\frac{2N+2\alpha+4}{N}$}\leavevmode\par
Under the current assumption, we have $0<\gamma_p+\gamma_q<2\gamma_{r_1}=2\gamma_{r_2}=2$ and
\begin{align*}
	h(s)&=\frac{1}{2}s^2-A_1s^{2\gamma_{r_1}}-A_2 s^{2\gamma_{r_2}}-A_3 s^{\gamma_p+\gamma_q}-\kappa \rho_1 \rho_2\\
	&=(\frac{1}{2}-A_2)s^2-A_1s^{2\gamma_{r_1}}-A_3 s^{\gamma_p+\gamma_q}-\kappa \rho_1 \rho_2,
\end{align*}
where $h(s)$ is given by \eqref{h(s)}. Assume that $\rho_2,  \lambda_2>0$ satisfying 
\begin{equation}
	\frac{1}{2}-A_2=\frac{1}{2}
	-\frac{\lambda_2}{2r_2}C(N,r_2,r_2)2^{r_2}\rho_2^{2(r_2-\gamma_{r_2})}>0.
	\label{frac{1}{2}-A2>0}
\end{equation}
A direct calculation yields
\[    h'(s)=(1-2A_2)s-2\gamma_{r_1}A_1s^{2\gamma_{r_1}-1}-(\gamma_p+\gamma_q)A_3s^{\gamma_p+\gamma_q-1}    .\]
Set
\begin{equation*}
	g(s):=2\gamma_{r_1}A_1s^{2\gamma_{r_1}-2}+(\gamma_p+\gamma_q)A_3s^{\gamma_p+\gamma_q-2}  .
\end{equation*}
Direct computation shows that $\lim\limits_{s\to 0^+}g(s)=+\infty$, $\lim\limits_{s\to +\infty}g(s)=0$ and $g(s)$ is strictly decreasing on $(0,+\infty)$.
Hence, we can find a unique $s_0>0$ satisfying $g(s_0)=1-2A_2$.
Consequently, $h(s)$ strictly decreases on $(0,s_0)$ while increases on $(s_0,+\infty)$. Noting that 
$\lim\limits_{s\to 0^+}h(s)=-\kappa \rho_1 \rho_2$ and $\lim\limits_{s\to +\infty}h(s)=+\infty$, there exists 
a unique $s_1>0$ such that $h(s_1)=\kappa\rho_1\rho_2$.
As the proof parallels the preceding case, we conclude the same existence result holds (Theorem \ref{Theorem1.8} when $\frac{2N+2\alpha}{N}<p+q<2r_1<2r_2=\frac{2N+2\alpha+4}{N}$).

\subsubsection{$\frac{2N+2\alpha}{N}<p+q=2r_1<2r_2=\frac{2N+2\alpha+4}{N}$}\leavevmode\par
Under the current assumption, we have $\gamma_p+\gamma_q=2\gamma_{r_1}<2\gamma_{r_2}=2$ and
\begin{align*}
	h(s)&=\frac{1}{2}s^2-A_1s^{2\gamma_{r_1}}-A_2 s^{2\gamma_{r_2}}-A_3 s^{\gamma_p+\gamma_q}-\kappa \rho_1 \rho_2\\
	&=(\frac{1}{2}-A_2)s^2-(A_1+A_3)s^{2\gamma_{r_1}}-\kappa \rho_1 \rho_2,
\end{align*}
where $h(s)$ is given by \eqref{h(s)}. 
Assume that $\rho_2,  \lambda_2>0$ satisfying \eqref{frac{1}{2}-A2>0}.
A direct calculation yields
\begin{equation*}
	h'(s)=(1-2A_2)s-2\gamma_{r_1}(A_1+A_3)s^{2\gamma_{r_1}-1}.
\end{equation*}
Defining \[s_0=(\frac{1-2A_3}{2\gamma_{r_1}(A_1+A_2)})^{\frac{1}{2\gamma_{r_1}-2}},\]
we derive
$h(s)$ strictly decreases on $(0,s_0)$ while increases on $(s_0,+\infty)$.
As the proof parallels the preceding case, we conclude the same existence result holds (Theorem \ref{Theorem1.8} when $\frac{2N+2\alpha}{N}<p+q=2r_1<2r_2=\frac{2N+2\alpha+4}{N}$).

\subsection{$\frac{N+\alpha+2}{N}<r_2<\frac{N+\alpha}{N-2}$}

\subsubsection{$\frac{2N+2\alpha}{N}<p+q<2r_1<\frac{2N+2\alpha+4}{N}<2r_2<\frac{2N+2\alpha}{N-2}$}\leavevmode\par
Under the current assumption, we have $0<\gamma_p+\gamma_q<2\gamma_{r_1}<2<2\gamma_{r_2}<\frac{2N+2\alpha}{N-2}$ and
\begin{align*}
	h(s)=\frac{1}{2}s^2-A_1s^{2\gamma_{r_1}}-A_2 s^{2\gamma_{r_2}}-A_3 s^{\gamma_p+\gamma_q}-\kappa \rho_1 \rho_2,
\end{align*}
where $h(s)$ is given by \eqref{h(s)}. 
A direct calculation yields
\begin{align*}
	h'(s)&=s-2\gamma_{r_1}A_1s^{2\gamma_{r_1}-1}-2\gamma_{r_2}A_2s^{2\gamma_{r_2}-1}-(\gamma_p+\gamma_q)A_3s^{\gamma_p+\gamma_q-1}\\
	&=s^{\gamma_p+\gamma_q-1}(s^{2-\gamma_p-\gamma_q}-2\gamma_{r_1}A_1s^{2\gamma_{r_1}-\gamma_p-\gamma_q}
	-2\gamma_{r_2}A_2s^{2\gamma_{r_2}-\gamma_p-\gamma_q}-(\gamma_p+\gamma_q)A_3).
\end{align*}
Set
\begin{equation*}
	g(s):=s^{2-\gamma_p-\gamma_q}-2\gamma_{r_1}A_1s^{2\gamma_{r_1}-\gamma_p-\gamma_q}
	-2\gamma_{r_2}A_2s^{2\gamma_{r_2}-\gamma_p-\gamma_q}  ,
\end{equation*}
and we derive
\begin{align*}
	g'(s)=&(2-\gamma_p-\gamma_q)s^{1-\gamma_p-\gamma_q}-2\gamma_{r_1}(2\gamma_{r_1}-\gamma_p-\gamma_q)A_1s^{2\gamma_{r_1}-\gamma_p-\gamma_q-1}\\
	&-2\gamma_{r_2}(2\gamma_{r_2}-\gamma_p-\gamma_q)A_2s^{2\gamma_{r_2}-\gamma_p-\gamma_q-1} \\
	=&s^{1-\gamma_p-\gamma_q}[(2-\gamma_p-\gamma_q)-2\gamma_{r_1}(2\gamma_{r_1}-\gamma_p-\gamma_q)A_1s^{2\gamma_{r_1}-2}\\
	&-2\gamma_{r_2}(2\gamma_{r_2}-\gamma_p-\gamma_q)A_2s^{2\gamma_{r_2}-2} ].
\end{align*}
Now we define
\begin{align*}
	f(s):=&2\gamma_{r_1}(2\gamma_{r_1}-\gamma_p-\gamma_q)A_1s^{2\gamma_{r_1}-2}+2\gamma_{r_2}(2\gamma_{r_2}-\gamma_p-\gamma_q)A_2s^{2\gamma_{r_2}-2} \\
	=&A_1's^{2\gamma_{r_1}-2}+A_2's^{2\gamma_{r_2}-2},
\end{align*}
where $A_1':=2\gamma_{r_1}(2\gamma_{r_1}-\gamma_p-\gamma_q)A_1$ and $A_2':=2\gamma_{r_2}(2\gamma_{r_2}-\gamma_p-\gamma_q)A_2$.\\
Defining 
\[s_0= \left[ \frac{(1-\gamma_{r_1})A_1'}{(\gamma_{r_2}-1)A_2'} \right] ^{\frac{1}{2\gamma_{r_2}-2\gamma_{r_1}}},\]
we derive
$f(s)$ strictly decreases on $(0,s_0)$ while increases on $(s_0,+\infty)$. Direct computation shows that
\[ f(s_0)=\left[(\frac{\gamma_{r_2}-1}{1-\gamma_{r_1}})^{\frac{1-\gamma_{r_1}}{\gamma_{r_2}-\gamma_{r_1}}}+
 (\frac{1-\gamma_{r_1}}{\gamma_{r_2}-1})^{\frac{\gamma_{r_2}-1}{\gamma_{r_2}-\gamma_{r_1}}}\right]
 (A_1')^{\frac{\gamma_{r_2}-1}{\gamma_{r_2}-\gamma_{r_1}}}(A_2')^{\frac{1-\gamma_{r_1}}{\gamma_{r_2}-\gamma_{r_1}}}.
\]
Assume that $\lambda_1, \lambda_2, \rho_1,\rho_2 > 0$ satisfying 

\begin{equation}
	\left\{
	\begin{aligned}
	f(s_0)<&2-\gamma_p-\gamma_q,\\
g(s_0)>&0,\\
\frac{1}{2}s_0^2-A_1s_0^{2\gamma_{r_1}}-A_2 s_0^{2\gamma_{r_2}}>&0,\\
(\frac{\gamma_{r_2}-\gamma_{r_1}}{\gamma_{r_2}-1}2\gamma_{r_1}A_1)^{\gamma_{r_2}-1}
(&\frac{\gamma_{r_2}-\gamma_{r_1}}{1-\gamma_{r_1}}2\gamma_{r_2}A_2)^{1-\gamma_{r_1}}<1.
	\end{aligned}
	\right.
	\label{f(s0)>0 assumption4.2}
\end{equation}
Direct computation shows that \eqref{f(s0)>0 assumption4.2} holds for sufficiently small $A_1, A_2>0$, 
justifying \eqref{f(s0)>0 assumption4.2}  as a reasonable assumption.
Set
\[ U:=(\frac{1-\gamma_{r_1}}{(\gamma_{r_2}-\gamma_{r_1})2\gamma_{r_2}A_2})^\frac{1}{\gamma_{r_2}-1},
\]
and define $\beta_1>0$ as the solution to the following equation
\begin{align}
	U=&\beta\frac{2\gamma_{r_2}-\gamma_p-\gamma_q}{2\gamma_{r_2}-2}(\gamma_p+\gamma_q)
	C(N,p,q)(\rho_1^2+\rho_2^2)^{\frac{p+q-\gamma_p-\gamma_q}{2}}U^{\frac{\gamma_p+\gamma_q}{2}}\notag \\
	&+\frac{\gamma_{r_2}-\gamma_{r_1}}{\gamma_{r_2}-1}2\gamma_{r_1}A_1 U^{\gamma_{r_1}}
	\label{beta_1 4.3.1}
\end{align}
We proceed by defining
\begin{align*}
	\beta_2:=&\frac{s_0^{2-\gamma_p-\gamma_q}-2\gamma_{r_1}A_1s_0^{2\gamma_{r_1}-\gamma_p-\gamma_q}-
	2\gamma_{r_2}A_2s_0^{2\gamma_{r_2}-\gamma_p-\gamma_q}}
	{C(N,p,q)(\gamma_p+\gamma_q)(\rho_1^2+\rho_2^2)^{\frac{p+q-\gamma_p-\gamma_q}{2}}},\\
	\beta_3:=&\frac{\frac{1}{2}s_0^{2-\gamma_p-\gamma_q}-A_1s_0^{2\gamma_{r_1}-\gamma_p-\gamma_q}
	-A_2s_0^{2\gamma_{r_2}-\gamma_p-\gamma_q}}
	{C(N,p,q)(\rho_1^2+\rho_2^2)^{\frac{p+q-\gamma_p-\gamma_q}{2}}},
\end{align*}
and
\begin{equation}
	\beta_0:=\min\{\beta_1,\beta_2, \beta_3\}.
	\label{beta 0 in 4.3.1}
\end{equation}
We also denote
\begin{equation}
	\kappa_0:=\frac{\frac{1}{2}s_0^2-A_1s_0^{2\gamma_{r_1}}
		-A_2s_0^{2\gamma_{r_2}}-\overline{A}_3 s^{\gamma_p+\gamma_q}}{2\rho_1\rho_2},
	\label{kappa 0 in 4.3.1}
\end{equation}
where $\overline{A}_3:=\beta_0 C(N,p,q)(\rho_1^2+\rho_2^2)^{\frac{p+q-\gamma_p-\gamma_q}{2}}.$

Assuming $0<\beta<\beta_0$, $0<\kappa<\kappa_0$, then direct computation shows that $g(s_0)>(\gamma_p+\gamma_q)A_3$,
$h(s_0)>\kappa \rho_1\rho_2>0$. Hence, $h(s)$ possesses precisely two critical points $0<s_1<s_0<s_2$ satisfying
\[h(s_1)=\min\limits_{0<s<s_0} h(s)<0,\quad h(s_2)=\max_{s>0}h(s)>\kappa\rho_1\rho_2.
\]
In addition, we can find $T_1>T_0>0$ such that $h(T_0)=h(T_1)=\kappa \rho_1 \rho_2$ 
and $h(s)>\kappa \rho_1 \rho_2$ exactly when $s \in (T_0,T_1)$.

\begin{lem}
	Assume that $(u,v) \in \mathcal{S}(\rho_1,\rho_2)$, $\lambda_1, \lambda_2, \rho_1,\rho_2 > 0$ satisfying \eqref{f(s0)>0 assumption4.2}, $0<\beta<\beta_0$, $0<\kappa<\kappa_0$, where $\beta_0$ and $\kappa_0$ are defined in \eqref{beta 0 in 4.3.1} and \eqref{kappa 0 in 4.3.1} respectively, then $\Psi_{(u,v)}^\beta(t)$ has exactly two critical points $s_\beta(u,v)<t_\beta(u,v)$. Furthermore,
	\begin{enumerate}[label=(\roman*)]
		\item $\mathcal{P}_\beta^0(\rho_1,\rho_2)=\emptyset$ and $\mathcal{P}_\beta(\rho_1,\rho_2)$ 
		forms a submanifold of $H$ with codimension 3.
		\item $s_\beta(u,v)\diamond (u,v) \in \mathcal{P}_\beta^+(\rho_1,\rho_2)$, $t_\beta(u,v)\diamond (u,v) \in \mathcal{P}_\beta^-(\rho_1,\rho_2)$ and $t \diamond (u,v) \in \mathcal{P}_\beta(\rho_1,\rho_2)$ 
		if and only if $t=s_\beta(u,v) $ or $ t_\beta(u,v)$.
		\item $\Psi_{(u,v)}^\beta(s_\beta(u,v))=\min\{\Psi_{(u,v)}^\beta(t):t(|\nabla u|_2^2+|\nabla v|_2^2)^{\frac{1}{2}}\leq T_0\}$.
		\item $\Psi_{(u,v)}^\beta(t)$ decreases strictly on $(t_\beta(u,v),+\infty)$ and  \[\Psi_{(u,v)}^\beta(t_\beta(u,v))=\max\limits_{t>0}\Psi_{(u,v)}^\beta(t).\]
		\item The maps $(u,v) \mapsto s_\beta(u,v)$ and $(u,v) \mapsto t_\beta(u,v)$ are of class $C^1$.
	\end{enumerate}
	\label{Lemma4.1变分结构 4.3.1}
\end{lem}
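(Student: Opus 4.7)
The plan is to mirror the proof of Lemma \ref{Lemma3.4变分结构 3.3.1} step by step, with the crucial adaptation that we now operate in the regime $0<\gamma_p+\gamma_q<2\gamma_{r_1}<2<2\gamma_{r_2}<\frac{2N+2\alpha}{N-2}$ where the exponents of $\lambda_1$ and $\lambda_2$ terms straddle $2$. All five conclusions match those of Lemma \ref{Lemma3.4变分结构 3.3.1}, so the skeleton is identical, and the new work is confined to (a) the contradiction step establishing $\mathcal{P}_\beta^0(\rho_1,\rho_2)=\emptyset$ and (b) the shape analysis of the scaled functional $\Psi_{(u,v)}^\beta$.

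For (i), I would assume $(u,v)\in \mathcal{P}_\beta^0(\rho_1,\rho_2)$ and subtract $(\Psi_{(u,v)}^\beta)''(1)=0$ from $P_\beta(u,v)=0$ to obtain the identity
\begin{equation*}
\frac{\gamma_{r_2}(2\gamma_{r_2}-2)}{r_2}\lambda_2 \int_{\mathbb{R}^N}(I_\alpha*|v|^{r_2})|v|^{r_2}=\frac{\gamma_{r_1}(2-2\gamma_{r_1})}{r_1}\lambda_1\int_{\mathbb{R}^N}(I_\alpha*|u|^{r_1})|u|^{r_1}+\beta(\gamma_p+\gamma_q)(2-\gamma_p-\gamma_q)\int_{\mathbb{R}^N}(I_\alpha*|u|^p)|v|^q.
\end{equation*}
Using this to eliminate the $\int(I_\alpha*|v|^{r_2})|v|^{r_2}$ contribution from $P_\beta(u,v)=0$, and then applying \eqref{GN} to the remaining $\int(I_\alpha*|u|^{r_1})|u|^{r_1}$ and $\int(I_\alpha*|u|^p)|v|^q$ terms, yields an upper bound for $|\nabla u|_2^2+|\nabla v|_2^2$ purely in powers of itself, forcing $\beta\ge \beta_2$ in contradiction with $\beta<\beta_0\le\beta_2$. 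The submanifold claim is then the same Lagrange multiplier argument as in Lemma \ref{Lemma3.4变分结构 3.3.1}(i): any linear dependence among $dP_\beta(u,v), dL_1(u), dL_2(v)$ produces (via the Pohozaev identity for the Lagrange system) a point in $\mathcal{P}_\beta^0$, which we have just ruled out.

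For (ii)--(iv), I would rewrite $(\Psi_{(u,v)}^\beta)'(t)/t = |\nabla u|_2^2+|\nabla v|_2^2 - F(t)$, where
\begin{equation*}
F(t):=\frac{\gamma_{r_1}}{r_1}\lambda_1 t^{2\gamma_{r_1}-2}\!\!\int_{\mathbb{R}^N}(I_\alpha*|u|^{r_1})|u|^{r_1}+\frac{\gamma_{r_2}}{r_2}\lambda_2 t^{2\gamma_{r_2}-2}\!\!\int_{\mathbb{R}^N}(I_\alpha*|v|^{r_2})|v|^{r_2}+\beta(\gamma_p+\gamma_q) t^{\gamma_p+\gamma_q-2}\!\!\int_{\mathbb{R}^N}(I_\alpha*|u|^p)|v|^q.
\end{equation*}
Since $2\gamma_{r_1}-2<0$, $\gamma_p+\gamma_q-2<0$ and $2\gamma_{r_2}-2>0$, the function $F$ blows up at both $t\to 0^+$ and $t\to+\infty$ and has a unique interior minimum, so the equation $F(t)=|\nabla u|_2^2+|\nabla v|_2^2$ has at most two solutions. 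Combining the envelope bound $\Psi_{(u,v)}^\beta(t)\ge h\!\left(t(|\nabla u|_2^2+|\nabla v|_2^2)^{1/2}\right)$ with the facts $h(s_0)>\kappa\rho_1\rho_2>0$, $\lim_{t\to 0^+}\Psi_{(u,v)}^\beta(t)=-\kappa\int uv\le\kappa\rho_1\rho_2$, $\lim_{t\to+\infty}\Psi_{(u,v)}^\beta(t)=-\infty$ (the last since $2\gamma_{r_2}>2$), and that $\Psi_{(u,v)}^\beta$ is strictly decreasing for small $t$, one concludes that the two zeros $s_\beta(u,v)<t_\beta(u,v)$ of $(\Psi_{(u,v)}^\beta)'$ exist and are separated by the well at level $\kappa\rho_1\rho_2$, yielding $s_\beta(u,v)\diamond(u,v)\in\mathcal{P}_\beta^+$, $t_\beta(u,v)\diamond(u,v)\in\mathcal{P}_\beta^-$, and the claimed $\min/\max$ characterizations relative to the sublevel set $\{t:t(|\nabla u|_2^2+|\nabla v|_2^2)^{1/2}\le T_0\}$. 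Item (v) then follows from the implicit function theorem applied to $\eta(t,(u,v))=(\Psi_{(u,v)}^\beta)'(t)$ at each of the two roots, since $(\Psi_{(u,v)}^\beta)''$ is nonzero there by (i).

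The main obstacle is step (i): unlike the case $r_1=r_2$ treated in Lemma \ref{Lemma3.4变分结构 3.3.1}, the coefficients $2-2\gamma_{r_1}$ and $2-2\gamma_{r_2}$ now have \emph{opposite signs}, so the subtracted identity is not a one-sign combination and cannot be used to conclude $(u,v)\equiv 0$ directly. The remedy is to treat it as an equation expressing the $r_2$-nonlocal energy in terms of the other two, and then use it as a substitution inside $P_\beta(u,v)=0$ before invoking \eqref{GN}; the smallness constants $\beta_1, \beta_2$ and the auxiliary $s_0$ in \eqref{s0 in 3.3.1}-style form are calibrated for precisely this substitution. The verification that $\beta<\beta_1$ also rules out the degenerate case where $F$ touches $|\nabla u|_2^2+|\nabla v|_2^2$ tangentially (giving a unique double root) is the other technical point that must be checked to separate $\mathcal{P}_\beta^+$ and $\mathcal{P}_\beta^-$ strictly.
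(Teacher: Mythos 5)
Your handling of (ii)--(v) is essentially correct and matches the paper in substance; indeed your reformulation $(\Psi_{(u,v)}^\beta)'(t)/t=|\nabla u|_2^2+|\nabla v|_2^2-F(t)$, with $F$ blowing up at both ends and having a unique interior minimum, is a slightly cleaner route to ``at most two critical points'' than the paper's case analysis of the auxiliary functions $\varphi$ and $\psi$. The problem is step (i), where your argument has a genuine gap. Substituting the identity \eqref{4.5 4.3.1} into $P_\beta(u,v)=0$ so as to eliminate the $r_2$-term and then applying \eqref{GN} gives
\[
|\nabla u|_2^2+|\nabla v|_2^2\leq \frac{\gamma_{r_2}-\gamma_{r_1}}{\gamma_{r_2}-1}2\gamma_{r_1}A_1\bigl(|\nabla u|_2^2+|\nabla v|_2^2\bigr)^{\gamma_{r_1}}+\beta\,\frac{2\gamma_{r_2}-\gamma_p-\gamma_q}{2\gamma_{r_2}-2}(\gamma_p+\gamma_q)C(N,p,q)(\rho_1^2+\rho_2^2)^{\frac{p+q-\gamma_p-\gamma_q}{2}}\bigl(|\nabla u|_2^2+|\nabla v|_2^2\bigr)^{\frac{\gamma_p+\gamma_q}{2}},
\]
in which every exponent on the right-hand side is strictly less than $1$. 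Such an inequality is automatically satisfied whenever $|\nabla u|_2^2+|\nabla v|_2^2$ is small, so no smallness condition on $\beta$ can extract a contradiction from it alone; your claim that this single substitution ``forces $\beta\geq\beta_2$'' does not follow (and $\beta_2$ is in any case the constant calibrated to make $g(s_0)>(\gamma_p+\gamma_q)A_3$, not the one governing this step).

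The paper closes the argument with a second, complementary substitution that you omit: using \eqref{4.5 4.3.1} to eliminate instead the $r_1$- and cross-terms from $P_\beta(u,v)=0$ yields
\[
|\nabla u|_2^2+|\nabla v|_2^2\leq \frac{\gamma_{r_2}-\gamma_{r_1}}{1-\gamma_{r_1}}\,2\gamma_{r_2}A_2\bigl(|\nabla u|_2^2+|\nabla v|_2^2\bigr)^{\gamma_{r_2}},
\]
and since $\gamma_{r_2}>1$ this gives the \emph{lower} bound $|\nabla u|_2^2+|\nabla v|_2^2\geq U$ with $U$ as in the definition preceding \eqref{beta_1 4.3.1}. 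Only with this lower bound in hand does the first inequality --- whose right-hand side, after dividing by $|\nabla u|_2^2+|\nabla v|_2^2$, is decreasing in that quantity --- contradict $\beta<\beta_1\leq\beta_0$, by the very equation defining $\beta_1$. You correctly diagnosed that the opposite signs of $2-2\gamma_{r_1}$ and $2-2\gamma_{r_2}$ are the crux, but the single substitution you propose produces only sublinear self-bounds and cannot conclude; the missing idea is the paired substitution exploiting $\gamma_{r_2}>1$ to pin the gradient norm from below.
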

\begin{proof}
	$(\romannumeral1)$Assume, for the sake of contradiction, that  $(u,v)\in \mathcal{P}_\beta^0(\rho_1,\rho_2)$. It follows that
	\begin{align*}
		(\Psi_{(u,v)}^\beta)'(1)=&|\nabla u|_2^2+|\nabla v|_2^2-\frac{\gamma_{r_1}}{r_1}\lambda_1 \int_{\mathbb{R}^N}(I_\alpha*|u|^{r_1})|u|^{r_1}
		-\frac{\gamma_{r_2}}{r_2}\lambda_2 \int_{\mathbb{R}^N}(I_\alpha*|v|^{r_2})|v|^{r_2}\\
		&-\beta(\gamma_p+\gamma_q)\int_{\mathbb{R}^N}(I_\alpha*|u|^p)|v|^q=0,
	\end{align*}
	\begin{align*}
		(&\Psi_{(u,v)}^\beta)''(1)=|\nabla u|_2^2+|\nabla v|_2^2
		-\beta(\gamma_p+\gamma_q)(\gamma_p+\gamma_q-1)\int_{\mathbb{R}^N}(I_\alpha*|u|^p)|v|^q\\
		&-\frac{\gamma_{r_1}}{r_1}\lambda_1 (2\gamma_{r_1}-1)\int_{\mathbb{R}^N}(I_\alpha*|u|^{r_1})|u|^{r_1}
		-\frac{\gamma_{r_2}}{r_2}\lambda_2(2\gamma_{r_2}-1)
		\int_{\mathbb{R}^N}(I_\alpha*|v|^{r_2})|v|^{r_2}=0.
	\end{align*}
	By combining the preceding two equations, we derive
	\begin{align}
		\frac{\gamma_{r_1}}{r_1}\lambda_1 (2-2\gamma_{r_1})&\int_{\mathbb{R}^N}(I_\alpha*|u|^{r_1})|u|^{r_1}
		+\beta(\gamma_p+\gamma_q)(2-\gamma_p-\gamma_q)\int_{\mathbb{R}^N}(I_\alpha*|u|^p)|v|^q \notag\\
		&=\frac{\gamma_{r_2}}{r_2}\lambda_2(2\gamma_{r_2}-2)\int_{\mathbb{R}^N}(I_\alpha*|v|^{r_2})|v|^{r_2}.
		\label{4.5 4.3.1}
	\end{align}
	On the one hand, by \eqref{GN} and \eqref{4.5 4.3.1}, we have
\begin{align*}
	|\nabla u|_2^2+|\nabla v|_2^2&=\frac{2-2\gamma_{r_1}}{2-2\gamma_{r_1}}\frac{\gamma_{r_1}}{r_1}\lambda_1 \int_{\mathbb{R}^N}(I_\alpha*|u|^{r_1})|u|^{r_1}
	+\frac{\gamma_{r_2}}{r_2}\lambda_2 \int_{\mathbb{R}^N}(I_\alpha*|v|^{r_2})|v|^{r_2}\\
	&\quad +\frac{2-\gamma_p-\gamma_q}{2-\gamma_p-\gamma_q}\beta(\gamma_p+\gamma_q)\int_{\mathbb{R}^N}(I_\alpha*|u|^p)|v|^q\\
	&\leq \frac{\gamma_{r_2}}{r_2}\lambda_2 \int_{\mathbb{R}^N}(I_\alpha*|v|^{r_2})|v|^{r_2}+
	\frac{1}{2-2\gamma_{r_1}}[\frac{\gamma_{r_1}}{r_1}\lambda_1 (2-2\gamma_{r_1})\int_{\mathbb{R}^N}(I_\alpha*|u|^{r_1})|u|^{r_1}\\
	&\quad +\beta(\gamma_p+\gamma_q)(2-\gamma_p-\gamma_q)\int_{\mathbb{R}^N}(I_\alpha*|u|^p)|v|^q ]\\
	&=\frac{\gamma_{r_2}}{r_2}\lambda_2 \int_{\mathbb{R}^N}(I_\alpha*|v|^{r_2})|v|^{r_2}+\frac{1}{2-2\gamma_{r_1}}
	\frac{\gamma_{r_2}}{r_2}\lambda_2(2\gamma_{r_2}-2)\int_{\mathbb{R}^N}(I_\alpha*|v|^{r_2})|v|^{r_2}\\
	&=\frac{\gamma_{r_2}-\gamma_{r_1}}{1-\gamma_{r_1}}\frac{\gamma_{r_2}}{r_2}\lambda_2 \int_{\mathbb{R}^N}(I_\alpha*|v|^{r_2})|v|^{r_2}\\
	&\leq \frac{\gamma_{r_2}-\gamma_{r_1}}{1-\gamma_{r_1}}2\gamma_{r_2}A_2(	|\nabla u|_2^2+|\nabla v|_2^2)^{\gamma_{r_2}}.
\end{align*}
	On the other hand, by \eqref{GN} and \eqref{4.5 4.3.1}, we obtain
\begin{align*}
	|\nabla u|_2^2+|\nabla v|_2^2&=\frac{\gamma_{r_1}}{r_1}\lambda_1 \int_{\mathbb{R}^N}(I_\alpha*|u|^{r_1})|u|^{r_1}
	+\frac{\gamma_{r_2}}{r_2}\lambda_2 \int_{\mathbb{R}^N}(I_\alpha*|v|^{r_2})|v|^{r_2}\\
	&\quad +\beta(\gamma_p+\gamma_q)\int_{\mathbb{R}^N}(I_\alpha*|u|^p)|v|^q\\
	&=\frac{\gamma_{r_2}-\gamma_{r_1}}{\gamma_{r_2}-1}\frac{\gamma_{r_1}}{r_1}\lambda_1 \int_{\mathbb{R}^N}(I_\alpha*|u|^{r_1})|u|^{r_1}\\
	&\quad +\frac{2\gamma_{r_2}-\gamma_p-\gamma_q}{2\gamma_{r_2}-2}\beta(\gamma_p+\gamma_q)\int_{\mathbb{R}^N}(I_\alpha*|u|^p)|v|^q\\
	&\leq \frac{\gamma_{r_2}-\gamma_{r_1}}{\gamma_{r_2}-1}2\gamma_{r_1}A_1(	|\nabla u|_2^2+|\nabla v|_2^2)^{\gamma_{r_1}}\\
	& +\beta\frac{2\gamma_{r_2}-\gamma_p-\gamma_q}{2\gamma_{r_2}-2}(\gamma_p+\gamma_q)
	C(N,p,q)(\rho_1^2+\rho_2^2)^{\frac{p+q-\gamma_p-\gamma_q}{2}}(	|\nabla u|_2^2+|\nabla v|_2^2)^{\frac{\gamma_p+\gamma_q}{2}}.
\end{align*}
Combining the two inequalities above together with $\beta<\beta_0$, we arrive at a contradiction. 
Hence, $\mathcal{P}_\beta^0(\rho_1,\rho_2)=\emptyset$. 
Then by an argument analogous to the proof of Lemma \ref{Lemma3.1变分结构} $(\romannumeral1)$, 
we derive $\mathcal{P}_\beta(\rho_1,\rho_2)$ 
forms a submanifold of $H$ with codimension 3; hence, we omit it here.\\
$(\romannumeral2)$-$(\romannumeral4)$ Given $(u,v) \in \mathcal{S}(\rho_1,\rho_2)$, recall that
\[\Psi_{(u,v)}^\beta(t)=\mathcal{J}_\beta(t\diamond(u,v)) \geq h \left( t(	|\nabla u|_2^2+|\nabla v|_2^2)^{\frac{1}{2}} \right),\]
and
\[\Psi_{(u,v)}^\beta(t)> \kappa\rho_1\rho_2,  \qquad \text{for any} ~~ t \in (\frac{T_0}{ (	|\nabla u|_2^2+|\nabla v|_2^2)^{\frac{1}{2}}}, 
\frac{T_1}{ (	|\nabla u|_2^2+|\nabla v|_2^2)^{\frac{1}{2}}}).   \]
We claim that $(\Psi_{(u,v)}^\beta)'(t)=0$ has at most two solutions. A direct calculation yields
\begin{align}
	\varphi(t):=& t^{2-\gamma_p-\gamma_q}(|\nabla u|_2^2+|\nabla v|_2^2)-\frac{\gamma_{r_1}}{r_1}\lambda_1 t^{2\gamma_{r_1}-\gamma_p-\gamma_q}
	\int_{\mathbb{R}^N}(I_\alpha*|u|^{r_1})|u|^{r_1}
	\label{(Psi_{(u,v)}beta)'(t)0 4.6}\\
	&-\frac{\gamma_{r_2}}{r_2}\lambda_2 t^{2\gamma_{r_2}-\gamma_p-\gamma_q}\int_{\mathbb{R}^N}(I_\alpha*|v|^{r_2})|v|^{r_2}=
	\beta (\gamma_p+\gamma_q)\int_{\mathbb{R}^N}(I_\alpha*|u|^p)|v|^q, \notag
\end{align}
and
\begin{align*}
	\varphi'(t)=& (2-\gamma_p-\gamma_q)t^{1-\gamma_p-\gamma_q}(|\nabla u|_2^2+|\nabla v|_2^2)\\
	&-(2\gamma_{r_1}-\gamma_p-\gamma_q)\frac{\gamma_{r_1}}{r_1}\lambda_1 t^{2\gamma_{r_1}-\gamma_p-\gamma_q-1}
	\int_{\mathbb{R}^N}(I_\alpha*|u|^{r_1})|u|^{r_1}\\
	& -(2\gamma_{r_2}-\gamma_p-\gamma_q)\frac{\gamma_{r_2}}{r_2}\lambda_2 t^{2\gamma_{r_2}-\gamma_p-\gamma_q-1}\int_{\mathbb{R}^N}(I_\alpha*|v|^{r_2})|v|^{r_2}\\
	=&t^{2\gamma_{r_1}-\gamma_p-\gamma_q-1}[(2-\gamma_p-\gamma_q)t^{2-2\gamma_{r_1}}(|\nabla u|_2^2+|\nabla v|_2^2)\\
	&-(2\gamma_{r_1}-\gamma_p-\gamma_q)\frac{\gamma_{r_1}}{r_1}\lambda_1 
	\int_{\mathbb{R}^N}(I_\alpha*|u|^{r_1})|u|^{r_1}\\
	& -(2\gamma_{r_2}-\gamma_p-\gamma_q)\frac{\gamma_{r_2}}{r_2}\lambda_2
	 t^{2\gamma_{r_2}-2\gamma_{r_1}}\int_{\mathbb{R}^N}(I_\alpha*|v|^{r_2})|v|^{r_2}].
\end{align*}
We proceed by defining
{\small
\begin{align*}
	\psi(t):=(2-\gamma_p-\gamma_q)t^{2-2\gamma_{r_1}}(|\nabla u|_2^2+|\nabla v|_2^2)
	-(2\gamma_{r_2}-\gamma_p-\gamma_q)\frac{\gamma_{r_2}}{r_2}\lambda_2
	t^{2\gamma_{r_2}-2\gamma_{r_1}}\int_{\mathbb{R}^N}(I_\alpha*|v|^{r_2})|v|^{r_2}.
\end{align*}
}
and
\begin{align*}
	t_0:=\left(\frac{(2-\gamma_p-\gamma_q)(|\nabla u|_2^2+|\nabla v|_2^2)}
	{(2\gamma_{r_2}-\gamma_p-\gamma_q)\frac{\gamma_{r_2}}{r_2}\lambda_2\int_{\mathbb{R}^N}(I_\alpha*|v|^{r_2})|v|^{r_2}}
	\right)^{\frac{1}{2\gamma_{r_2}-2}}.
\end{align*}
Direct computation shows that $\psi(t)$ strictly increases on $(0,t_0)$ while decreases on $(t_0,+\infty)$.\\
If $\psi(t_0) \leq (2\gamma_{r_1}-\gamma_p-\gamma_q)\frac{\gamma_{r_1}}{r_1}\lambda_1 
\int_{\mathbb{R}^N}(I_\alpha*|u|^{r_1})|u|^{r_1}$, we derive that $\varphi(t)$ strictly decreases on $(0,+\infty)$
and equation \eqref{(Psi_{(u,v)}beta)'(t)0 4.6} admits no solution.\\
If$\psi(t_0) > (2\gamma_{r_1}-\gamma_p-\gamma_q)\frac{\gamma_{r_1}}{r_1}\lambda_1 
\int_{\mathbb{R}^N}(I_\alpha*|u|^{r_1})|u|^{r_1}$, we derive that there exist $t_1,t_2>0$ such that
$\varphi(t)$ strictly decreases on $(0,t_1)$ and $(t_2,+\infty)$ while increases on $(t_1,t_2)$.
Noting that $\lim\limits_{t\to 0^+}\varphi(t)=0$, $\lim\limits_{t\to +\infty}\varphi(t)=-\infty$,
we have equation \eqref{(Psi_{(u,v)}beta)'(t)0 4.6} admits at most two solutions. Hence, the claim holds true.\\
Note that $\lim\limits_{t\to 0^+}\Psi_{(u,v)}^\beta(t)=-\kappa\int_{\mathbb{R}^N}uv \leq \kappa\rho_1\rho_2$,  $\lim\limits_{t\to +\infty}\Psi_{(u,v)}^\beta(t)=-\infty$
and $\Psi_{(u,v)}^\beta(t)$ is strictly decreasing for $0<t\ll 1$. Thus, $\Psi_{(u,v)}^\beta(t)$ has exactly two critical points $s_\beta(u,v)<t_\beta(u,v)$ satisfying
\[\Psi_{(u,v)}^\beta(s_\beta(u,v))=\min\{\Psi_{(u,v)}^\beta(t):t(|\nabla u|_2^2+|\nabla v|_2^2)^{\frac{1}{2}}\leq T_0\} \]
and  $\Psi_{(u,v)}^\beta(t_\beta(u,v))=\max\limits_{t>0}\Psi_{(u,v)}^\beta(t)$.\\
$(\romannumeral5)$ By an argument analogous to the proof of Lemma \ref{Lemma3.1变分结构} $(\romannumeral5)$, 
we derive that the maps $(u,v) \mapsto s_\beta(u,v)$ and $(u,v) \mapsto t_\beta(u,v)$ are of class $C^1$; 
hence, we omit it here.
\end{proof}
As the proof parallels the preceding case, we conclude the same existence result holds (Theorem \ref{Theorem1.9}).

\subsubsection{$\frac{2N+2\alpha}{N}<p+q=2r_1<\frac{2N+2\alpha+4}{N}<2r_2<\frac{2N+2\alpha}{N-2}$}\leavevmode\par
Under the current assumption, we have $0<\gamma_p+\gamma_q=2\gamma_{r_1}<2<2\gamma_{r_2}<\frac{2N+2\alpha}{N-2}$ and
\begin{align*}
	h(s)=&\frac{1}{2}s^2-A_1s^{2\gamma_{r_1}}-A_2 s^{2\gamma_{r_2}}-A_3 s^{\gamma_p+\gamma_q}-\kappa \rho_1 \rho_2\\
	=&\frac{1}{2}s^2-(A_1+A_3)s^{2\gamma_{r_1}}-A_2 s^{2\gamma_{r_2}}-\kappa \rho_1 \rho_2,
\end{align*}
where $h(s)$ is given by \eqref{h(s)}. 
A direct calculation yields
\begin{align*}
	h'(s)&=s-2\gamma_{r_1}(A_1+A_3)s^{2\gamma_{r_1}-1}-2\gamma_{r_2}A_2s^{2\gamma_{r_2}-1}\\
	&=s^{2\gamma_{r_1}-1}(s^{2-2\gamma_{r_1}}-2\gamma_{r_1}(A_1+A_3)-2\gamma_{r_2}A_2s^{2\gamma_{r_2}-2\gamma_{r_1}}).
\end{align*}
Set
\begin{equation*}
	g(s):=s^{2-2\gamma_{r_1}}-2\gamma_{r_2}A_2s^{2\gamma_{r_2}-2\gamma_{r_1}},
\end{equation*}
and we derive
\begin{align*}
	g'(s)=&(2-2\gamma_{r_1})s^{1-2\gamma_{r_1}}-(2\gamma_{r_2}-2\gamma_{r_1})2\gamma_{r_2}A_2s^{2\gamma_{r_2}-2\gamma_{r_1}-1}\\
	=&2s^{1-2\gamma_{r_1}}(1-\gamma_{r_1}-(\gamma_{r_2}-\gamma_{r_1})2\gamma_{r_2}A_2s^{2\gamma_{r_2}-2}).
\end{align*}
Defining 
\[s_0= \left[ \frac{1-\gamma_{r_1}}{(\gamma_{r_2}-\gamma_{r_1})2\gamma_{r_2}A_2} \right] ^{\frac{1}{2\gamma_{r_2}-2}},\]
we derive
$g(s)$ strictly increases on $(0,s_0)$ while decreases on $(s_0,+\infty)$. 
A direct calculation yields
\begin{align*}
	g(s_0)=(2\gamma_{r_2}A_2)^{-\frac{1-\gamma_{r_1}}{\gamma_{r_2}-1}}
	\left[(\frac{1-\gamma_{r_1}}{\gamma_{r_2}-\gamma_{r_1}})^{\frac{1-\gamma_{r_1}}{\gamma_{r_2}-1}}-
	(\frac{1-\gamma_{r_1}}{\gamma_{r_2}-\gamma_{r_1}})^{\frac{\gamma_{r_2}-\gamma_{r_1}}{\gamma_{r_2}-1}}\right].
\end{align*}
Assume that $\lambda_1, \lambda_2, \rho_1,\rho_2 > 0$ satisfying 
\begin{equation}
	\left\{
	\begin{aligned}
		s_0^2-2\gamma_{r_1}A_1s_0^{2\gamma_{r_1}}-2\gamma_{r_2}A_2 s_0^{2\gamma_{r_2}}>&0,\\
		\frac{1}{2}s_0^2-A_1s_0^{2\gamma_{r_1}}-A_2 s_0^{2\gamma_{r_2}}>&0,\\
		(\frac{\gamma_{r_2}-\gamma_{r_1}}{\gamma_{r_2}-1}2\gamma_{r_1}A_1)^{\gamma_{r_2}-1}
		(\frac{\gamma_{r_2}-\gamma_{r_1}}{1-\gamma_{r_1}}2\gamma_{r_2}A_2)^{1-\gamma_{r_1}}<&1.
	\end{aligned}
	\right.
	\label{g(s0)>0 assumption4.8}
\end{equation}
Direct computation shows that \eqref{g(s0)>0 assumption4.8} holds for sufficiently small $A_1, A_2>0$, 
justifying \eqref{g(s0)>0 assumption4.8}  as a reasonable assumption.
We proceed by defining
\begin{align*}
	\beta_2:=&\frac{s_0^{2-2\gamma_{r_1}}-2\gamma_{r_1}A_1-
		2\gamma_{r_2}A_2s_0^{2\gamma_{r_2}-2\gamma_{r_1}}}
	{C(N,p,q)2\gamma_{r_1}(\rho_1^2+\rho_2^2)^{\frac{p+q-\gamma_p-\gamma_q}{2}}},\\
	\beta_3:=&\frac{\frac{1}{2}s_0^{2-2\gamma_{r_1}}-A_1
		-A_2s_0^{2\gamma_{r_2}-2\gamma_{r_1}}}
	{C(N,p,q)(\rho_1^2+\rho_2^2)^{\frac{p+q-\gamma_p-\gamma_q}{2}}},
\end{align*}
and
\begin{equation}
	\beta_0:=\min\{\beta_1,\beta_2, \beta_3\},
	\label{beta 0 in 4.3.2}
\end{equation}
where $\beta_1$ is defined by \eqref{beta_1 4.3.1}. 
We also denote
\begin{equation}
	\kappa_0:=\frac{\frac{1}{2}s_0^2-(A_1+\overline{A}_3)s_0^{2\gamma_{r_1}}
		-A_2s_0^{2\gamma_{r_2}}}{2\rho_1\rho_2},
	\label{kappa 0 in 4.3.2}
\end{equation}
where $\overline{A}_3:=\beta_0 C(N,p,q)(\rho_1^2+\rho_2^2)^{\frac{p+q-\gamma_p-\gamma_q}{2}}.$

Assuming $0<\beta<\beta_0$, $0<\kappa<\kappa_0$, then direct computation shows that $g(s_0)>2\gamma_{r_1}(A_1+A_3)$,
$h(s_0)>\kappa \rho_1\rho_2>0$. Hence, $h(s)$ possesses precisely two critical points $0<s_1<s_0<s_2$ satisfying
\[h(s_1)=\min\limits_{0<s<s_0} h(s)<0,\quad h(s_2)=\max_{s>0}h(s)>\kappa\rho_1\rho_2.
\]
In addition, we can find $T_1>T_0>0$ such that $h(T_0)=h(T_1)=\kappa \rho_1 \rho_2$ 
and $h(s)>\kappa \rho_1 \rho_2$ exactly when $s \in (T_0,T_1)$.

As the proof parallels the preceding case, we conclude the same existence result holds (Theorem \ref{Theorem1.10}).

\subsubsection{$\frac{2N+2\alpha}{N}<p+q<2r_1=\frac{2N+2\alpha+4}{N}<2r_2<\frac{2N+2\alpha}{N-2}$}\leavevmode\par
Under the current assumption, we have $0<\gamma_p+\gamma_q<2\gamma_{r_1}=2<2\gamma_{r_2}<\frac{2N+2\alpha}{N-2}$ and
\begin{align*}
	h(s)=&\frac{1}{2}s^2-A_1s^{2\gamma_{r_1}}-A_2 s^{2\gamma_{r_2}}-A_3 s^{\gamma_p+\gamma_q}-\kappa \rho_1 \rho_2\\
	=&(\frac{1}{2}-A_1)s^2-A_2 s^{2\gamma_{r_2}}-A_3 s^{\gamma_p+\gamma_q}-\kappa \rho_1 \rho_2,
\end{align*}
where $h(s)$ is given by \eqref{h(s)}. 
Assume that $\lambda_1, \rho_1> 0$ satisfying 
\begin{equation}
	\frac{1}{2}-A_1>0.
	\label{frac{1}{2}-A_1>0 4.3.3}
\end{equation}
A direct calculation yields
\begin{align*}
	h'(s)&=(1-2A_1)s-2\gamma_{r_2}A_2 s^{2\gamma_{r_2}-1}-(\gamma_p+\gamma_q)A_3 s^{\gamma_p+\gamma_q-1}\\
	&= s^{\gamma_p+\gamma_q-1}\left[(1-2A_1)s^{2-\gamma_p-\gamma_q}-2\gamma_{r_2}A_2
	 s^{2\gamma_{r_2}-\gamma_p-\gamma_q}-(\gamma_p+\gamma_q)A_3\right].
\end{align*}
Set
\begin{equation*}
	g(s):=(1-2A_1)s^{2-\gamma_p-\gamma_q}-2\gamma_{r_2}A_2s^{2\gamma_{r_2}-\gamma_p-\gamma_q},
\end{equation*}
and we derive
\begin{align*}
	g'(s)=&(2-\gamma_p-\gamma_q)(1-2A_1)s^{1-\gamma_p-\gamma_q}-
	(2\gamma_{r_2}-\gamma_p-\gamma_q)2\gamma_{r_2}A_2s^{2\gamma_{r_2}-\gamma_p-\gamma_q-1}\\
	=&s^{1-\gamma_p-\gamma_q}\left[ (2-\gamma_p-\gamma_q)(1-2A_1)-
	(2\gamma_{r_2}-\gamma_p-\gamma_q)2\gamma_{r_2}A_2s^{2\gamma_{r_2}-2} \right].
\end{align*}
Defining 
\[s_0= \left[ \frac{(2-\gamma_p-\gamma_q)(1-2A_1)}{(2\gamma_{r_2}-\gamma_p-\gamma_q)2\gamma_{r_2}A_2} \right] ^{\frac{1}{2\gamma_{r_2}-2}},\]
we derive
$g(s)$ strictly increases on $(0,s_0)$ while decreases on $(s_0,+\infty)$. 
Note that
\begin{align*}
	(\frac{1}{2}-A_1)s_0^2-A_2 s_0^{2\gamma_{r_2}}=&\frac{1}{2}s_0^{\gamma_p+\gamma_q}
	((1-2A_1)s_0^{2-\gamma_p-\gamma_q}-2A_2s_0^{2\gamma_{r_2}-\gamma_p-\gamma_q})\\
	\geq&\frac{1}{2}s_0^{\gamma_p+\gamma_q}
	((1-2A_1)s_0^{2-\gamma_p-\gamma_q}-2\gamma_{r_2}A_2s_0^{2\gamma_{r_2}-\gamma_p-\gamma_q})\\
	=&\frac{1}{2}s_0^{\gamma_p+\gamma_q}g(s_0)>0.
\end{align*}
We proceed by defining
\begin{align*}
	\beta_1:=&\frac{(1-2A_1)s_0^{2-\gamma_p-\gamma_q}-2\gamma_{r_2}A_2s_0^{2\gamma_{r_2}-\gamma_p-\gamma_q}}
	{C(N,p,q)(\gamma_p+\gamma_q)(\rho_1^2+\rho_2^2)^{\frac{p+q-\gamma_p-\gamma_q}{2}}},\\
	\beta_2:=&\frac{(\frac{1}{2}-A_1)s^{2-\gamma_p-\gamma_q}-A_2 s^{2\gamma_{r_2}-\gamma_p-\gamma_q}}
	{C(N,p,q)(\rho_1^2+\rho_2^2)^{\frac{p+q-\gamma_p-\gamma_q}{2}}},
\end{align*}
and
\begin{equation}
	\beta_0:=\min\{\beta_1,\beta_2\}.
	\label{beta 0 in 4.3.3}
\end{equation}
We also denote
\begin{equation}
	\kappa_0:=\frac{(\frac{1}{2}-A_1)s_0^2-A_2 s_0^{2\gamma_{r_2}}-\overline{A}_3 s_0^{\gamma_p+\gamma_q}}{2\rho_1\rho_2},
	\label{kappa 0 in 4.3.3}
\end{equation}
where $\overline{A}_3:=\beta_0 C(N,p,q)(\rho_1^2+\rho_2^2)^{\frac{p+q-\gamma_p-\gamma_q}{2}}.$

Assuming $0<\beta<\beta_0$, $0<\kappa<\kappa_0$, then direct computation shows that $g(s_0)>(\gamma_p+\gamma_q)A_3$,
$h(s_0)>\kappa \rho_1\rho_2>0$. Hence, $h(s)$ possesses precisely two critical points $0<s_1<s_0<s_2$ satisfying
\[h(s_1)=\min\limits_{0<s<s_0} h(s)<0,\quad h(s_2)=\max_{s>0}h(s)>\kappa\rho_1\rho_2.
\]
In addition, we can find $T_1>T_0>0$ such that $h(T_0)=h(T_1)=\kappa \rho_1 \rho_2$ 
and $h(s)>\kappa \rho_1 \rho_2$ exactly when $s \in (T_0,T_1)$.
\begin{lem}
	Assume that $(u,v) \in \mathcal{S}(\rho_1,\rho_2)$, $\lambda_1, \rho_1> 0$ satisfying \eqref{frac{1}{2}-A_1>0 4.3.3}, $0<\beta<\beta_0$, $0<\kappa<\kappa_0$, where $\beta_0$ and $\kappa_0$ are defined in \eqref{beta 0 in 4.3.3} and \eqref{kappa 0 in 4.3.3} respectively, then $\Psi_{(u,v)}^\beta(t)$ has exactly two critical points $s_\beta(u,v)<t_\beta(u,v)$. Furthermore,
	\begin{enumerate}[label=(\roman*)]
		\item $\mathcal{P}_\beta^0(\rho_1,\rho_2)=\emptyset$ and $\mathcal{P}_\beta(\rho_1,\rho_2)$ 
		forms a submanifold of $H$ with codimension 3.
		\item $s_\beta(u,v)\diamond (u,v) \in \mathcal{P}_\beta^+(\rho_1,\rho_2)$, $t_\beta(u,v)\diamond (u,v) \in \mathcal{P}_\beta^-(\rho_1,\rho_2)$ and $t \diamond (u,v) \in \mathcal{P}_\beta(\rho_1,\rho_2)$ 
		if and only if $t=s_\beta(u,v) $ or $ t_\beta(u,v)$.
		\item $\Psi_{(u,v)}^\beta(s_\beta(u,v))=\min\{\Psi_{(u,v)}^\beta(t):t(|\nabla u|_2^2+|\nabla v|_2^2)^{\frac{1}{2}}\leq T_0\}$.
		\item $\Psi_{(u,v)}^\beta(t)$ decreases strictly on $(t_\beta(u,v),+\infty)$ and  \[\Psi_{(u,v)}^\beta(t_\beta(u,v))=\max\limits_{t>0}\Psi_{(u,v)}^\beta(t).\]
		\item The maps $(u,v) \mapsto s_\beta(u,v)$ and $(u,v) \mapsto t_\beta(u,v)$ are of class $C^1$.
	\end{enumerate}
	\label{Lemma4.2变分结构 4.3.3}
\end{lem}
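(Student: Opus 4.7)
The plan is to follow the blueprint of Lemma~\ref{Lemma4.1变分结构 4.3.1}, adapting every step to the new exponent configuration $0<\gamma_p+\gamma_q<2\gamma_{r_1}=2<2\gamma_{r_2}<\frac{2N+2\alpha}{N-2}$. The decisive new feature is $\gamma_{r_1}=1$, which makes the $r_1$-term comparable via \eqref{GN} to $|\nabla u|_2^2+|\nabla v|_2^2$; the structural hypothesis \eqref{frac{1}{2}-A_1>0 4.3.3} is exactly what guarantees
\begin{equation*}
|\nabla u|_2^2+|\nabla v|_2^2-\tfrac{\gamma_{r_1}}{r_1}\lambda_1\int_{\mathbb{R}^N}(I_\alpha*|u|^{r_1})|u|^{r_1}\geq (1-2A_1)(|\nabla u|_2^2+|\nabla v|_2^2)>0,
\end{equation*}
thereby replacing the mass-subcritical cushion exploited in Lemma~\ref{Lemma4.1变分结构 4.3.1}.

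For part~(i) I argue by contradiction: suppose $(u,v)\in\mathcal{P}_\beta^0(\rho_1,\rho_2)$ so that $(\Psi_{(u,v)}^\beta)'(1)=(\Psi_{(u,v)}^\beta)''(1)=0$. Subtracting the two identities eliminates the gradient term, and since $2-2\gamma_{r_1}=0$ the entire $r_1$-contribution drops out, leaving
\begin{equation*}
(2\gamma_{r_2}-2)\tfrac{\gamma_{r_2}}{r_2}\lambda_2\int_{\mathbb{R}^N}(I_\alpha*|v|^{r_2})|v|^{r_2}=(2-\gamma_p-\gamma_q)\beta(\gamma_p+\gamma_q)\int_{\mathbb{R}^N}(I_\alpha*|u|^p)|v|^q.
\end{equation*}
Substituting back into $P_\beta(u,v)=0$ and using \eqref{frac{1}{2}-A_1>0 4.3.3} together with \eqref{GN} on the remaining two nonlocal terms, I expect to obtain two competing estimates on $s:=(|\nabla u|_2^2+|\nabla v|_2^2)^{1/2}$ of the respective forms $(1-2A_1)s^{2-\gamma_p-\gamma_q}\leq \text{const}\cdot\beta$ and $(1-2A_1)\leq 2\gamma_{r_2}A_2\tfrac{2\gamma_{r_2}-\gamma_p-\gamma_q}{2-\gamma_p-\gamma_q} s^{2\gamma_{r_2}-2}$; evaluated at the unique extremum $s_0$ defined in the paragraph preceding the lemma they combine to contradict $\beta<\beta_1$. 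The submanifold-of-codimension-$3$ claim then follows by the Lagrange-multiplier argument reproduced in the proof of Lemma~\ref{Lemma3.1变分结构}(i).

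For parts~(ii)--(iv), the main task is to count critical points of $\Psi_{(u,v)}^\beta$. Writing $(\Psi_{(u,v)}^\beta)'(t)=0$ as a horizontal-line problem, set
\begin{equation*}
\varphi(t):=t^{2-\gamma_p-\gamma_q}\Bigl[(|\nabla u|_2^2+|\nabla v|_2^2)-\tfrac{\gamma_{r_1}}{r_1}\lambda_1\int_{\mathbb{R}^N}(I_\alpha*|u|^{r_1})|u|^{r_1}\Bigr]-\tfrac{\gamma_{r_2}}{r_2}\lambda_2 t^{2\gamma_{r_2}-\gamma_p-\gamma_q}\int_{\mathbb{R}^N}(I_\alpha*|v|^{r_2})|v|^{r_2},
\end{equation*}
so that the critical-point equation becomes $\varphi(t)=\beta(\gamma_p+\gamma_q)\int(I_\alpha*|u|^p)|v|^q$. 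Thanks to \eqref{frac{1}{2}-A_1>0 4.3.3} the coefficient of $t^{2-\gamma_p-\gamma_q}$ is strictly positive, so $\varphi$ is unimodal on $(0,+\infty)$ with $\varphi(0^+)=0$ and $\varphi(t)\to-\infty$ as $t\to+\infty$, giving at most two positive roots. Combining this with the envelope $\Psi_{(u,v)}^\beta(t)\geq h\bigl(t(|\nabla u|_2^2+|\nabla v|_2^2)^{1/2}\bigr)$, the separations $h(T_0)=h(T_1)=\kappa\rho_1\rho_2$ with $h>\kappa\rho_1\rho_2$ on $(T_0,T_1)$, and the boundary behaviour $\lim_{t\to 0^+}\Psi_{(u,v)}^\beta(t)\leq\kappa\rho_1\rho_2$, $\lim_{t\to+\infty}\Psi_{(u,v)}^\beta(t)=-\infty$, yields exactly two critical points $s_\beta(u,v)<t_\beta(u,v)$ of local-min and local-max type respectively, and the classifications in (ii)--(iv) follow verbatim as in Lemma~\ref{Lemma4.1变分结构 4.3.1}.

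Part~(v) is a direct application of the implicit function theorem to $\eta(t,(u,v))=(\Psi_{(u,v)}^\beta)'(t)$ at $t=s_\beta,t_\beta$, using that $\partial_t\eta\neq 0$ there as a consequence of $\mathcal{P}_\beta^0=\emptyset$. The main obstacle I anticipate is the bookkeeping in step~(i): because the $A_1$-contribution disappears from the Pohozaev-subtraction identity, one must make the $A_2$- and $A_3$-estimates interact sharply, and then verify that the threshold $\beta_1$ hidden in \eqref{beta 0 in 4.3.3} is indeed the one that closes the contradiction; everything else is a routine transcription of the arguments already carried out in Subsection~3.3.1 and Subsection~4.3.1.
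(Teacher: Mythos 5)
Your proposal follows the paper's proof essentially verbatim: part (i) uses the same Pohozaev-subtraction identity (in which the $r_1$-term cancels because $2\gamma_{r_1}=2$), the same two competing Gagliardo--Nirenberg estimates yielding $s\geq s_0$ and $(1-2A_1)s^{2-\gamma_p-\gamma_q}\lesssim\beta$, and the same contradiction with $\beta_1$; parts (ii)--(v) are handled, as in the paper, by transcribing Lemma \ref{Lemma4.1变分结构 4.3.1}. Your one small deviation --- merging the gradient and $r_1$-terms of $\varphi(t)$ into a single positive coefficient of $t^{2-\gamma_p-\gamma_q}$ so that $\varphi$ is manifestly unimodal --- is a correct and slightly cleaner way to carry out the critical-point count that the paper leaves implicit.
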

\begin{proof}
	Assume, for the sake of contradiction, that  $(u,v)\in \mathcal{P}_\beta^0(\rho_1,\rho_2)$. It follows that
	\begin{align*}
		(\Psi_{(u,v)}^\beta)'(1)=&|\nabla u|_2^2+|\nabla v|_2^2-\frac{\gamma_{r_1}}{r_1}\lambda_1 \int_{\mathbb{R}^N}(I_\alpha*|u|^{r_1})|u|^{r_1}
		-\frac{\gamma_{r_2}}{r_2}\lambda_2 \int_{\mathbb{R}^N}(I_\alpha*|v|^{r_2})|v|^{r_2}\\
		&-\beta(\gamma_p+\gamma_q)\int_{\mathbb{R}^N}(I_\alpha*|u|^p)|v|^q=0,
	\end{align*}
	\begin{align*}
		(&\Psi_{(u,v)}^\beta)''(1)=|\nabla u|_2^2+|\nabla v|_2^2
		-\beta(\gamma_p+\gamma_q)(\gamma_p+\gamma_q-1)\int_{\mathbb{R}^N}(I_\alpha*|u|^p)|v|^q\\
		&-\frac{\gamma_{r_1}}{r_1}\lambda_1 (2\gamma_{r_1}-1)\int_{\mathbb{R}^N}(I_\alpha*|u|^{r_1})|u|^{r_1}
		-\frac{\gamma_{r_2}}{r_2}\lambda_2(2\gamma_{r_2}-1)
		\int_{\mathbb{R}^N}(I_\alpha*|v|^{r_2})|v|^{r_2}=0.
	\end{align*}
	By combining the preceding two equations, we derive
	\begin{align}
		\beta(\gamma_p+\gamma_q)(2-\gamma_p-\gamma_q)\int_{\mathbb{R}^N}(I_\alpha*|u|^p)|v|^q 
		=\frac{\gamma_{r_2}}{r_2}\lambda_2(2\gamma_{r_2}-2)\int_{\mathbb{R}^N}(I_\alpha*|v|^{r_2})|v|^{r_2}.
		\label{4.14 4.3.3}
	\end{align}
	On the one hand, by \eqref{GN} and \eqref{4.14 4.3.3}, we have
	\begin{align*}
		|\nabla u|_2^2+|\nabla v|_2^2&=\frac{\gamma_{r_1}}{r_1}\lambda_1 \int_{\mathbb{R}^N}(I_\alpha*|u|^{r_1})|u|^{r_1}
		+\frac{\gamma_{r_2}}{r_2}\lambda_2 \int_{\mathbb{R}^N}(I_\alpha*|v|^{r_2})|v|^{r_2}\\
		&\quad +\beta(\gamma_p+\gamma_q)\int_{\mathbb{R}^N}(I_\alpha*|u|^p)|v|^q\\
		&=\frac{\gamma_{r_1}}{r_1}\lambda_1 \int_{\mathbb{R}^N}(I_\alpha*|u|^{r_1})|u|^{r_1}+
		\frac{2\gamma_{r_2}-\gamma_p-\gamma_q}{2-\gamma_p-\gamma_q}
		\frac{\gamma_{r_2}}{r_2}\lambda_2 \int_{\mathbb{R}^N}(I_\alpha*|v|^{r_2})|v|^{r_2}\\
		&\leq 2\gamma_{r_1}A_1(|\nabla u|_2^2+|\nabla v|_2^2)^{\gamma_{r_1}}
		+\frac{2\gamma_{r_2}-\gamma_p-\gamma_q}{2-\gamma_p-\gamma_q}
		2\gamma_{r_2}A_2(|\nabla u|_2^2+|\nabla v|_2^2)^{\gamma_{r_2}}.
	\end{align*}
	On the other hand, by \eqref{GN} and \eqref{4.14 4.3.3}, we obtain
	\begin{align*}
		|\nabla u|_2^2+|\nabla v|_2^2&=\frac{\gamma_{r_1}}{r_1}\lambda_1 \int_{\mathbb{R}^N}(I_\alpha*|u|^{r_1})|u|^{r_1}
		+\frac{\gamma_{r_2}}{r_2}\lambda_2 \int_{\mathbb{R}^N}(I_\alpha*|v|^{r_2})|v|^{r_2}\\
		&\quad +\beta(\gamma_p+\gamma_q)\int_{\mathbb{R}^N}(I_\alpha*|u|^p)|v|^q\\
		&=\frac{\gamma_{r_1}}{r_1}\lambda_1 \int_{\mathbb{R}^N}(I_\alpha*|u|^{r_1})|u|^{r_1}
		+\frac{2\gamma_{r_2}-\gamma_p-\gamma_q}{2\gamma_{r_2}-2}
		\beta(\gamma_p+\gamma_q)\int_{\mathbb{R}^N}(I_\alpha*|u|^p)|v|^q\\
		&\leq 2\gamma_{r_1}A_1(|\nabla u|_2^2+|\nabla v|_2^2)^{\gamma_{r_1}}+
		\frac{2\gamma_{r_2}-\gamma_p-\gamma_q}{2\gamma_{r_2}-2}
		(\gamma_p+\gamma_q)A_3(|\nabla u|_2^2+|\nabla v|_2^2)^{\frac{\gamma_p+\gamma_q}{2}}.
	\end{align*}
	By combining the two inequalities above, we get
	\[ \beta \geq \frac{(2\gamma_{r_2}-2)(1-2A_1)s_0^{2-\gamma_p-\gamma_q}}{(2\gamma_{r_2}-\gamma_p-\gamma_q)
		C(N,p,q)(\gamma_p+\gamma_q)(\rho_1^2+\rho_2^2)^{\frac{p+q-\gamma_p-\gamma_q}{2}}}=\beta_1,    \]
	which contradicts $\beta<\beta_0$.
	
		By an argument analogous to the proof of Lemma \ref{Lemma4.1变分结构 4.3.1}                 , 
	we derive that the remaining conclusions hold; 
	hence, we omit it here.
\end{proof}
As the proof parallels the preceding case, we conclude the same existence result holds (Theorem \ref{Theorem1.11}).

\subsubsection{$\frac{2N+2\alpha}{N}<p+q=2r_1=\frac{2N+2\alpha+4}{N}<2r_2<\frac{2N+2\alpha}{N-2}$}\leavevmode\par
Under the current assumption, we have $0<\gamma_p+\gamma_q=2\gamma_{r_1}=2<2\gamma_{r_2}<\frac{2N+2\alpha}{N-2}$ and
\begin{align*}
	h(s)=&\frac{1}{2}s^2-A_1s^{2\gamma_{r_1}}-A_2 s^{2\gamma_{r_2}}-A_3 s^{\gamma_p+\gamma_q}-\kappa \rho_1 \rho_2\\
	=&(\frac{1}{2}-(A_1+A_3))s^2-A_2 s^{2\gamma_{r_2}}-\kappa \rho_1 \rho_2,
\end{align*}
where $h(s)$ is given by \eqref{h(s)}. 
Assume that $\lambda_1, \rho_1,\beta> 0$ satisfying 
\begin{equation}
	\frac{1}{2}-(A_1+A_3)>0.
	\label{frac{1}{2}-(A_1+A_3)>0 4.3.4}
\end{equation}
A direct calculation yields
\begin{align*}
	h'(s)&=(1-2(A_1+A_3))s-2\gamma_{r_2}A_2 s^{2\gamma_{r_2}-1}.
\end{align*}
Defining 
\[s_0= \left[ \frac{1-2(A_1+A_3)}{2\gamma_{r_2}A_2 } \right] ^{\frac{1}{2\gamma_{r_2}-2}},\]
we derive that
$h(s)$ strictly increases on $(0,s_0)$ while decreases on $(s_0,+\infty)$.
As the proof parallels the preceding case ($p+q =\frac{2N+2\alpha+4}{N}< 2r_1=2r_2<\frac{2N+2\alpha}{N-2}$), we conclude the same existence result holds (Theorem \ref{Theorem1.12}).

\subsubsection{$\frac{2N+2\alpha}{N}<p+q<\frac{2N+2\alpha+4}{N}<2r_1<2r_2<\frac{2N+2\alpha}{N-2}$}\leavevmode\par
Under the current assumption, we have $0<\gamma_p+\gamma_q<2<2\gamma_{r_1}<2\gamma_{r_2}<\frac{2N+2\alpha}{N-2}$ and
\begin{align*}
	h(s)=\frac{1}{2}s^2-A_1s^{2\gamma_{r_1}}-A_2 s^{2\gamma_{r_2}}-A_3 s^{\gamma_p+\gamma_q}-\kappa \rho_1 \rho_2,
\end{align*}
where $h(s)$ is given by \eqref{h(s)}. 
A direct calculation yields
\begin{align*}
	h'(s)&=s-2\gamma_{r_1}A_1s^{2\gamma_{r_1}-1}-2\gamma_{r_2}A_2 s^{2\gamma_{r_2}-1}
	-(\gamma_p+\gamma_q)A_3 s^{\gamma_p+\gamma_q-1}\\
	&=s^{\gamma_p+\gamma_q-1} \left[s^{2-\gamma_p-\gamma_q}-2\gamma_{r_1}A_1s^{2\gamma_{r_1}-\gamma_p-\gamma_q}
	-2\gamma_{r_2}A_2 s^{2\gamma_{r_2}-\gamma_p-\gamma_q}
	-(\gamma_p+\gamma_q)A_3 \right].
\end{align*}
Set
\begin{equation*}
	g(s):=s^{2-\gamma_p-\gamma_q}-2\gamma_{r_1}A_1s^{2\gamma_{r_1}-\gamma_p-\gamma_q}
	-2\gamma_{r_2}A_2 s^{2\gamma_{r_2}-\gamma_p-\gamma_q},
\end{equation*}
and we derive
\begin{align*}
	g'(s)=&(2-\gamma_p-\gamma_q)s^{1-\gamma_p-\gamma_q}
	-(2\gamma_{r_1}-\gamma_p-\gamma_q)2\gamma_{r_1}A_1s^{2\gamma_{r_1}-\gamma_p-\gamma_q-1}\\
	&-(2\gamma_{r_2}-\gamma_p-\gamma_q)2\gamma_{r_2}A_2 s^{2\gamma_{r_2}-\gamma_p-\gamma_q-1}\\
	=&s^{1-\gamma_p-\gamma_q}((2-\gamma_p-\gamma_q)-
	(2\gamma_{r_1}-\gamma_p-\gamma_q)2\gamma_{r_1}A_1s^{2\gamma_{r_1}-2}\\
	&-(2\gamma_{r_2}-\gamma_p-\gamma_q)2\gamma_{r_2}A_2 s^{2\gamma_{r_2}-2}).
\end{align*}
Now we define
\begin{align*}
	f(s):=(2\gamma_{r_1}-\gamma_p-\gamma_q)2\gamma_{r_1}A_1s^{2\gamma_{r_1}-2}
	+(2\gamma_{r_2}-\gamma_p-\gamma_q)2\gamma_{r_2}A_2 s^{2\gamma_{r_2}-2}.
\end{align*}
Direct computation shows that $\lim\limits_{s\to 0^+}f(s)=0$, $\lim\limits_{s\to +\infty}f(s)=+\infty$ and $f(s)$ is strictly increasing on $(0,+\infty)$.
It follows that there exists $s_*>0$ such that $f(s_*)=2-\gamma_p-\gamma_q$ and 
$g(s)$ strictly increases on $(0,s_*)$ while decreases on $(s_*,+\infty)$. 
Set
\[s_0:=(\frac{2-\gamma_p-\gamma_q}{(2\gamma_{r_2}-\gamma_p-\gamma_q)2\gamma_{r_2}A_2})
^{{\frac{1}{2\gamma_{r_2}-2}}}.
\]
Assume that $\lambda_1, \lambda_2, \rho_1,\rho_2 > 0$ satisfying
\begin{equation}
	1-2\gamma_{r_1}A_1s_0^{2\gamma_{r_1}-2}-2\gamma_{r_2}A_2 s_0^{2\gamma_{r_2}-2}>0.
	\label{g(s_0)>0 4.3.5}
\end{equation}
Note that
\begin{align*}
    \frac{1}{2}s_0^2-A_1s_0^{2\gamma_{r_1}}-A_2 s_0^{2\gamma_{r_2}}=&\frac{1}{2}s_0^2
    (1-2A_1s_0^{2\gamma_{r_1}-2}-2A_2 s_0^{2\gamma_{r_2}-2})\\
    \geq &\frac{1}{2}s_0^2(1-2\gamma_{r_1}A_1s_0^{2\gamma_{r_1}-2}-2\gamma_{r_2}A_2 s_0^{2\gamma_{r_2}-2})>0.
\end{align*}
We proceed by defining
\begin{align*}
	\beta_1:=&\frac{s_0^{2-\gamma_p-\gamma_q}-2\gamma_{r_1}A_1s_0^{2\gamma_{r_1}-\gamma_p-\gamma_q}
		-2\gamma_{r_2}A_2 s_0^{2\gamma_{r_2}-\gamma_p-\gamma_q}}
	{C(N,p,q)(\gamma_p+\gamma_q)(\rho_1^2+\rho_2^2)^{\frac{p+q-\gamma_p-\gamma_q}{2}}},\\
	\beta_2:=&\frac{\frac{1}{2}s_0^{2-\gamma_p-\gamma_q}-A_1s_0^{2\gamma_{r_1}-\gamma_p-\gamma_q}
		-A_2 s_0^{2\gamma_{r_2}-\gamma_p-\gamma_q}}
	{C(N,p,q)(\rho_1^2+\rho_2^2)^{\frac{p+q-\gamma_p-\gamma_q}{2}}},\\
	\beta_3:=&\frac{(2\gamma_{r_1}-2)\left[\frac{2-\gamma_p-\gamma_q}{2(2\gamma_{r_1}-\gamma_p-\gamma_q)2\gamma_{r_1}A_1}\right]^{\frac{2-\gamma_p-\gamma_q}{2\gamma_{r_1}-2}}}{(2\gamma_{r_1}-\gamma_p-\gamma_q)C(N,p,q)(\gamma_p+\gamma_q)(\rho_1^2+\rho_2^2)^{\frac{p+q-\gamma_p-\gamma_q}{2}}}\\
	\beta_4:=&\frac{(2\gamma_{r_1}-2)\left[\frac{2-\gamma_p-\gamma_q}{2(2\gamma_{r_2}-\gamma_p-\gamma_q)2\gamma_{r_2}A_2}\right]^{\frac{2-\gamma_p-\gamma_q}{2\gamma_{r_2}-2}}}{(2\gamma_{r_1}-\gamma_p-\gamma_q)C(N,p,q)(\gamma_p+\gamma_q)(\rho_1^2+\rho_2^2)^{\frac{p+q-\gamma_p-\gamma_q}{2}}},
\end{align*}
and
\begin{equation}
	\beta_0:=\min\{\beta_1,\beta_2\}.
	\label{beta 0 in 4.3.5}
\end{equation}
We also denote
\begin{equation}
	\kappa_0:=\frac{\frac{1}{2}s_0^2-A_1s_0^{2\gamma_{r_1}}-A_2 s_0^{2\gamma_{r_2}}-\overline{A}_3 s_0^{\gamma_p+\gamma_q}}{2\rho_1\rho_2},
	\label{kappa 0 in 4.3.5}
\end{equation}
where $\overline{A}_3:=\beta_0 C(N,p,q)(\rho_1^2+\rho_2^2)^{\frac{p+q-\gamma_p-\gamma_q}{2}}.$

Assuming $0<\beta<\beta_0$, $0<\kappa<\kappa_0$, then direct computation shows that $g(s_0)>(\gamma_p+\gamma_q)A_3$,
$h(s_0)>\kappa \rho_1\rho_2>0$. Hence, $h(s)$ possesses precisely two critical points $0<s_1<s_0<s_2$ satisfying
\[h(s_1)=\min\limits_{0<s<s_0} h(s)<0,\quad h(s_2)=\max_{s>0}h(s)>\kappa\rho_1\rho_2.
\]
In addition, we can find $T_1>T_0>0$ such that $h(T_0)=h(T_1)=\kappa \rho_1 \rho_2$ 
and $h(s)>\kappa \rho_1 \rho_2$ exactly when $s \in (T_0,T_1)$.
\begin{lem}
	Assume that $(u,v) \in \mathcal{S}(\rho_1,\rho_2)$, $\lambda_1, \lambda_2, \rho_1,\rho_2 > 0$ satisfying \eqref{g(s_0)>0 4.3.5}, $0<\beta<\beta_0$, $0<\kappa<\kappa_0$, where $\beta_0$ and $\kappa_0$ are defined in \eqref{beta 0 in 4.3.5} and \eqref{kappa 0 in 4.3.5} respectively, then $\Psi_{(u,v)}^\beta(t)$ has exactly two critical points $s_\beta(u,v)<t_\beta(u,v)$. Furthermore,
	\begin{enumerate}[label=(\roman*)]
		\item $\mathcal{P}_\beta^0(\rho_1,\rho_2)=\emptyset$ and $\mathcal{P}_\beta(\rho_1,\rho_2)$ 
		forms a submanifold of $H$ with codimension 3.
		\item $s_\beta(u,v)\diamond (u,v) \in \mathcal{P}_\beta^+(\rho_1,\rho_2)$, $t_\beta(u,v)\diamond (u,v) \in \mathcal{P}_\beta^-(\rho_1,\rho_2)$ and $t \diamond (u,v) \in \mathcal{P}_\beta(\rho_1,\rho_2)$ 
		if and only if $t=s_\beta(u,v) $ or $ t_\beta(u,v)$.
		\item $\Psi_{(u,v)}^\beta(s_\beta(u,v))=\min\{\Psi_{(u,v)}^\beta(t):t(|\nabla u|_2^2+|\nabla v|_2^2)^{\frac{1}{2}}\leq T_0\}$.
		\item $\Psi_{(u,v)}^\beta(t)$ decreases strictly on $(t_\beta(u,v),+\infty)$ and  \[\Psi_{(u,v)}^\beta(t_\beta(u,v))=\max\limits_{t>0}\Psi_{(u,v)}^\beta(t).\]
		\item The maps $(u,v) \mapsto s_\beta(u,v)$ and $(u,v) \mapsto t_\beta(u,v)$ are of class $C^1$.
	\end{enumerate}
	\label{Lemma4.3变分结构 4.3.5}
\end{lem}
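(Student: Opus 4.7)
The plan is to adapt the argument of Lemma \ref{Lemma4.1变分结构 4.3.1} to the present exponent configuration $0<\gamma_p+\gamma_q<2<2\gamma_{r_1}<2\gamma_{r_2}<\frac{2N+2\alpha}{N-2}$. The essential novelty lies in verifying $\mathcal{P}_\beta^0(\rho_1,\rho_2)=\emptyset$: the Gagliardo-Nirenberg bookkeeping now involves three nontrivial mass-supercritical scalings ($2\gamma_{r_1}$, $2\gamma_{r_2}$ and $2$, the last coming from $\gamma_p+\gamma_q<2$), so the elimination arguments used in Lemma \ref{Lemma3.4变分结构 3.3.1} and Lemma \ref{Lemma4.1变分结构 4.3.1} must be combined. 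I will carry out this step first and then leverage the general scheme to derive (ii)-(v).

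\textbf{Step 1 (empty nodal set).} Suppose by contradiction that $(u,v)\in\mathcal{P}_\beta^0(\rho_1,\rho_2)$. Writing $(\Psi_{(u,v)}^\beta)'(1)=0$ and $(\Psi_{(u,v)}^\beta)''(1)=0$ and subtracting, I obtain the identity
\begin{equation*}
\tfrac{\gamma_{r_1}}{r_1}\lambda_1(2-2\gamma_{r_1})\!\!\int_{\mathbb R^N}\!(I_\alpha\ast|u|^{r_1})|u|^{r_1}+\tfrac{\gamma_{r_2}}{r_2}\lambda_2(2-2\gamma_{r_2})\!\!\int_{\mathbb R^N}\!(I_\alpha\ast|v|^{r_2})|v|^{r_2}+\beta(\gamma_p+\gamma_q)(2-\gamma_p-\gamma_q)\!\!\int_{\mathbb R^N}\!(I_\alpha\ast|u|^p)|v|^q=0.
\end{equation*}
Since $2-2\gamma_{r_1},\,2-2\gamma_{r_2}<0$ while $2-(\gamma_p+\gamma_q)>0$, this identity expresses the (positive) $(p,q)$-term as a positive combination of the Choquard self-interactions. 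Substituting back into $(\Psi_{(u,v)}^\beta)'(1)=0$ and using \eqref{GN} together with the definitions of $A_1,A_2,A_3$ produces two competing upper bounds on $|\nabla u|_2^2+|\nabla v|_2^2$: one of the form $C(s_0)\beta(\rho_1^2+\rho_2^2)^{(p+q-\gamma_p-\gamma_q)/2}(|\nabla u|_2^2+|\nabla v|_2^2)^{(\gamma_p+\gamma_q)/2}$, and one of the form $C_1(|\nabla u|_2^2+|\nabla v|_2^2)^{\gamma_{r_1}}+C_2(|\nabla u|_2^2+|\nabla v|_2^2)^{\gamma_{r_2}}$. Setting $s:=(|\nabla u|_2^2+|\nabla v|_2^2)^{1/2}$, the first bound forces $s\geq s_0$ (via the definition of $s_0$ through $g(s_0)>(\gamma_p+\gamma_q)A_3$), while the second, combined with the assumption $\beta<\beta_1$ from \eqref{beta 0 in 4.3.5}, forces $s< s_0$, a contradiction. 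The codimension-3 submanifold property then follows verbatim from the argument after equation (3.14) in the proof of Lemma \ref{Lemma3.1变分结构}(i), since $\mathcal{P}_\beta^0=\emptyset$.

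\textbf{Step 2 (critical points of $\Psi_{(u,v)}^\beta$).} For fixed $(u,v)\in\mathcal{S}(\rho_1,\rho_2)$, I analyse the equation $P_\beta(t\diamond(u,v))=0$, which after dividing by $t^{\gamma_p+\gamma_q}$ becomes
\begin{equation*}
\varphi(t):=t^{2-\gamma_p-\gamma_q}(|\nabla u|_2^2+|\nabla v|_2^2)-\tfrac{\gamma_{r_1}}{r_1}\lambda_1 t^{2\gamma_{r_1}-\gamma_p-\gamma_q}\!\!\int_{\mathbb R^N}\!(I_\alpha\ast|u|^{r_1})|u|^{r_1}-\tfrac{\gamma_{r_2}}{r_2}\lambda_2 t^{2\gamma_{r_2}-\gamma_p-\gamma_q}\!\!\int_{\mathbb R^N}\!(I_\alpha\ast|v|^{r_2})|v|^{r_2}=\beta(\gamma_p+\gamma_q)\!\!\int_{\mathbb R^N}\!(I_\alpha\ast|u|^p)|v|^q.
\end{equation*}
Differentiating and factoring $t^{2\gamma_{r_1}-\gamma_p-\gamma_q-1}$, the sign of $\varphi'$ is controlled by a function analogous to $\psi$ in Lemma \ref{Lemma4.1变分结构 4.3.1}(ii)-(iv); it is a difference of two power terms with exponents $2-2\gamma_{r_1}<0$ and $2\gamma_{r_2}-2\gamma_{r_1}>0$, hence it is unimodal. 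Consequently $\varphi$ has at most one interior maximum and $\varphi(t)=\text{const}>0$ admits at most two solutions.

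\textbf{Step 3 (location, sign, regularity).} Using $\Psi_{(u,v)}^\beta(t)\geq h(t(|\nabla u|_2^2+|\nabla v|_2^2)^{1/2})$, the fact that $h(T_0)=h(T_1)=\kappa\rho_1\rho_2$ and $h>\kappa\rho_1\rho_2$ on $(T_0,T_1)$ implies $\Psi_{(u,v)}^\beta(t)>\kappa\rho_1\rho_2$ on the corresponding dilated interval; combined with $\lim_{t\to 0^+}\Psi_{(u,v)}^\beta(t)=-\kappa\int uv\leq\kappa\rho_1\rho_2$, $\lim_{t\to+\infty}\Psi_{(u,v)}^\beta(t)=-\infty$ (since $2\gamma_{r_2}>2$) and the strict decay of $\Psi_{(u,v)}^\beta$ near $0$, I conclude that $\Psi_{(u,v)}^\beta$ has exactly two critical points $s_\beta(u,v)<t_\beta(u,v)$, which are respectively a local minimum with $s_\beta(u,v)(|\nabla u|_2^2+|\nabla v|_2^2)^{1/2}\leq T_0$ and the global maximum; this gives (ii)-(iv). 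Finally, (v) follows from the implicit function theorem applied to $\eta(t,(u,v))=(\Psi_{(u,v)}^\beta)'(t)$ at each of the two critical points, whose nondegeneracy is guaranteed by $\mathcal{P}_\beta^0=\emptyset$, exactly as at the end of Lemma \ref{Lemma3.1变分结构}(iv).

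The main obstacle is Step 1: the combinatorics of three exponents on different sides of $2$ must be matched so that the quantitative choice of $\beta_0,\kappa_0$ in \eqref{beta 0 in 4.3.5}-\eqref{kappa 0 in 4.3.5} (and in particular the threshold $\beta_1$) is sharp enough to close the contradiction. All subsequent steps are structural and follow the template of the earlier lemmas.
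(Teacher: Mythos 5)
Your overall architecture matches the paper's: the same subtracted identity from $(\Psi_{(u,v)}^\beta)'(1)=(\Psi_{(u,v)}^\beta)''(1)=0$, the same pair of competing Gagliardo--Nirenberg bounds on $|\nabla u|_2^2+|\nabla v|_2^2$, the same unimodality analysis of $\varphi$ for the two-critical-point claim, and the same limiting/implicit-function arguments for (ii)--(v). Steps 2 and 3 are sound (and in this regime the bracket controlling $\varphi'$ is in fact strictly decreasing, since $2-2\gamma_{r_1}<0$, which makes your unimodality claim even easier than in Lemma \ref{Lemma4.1变分结构 4.3.1}).

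The genuine gap is in Step 1, exactly where you flagged the risk. With $\gamma_p+\gamma_q<2<2\gamma_{r_1}<2\gamma_{r_2}$, the two bounds point in the opposite directions from what you assert. Setting $s^2=|\nabla u|_2^2+|\nabla v|_2^2$, the $A_3$-bound reads $s^2\le \frac{2\gamma_{r_1}-\gamma_p-\gamma_q}{2\gamma_{r_1}-2}(\gamma_p+\gamma_q)A_3\,s^{\gamma_p+\gamma_q}$, i.e. $s^{2-\gamma_p-\gamma_q}\lesssim \beta$, which is an \emph{upper} bound on $s$ (small when $\beta$ is small) — it cannot "force $s\ge s_0$". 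Conversely, the bound $s^2\le C_1 s^{2\gamma_{r_1}}+C_2 s^{2\gamma_{r_2}}$ involves only $A_1,A_2$ (no $\beta$) and, because both exponents exceed $2$, yields a \emph{lower} bound on $s$; it cannot "force $s<s_0$", and $\beta<\beta_1$ plays no role in it. The paper closes the contradiction by splitting the latter bound (at least one of the two terms must exceed $\tfrac12 s^2$), extracting an explicit lower bound for $s$ from the $A_1$-term or the $A_2$-term, and feeding it into the $A_3$-bound to conclude $\beta\ge\beta_3$ or $\beta\ge\beta_4$ — these are the operative thresholds, not $\beta_1$, whose only job in this subsection is to guarantee $g(s_0)>(\gamma_p+\gamma_q)A_3$ and hence the two-bump geometry of $h$. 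As written, your Step 1 does not produce a contradiction; you need to reverse the roles of the two bounds and invoke the thresholds built from the lower bounds on $s$ (the quantities $\beta_3,\beta_4$ of \eqref{beta 0 in 4.3.5}).
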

\begin{proof}
		Assume, for the sake of contradiction, that  $(u,v)\in \mathcal{P}_\beta^0(\rho_1,\rho_2)$. It follows that
	\begin{align*}
		(\Psi_{(u,v)}^\beta)'(1)=&|\nabla u|_2^2+|\nabla v|_2^2-\frac{\gamma_{r_1}}{r_1}\lambda_1 \int_{\mathbb{R}^N}(I_\alpha*|u|^{r_1})|u|^{r_1}
		-\frac{\gamma_{r_2}}{r_2}\lambda_2 \int_{\mathbb{R}^N}(I_\alpha*|v|^{r_2})|v|^{r_2}\\
		&-\beta(\gamma_p+\gamma_q)\int_{\mathbb{R}^N}(I_\alpha*|u|^p)|v|^q=0,
	\end{align*}
	\begin{align*}
		(&\Psi_{(u,v)}^\beta)''(1)=|\nabla u|_2^2+|\nabla v|_2^2
		-\beta(\gamma_p+\gamma_q)(\gamma_p+\gamma_q-1)\int_{\mathbb{R}^N}(I_\alpha*|u|^p)|v|^q\\
		&-\frac{\gamma_{r_1}}{r_1}\lambda_1 (2\gamma_{r_1}-1)\int_{\mathbb{R}^N}(I_\alpha*|u|^{r_1})|u|^{r_1}
		-\frac{\gamma_{r_2}}{r_2}\lambda_2(2\gamma_{r_2}-1)
		\int_{\mathbb{R}^N}(I_\alpha*|v|^{r_2})|v|^{r_2}=0.
	\end{align*}
	By combining the preceding two equations, we derive
	\begin{align}
			\frac{\gamma_{r_1}}{r_1}\lambda_1 (2\gamma_{r_1}-2)&\int_{\mathbb{R}^N}(I_\alpha*|u|^{r_1})|u|^{r_1}
		+\frac{\gamma_{r_2}}{r_2}\lambda_2(2\gamma_{r_2}-2)\int_{\mathbb{R}^N}(I_\alpha*|v|^{r_2})|v|^{r_2}\notag\\
		&=\beta(\gamma_p+\gamma_q)(2-\gamma_p-\gamma_q)\int_{\mathbb{R}^N}(I_\alpha*|u|^p)|v|^q.
		\label{4.19 4.3.5}
	\end{align}
		On the one hand, by \eqref{GN} and \eqref{4.19 4.3.5}, we have
		\begin{align*}
			|\nabla u|_2^2+|\nabla v|_2^2&=\frac{\gamma_{r_1}}{r_1}\lambda_1 \int_{\mathbb{R}^N}(I_\alpha*|u|^{r_1})|u|^{r_1}
			+\frac{\gamma_{r_2}}{r_2}\lambda_2 \int_{\mathbb{R}^N}(I_\alpha*|v|^{r_2})|v|^{r_2}\\
			&\quad +\beta(\gamma_p+\gamma_q)\int_{\mathbb{R}^N}(I_\alpha*|u|^p)|v|^q\\
			&=\frac{2\gamma_{r_1}-2}{2\gamma_{r_1}-2}\frac{\gamma_{r_1}}{r_1}\lambda_1 \int_{\mathbb{R}^N}(I_\alpha*|u|^{r_1})|u|^{r_1}
			+\frac{2\gamma_{r_2}-2}{2\gamma_{r_2}-2}\frac{\gamma_{r_2}}{r_2}\lambda_2 \int_{\mathbb{R}^N}(I_\alpha*|v|^{r_2})|v|^{r_2}\\
			&\quad +\beta(\gamma_p+\gamma_q)\int_{\mathbb{R}^N}(I_\alpha*|u|^p)|v|^q\\
			&\leq \frac{1}{2\gamma_{r_1}-2}((2\gamma_{r_1}-2)\frac{\gamma_{r_1}}{r_1}\lambda_1 \int_{\mathbb{R}^N}(I_\alpha*|u|^{r_1})|u|^{r_1}\\
			&+(2\gamma_{r_2}-2)\frac{\gamma_{r_2}}{r_2}\lambda_2 \int_{\mathbb{R}^N}(I_\alpha*|v|^{r_2})|v|^{r_2})
			+\beta(\gamma_p+\gamma_q)\int_{\mathbb{R}^N}(I_\alpha*|u|^p)|v|^q\\
			&=\frac{2\gamma_{r_1}-\gamma_p-\gamma_q}{2\gamma_{r_1}-2}\beta(\gamma_p+\gamma_q)\int_{\mathbb{R}^N}(I_\alpha*|u|^p)|v|^q\\
			&\leq \frac{2\gamma_{r_1}-\gamma_p-\gamma_q}{2\gamma_{r_1}-2}(\gamma_p+\gamma_q)A_3
			(|\nabla u|_2^2+|\nabla v|_2^2)^{\frac{\gamma_p+\gamma_q}{2}}.
		\end{align*}
	On the other hand, by \eqref{GN} and \eqref{4.19 4.3.5}, we obtain
	\begin{align*}
			|\nabla u|_2^2+|\nabla v|_2^2&=\frac{\gamma_{r_1}}{r_1}\lambda_1 \int_{\mathbb{R}^N}(I_\alpha*|u|^{r_1})|u|^{r_1}
		+\frac{\gamma_{r_2}}{r_2}\lambda_2 \int_{\mathbb{R}^N}(I_\alpha*|v|^{r_2})|v|^{r_2}\\
		&\quad +\beta(\gamma_p+\gamma_q)\int_{\mathbb{R}^N}(I_\alpha*|u|^p)|v|^q\\
		&=\frac{2\gamma_{r_1}-\gamma_p-\gamma_q}{2-\gamma_p-\gamma_q}
		\frac{\gamma_{r_1}}{r_1}\lambda_1 \int_{\mathbb{R}^N}(I_\alpha*|u|^{r_1})|u|^{r_1}\\
		&\quad +\frac{2\gamma_{r_2}-\gamma_p-\gamma_q}{2-\gamma_p-\gamma_q}
		\frac{\gamma_{r_2}}{r_2}\lambda_2 \int_{\mathbb{R}^N}(I_\alpha*|v|^{r_2})|v|^{r_2}\\
		&\leq \frac{2\gamma_{r_1}-\gamma_p-\gamma_q}{2-\gamma_p-\gamma_q}2\gamma_{r_1}A_1
		(	|\nabla u|_2^2+|\nabla v|_2^2)^{\gamma_{r_1}}\\
		&\quad +\frac{2\gamma_{r_2}-\gamma_p-\gamma_q}{2-\gamma_p-\gamma_q}
		2\gamma_{r_2}A_2(|\nabla u|_2^2+|\nabla v|_2^2)^{\gamma_{r_2}}.
	\end{align*}
	Hence, we derive that
	\begin{align*}
		\frac{2\gamma_{r_1}-\gamma_p-\gamma_q}{2-\gamma_p-\gamma_q}2\gamma_{r_1}A_1
		(	|\nabla u|_2^2+|\nabla v|_2^2)^{\gamma_{r_1}-1}\geq \frac{1}{2},
	\end{align*}
	or
	\begin{align*}
		\frac{2\gamma_{r_2}-\gamma_p-\gamma_q}{2-\gamma_p-\gamma_q}
		2\gamma_{r_2}A_2(|\nabla u|_2^2+|\nabla v|_2^2)^{\gamma_{r_2}-1} \geq \frac{1}{2}.
	\end{align*}
		By combining the preceding inequalities, we get
	\[ \beta \geq  \beta_3  ~~  \text{or} ~~ \beta \geq  \beta_4, \]
	which contradicts $\beta<\beta_0$.
	
	By an argument analogous to the proof of Lemma \ref{Lemma4.1变分结构 4.3.1}                 , 
	we derive that the remaining conclusions hold; 
	hence, we omit it here.
\end{proof}
As the proof parallels the preceding case, we conclude the same existence result holds (Theorem \ref{Theorem1.13}).

\subsubsection{$p+q=\frac{2N+2\alpha+4}{N}<2r_1<2r_2<\frac{2N+2\alpha}{N-2}$}\leavevmode\par
Under the current assumption, we have $\gamma_p+\gamma_q=2<2\gamma_{r_1}<2\gamma_{r_2}<\frac{2N+2\alpha}{N-2}$ and
\begin{align*}
	h(s)=&\frac{1}{2}s^2-A_1s^{2\gamma_{r_1}}-A_2 s^{2\gamma_{r_2}}-A_3 s^{\gamma_p+\gamma_q}-\kappa \rho_1 \rho_2\\
	=&(\frac{1}{2}-A_3)s^2-A_1s^{2\gamma_{r_1}}-A_2 s^{2\gamma_{r_2}}-\kappa \rho_1 \rho_2,
\end{align*}
where $h(s)$ is given by \eqref{h(s)}. 
Assume that $\rho_1,\rho_2, \beta> 0$ satisfying
\begin{equation}
	\frac{1}{2}-A_3>0.
	\label{frac{1}{2}-A_3>0 4.3.6}
\end{equation}
A direct calculation yields
\begin{align*}
	h'(s)=&(1-2A_3)s-2\gamma_{r_1}A_1s^{2\gamma_{r_1}-1}-2\gamma_{r_2}A_2 s^{2\gamma_{r_2}-1}\\
	=&s\left[(1-2A_3)-2\gamma_{r_1}A_1s^{2\gamma_{r_1}-2}-2\gamma_{r_2}A_2 s^{2\gamma_{r_2}-2}\right],
\end{align*}
Set
\begin{align*}
	g(s):=2\gamma_{r_1}A_1s^{2\gamma_{r_1}-2}+2\gamma_{r_2}A_2 s^{2\gamma_{r_2}-2}.
\end{align*}
Direct computation shows that $\lim\limits_{s\to 0^+}g(s)=0$, $\lim\limits_{s\to +\infty}g(s)=+\infty$ and $g(s)$ is strictly increasing on $(0,+\infty)$.
It follows that there exists $s_0>0$ such that $g(s_0)=1-2A_3$ and 
$h(s)$ strictly increases on $(0,s_0)$ while decreases on $(s_0,+\infty)$. 

As the proof parallels the preceding case ($p+q =\frac{2N+2\alpha+4}{N}< 2r_1=2r_2<\frac{2N+2\alpha}{N-2}$), we conclude the same existence result holds (Theorem \ref{Theorem1.14}).

\subsubsection{$\frac{2N+2\alpha+4}{N}<p+q<2r_1<2r_2<\frac{2N+2\alpha}{N-2}$}\leavevmode\par
Under the current assumption, we have $2<\gamma_p+\gamma_q<2\gamma_{r_1}<2\gamma_{r_2}<\frac{2N+2\alpha}{N-2}$ and
\begin{align*}
	h(s)=\frac{1}{2}s^2-A_1s^{2\gamma_{r_1}}-A_2 s^{2\gamma_{r_2}}-A_3 s^{\gamma_p+\gamma_q}-\kappa \rho_1 \rho_2,
\end{align*}
where $h(s)$ is given by \eqref{h(s)}. 
A direct calculation yields
\begin{align*}
	h'(s)=&s-2\gamma_{r_1}A_1s^{2\gamma_{r_1}-1}-2\gamma_{r_2}A_2 s^{2\gamma_{r_2}-1}-(\gamma_p+\gamma_q)A_3 s^{\gamma_p+\gamma_q-1}\\
	=&s(1-2\gamma_{r_1}A_1s^{2\gamma_{r_1}-2}-2\gamma_{r_2}A_2 s^{2\gamma_{r_2}-2}-(\gamma_p+\gamma_q)A_3 s^{\gamma_p+\gamma_q-2}).
\end{align*}
Set
\begin{align*}
	g(s):=2\gamma_{r_1}A_1s^{2\gamma_{r_1}-2}+2\gamma_{r_2}A_2 s^{2\gamma_{r_2}-2}+(\gamma_p+\gamma_q)A_3 s^{\gamma_p+\gamma_q-2}.
\end{align*}
Direct computation shows that $\lim\limits_{s\to 0^+}g(s)=0$, $\lim\limits_{s\to +\infty}g(s)=+\infty$ and $g(s)$ is strictly increasing on $(0,+\infty)$.
It follows that there exists $s_0>0$ such that $g(s_0)=1$ and 
$h(s)$ strictly increases on $(0,s_0)$ while decreases on $(s_0,+\infty)$. 

As the proof parallels the preceding case ($\frac{2N+2\alpha+4}{N}<p+q<2r_1=2r_2<\frac{2N+2\alpha}{N-2}$), we conclude the same existence result holds (Theorem \ref{Theorem1.15} when $\frac{2N+2\alpha+4}{N}<p+q<2r_1<2r_2<\frac{2N+2\alpha}{N-2}$).

\subsubsection{$\frac{2N+2\alpha+4}{N}<p+q=2r_1<2r_2<\frac{2N+2\alpha}{N-2}$}\leavevmode\par
Under the current assumption, we have $2<\gamma_p+\gamma_q=2\gamma_{r_1}<2\gamma_{r_2}<\frac{2N+2\alpha}{N-2}$ and
\begin{align*}
	h(s)=&\frac{1}{2}s^2-A_1s^{2\gamma_{r_1}}-A_2 s^{2\gamma_{r_2}}-A_3 s^{\gamma_p+\gamma_q}-\kappa \rho_1 \rho_2\\
	=&\frac{1}{2}s^2-(A_1+A_3)s^{2\gamma_{r_1}}-A_2 s^{2\gamma_{r_2}}-\kappa \rho_1 \rho_2,
\end{align*}
where $h(s)$ is given by \eqref{h(s)}. 
A direct calculation yields
\begin{align*}
	h'(s)=&s-2\gamma_{r_1}(A_1+A_3)s^{2\gamma_{r_1}-1}-2\gamma_{r_2}A_2 s^{2\gamma_{r_2}-1}\\
	=&s(1-2\gamma_{r_1}(A_1+A_3)s^{2\gamma_{r_1}-2}-2\gamma_{r_2}A_2 s^{2\gamma_{r_2}-2}).
\end{align*}
Set
\begin{align*}
	g(s):=2\gamma_{r_1}(A_1+A_3)s^{2\gamma_{r_1}-2}+2\gamma_{r_2}A_2 s^{2\gamma_{r_2}-2}.
\end{align*}
Direct computation shows that $\lim\limits_{s\to 0^+}g(s)=0$, $\lim\limits_{s\to +\infty}g(s)=+\infty$ and $g(s)$ is strictly increasing on $(0,+\infty)$.
It follows that there exists $s_0>0$ such that $g(s_0)=1$ and 
$h(s)$ strictly increases on $(0,s_0)$ while decreases on $(s_0,+\infty)$. 

As the proof parallels the preceding case ($\frac{2N+2\alpha+4}{N}<p+q=2r_1=2r_2<\frac{2N+2\alpha}{N-2}$), we conclude the same existence result holds (Theorem \ref{Theorem1.15} when $\frac{2N+2\alpha+4}{N}<p+q=2r_1<2r_2<\frac{2N+2\alpha}{N-2}$).

	\section*{Acknowledgements}
The authors have no acknowledgements to declare.

\end{document}